\theoremstyle{plain}
\newtheorem{thm}{Theorem}[section]
\newtheorem{lem}[thm]{Lemma}
\newtheorem{cor}[thm]{Corollary}
\newtheorem{prop}[thm]{Proposition}
\newtheorem{fact}[thm]{Fact}
\newtheorem*{fact*}{Fact}
\newtheorem*{thm*}{Theorem}
\newtheorem*{cor*}{Corollary}
\theoremstyle{remark}
\newtheorem{rmk}[thm]{Remark}
\newtheorem{example}[thm]{Example}
\newtheorem*{case}{Case}
\theoremstyle{definition}
\newtheorem{defn}[thm]{Definition}
\newtheorem{ques}[thm]{Question}
\newtheorem*{ques*}{Question}
\numberwithin{equation}{section}
\DeclareMathOperator{\ex}{ex}
\DeclareMathOperator{\bary}{bar}
\DeclareMathOperator{\ind}{ind}
\DeclareMathOperator{\supp}{supp}
\DeclareMathOperator{\diam}{diam}
\DeclareMathOperator{\dist}{dist}
\def\Z{\mathbb Z}
\def\R{\mathbb R}
\def\N{\mathbb N}
\def\al{\alpha}
\def\o{\omega}
\def\M{\mathcal M}
\def\cA{\mathcal A}
\def\P{\mathcal P}
\def\H{\mathcal H}
\def\0{\mathbf{0}}
\def\F{\mathcal F}
\def\G{\mathcal G}
\def\SM{\mathcal{SM}}
\def\erg{\text{erg}}
\def\hext{h_{\text{ext}}}
\def\hres{h_{\text{res}}}
\def\hsex{h_{\text{sex}}}
\def\charfun{\mathbf{1}}
\title{Orders of Accumulation of Entropy}
\author{David Burguet and Kevin McGoff}
\begin{document}


\maketitle

\begin{abstract}
For a continuous map $T$ of a compact metrizable space $X$ with finite topological entropy, the order of accumulation of entropy of $T$ is a countable ordinal that arises in the context of entropy structure and symbolic extensions. We show that every countable ordinal is realized as the order of accumulation of some dynamical system. Our proof relies on functional analysis of metrizable Choquet simplices and a realization theorem of Downarowicz and Serafin. Further, if $M$ is a metrizable Choquet simplex, we bound the ordinals that appear as the order of accumulation of entropy of a dynamical system whose simplex of invariant measures is affinely homeomorphic to $M$. These bounds are given in terms of the Cantor-Bendixson rank of $\overline{\ex(M)}$, the closure of the extreme points of $M$, and the relative Cantor-Bendixson rank of $\overline{\ex(M)}$ with respect to $\ex(M)$. We also address the optimality of these bounds.
\end{abstract}

\tableofcontents

\section{Introduction}

In this paper, a topological dynamical system is a pair $(X,T)$, where $X$ is a compact metrizable space and $T$ is a continuous mapping of $X$ to itself. For such a system $(X,T)$, the topological entropy $\mathbf{h}_{top}(T)$ provides a well-studied measure of the topological dynamical complexity of the system. We only consider systems with $\mathbf{h}_{top}(T) < \infty$. Let $M(X,T)$ be the space of Borel probability measures on $X$ which are invariant under $T$. The entropy function $h:M(X,T) \rightarrow [0,\infty)$, where $h(\mu)$ is the metric entropy of the measure $\mu$, quantifies the amount of complexity in the system that lies on generic points for $\mu$. In this sense, the entropy function $h$ describes both \textit{where} and \textit{how much} complexity lies in the system. The theory of entropy structures developed by Downarowicz \cite{D} produces a master entropy invariant in the form of a distinguished class of sequences of functions on $M(X,T)$ whose limit is $h$. The entropy structure of a dynamical system completely determines almost all previously known entropy invariants such as the topological entropy, the entropy function on invariant measures, the tail entropy (or topological conditional entropy \cite{M}), the symbolic extension entropy, and the symbolic extension entropy function. Entropy structure also produces new entropy invariants, such as the order of accumulation of entropy. Furthermore, the theory of entropy structures and symbolic extensions provides a rigorous description of \textit{how entropy emerges on refining scales}. Entropy structures and the closely related theory of symbolic extensions \cite{BD} have attracted interest in the dynamical systems literature \cite{A, Bur2, BurNew, DF, D, DM, DN}, especially with the intention of using entropy structure to obtain information about symbolic extensions for various classes of smooth systems. The purpose of the current work is to investigate a new entropy invariant arising from the theory of entropy structures: the order of accumulation of entropy, which is denoted $\al_0(X,T)$.

Given a dynamical system $(X,T)$, one may associate a particular sequence $\H(T)=(h_k)$ to $(X,T)$ with the following properties \cite{D}:
\begin{enumerate}
 \item $(h_k)$ is a non-decreasing sequence of harmonic, upper semi-continuous functions from $M(X,T)$ to $[0,\infty)$;
 \item $\lim_k h_k = h$;
 \item $h_{k+1}-h_k$ is upper semi-continuous for every $k$.
\end{enumerate}
This sequence, or any sequence uniformly equivalent to it (Definition \ref{uniformEquivDef}), is called an entropy structure for the system $(X,T)$ \cite{D}. This distinguished uniform equivalence class of sequences is an invariant of topological conjugacy of the system \cite{D}. Consequently, we sometimes refer to the entire uniform equivalence class of $\H$ as \textit{the} entropy structure of the system $(X,T)$.

Associated to a non-decreasing sequence $\H = (h_k)$ of functions $h_k : M \to [0,\infty]$, where $M$ is a compact metrizable space, there is a transfinite sequence of functions $u_{\al} : M \rightarrow [0,\infty]$, indexed by the ordinals and defined by transfinite induction as follows. Let $\widetilde{f}$ denote the upper semi-continuous envelope of the function $f$ (Definition \ref{USCdef}; by convention $\widetilde{f} \equiv \infty$ if $f$ is unbounded). Let $\tau_k = h - h_k$. Then
\begin{itemize}
 \item let $u_0 \equiv 0$;
 \item if $u_{\al}$ has been defined, let $ u_{\al+1} = \lim_{k} \widetilde{u_{\al} + \tau_k}$;
 \item if $u_{\beta}$ has been defined for all $\beta < \al$ for a limit ordinal  $\al$, let $u_{\al} = \widetilde{\sup_{\beta < \al} u_{\beta}}$.
\end{itemize}
The sequence $(u_{\al})$ is non-decreasing in $\al$ and does not depend on the particular representative of the uniform equivalence class of $\H$. Since $M$ is compact and metrizable, an easy argument (given in \cite{BD}) implies that there exists a countable ordinal $\al$ such that $u_{\beta} \equiv u_{\al}$ for all $\beta \geq \al$. The least ordinal $\al$ with this property is denoted $\al_0(\H)$ and is called the order of accumulation of $\H$. In the case when $M = M(X,T)$ and $\H$ is an entropy structure for $(X,T)$, the order of accumulation of entropy of $(X,T)$ is defined as $\al_0(\H)$. Because the entropy structure of $(X,T)$ is invariant under topological conjugacy, the sequence $(u_{\al})$ associated to $(X,T)$ and the order of accumulation $\al_0(X,T)$ are invariants of topological conjugacy.

To explain the meaning of $\al_0(X,T)$ and $u_{\al_0(X,T)}$, we discuss symbolic extensions and their relationship to entropy structures. A symbolic extension of $(X,T)$ is a (two-sided) subshift $(Y,S)$ on a finite number of symbols, along with a continuous surjection $\pi : Y \to X$ (the \textit{factor map} of the extension) such that $\pi \circ S = T \circ \pi$. Symbolic extensions have been important tools in the study of some dynamical systems, in particular uniformly hyperbolic systems. A symbolic extension serves as a ``lossless finite encoding'' of the system $(X,T)$ \cite{D}. If $\pi$ is the factor map of a symbolic extension $(Y,S)$, we define the extension entropy function $\hext^{\pi} : M(X,T) \rightarrow [0,\infty)$ for $\mu$ in $M(X,T)$ by
\begin{equation*}
\hext^{\pi}(\mu) = \max \{ h(\nu) : \pi^* \mu = \nu \}.
\end{equation*}
The number $\hext^{\pi}(\mu)$ represents the amount of complexity above the measure $\mu$ in the symbolic extension. The symbolic extension entropy function of a dynamical system $(X,T)$, $\hsex : M(X,T) \rightarrow [0, \medspace \infty]$, is defined for $\mu$ in $M(X,T)$ as
\begin{equation*}
\hsex(\mu) = \inf \{ \hext^{\pi}(\mu) : \pi \text{ is the factor map of a symbolic extension of } (X,T) \},
\end{equation*}
where the infimum is understood to be $\infty$ if $(X,T)$ admits no symbolic extensions. The symbolic extension entropy function measures the amount of entropy that must be present above each measure in any symbolic extension of the system. Finally, we define the residual entropy function $\hres : M(X,T) \to [0,\infty]$ as $\hres = \hsex - h$. The residual entropy function then measures the amount of entropy that must be added above each measure in any symbolic extension of the system. The functions $\hres$ and $\hsex$ give much finer information about the complexity of the system than the entropy function $h$.  These quantities are related to the entropy structure of the system by the following remarkable result of Boyle and Downarowicz.
\begin{thm}[\cite{BD}]
Let $X$ be a compact metrizable space and $T: X \to X$ a continuous map. Let $\H$ be an entropy structure for $(X,T)$. Then
\begin{equation*}
\hsex = h + u_{\al_0(X,T)}^{\H}.
\end{equation*}
\end{thm}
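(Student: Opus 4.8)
\emph{Approach.} Write $\H=(h_k)$ for the entropy structure and set $\tau_k=h-h_k$, so that $\tau_k\downarrow 0$. Following Boyle and Downarowicz, call an affine, upper semi-continuous function $E\colon M(X,T)\to[0,\infty]$ with $E\ge h$ a \emph{superenvelope} of $\H$ if $\lim_k\widetilde{E-h_k}\le E-h$; equivalently, writing $E=h+u$, the function $u\ge 0$ is a super-solution of the fixed-point relation $u=\lim_k\widetilde{u+\tau_k}$ that defines $(u_\al)$. The plan is to prove the two inequalities $\hsex\ge h+u_{\al_0(X,T)}^{\H}$ and $\hsex\le h+u_{\al_0(X,T)}^{\H}$, with the functional analysis of superenvelopes on the metrizable Choquet simplex $M(X,T)$ as the common engine. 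The first task is to identify the smallest superenvelope: I claim it is $h+u_{\al_0(X,T)}^{\H}$. That this function is a superenvelope is essentially the definition of $\al_0(X,T)$, once one knows it is affine and u.s.c., which I would verify by transfinite induction using that the $h_k$ are harmonic and that the envelope operation $\widetilde{\cdot}$ and monotone suprema preserve harmonicity and upper semicontinuity on a metrizable simplex. Minimality: if $E=h+u$ is any superenvelope, a transfinite induction gives $u\ge u_\al$ for all $\al$ --- $u_0\equiv 0\le u$; at a successor, $u\ge u_\al$ yields $u_{\al+1}=\lim_k\widetilde{u_\al+\tau_k}\le\lim_k\widetilde{u+\tau_k}\le u$; at a limit ordinal, take suprema and one further envelope, using that $u$ is u.s.c. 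Hence $u\ge u_{\al_0}$ and $E\ge h+u_{\al_0(X,T)}^{\H}$.

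\emph{Lower bound.} Let $\pi\colon(Y,S)\to(X,T)$ be any symbolic extension (if none exists, $\hsex\equiv\infty$ and there is nothing to show). I would prove that $\hext^\pi$ is a superenvelope of $\H$; granting this, the paragraph above gives $\hext^\pi\ge h+u_{\al_0(X,T)}^{\H}$, and taking the infimum over all symbolic extensions yields $\hsex\ge h+u_{\al_0(X,T)}^{\H}$. This requires two ingredients. First, $\hext^\pi$ is affine and u.s.c.: it is the fiberwise maximum of the affine entropy function of $Y$ over the weak-$*$ closed, affinely varying fibers of $\pi^*$ (the nontrivial direction of affinity uses ergodic decomposition), and it is u.s.c.\ because a subshift is expansive --- so its entropy function is u.s.c.\ --- and the fibers of $\pi^*$ vary upper semicontinuously in the Hausdorff metric. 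Second, the superenvelope inequality $\lim_k\widetilde{\hext^\pi-h_k}\le\hext^\pi-h$: here I would invoke Downarowicz's description of how entropy structures transform under factor maps, together with the fact that a subshift is asymptotically $h$-expansive (its tail entropy vanishes), so that the entropy structure of $Y$ is uniformly equivalent to a constant sequence and contributes no accumulated defect; transferring this through $\pi$ forces the defect of $\hext^\pi-h_k$ to collapse in the limit onto $\hext^\pi-h$.

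\emph{Upper bound.} Given any superenvelope $E$ of $\H$ and any $\eps>0$, I would construct a symbolic extension $\pi$ of $(X,T)$ with $\hext^\pi\le E+\eps$; applied to $E=h+u_{\al_0(X,T)}^{\H}$ and letting $\eps\to 0$, this gives $\hsex\le h+u_{\al_0(X,T)}^{\H}$ and finishes the proof. The extension $(Y,S)$ would be built as an inverse limit of finite symbolic recodings of $(X,T)$: at stage $k$ one adjoins enough new symbols to encode the entropy emerging between scales $k-1$ and $k$, quantified by the u.s.c.\ increments $h_k-h_{k-1}$ from property (3) of $\H$, plus the residual amount $E-h$ that the superenvelope inequality allows one to carry across stages. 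One then checks that these stage-wise codings amalgamate into a single subshift whose extension entropy function does not exceed $E+\eps$; the point at which this inductive scheme closes up is precisely where the ordinal $\al_0(X,T)$ re-enters, since it is the transfinite stabilization of $(u_\al)$ that terminates the bookkeeping.

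\emph{Main obstacle.} The upper bound is the heart of the matter. The identification of the smallest superenvelope is a transfinite induction in convex analysis on a metrizable Choquet simplex, and the lower bound is an application of the factor-map theory of entropy structures; the genuinely hard part --- and the reason the theorem is deep --- is the explicit multi-scale coding construction realizing an arbitrary superenvelope as the extension entropy function of an actual symbolic extension, which rests on Downarowicz's earlier realization theorems for upper semi-continuous functions and on a delicate amalgamation whose convergence is controlled by $\al_0(X,T)$.
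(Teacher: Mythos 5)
The paper does not prove this theorem; it is cited from Boyle--Downarowicz \cite{BD}, so there is no ``paper's own proof'' to compare against. Taking your proposal on its own terms: your high-level plan does track the actual BD argument --- introduce superenvelopes, identify $h+u_{\al_0}$ as the minimal one by transfinite induction, prove $\hext^{\pi}$ is always a superenvelope for the lower bound, and construct symbolic extensions realizing any affine superenvelope for the upper bound. The place where you wave your hands (``explicit multi-scale coding construction'') is indeed the hard core of BD, and it is fine to flag it as such rather than reproduce it.

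There is, however, one concrete step in your write-up that does not work as stated, and it is not a cosmetic issue. You require a superenvelope to be \emph{affine} and u.s.c., and then assert that $h+u_{\al_0}$ is affine ``by transfinite induction using that the $h_k$ are harmonic and that the envelope operation $\widetilde{\,\cdot\,}$ and monotone suprema preserve harmonicity and upper semicontinuity on a metrizable simplex.'' The u.s.c.\ envelope operation does \emph{not} preserve harmonicity on a general metrizable Choquet simplex; this is exactly the content of Proposition \ref{uscBauerharm} (which only works for Bauer simplices) together with Remark \ref{ConcaveRmk}, where the present paper records that even for a harmonic candidate sequence $\H$ the functions $u_{\al}^{\H}$ are in general only concave, not harmonic. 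Since $M(X,T)$ is typically not Bauer (the ergodic measures need not be closed), your transfinite induction collapses at the very first successor step: $\widetilde{u_0+\tau_k}=\widetilde{\tau_k}$ need not be harmonic. Thus in your framework $h+u_{\al_0}$ is a priori only concave and u.s.c., so it is not even known to be a superenvelope, and the minimality claim --- the cornerstone of both inequalities --- is unsupported.

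The conclusion that $u_{\al_0}$ is affine is true, but Boyle--Downarowicz prove it by a substantially different route: they work with superenvelopes that are \emph{not} required to be affine (only $E-h_k$ u.s.c.\ and nonnegative for each $k$), establish that the minimal superenvelope is $h+u_{\al_0}$, and then separately prove the affinity of that minimal superenvelope using harmonicity of the $\tau_k$ and a barycentric argument, \emph{not} by pushing harmonicity through the envelope operator. You should either drop the affinity requirement from your definition (so that $h+u_{\al_0}$ is trivially a superenvelope) and supply the affinity of the minimal superenvelope as a separate lemma, or else replace the transfinite-induction harmonicity claim with an argument along BD's lines. As written, this is a genuine gap.
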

The conclusion of the theorem may also be stated as $u_{\al_0(X,T)} = \hres$. In this sense, the order of accumulation $\al_0(X,T)$ and the function $u_{\al_0(X,T)}$ each measures a residual complexity in the system that is not detected by the entropy function $h$.  The order of accumulation of entropy measures, roughly speaking, \textit{over how many distinct layers residual entropy emerges} in the system \cite{BD}. It is then natural to ask the following question.
\begin{ques} \label{realizationQuestion}
Which countable ordinals can be realized as the order of accumulation of entropy of a dynamical system?
\end{ques}
It is shown in \cite{BD} that all finite ordinals can be realized as the order of accumulation of dynamical system. There are constructions in \cite{Bur2, DN} (built for other purposes) that show that some infinite ordinals are realized in this way, but these constructions do not allow one to determine exactly which ordinals appear. Moreover, it is stated without proof in \cite{D} that all countable ordinals are realized.

We prove that all countable ordinals can be realized as the order of accumulation of entropy for a dynamical system (Corollary \ref{DynRealizationCor}), answering Question \ref{realizationQuestion}. On account of the realization theorem of Downarowicz and Serafin (restated as Theorem \ref{realization} in this work), this result reduces to establishing the following result, which is purely functional analytic.
\begin{thm}
For every countable ordinal $\al$, there exists a metrizable Choquet simplex $M$ and a sequence of functions $\H = (h_k)$ on $M$ such that
\begin{itemize}
 \item $(h_k)$ is a non-decreasing sequence of harmonic, upper semi-continuous functions from $M$ to $[0,\infty)$;
 \item $\lim_k h_k$ exists and is bounded;
 \item $h_{k+1}-h_k$ is upper semi-continuous for every $k$;
 \item $\al_0(\H) = \al$.
\end{itemize}
\end{thm}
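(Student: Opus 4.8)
The plan is to pass to a Bauer simplex, reducing the theorem to a one-dimensional statement about upper semicontinuous functions on a countable compact metric space, and there to realize the transfinite sequence by an explicit Cantor--Bendixson stratification.

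\emph{Step 1 (reduction to a Bauer simplex).} We are free to choose $M$, so take $M=P(K)$, the simplex of Borel probability measures (with the weak$^*$ topology) on a countable compact metrizable space $K$; then $M$ is a metrizable Bauer simplex and $\ex(M)=\overline{\ex(M)}=K$, identifying $x$ with $\delta_x$. Given a non-decreasing sequence of u.s.c.\ functions $f_k\colon K\to[0,\infty)$ with $\lim_k f_k=f$ bounded and each $f_{k+1}-f_k$ u.s.c., put $h_k(\mu)=\int_K f_k\,d\mu$. Each $h_k$ is affine (hence harmonic) and u.s.c., so $\H=(h_k)$ satisfies the first three conditions with bounded limit. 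The key claim is that $u_\beta^{\H}(\mu)=\int_K v_\beta\,d\mu$ for every ordinal $\beta$, where $(v_\beta)$ is the transfinite sequence built on $K$ from $\sigma_k=f-f_k$ exactly as $(u_\beta)$ but with the u.s.c.\ envelope taken inside $K$. One proves this by transfinite induction; the only nonroutine ingredient is that for bounded Borel $g\colon K\to[0,\infty)$, writing $\widehat g(\mu)=\int_K g\,d\mu$, the u.s.c.\ envelope of $\widehat g$ in $M$ equals $\widehat{\widetilde g}$, where $\widetilde g$ is the u.s.c.\ envelope of $g$ in $K$. Here ``$\le$'' is clear since $\widehat{\widetilde g}$ is u.s.c.\ on $M$ and dominates $\widehat g$, and ``$\ge$'' follows by pushing $\mu$ forward under Borel maps $\phi_\eps\colon K\to K$ with $\dist(\phi_\eps(x),x)<\eps$ and $g(\phi_\eps(x))>\widetilde g(x)-\eps$. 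Since the barycentric map is injective on u.s.c.\ functions on $K$, we get $\al_0(\H)=\min\{\beta\colon v_\beta=v_{\beta+1}\}$, and it remains to realize every countable ordinal this way.

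\emph{Step 2 (the target on $K$).} Fix a countable ordinal $\al\ge1$, let $K=\{\gamma\colon\gamma\le\omega^{\al}\}$ with the order topology (a countable compact metrizable space with $K^{(\al)}=\{\omega^{\al}\}$ and $K^{(\al+1)}=\emptyset$), and let $r\colon K\to\{\gamma\colon\gamma\le\al\}$ be the Cantor--Bendixson rank function. Choose a strictly increasing scale $(c_\gamma)_{\gamma\le\al}\subset[1,2)$ with $c_\lambda=\sup_{\gamma<\lambda}c_\gamma$ at limit $\lambda$ and with strictly decreasing successor increments $\delta_\gamma:=c_{\gamma+1}-c_\gamma$. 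Set $\psi_\beta(x):=c_{\min(r(x),\beta)}$. Using that near any point of $K$ the rank drops strictly, one checks that each $\psi_\beta$ is u.s.c., that $(\psi_\beta)$ is non-decreasing, that $\widetilde{\sup_{\gamma<\lambda}\psi_\gamma}=\psi_\lambda$ at limit $\lambda$, and that $\psi_\beta=\psi_{\beta+1}$ iff $K^{(\beta+1)}=\emptyset$ iff $\beta\ge\al$. So it suffices to construct a sequence $(f_k)$ on $K$ with the required regularity whose associated transfinite sequence is $v_\beta=\psi_\beta$; this forces $\al_0(\H)=\al$.

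\emph{Step 3 (construction of $f_k$).} Limit stages are then automatic, and the successor stage reduces to showing, for every $x_0$ and $\beta$, that $\limsup_{x\to x_0}\bigl(\psi_\beta(x)+\sigma_k(x)\bigr)\to\psi_{\beta+1}(x_0)$ as $k\to\infty$ (using $\sigma_k(x_0)\to0$). I would take $f:=c_{r(\cdot)}$, which is u.s.c.; then, for each point $y$ that is not of locally maximal rank, attach a continuous bump of height $\delta_{r(y)}$ supported in a small clopen neighbourhood of $y$ that meets $K^{(r(y))}$ only in $y$, with these supports disjoint and shrinking suitably, and let $f_k:=f-(\text{sum of the bumps active at stage }k)$, each bump being active exactly for an initial block of stages. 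The scheduling of which bumps are active at stage $k$ must be arranged so that: (i) \emph{persistence} --- near every $x_0$, for every $\gamma<r(x_0)$ and every $k$ there is still an active rank-$\gamma$ bump, forcing $\limsup_{x\to x_0}(\psi_\beta+\sigma_k)(x)\ge c_{\min(r(x_0),\beta+1)}$; and (ii) \emph{a telescoping bound} --- since $(\delta_\gamma)$ is decreasing, $\psi_\beta(x)+\sigma_k(x)\le c_{\min(r(x),\beta)}+\delta_{\min(r(x),\beta)}\le c_{\min(r(x_0),\beta+1)}$ for $x$ near $x_0$, which caps the $\limsup$ from above; letting $k\to\infty$ retracts the active bumps and yields the exact limit. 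Taking the $f_k$ continuous (deactivating bumps in clopen chunks of vanishing height) makes each $f_{k+1}-f_k$ continuous, hence u.s.c., and $f=\lim_k f_k$ is bounded and nonnegative.

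\emph{Main obstacle.} The heart of the argument is designing the stage-dependent family of bumps so that the cascade of u.s.c.\ envelopes reproduces the stratification $(\psi_\beta)$ \emph{exactly} --- neither saturating early (which requires persistent oscillation of the correct size near every point of every derived set $K^{(\gamma)}$, at every stage $k$) nor overshooting (which requires the telescoping bounds to propagate through all ordinal stages, including limit ordinals and the top point $\omega^{\al}$). This is a delicate transfinite bookkeeping problem; the ``$\ge$'' half of the envelope identity in Step~1 is a secondary technical point.
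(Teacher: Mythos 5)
Your Step~1 is sound and amounts to the paper's Proposition on Bauer simplices: a harmonic u.s.c.d.\ sequence on $P(K)$ obtained by integrating a u.s.c.d.\ sequence on a countable compact $K$ has its transfinite sequence given by the barycentric integral of the transfinite sequence on $K$. The difficulty is in Steps~2 and~3, and the gap is already visible before any analysis of the bumps.

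\textbf{The scale $(c_\gamma)$ you require does not exist once $\al>\o$.} You ask for a strictly increasing $(c_\gamma)_{\gamma\le\al}\subset[1,2)$ with $c_\lambda=\sup_{\gamma<\lambda}c_\gamma$ at limits and with the successor increments $\delta_\gamma=c_{\gamma+1}-c_\gamma$ strictly decreasing. But for any limit $\lambda\le\al$ one has $\sum_{\gamma<\lambda}\delta_\gamma=c_\lambda-c_0<1$, a convergent series of positive terms, so $\delta_\gamma\to 0$ as $\gamma\uparrow\lambda$. Consequently $\inf_{\gamma<\lambda}\delta_\gamma=0$, and since $\delta_\lambda>0$ there is no way to have $\delta_\lambda<\delta_\gamma$ for all $\gamma<\lambda$: the increments must \emph{jump up} at every limit ordinal. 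For $\al\ge\o+1$ (so there is a limit $\lambda<\al$) the required scale is therefore impossible. This is not a cosmetic problem: the whole telescoping estimate in Step~3, which you justify by the parenthetical ``since $(\delta_\gamma)$ is decreasing'', collapses. Worse, the same analytic fact blocks the stratification $\psi_\beta=c_{\min(r(\cdot),\beta)}$ itself. If one imposes the local bound $\tau_k\le\delta_{r(\cdot)}$ near rank-$(\gamma+1)$ points (which is exactly what is needed to keep the envelope at level $\gamma+1$ from overshooting), then along any sequence $z_n\to y$ with $r(z_n)\uparrow\o$ one gets $f_k(z_n)=c_{r(z_n)}-\tau_k(z_n)\ge c_{r(z_n)}-\delta_{r(z_n)}\to c_\o$, so upper semicontinuity of $f_k$ at a rank-$\o$ point $y$ forces $\tau_k(y)=0$. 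That kills persistence at the next level: the $\limsup$ in $u_{\o+1}$ at a rank-$(\o+1)$ point must be achieved through rank-$\o$ points, and those now contribute nothing beyond $c_\o-c_0$. So $\psi_{\o+1}$ cannot be attained. (Separately, the demand that the bump supports be pairwise disjoint clopen neighborhoods cannot be met: the support of a bump at a rank-$\gamma$ point $y$ is a neighborhood of $y$ and hence contains lower-rank points $y'$ accumulating at $y$, and the support of the bump at $y'$ must contain $y'$ too. Also note the minor normalization slip: $v_0\equiv 0$ but $\psi_0\equiv c_0\ge 1$, so at best you can hope to realize $v_\beta=\psi_\beta-c_0$, not $v_\beta=\psi_\beta$.)

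By contrast, the paper does not try to realize a single rank-graded stratification on $\o^\al+1$. It first treats the irreducible ordinals $\o^\beta$ by transfinite induction, using a disjoint-union construction (whose summands have supremum norms tending to $0$) at limit stages, and a renormalized power/product construction (Lemmas~3.8 and~3.10) at successor stages; a general countable $\al$ is then handled via its Cantor Normal Form and the Product Lemma. The disjoint-union step is precisely what sidesteps the obstruction above: the ``heights'' of the pieces shrink to zero near the accumulation point, so the increments do not need to be monotone across a limit ordinal. To make your approach work you would need, at minimum, a replacement for the monotone-increment scale at limit stages, and this is essentially where the paper's recursion becomes unavoidable.
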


Building on the approach of Downarowicz and Serafin to reduce questions in the theory of entropy structure to the study of functional analysis, we also consider what constraints, if any, the simplex of invariant measures may place on orders of accumulation of entropy.
\begin{ques} \label{dynQuestion}
Given a metrizable Choquet simplex $M$, which ordinals can be realized as the order of accumulation of a dynamical system $(X,T)$ such that $M(X,T)$ is affinely homeomorphic to $M$?
\end{ques}
For a metrizable Choquet simplex $M$, we let $S(M)$ denote the set of all ordinals that can be realized as the order of accumulation of a sequence $\H$ on $M$ satisfying properties (1)-(3). The realization theorem of Downarowicz and Serafin (Theorem \ref{realization}) reduces Question \ref{dynQuestion} to the following question in functional analysis.
\begin{ques} \label{introQues}
Given a metrizable Choquet simplex $M$, which ordinals are in $S(M)$?
\end{ques}
Theorem \ref{bauerThm} answers Question \ref{introQues} (and therefore Question \ref{dynQuestion}) completely in the event that $M$ is a Bauer simplex by giving a precise description of $S(M)$ in terms of the Cantor-Bendixson rank of the extreme points of $M$. Theorem \ref{topBounds} addresses the general case, giving constraints on $S(M)$ in terms of Cantor-Bendixson rank of the closure $\overline{E}$ of the space $E = \ex(M)$ of extreme points of $M$ and the relative Cantor-Bendixson rank of $\overline{E}$ with respect to $E$. Theorems \ref{threeAlphas} and \ref{optimal} address the optimality of these constraints, and Section \ref{OpenQuestions} summarizes our progress on this question and poses some remaining questions.

In the language of dynamical systems, if $M$ is a metrizable Choquet simplex, we have found constraints on the orders of accumulation of entropy that appear within the class of all dynamical systems $(X,T)$ such that $M(X,T)$ is affinely homeomorphic to $M$. These constraints are in terms of the Cantor-Bendixson ranks of the closure $\overline{E}$ of the space $E$ of ergodic measures and the relative Cantor-Bendixson rank of $\overline{E}$ with respect to $E$.

\section{Preliminaries} \label{preliminaries}

\subsection{Ordinals}
We assume a basic familiarity with the ordinal numbers, ordinal arithmetic, and transfinite induction. The relevant sections in \cite{Pin} provide a good introduction. Here we briefly recall some notions that are used in this work.

We view the ordinal $\al$ as the set $\{ \beta : \beta < \al \}$. The symbols $\o$ and $\o_1$ will always be used to denote the first infinite ordinal and the first uncountable ordinal, respectively.

\begin{defn}
An ordinal $\al$ is \textbf{irreducible} if whenever $\al = \al_1 +\al_2$ with $\al_1 \geq \al_2$, it follows that $\al_2=0$.
\end{defn}

Recall the well-known Cantor Normal Form of an ordinal.
\begin{thm} \label{CNF}
For every ordinal $\al > 0$, there exists natural numbers $n_1,\dots, n_k$ and ordinals $\beta_1 > \dots > \beta_k$ such that $\al = \o^{\beta_1}n_1 + \dots + \o^{\beta_k}n_k$. Furthermore, the numbers $n_1, \dots, n_k$ and the ordinals $\beta_1,\dots,\beta_k$ are unique.
\end{thm}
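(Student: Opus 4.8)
The plan is to prove both the existence and the uniqueness of the Cantor normal form by transfinite induction on $\al$, relying only on the elementary ordinal arithmetic recalled in \cite{Pin}: that $\gamma \mapsto \o^{\gamma}$ is strictly increasing and continuous (so in particular $\o^{\gamma} \geq \gamma$ and $\o^{\gamma+1} = \o^{\gamma}\cdot\o$), that $+$, $\cdot$, and exponentiation are monotone and that $\cdot$ is left-distributive over $+$, that ordinal addition is left-cancellable, and the ordinal division algorithm (for $\al$ and $\delta>0$ there are unique $q,r$ with $\al = \delta q + r$ and $r<\delta$). Here the coefficients $n_i$ are taken to be positive, as they must be for uniqueness to hold. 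The first thing I would record is a \textbf{bounding lemma}: if $\gamma = \o^{\gamma_1}p_1 + \cdots + \o^{\gamma_k}p_k$ with $\gamma_1 > \cdots > \gamma_k$ and the $p_i$ positive integers, then $\o^{\gamma_1}p_1 \leq \gamma < \o^{\gamma_1}(p_1+1)$. This follows by induction on $k$: writing $\gamma = \o^{\gamma_1}p_1 + \gamma'$ where $\gamma'$ is the (possibly empty) tail, the inductive hypothesis gives $\gamma' < \o^{\gamma_2}(p_2+1) \leq \o^{\gamma_2+1} \leq \o^{\gamma_1}$, whence $\o^{\gamma_1}p_1 \leq \gamma < \o^{\gamma_1}p_1 + \o^{\gamma_1} = \o^{\gamma_1}(p_1+1)$.

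For existence, fix $\al > 0$, assume the statement for all positive ordinals below $\al$, and let $\beta_1$ be the largest ordinal with $\o^{\beta_1} \leq \al$; this exists because the set $\{\gamma : \o^{\gamma} \leq \al\}$ is nonempty ($\o^0 = 1 \leq \al$) and bounded ($\o^{\al+1} \geq \al+1 > \al$). Division by $\o^{\beta_1}$ writes $\al = \o^{\beta_1}n_1 + r$ with $r < \o^{\beta_1}$, and $n_1$ is forced to be a positive integer: it is nonzero since $\o^{\beta_1} \leq \al$, and finite since otherwise $\al \geq \o^{\beta_1}\cdot\o = \o^{\beta_1+1}$. If $r = 0$ we are done with $k=1$; otherwise $0 < r < \o^{\beta_1} \leq \al$, so by the inductive hypothesis $r$ has a Cantor normal form, whose leading exponent $\delta_1$ satisfies $\o^{\delta_1} \leq r < \o^{\beta_1}$ (the lower bound by the bounding lemma), hence $\delta_1 < \beta_1$ by monotonicity of exponentiation; prepending $\o^{\beta_1}n_1$ yields a Cantor normal form of $\al$.

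For uniqueness, given two Cantor normal forms of $\al$ with leading terms $\o^{\beta_1}n_1$ and $\o^{\delta_1}m_1$, the bounding lemma gives $\o^{\beta_1} \leq \al < \o^{\beta_1+1}$ and $\o^{\delta_1} \leq \al < \o^{\delta_1+1}$, so both $\beta_1$ and $\delta_1$ equal the largest $\gamma$ with $\o^{\gamma} \leq \al$, forcing $\beta_1 = \delta_1$. Applying the bounding lemma again, $\o^{\beta_1}n_1 \leq \al < \o^{\beta_1}(n_1+1)$ and likewise for $m_1$, and in each case $\al = \o^{\beta_1}n_1 + (\text{tail})$ with tail $< \o^{\beta_1}$ is exactly the decomposition from the division algorithm, so by uniqueness of the quotient $n_1 = m_1$. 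Left-cancelling the common leading term exhibits the two tails as Cantor normal forms of $r := \al - \o^{\beta_1}n_1 < \o^{\beta_1} \leq \al$, which agree by the inductive hypothesis; hence the two normal forms of $\al$ coincide.

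I do not anticipate a genuine obstacle: the proof is entirely routine once the bounding lemma is in hand. The only thing requiring care is the non-commutativity of ordinal arithmetic — that the lower-order terms of a Cantor normal form are added \emph{on the right}, that the monotonicity laws are applied to the correct argument of $\cdot$ and of exponentiation, and that one invokes \emph{left} cancellativity of $+$ (which holds) rather than right cancellativity (which fails).
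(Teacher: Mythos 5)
The paper offers no proof of this theorem: it is stated as the ``well-known Cantor Normal Form'' with \cite{Pin} cited only as general background on ordinals, so there is nothing in the paper to compare your argument against step by step. Your proof is the standard one and it is correct and complete: the bounding lemma $\o^{\gamma_1}p_1 \leq \gamma < \o^{\gamma_1}(p_1+1)$ is exactly the right tool, the induction for existence (take the maximal $\beta_1$ with $\o^{\beta_1}\leq\al$, divide, recurse on the remainder) is sound, and the uniqueness argument correctly pins down the leading exponent via the bounding lemma, the leading coefficient via uniqueness of the quotient in the division algorithm, and the tail via left cancellation and the inductive hypothesis. One small point worth making explicit: you justify the existence of a \emph{largest} $\gamma$ with $\o^{\gamma}\leq\al$ by saying the set is nonempty and bounded, which a priori only gives a supremum; that the supremum is attained follows from the continuity of $\gamma\mapsto\o^{\gamma}$ at limit ordinals, which you do list among your background facts, so the gap is only one of exposition. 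Your closing caution about non-commutativity (right-added tails, left cancellativity) is exactly the place where such proofs usually go wrong, and you have navigated it correctly.
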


The following corollary is an easy consequence of the Cantor Normal Form.
\begin{cor} \label{irreducible}
An ordinal $\al > 0$ is irreducible if and only if there exists an ordinal $\beta$ such that $\al = \o^{\beta}$.
\end{cor}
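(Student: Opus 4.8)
The plan is to deduce both directions from the Cantor Normal Form (Theorem \ref{CNF}) together with the elementary rule governing how ordinal addition acts on Cantor normal forms. Recall that if $\mu$ has CNF $\o^{\gamma_1}m_1 + \dots + \o^{\gamma_r}m_r$ and $\nu$ has CNF $\o^{\delta_1}n_1 + \dots + \o^{\delta_s}n_s$ (so $\delta_1$ is the largest exponent appearing in $\nu$), then in $\mu + \nu$ each term of $\mu$ with exponent $< \delta_1$ is deleted, a term of $\mu$ with exponent $=\delta_1$ has its coefficient added to $n_1$, and the terms of $\mu$ with exponent $> \delta_1$ survive unchanged. Two consequences I will record and use: first, an ordinal is $< \o^\beta$ if and only if every exponent in its CNF is $< \beta$; second, the exponents appearing in the CNF of $\mu + \nu$ all lie in $\{\gamma_1,\dots,\gamma_r\}\cup\{\delta_1,\dots,\delta_s\}$, and therefore $\mu, \nu < \o^\beta$ implies $\mu + \nu < \o^\beta$ (i.e.\ $\o^\beta$ is additively indecomposable).

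For the implication ``$\al = \o^\beta \Rightarrow \al$ irreducible'': suppose $\o^\beta = \al_1 + \al_2$ with $\al_1 \geq \al_2$, and assume toward a contradiction that $\al_2 > 0$. Since ordinal addition is strictly increasing in its right argument, $\al_1 = \al_1 + 0 < \al_1 + \al_2 = \o^\beta$, whence $\al_2 \leq \al_1 < \o^\beta$; by additive indecomposability of $\o^\beta$ we then get $\al_1 + \al_2 < \o^\beta$, contradicting $\al_1 + \al_2 = \o^\beta$. Hence $\al_2 = 0$, so $\o^\beta$ is irreducible.

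For the converse I prove the contrapositive: if $\al > 0$ is not a power of $\o$, then it is reducible. Write the CNF $\al = \o^{\beta_1}n_1 + \dots + \o^{\beta_k}n_k$ with $\beta_1 > \dots > \beta_k$ and each $n_i \geq 1$. Since $\al$ is not of the form $\o^\beta$, either $k \geq 2$, or $k = 1$ and $n_1 \geq 2$. If $k \geq 2$, set $\al_1 = \o^{\beta_1}n_1$ and $\al_2 = \o^{\beta_2}n_2 + \dots + \o^{\beta_k}n_k > 0$; because $\beta_1 \geq \beta_2 + 1$ we have $\al_2 < \o^{\beta_2 + 1} \leq \o^{\beta_1} \leq \al_1$, so $\al = \al_1 + \al_2$ witnesses reducibility. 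If $k = 1$ and $n_1 \geq 2$, set $\al_1 = \o^{\beta_1}(n_1 - 1)$ and $\al_2 = \o^{\beta_1} > 0$; then $\al_1 \geq \al_2$ since $n_1 - 1 \geq 1$, and $\al_1 + \al_2 = \o^{\beta_1}n_1 = \al$, again witnessing reducibility. This completes the proof. There is no genuine obstacle here; the only point demanding care is the bookkeeping in the CNF addition rule — in particular, that adjoining a nonzero ordinal on the right truncates the lower-order terms and introduces no exponent exceeding those already present, which is exactly what prevents a sum $\al_1 + \al_2$ with $\al_1 \geq \al_2 > 0$ from equalling $\o^\beta$.
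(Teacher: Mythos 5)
Your proof is correct and follows exactly the route the paper intends: the paper states this corollary without proof as "an easy consequence of the Cantor Normal Form," and your argument is precisely that verification, using additive indecomposability of $\o^{\beta}$ for one direction and an explicit splitting of the CNF for the other. Both cases of the converse and the bookkeeping with the CNF addition rule check out.
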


In light of this corollary, one can view the Cantor Normal Form of $\al$ as a decomposition of $\al$ into a finite sum of irreducible ordinals.

The following corollary is then a simple consequence of Corollary \ref{irreducible} and the fact that any non-zero ordinal $\beta$ is either a successor ordinal or a limit ordinal.
\begin{cor}\label{irrCor}
If $\al > 0$ is countable and irreducible, then either (i) there exists an irreducible ordinal $\widetilde{\al} < \al$ such that $\sup_{n\in \N} \widetilde{\al} n = \al$, or (ii) there exists a strictly increasing sequence of irreducible ordinals $(\al_k)_{k \in \N}$ such that $\sup_{k \in \N} \al_k = \al$.
\end{cor}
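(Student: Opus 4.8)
The plan is to reduce everything to Corollary~\ref{irreducible}, which says that the nonzero irreducible ordinals are exactly those of the form $\o^{\beta}$, and then to split into cases according to whether the exponent $\beta$ is a successor or a limit ordinal. First I would write $\al = \o^{\beta}$ using Corollary~\ref{irreducible}; since $\al$ is countable and $\o^{\beta} \geq \beta$, the exponent $\beta$ is countable as well. The case $\beta = 0$ gives $\al = 1$, for which neither (i) nor (ii) can literally hold, so (as the result is only invoked for infinite irreducible ordinals) I would assume $\beta \geq 1$ and treat the two remaining cases separately.

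If $\beta = \gamma + 1$ is a successor, I would set $\widetilde{\al} = \o^{\gamma}$. This ordinal is irreducible by Corollary~\ref{irreducible}, and $\widetilde{\al} = \o^{\gamma} < \o^{\gamma+1} = \al$ by strict monotonicity of ordinal exponentiation. Moreover
\begin{equation*}
\sup_{n \in \N} \widetilde{\al}\, n \;=\; \sup_{n \in \N} \o^{\gamma} n \;=\; \o^{\gamma}\cdot\o \;=\; \o^{\gamma+1} \;=\; \al ,
\end{equation*}
so alternative (i) holds.

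If $\beta$ is a limit ordinal, then since $\beta$ is a countable limit ordinal it admits a strictly increasing sequence $(\beta_k)_{k \in \N}$ with $\sup_k \beta_k = \beta$; I would set $\al_k = \o^{\beta_k}$. Each $\al_k$ is irreducible by Corollary~\ref{irreducible}, the sequence $(\al_k)$ is strictly increasing by monotonicity of exponentiation, and left-continuity of ordinal exponentiation at limit exponents gives
\begin{equation*}
\sup_{k \in \N} \al_k \;=\; \sup_{k \in \N} \o^{\beta_k} \;=\; \o^{\sup_k \beta_k} \;=\; \o^{\beta} \;=\; \al ,
\end{equation*}
so alternative (ii) holds.

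Once Corollary~\ref{irreducible} is available, this is essentially bookkeeping, so I do not anticipate a genuine obstacle. The only facts that need to be invoked with any care are the two standard properties of ordinal exponentiation used above: strict monotonicity, $\gamma_1 < \gamma_2 \Rightarrow \o^{\gamma_1} < \o^{\gamma_2}$, and left-continuity at limits, $\o^{\lambda} = \sup_{\gamma < \lambda} \o^{\gamma}$. The one point worth a sentence of explicit comment is the degenerate value $\al = 1$, which explains why the hypothesis should really be read as applying to infinite irreducible ordinals.
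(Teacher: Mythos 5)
Your proof is correct and takes essentially the same route the paper sketches: apply Corollary~\ref{irreducible} to write $\al = \o^{\beta}$ and then split according to whether the exponent $\beta$ is a successor or a limit ordinal, using monotonicity and left-continuity of ordinal exponentiation. Your remark about the degenerate value $\al = 1$ (where $\beta = 0$ is neither a successor nor a limit, and in fact neither alternative (i) nor (ii) holds) is an accurate observation about a case the paper's statement quietly overlooks; since the corollary is only ever applied to infinite irreducible ordinals, this does not affect anything downstream.
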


Any ordinal $\al$ can be viewed as a topological space with the order topology (sets of the form $\{ \gamma \in \al : \gamma < \beta \}$ or $\{ \gamma \in \al : \beta < \gamma \}$ form a subbase for the topology). With this topology, $\al$ is a completely normal, Hausdorff space, and if $\al$ is countable, then it is a Polish space (see below for definition). The space $\al$ is compact if and only if $\al$ is a successor ordinal. The accumulation points in $\al$ are exactly the limit ordinals in $\al$.

For ease of notation, if $\al$ is a successor ordinal, let $\al-1$ denote the unique ordinal $\beta$ such that $\al = \beta+1$. Also, for countable ordinals $\al \leq \beta$, we will write $[\al, \beta]$ to denote the ordinal interval $\{ \gamma : \al \leq \gamma \leq \beta \}$. If $\beta = \o_1$, we make the convention that $[\al, \beta] = \{ \gamma : \al \leq \gamma < \beta \}$. We also make use of the notation $]\al, \beta[ = \{ \gamma : \al < \gamma < \beta \}$, as well as the other possible ``half-open'' and ``half-closed'' notations.


\subsection{Polish Spaces}

A general reference that covers Polish spaces is \cite{S}. We recall that a topological space $E$ is a Polish space if it is separable and completely metrizable. In particular, any compact metrizable space is Polish. Moreover, any closed subset of a Polish space is itself a Polish space. Some of the definitions and statements below hold for more general topological spaces, but we require them only in the case of Polish spaces.

For any Polish space $E$, let $E'$ denote the set of accumulation points of $E$,
\begin{equation*}
E' = \{ x \in E : \exists (x_n) \subset E \setminus \{ x \}, \medspace x_n \rightarrow x \}.
\end{equation*}
Note that $E'$ is closed in $E$.

A subset $A$ of a Polish space $E$ is a perfect set if $A$ is a compact subset of $E$ and $A$ contains no isolated points (in the subspace topology). The following result is a special case of the Cantor-Bendixson Theorem.
\begin{thm}\label{CBThm}
Let $E$ be a Polish space. Then $E = C \cup A$, where $C$ is countable, $A$ is closed and has no isolated points, and $C \cap A = \emptyset$.
\end{thm}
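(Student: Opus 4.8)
The plan is to realize $A$ as the \emph{perfect kernel} of $E$, obtained by iterating the derived-set operation $S \mapsto S'$ transfinitely, and then to take $C = E \setminus A$. Define closed subsets $E_\al \subseteq E$ by transfinite recursion: $E_0 = E$, $E_{\al+1} = (E_\al)'$, and $E_\lb = \bigcap_{\al < \lb} E_\al$ for limit ordinals $\lb$. Since the derived set of a closed subset of a metric space is again closed (as noted above for $E'$, and the same argument applies to any closed set), and an intersection of closed sets is closed, an easy induction shows each $E_\al$ is closed; moreover the family $(E_\al)$ is non-increasing.

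The key step is to show this family stabilizes at some countable ordinal, that is, that there exists $\al^* < \o_1$ with $E_{\al^*} = E_{\al^*+1}$; here is where separability is used. Fix a countable base $\mathcal{B}$ for $E$. If no such $\al^*$ below $\o_1$ existed, then for each $\al < \o_1$ one could choose $x_\al \in E_\al \setminus E_{\al+1}$, which is isolated in $E_\al$, so pick $B_\al \in \mathcal{B}$ with $B_\al \cap E_\al = \{x_\al\}$. The assignment $\al \mapsto B_\al$ is injective: for $\al < \beta$ one has $x_\beta \in E_\beta \subseteq E_{\al+1} \subseteq E_\al$ while $x_\al \notin E_{\al+1} \ni x_\beta$, so $x_\al \neq x_\beta$; if $B_\al = B_\beta$ then $x_\beta \in B_\al \cap E_\al = \{x_\al\}$, a contradiction. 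This gives an injection of $\o_1$ into the countable set $\mathcal{B}$, which is absurd. Hence the least such $\al^* < \o_1$ exists; set $A = E_{\al^*}$.

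Then $A$ is closed and $A' = E_{\al^*+1} = E_{\al^*} = A$, so $A$ has no isolated points. Put $C = E \setminus A$; then $C \cap A = \emptyset$ and $C \cup A = E$ automatically. It remains to check $C$ is countable: write $C = \bigcup_{\al < \al^*} (E_\al \setminus E_{\al+1})$, observe that every point of $E_\al \setminus E_{\al+1}$ is isolated in $E_\al$ so that (attaching to each such point a basic open set meeting $E_\al$ only there, these sets being pairwise distinct) $E_\al \setminus E_{\al+1}$ injects into $\mathcal{B}$ and hence is countable, and conclude since $\al^* < \o_1$ makes $C$ a countable union of countable sets. Note that we never invoke the notion of a perfect set, only that $A$ is closed with no isolated points. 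The main obstacle — indeed the only non-formal point — is the stabilization step, i.e. bounding the length of the strictly decreasing transfinite chain of closed sets; this is exactly where separability (through the countable base $\mathcal{B}$) enters, and the rest is routine bookkeeping with the derived-set operation.
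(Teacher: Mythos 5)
Your proof is correct. The paper itself gives no proof of this statement (it is cited as a special case of the Cantor-Bendixson theorem), so there is nothing to compare against; what you have written is the standard Cantor-Bendixson argument via transfinite iteration of the derived-set operation and stabilization at a countable ordinal, with the countability of $C$ and the bound $\al^* < \o_1$ both extracted correctly from a countable base. One small remark worth keeping: you are right to avoid the word ``perfect,'' since the paper's Definition of a perfect set requires compactness, which the kernel $A$ need not satisfy when $E$ is non-compact; the theorem as stated only asserts that $A$ is closed with no isolated points, and your argument delivers exactly that.
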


We will also use the following fact (see \cite{S}). Let $\mathcal{C}$ denote the Cantor space.
\begin{thm} \label{embedCperfect}
Let $A$ be a non-empty Polish space with no isolated points. Then there is an embedding of $\mathcal{C}$ into $A$.
\end{thm}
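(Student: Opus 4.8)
The plan is to carry out the classical Cantor scheme construction. Since $A$ is Polish, fix a complete metric $d$ on $A$ compatible with its topology. Recall that the Cantor space $\mathcal{C}$ is homeomorphic to $\{0,1\}^{\N}$ with the product topology, so it suffices to produce an embedding of $\{0,1\}^{\N}$ into $A$. Let $\{0,1\}^{<\N}$ be the set of finite binary strings, with $|s|$ the length of $s$, with $s0$ and $s1$ the two one-step extensions of $s$, and with $x|n$ the initial segment of length $n$ of an infinite string $x$.

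The key step is to build a family $(U_s)_{s \in \{0,1\}^{<\N}}$ of non-empty open subsets of $A$ satisfying: (i) $\diam(U_s) \leq 2^{-|s|}$; (ii) $\overline{U_{s0}} \cup \overline{U_{s1}} \subseteq U_s$; and (iii) $\overline{U_{s0}} \cap \overline{U_{s1}} = \emptyset$. This is done by induction on $|s|$. Take $U_{\emptyset}$ to be any non-empty open ball of diameter at most $1$. Suppose $U_s$ has been constructed. First observe that any non-empty open subset of a space without isolated points has no isolated points (an isolated point of the subset would be isolated in the whole space), so $U_s$ has no isolated points and in particular contains two distinct points $x_0 \neq x_1$. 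Since $A$ is metrizable, hence regular, a routine argument lets us choose disjoint open balls $U_{s0} \ni x_0$ and $U_{s1} \ni x_1$ whose radii are small enough that $\overline{U_{s0}}$ and $\overline{U_{s1}}$ are disjoint, both contained in $U_s$, and both of diameter at most $2^{-|s|-1}$. This completes the induction.

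Now define $\varphi : \{0,1\}^{\N} \to A$ by letting $\varphi(x)$ be the unique point of $\bigcap_{n \geq 0} \overline{U_{x|n}}$: the sets $\overline{U_{x|n}}$ form a decreasing sequence of non-empty closed sets with $\diam(\overline{U_{x|n}}) \to 0$, so by completeness of $d$ the intersection is a single point, and $\varphi$ is well defined. If $x$ and $y$ agree on their first $n$ coordinates, then $\varphi(x),\varphi(y) \in \overline{U_{x|n}}$, so $\dist(\varphi(x),\varphi(y)) \leq 2^{-n}$; hence $\varphi$ is continuous. If $x \neq y$, let $n$ be least with $x(n) \neq y(n)$; then $x|(n+1)$ and $y|(n+1)$ are the two distinct extensions of $x|n$, so $\varphi(x) \in \overline{U_{x|(n+1)}}$ and $\varphi(y) \in \overline{U_{y|(n+1)}}$ lie in disjoint sets by (iii), whence $\varphi(x) \neq \varphi(y)$; hence $\varphi$ is injective. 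Finally, $\{0,1\}^{\N}$ is compact and $A$ is Hausdorff, so the continuous injection $\varphi$ is a homeomorphism onto its image, which is the required embedded copy of $\mathcal{C}$.

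There is no serious obstacle here; the only points needing care are (a) the remark that a non-empty open subset of a space without isolated points again has no isolated points, which furnishes two distinct points at every stage of the induction, (b) the use of regularity of $A$ to shrink neighborhoods so that their closures are disjoint and nested inside $U_s$, and (c) the appeal to completeness of $d$ to ensure the nested intersection defining $\varphi$ is non-empty. The continuity, injectivity, and homeomorphism assertions are then immediate from the diameter and disjointness conditions.
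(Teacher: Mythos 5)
Your proof is correct: it is the standard Cantor scheme argument, and the paper itself offers no proof of this statement, simply citing \cite{S}, where essentially this same construction appears. All three delicate points you flag (openness preserving the absence of isolated points, regularity to separate the two children, completeness to make the nested intersection a singleton) are handled properly, so nothing is missing.
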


The following statement is an immediate corollary of the previous two theorems.
\begin{cor} \label{embedCunctble}
Let $E$ be any uncountable Polish space. Then there is an embedding of $\mathcal{C}$ into $E$.
\end{cor}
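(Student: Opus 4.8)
The plan is to string together the two preceding theorems directly, with essentially no additional work. First I would apply the Cantor--Bendixson Theorem (Theorem \ref{CBThm}) to the uncountable Polish space $E$, obtaining a decomposition $E = C \cup A$ with $C$ countable, $A$ closed in $E$ with no isolated points, and $C \cap A = \emptyset$. Since $E$ is uncountable while $C$ is countable, the set $A$ cannot be empty (indeed it must itself be uncountable). Because $A$ is a closed subset of the Polish space $E$, it is itself Polish, and by construction it has no isolated points.

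Next I would invoke Theorem \ref{embedCperfect} applied to the non-empty Polish space $A$ without isolated points, which yields an embedding $\iota \colon \mathcal{C} \to A$. Composing $\iota$ with the inclusion $A \hookrightarrow E$ — which is a topological embedding since $A$ carries the subspace topology inherited from $E$ — produces an embedding of $\mathcal{C}$ into $E$, as desired. There is no genuine obstacle here; the only points meriting a word of care are that "no isolated points" in Theorem \ref{CBThm} is to be understood in the subspace topology of $A$, and that the composition of a topological embedding with a subspace inclusion is again a topological embedding. Both are immediate, so the corollary follows at once.
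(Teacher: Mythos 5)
Your proof is correct and is exactly the route the paper intends: apply the Cantor--Bendixson decomposition (Theorem \ref{CBThm}) to $E$, observe that the perfect kernel $A$ must be nonempty since $E$ is uncountable, and then invoke Theorem \ref{embedCperfect} on $A$, composing with the inclusion. Nothing more is needed.
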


The following corollary is an easy consequence of Corollary \ref{embedCunctble}.
\begin{cor}\label{uncountPolish}
Let $E$ be an uncountable Polish space. Then for every countable ordinal $\al$ and every natural number $n$, there exists an embedding $g: \o^{\al}n+1 \rightarrow E$.
\end{cor}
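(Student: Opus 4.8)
The plan is to reduce the statement to embedding an ordinal into the Cantor space, and to carry out that embedding by transfinite induction. By Corollary~\ref{embedCunctble} there is an embedding of $\mathcal{C}$ into $E$, so it suffices to embed the ordinal space $\o^{\al}n+1$, equipped with the order topology, into $\mathcal{C}$. Note that $\o^{\al}n+1$ is a countable successor ordinal, hence a countable, compact, metrizable space, and that the order topology on any ordinal has a basis of clopen intervals.

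Rather than handle $\o^{\al}n$ directly, I would prove the more convenient statement that $[0,\gamma]=\gamma+1$ embeds into $\mathcal{C}$ for \emph{every} countable ordinal $\gamma$, and then apply it with $\gamma=\o^{\al}n$. The induction on $\gamma$ splits into three cases. If $\gamma=0$ the space is a single point. If $\gamma=\delta+1$, then $\{\gamma\}$ is clopen in $[0,\gamma]$, so $[0,\gamma]=[0,\delta]\sqcup\{\gamma\}$; one embeds $[0,\delta]$ into the clopen set $\{x\in\mathcal{C}:x_0=0\}$ using the inductive hypothesis and sends $\gamma$ to any point of $\{x:x_0=1\}$. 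If $\gamma$ is a limit ordinal, then, being countable, it has countable cofinality, so we may fix $\gamma_0<\gamma_1<\cdots$ with $\sup_k\gamma_k=\gamma$. Each interval $(\gamma_k,\gamma_{k+1}]$ is clopen and compact in $[0,\gamma]$ and is order-isomorphic to $[0,\eta_k]$ for some $\eta_k<\gamma$; the same holds for $[0,\gamma_0]$. Thus $[0,\gamma]$ is the disjoint union of countably many clopen compact pieces together with the single point $\gamma$, and every neighborhood of $\gamma$ contains all but finitely many pieces. Using the inductive hypothesis, embed the $k$-th piece into the clopen set $C_k=\{x\in\mathcal{C}:x_i=1\text{ for }i<k,\ x_k=0\}$, which is homeomorphic to $\mathcal{C}$, and send $\gamma$ to the constant sequence $(1,1,1,\dots)$. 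Since $\diam(C_k)\to 0$, the resulting map is continuous at $\gamma$, and hence is a continuous injection from a compact space into a Hausdorff space — that is, an embedding.

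The only step requiring genuine care is the limit case: one must verify that the decomposition $[0,\gamma]=[0,\gamma_0]\sqcup\bigsqcup_k(\gamma_k,\gamma_{k+1}]\sqcup\{\gamma\}$ is indeed into clopen sets, that the order type of each piece is strictly less than $\gamma$ so the inductive hypothesis applies, and that patching the pieces into the shrinking clopen sets $C_k$ really produces continuity at the accumulation point $\gamma$. Everything else is routine bookkeeping. (Alternatively, one could skip the induction and simply invoke the standard fact that every zero-dimensional compact metrizable space — in particular $\o^{\al}n+1$ — embeds into $\mathcal{C}$, but the inductive argument keeps this section self-contained.)
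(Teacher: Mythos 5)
The paper offers no written proof of Corollary~\ref{uncountPolish}; it simply calls the statement ``an easy consequence'' of Corollary~\ref{embedCunctble}. Your argument supplies exactly the intended details: first embed $\mathcal{C}$ into $E$ via Corollary~\ref{embedCunctble}, then embed $\o^{\al}n+1$ into $\mathcal{C}$. The transfinite-induction argument for the second step is correct; in the limit case the decomposition $[0,\gamma]=[0,\gamma_0]\sqcup\bigsqcup_k(\gamma_k,\gamma_{k+1}]\sqcup\{\gamma\}$ does consist of clopen pieces, each piece has a maximum and hence order type a successor ordinal strictly below $\gamma$ (so the inductive hypothesis applies), and sending the $k$-th piece into $C_k=\{x:x_i=1\text{ for }i<k,\ x_k=0\}$ and $\gamma$ to $(1,1,\dots)$ gives continuity at $\gamma$ because $\diam(C_k)\to 0$ and every neighborhood of $\gamma$ swallows all but finitely many pieces; a continuous injection from a compact space to a Hausdorff space is then an embedding. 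This is a clean, self-contained way to get the standard fact that every countable successor ordinal (more generally, every compact metrizable zero-dimensional space, as you note) embeds in $\mathcal{C}$. The authors most likely had in mind either this direct argument or a citation of that standard theorem, and your version matches that intent; no gap.
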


\subsection{Cantor-Bendixson Rank}

Given a Polish space $E$, we now use transfinite induction to define a transfinite sequence of topological spaces, $\{ \Gamma^{\al}(E) \}$. Let $\Gamma^0(E) = E$. If $\Gamma^{\al}(E)$ has been defined, then let $\Gamma^{\al+1}(E) = (\Gamma^{\al}(E)) ' \subset \Gamma^{\al}(E)$. If $\al$ is a limit ordinal and $\Gamma^{\beta}(E)$ is defined for all $\beta < \al$, then let $\Gamma^{\al}(E) = \cap_{\beta < \al} \Gamma^{\beta}(E)$. Each set $\Gamma^{\al}(E)$ is closed in $E$ and therefore Polish.

Note that $\Gamma^{\al}(E) = \Gamma^{\al+1}(E)$ implies that $\Gamma^{\al}(E)$ has no isolated points (in the subspace topology) and then that $\Gamma^{\beta}(E) = \Gamma^{\al}(E)$ for all $\beta > \al$. For any Polish space $E$, Theorem \ref{CBThm} implies that there exists a countable ordinal $\al$ such that $\Gamma^{\al}(E) = \Gamma^{\al+1}(E)$.

\begin{defn}
With the notation above, the \textbf{Cantor-Bendixson rank} of the space $E$, denoted $|E|_{CB}$, is defined to be the least ordinal $\al$ such that $\Gamma^{\al}(E) = \Gamma^{\al+1}(E)$.
\end{defn}
When $E$ is compact, $\Gamma^{|E|_{CB}}(E)$ is a perfect set (which may be the empty set). Now we mention a pointwise version of Cantor-Bendixson rank.
\begin{defn} \label{ptwiseCBDef}
Let $E$ be a Polish space, and let $x$ be in $E$. We define the \textbf{topological rank} of $x$, $r(x)$, to be
\begin{equation*}
r(x) = \left\{
\begin{array}{ll}
\sup \{ \al : x \in \Gamma^{\al}(E) \} & \text{if } x \notin \Gamma^{|E|_{CB}}(E) \\
\o_1 & \text{if } x \in \Gamma^{|E|_{CB}}(E).
\end{array} \right.
\end{equation*}
\end{defn}

The following proposition follows directly from the definitions and compactness.
\begin{prop}\label{accumFacts} Let $E$ be a countable, compact Polish space. Then
\begin{enumerate}
 \item $|E|_{CB}$ is a successor ordinal.
 \item If $|E|_{CB} = \al+1$, then $\Gamma^{\al}(E)$ is a non-empty, finite set, and $\Gamma^{\al+1}(E) = \emptyset$.
 \item $|E|_{CB} = \big( \sup_{x \in E} r(x) \big)+1 = \big(\max_{x \in E} r(x)\big) + 1$.
\end{enumerate}
\end{prop}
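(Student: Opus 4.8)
The plan is to derive all three parts from one structural fact — \emph{for a countable compact Polish space $E$, the perfect kernel $\Gamma^{|E|_{CB}}(E)$ is empty} — together with the elementary observation that the transfinite sequence $\big(\Gamma^\al(E)\big)$ is non-increasing and has the property that once a term is empty, every later term is empty. The latter is immediate from the definitions: $\Gamma^{\al+1}(E) = (\Gamma^\al(E))'$ is contained in $\Gamma^\al(E)$, limit stages are intersections, and $\emptyset' = \emptyset$; consequently $\Gamma^\beta(E) \neq \emptyset$ for every $\beta < |E|_{CB}$, since an empty term would make the sequence stabilize strictly before $|E|_{CB}$. For the kernel, I would write $\al_1 = |E|_{CB}$; then $\Gamma^{\al_1}(E) = \Gamma^{\al_1+1}(E) = (\Gamma^{\al_1}(E))'$, so every point of $\Gamma^{\al_1}(E)$ is an accumulation point of it, i.e.\ $\Gamma^{\al_1}(E)$ has no isolated points, and being closed in the compact space $E$ it is compact, hence perfect. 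If it were nonempty, Theorem \ref{embedCperfect} would embed the Cantor set into it and force $E$ to be uncountable, a contradiction. (One may instead invoke the Cantor-Bendixson decomposition of Theorem \ref{CBThm} and run the same cardinality argument.)

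Granting this, parts (1) and the last clause of (2) come together. I would argue by contradiction: if $|E|_{CB}$ were a limit ordinal $\lambda$, then $\Gamma^\lambda(E) = \bigcap_{\beta < \lambda} \Gamma^\beta(E)$ would be a nested intersection of nonempty closed subsets of the compact space $E$, hence nonempty by the finite intersection property; but it is the empty perfect kernel, a contradiction. So $|E|_{CB} = \al + 1$ for some ordinal $\al$, and then $\Gamma^{\al+1}(E) = \Gamma^{|E|_{CB}}(E) = \emptyset$. For the remaining clause of (2): $\Gamma^\al(E)$ is nonempty (since $\al < |E|_{CB}$) and closed in $E$, hence compact, and its derived set $\Gamma^{\al+1}(E)$ is empty, so $\Gamma^\al(E)$ has no accumulation points; a compact space with no accumulation points is discrete, and a compact discrete space is finite, so $\Gamma^\al(E)$ is a nonempty finite set.

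For part (3) the crux is to show that the supremum defining $r(x)$ is attained. Fix $x \in E$; since the kernel is empty we are in the first case of the definition of $r(x)$, so $r(x) = \sup\{\gamma : x \in \Gamma^\gamma(E)\}$, and by monotonicity the set $S_x = \{\gamma : x \in \Gamma^\gamma(E)\}$ is downward closed, hence equal to $\{\gamma : \gamma < \delta\}$ for a unique ordinal $\delta$. If $\delta$ were a limit ordinal, then $x \in \Gamma^\gamma(E)$ for all $\gamma < \delta$ would give $x \in \bigcap_{\gamma < \delta}\Gamma^\gamma(E) = \Gamma^\delta(E)$, contradicting $\delta \notin S_x$; hence $\delta$ is a successor, say $\delta = \rho + 1$, so $r(x) = \rho$ and $x \in \Gamma^{r(x)}(E) \setminus \Gamma^{r(x)+1}(E)$. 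Since $\Gamma^{\al+1}(E) = \emptyset$, this forces $r(x) \le \al$ for every $x \in E$, with equality precisely when $x \in \Gamma^\al(E)$; as $\Gamma^\al(E) \neq \emptyset$, I conclude $\sup_{x \in E} r(x) = \max_{x \in E} r(x) = \al$, and therefore $|E|_{CB} = \al + 1 = \big(\sup_{x \in E} r(x)\big) + 1 = \big(\max_{x \in E} r(x)\big) + 1$.

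I do not anticipate a substantive obstacle: the argument is a chain of routine compactness and transfinite-induction steps. The one genuinely external ingredient is that a nonempty perfect Polish space is uncountable (Theorem \ref{embedCperfect}), and it is precisely this that annihilates the perfect kernel and drives everything else. The step demanding the most care is the ``attained supremum'' in part (3): there it is essential that $\Gamma^\gamma(E)$ is \emph{defined as an intersection} at limit stages, so that membership of $x$ propagates through limit ordinals. Finally, one should tacitly assume $E \neq \emptyset$ (so that, for instance, $|E|_{CB} \geq 1$); this holds in all the intended applications, where $E$ is the closure of a nonempty set of extreme points.
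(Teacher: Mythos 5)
Your proof is correct, and it takes the same route the paper has in mind: the paper dismisses this proposition with the one-line remark that it ``follows directly from the definitions and compactness,'' and your write-up is precisely the natural expansion of that sentence — the perfect kernel of a countable Polish space is empty (via Theorem \ref{embedCperfect}), the finite intersection property in the compact space $E$ then rules out a limit-ordinal rank and forces $\Gamma^\alpha(E)$ to be a nonempty finite discrete set, and the set $S_x$ of ordinals witnessing membership is downward-closed with a successor supremum because limit-stage derivatives are intersections. The attention you give to the attained-supremum step in part (3) and to the tacit assumption $E \neq \emptyset$ is appropriate; no gaps.
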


Now we state a well-known classification of countable, compact Polish spaces, due to Mazurkiewicz and Sierpi\'{n}ski  \cite[p. 21]{MS}. We denote the cardinality of a set $E$ by $|E|$.
\begin{thm} \label{classification}
Let $E$ and $F$ be countable, compact Polish spaces, and assume that $|E|_{CB} = \al+1$. Then $E$ and $F$ are homeomorphic if and only if $|E|_{CB}=|F|_{CB}$ and $|\Gamma^{\al}(E)| = |\Gamma^{\al}(F)|$.
\end{thm}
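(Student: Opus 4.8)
The plan is to treat the two implications separately: the forward one is a routine invariance check, while the backward one reduces, by transfinite induction on the Cantor--Bendixson rank, to showing that a countable compact Polish space is determined up to homeomorphism by the pair $\bigl(|E|_{CB},\,|\Gamma^{\al}(E)|\bigr)$ via an explicit ordinal ``model''. For the forward implication I would observe that any homeomorphism $\phi : E \to F$ satisfies $\phi(\Gamma^{\beta}(E)) = \Gamma^{\beta}(F)$ for every ordinal $\beta$, proved by transfinite induction (a homeomorphism carries accumulation points to accumulation points and commutes with the decreasing intersections used at limit stages); hence $|E|_{CB} = |F|_{CB}$ and $|\Gamma^{\al}(E)| = |\Gamma^{\al}(F)|$.

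For the converse it suffices to exhibit, for each pair $(\al,n)$ with $n \ge 1$, a single model space $M_{\al,n}$ and to prove that every countable compact Polish space $E$ with $|E|_{CB} = \al+1$ and $|\Gamma^{\al}(E)| = n$ is homeomorphic to $M_{\al,n}$; then two spaces with equal invariants are each homeomorphic to the same model, hence to one another. I would take $M_{0,n}$ to be the $n$-point discrete space and, for $\al \ge 1$, take $M_{\al,n}$ to be the ordinal space $[0,\o^{\al}n]$ with the order topology. A direct transfinite computation identifying $\Gamma^{\beta}([0,\gamma])$ with the set of nonzero multiples of $\o^{\beta}$ not exceeding $\gamma$ then confirms that $M_{\al,n}$ has $|M_{\al,n}|_{CB} = \al+1$ and $|\Gamma^{\al}(M_{\al,n})| = n$, so the invariants of $E$ are exactly the invariants of its intended model.

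The heart of the argument is showing every such $E$ is homeomorphic to $M_{\al,n}$, which I would prove by transfinite induction on $\al$. The case $\al = 0$ is immediate: $E$ is finite by Proposition~\ref{accumFacts} and is classified by its cardinality. For $\al \ge 1$, first treat $n = 1$, so $\Gamma^{\al}(E) = \{p\}$ for a single non-isolated point $p$. Since $E$ is compact, metrizable, and zero-dimensional (it is countable), choose a decreasing sequence of clopen neighborhoods $V_0 \supseteq V_1 \supseteq \dots$ of $p$ with $\bigcap_m V_m = \{p\}$. Then $E$ is the one-point compactification of $E \setminus \{p\} = \bigsqcup_m (V_m \setminus V_{m+1})$; each annulus $V_m \setminus V_{m+1}$ is clopen and disjoint from $\Gamma^{\al}(E)$, hence has Cantor--Bendixson rank $\le \al$, so by the inductive hypothesis it is homeomorphic to a countable ordinal space $[0,\gamma_m]$ with $\gamma_m < \o^{\al}$. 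Concatenating, $E \setminus \{p\}$ is homeomorphic to $[0,\delta)$ for $\delta = \sum_m (\gamma_m+1)$, and $E$ to its one-point compactification $[0,\delta]$. Because infinitely many annuli must have rank approaching $\al$ (else $|E|_{CB} < \al+1$), the partial sums of $\sum_m(\gamma_m+1)$ are cofinal in $\o^{\al}$, while, $\o^{\al}$ being irreducible (Corollary~\ref{irreducible}), finite sums of ordinals below $\o^{\al}$ stay below $\o^{\al}$; thus $\delta = \o^{\al}$ and $E \cong M_{\al,1}$. For general $n$, use zero-dimensionality to partition $E$ into clopen pieces $Q_1,\dots,Q_n$ with $p_i \in Q_i$ where $\Gamma^{\al}(E) = \{p_1,\dots,p_n\}$; then $\Gamma^{\al}(Q_i) = \{p_i\}$, the $n=1$ case gives $Q_i \cong [0,\o^{\al}]$, and an explicit clopen decomposition of $[0,\o^{\al}n]$ at its $n$ top multiples of $\o^{\al}$ shows $\bigsqcup_i [0,\o^{\al}] \cong [0,\o^{\al}n] = M_{\al,n}$.

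The step I expect to demand the most care is the reassembly: verifying that $E$ really is the one-point compactification of $E \setminus \{p\}$ in the stated sense, that concatenating the clopen pieces $[0,\gamma_m]$ yields $[0,\delta)$, and — above all — pinning down $\delta$, which is where the lower-order ``tails'' of the pieces $[0,\gamma_m]$ get absorbed into the single irreducible ordinal $\o^{\al}$ and where the only genuine ordinal arithmetic (additive indecomposability of $\o^{\al}$ and the identity $\sum_{k \in \N}\o^{\beta} = \o^{\beta+1}$) enters. Everything else — the invariance argument, the separation of finitely many points by clopen sets, the base case, and the computation of $|\Gamma^{\beta}(M_{\al,n})|$ — is routine.
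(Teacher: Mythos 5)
The paper does not prove this theorem: it is stated as a citation of Mazurkiewicz and Sierpi\'{n}ski \cite{MS}, so there is no internal proof to compare against. Your argument is the classical proof of this classification and it is correct: invariance of the Cantor--Bendixson derivatives under homeomorphism gives the forward direction, and the converse follows by transfinite induction on $\al$ after reducing to the ordinal models $[0,\o^{\al} n]$ via the decomposition of a deleted clopen neighborhood of each top-rank point into clopen annuli of strictly smaller rank. You correctly flag the one step needing real care, namely $\delta = \o^{\al}$; when written out fully this splits into two cases. If $\al$ is a limit, cofinality of the partial sums in $\o^{\al} = \sup_{\beta<\al}\o^{\beta}$ follows directly because $p \in \Gamma^{\beta+1}(E)$ forces, for every $\beta<\al$, infinitely many annuli of rank $>\beta$. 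If $\al = \al'+1$, the same argument gives only infinitely many annuli of rank exactly $\al'+1$, and one must then sum $k$ of the resulting $\gamma_m \geq \o^{\al'}$ (using monotonicity of ordinal addition and $\o^{\al'}\cdot\o = \o^{\al}$) to exceed $\o^{\al'} k$ for every $k$. The upper bound $\delta \leq \o^{\al}$ from irreducibility, the zero-dimensionality of countable metrizable spaces, the one-point compactification identification, and the clopen partition in the $n>1$ step are all routine and handled soundly.
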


\begin{rmk} \label{disjUnion}
Let $\al$ be a countable ordinal. Then $\Gamma^{\al}(\o^{\al}+1) = \{\o^{\al}\}$ and $|\o^{\al}+1|_{CB} = \al+1$. It follows from Theorem \ref{classification} that if $\gamma_k$ is any increasing sequence of ordinals such that $\sup_k \gamma_k = \o^{\al}$, then $\o^{\al}+1$ is homeomorphic to the one-point compactification of the disjoint union of the spaces $\gamma_k$, with the point at infinity corresponding to $\o^{\al}$.
\end{rmk}

Note that for any countable ordinal $\al$, the space $\o^{\al}n+1$ has Cantor-Bendixson rank $\al+1$ and exactly $n$ points of topological rank $\al$ given by $\o^{\al}k$ for $k = 1, \dots, n$. Then by the above classification, the space $\o^{\al}n+1$ provides a representative of the homeomorphism class of countable, compact Polish spaces with Cantor-Bendixson rank $\al+1$ and $n$ points of topological rank $\al$. 

\subsection{Upper-semicontinuity}

Now we consider functions $f : E \rightarrow \R$, where $E$ is a metrizable space. For such a function $f$, we let $||f|| = \sup_{x \in E} |f(x)|$, where the supremum is taken to be $+\infty$ if $f$ is unbounded.

\begin{defn} \label{USCdef}
Let $E$ be a compact metrizable space, and let $f : E \rightarrow \R$. Then $f$ is \textbf{upper semi-continuous} (u.s.c.) if one of the following equivalent conditions holds:
\begin{enumerate}
 \item $f = \inf_{\al} g_{\al}$ for some family $\{g_{\al}\}$ of continuous functions;
 \item $f = \lim_{n} g_n$ for some nonincreasing sequence $(g_n)_{n\in \N}$ of continuous functions;
 \item For each $r \in \R$, the set $\{x : f(x) \geq r \}$ is closed;
 \item $\limsup_{y\rightarrow x} f(y) \leq f(x)$, for all $x \in E$.
\end{enumerate}
For any $f : E \rightarrow \R$, the \textbf{upper semi-continuous envelope} of $f$, written $\widetilde{f}$, is defined, for all $x$ in $E$, by
\begin{equation*}
 \widetilde{f}(x) = \left\{ \begin{array}{ll} \inf \{ g(x) : g \text{ is continuous, and } g \geq f \}, & \text{ if } f \text{ is bounded} \\ +\infty, & \text{ if } f \text{ is unbounded.}
                            \end{array}
\right.
\end{equation*}
\end{defn}

Note that when $f$ is bounded, $\widetilde{f}$ is the smallest u.s.c. function greater than or equal to $f$ and satisfies
\begin{equation*}
 \widetilde{f}(x) = \max \left( f(x), \limsup_{y \rightarrow x} f(y) \right).
\end{equation*}
It is immediately seen that for any $f, g : E \rightarrow \R$, $\widetilde{f+g} \leq \widetilde{f}+\widetilde{g}$, with equality holding if $f$ or $g$ is continuous.

\begin{defn}
Let $\pi : E \rightarrow F$ be a continuous map. If $f: F \rightarrow \R$ is any function, we define the \textbf{lift} of $f$, denoted $^{\pi} f$, to be the function given by $f \circ \pi$.

If $\pi : E \rightarrow F$ is a surjection and $f: E \rightarrow \R$ is bounded, then the \textbf{projection} of $f$, denoted $f^{[F]}$, is the function defined on $F$ by
\begin{equation*}
f^{[F]}(x) = \sup_{y \in \pi^{-1}(x)} f(y).
\end{equation*}
\end{defn}

\begin{rmk} \label{pushDownPullUp} Let $\pi: E \rightarrow F$ be a continuous surjection.
\begin{enumerate}
 \item If $f : F \rightarrow \R$, then $(^{\pi} f)^{[F]} = f$.
 \item If $f: E \rightarrow \R$, then $^{\pi}(f^{[F]}) \geq f$, and the inequality is strict in general.
 \item If $f : E \rightarrow \R$ is u.s.c., then $f^{[F]}$ is also u.s.c. and the supremum is attained.
 \item If $f: F \rightarrow \R$ is u.s.c., then $^{\pi} f$ is also u.s.c.
\end{enumerate}
\end{rmk}

\subsection{Candidate Sequences} \label{candSeqs}

\begin{defn}
A \textbf{candidate sequence} on a compact, metrizable space $E$ is a non-decreasing sequence $\H = (h_k)$ of non-negative, real-valued functions on $E$ that converges pointwise to a function $h$. We often write $\lim \H = h$. We always assume by convention that $h_0 \equiv 0$.

A candidate sequence $\H$ \textbf{has u.s.c. differences} if $h_{k+1} - h_{k}$ is u.s.c. for all $k$. Note that in this case each $h_k$ is u.s.c., since $h_0 \equiv 0$. If $\H$ has u.s.c. differences, we may also refer to $\H$ as a u.s.c.d. candidate sequence, or we may write that $\H$ is u.s.c.d.
\end{defn}


Given a candidate sequence $\H$, it is natural to seek a precise description of the manner in which $h_k$ converges to $h$. For example, is this convergence uniform or not? The notion of \textit{uniform equivalence}, as defined by Downarowicz in \cite{D}, captures exactly the manner in which $h_k$ converges to $h$.

\begin{defn} \label{uniformEquivDef}
Let $\H$ and $\F$ be two candidate sequences on a compact, metrizable space $E$. We say that $\H$ \textbf{uniformly dominates} $\F$, written $\H \geq \F$, if for all $\epsilon > 0$, and for each $k$, there exists $\ell$, such that $f_k \leq h_{\ell}+\epsilon$.

The candidate sequences $\H$ and $\F$ are \textbf{uniformly equivalent}, written $\H \cong \F$, if $\H \geq \F$ and $\F \geq \H$.
\end{defn}
Note that uniform equivalence is in fact an equivalence relation.

\section{Basic Constructions} \label{basicConstructions}

\subsection{Order Of Accumulation}
\begin{defn}
Let $\H$ be a candidate sequence on $E$. The \textbf{transfinite sequence} associated to $\H$, which we write as $(u_{\al}^{\H})$ or $(u_{\al})$, is defined by transfinite induction as follows. Let $\tau_k = h - h_k$. Then
\begin{itemize}
 \item let $u_0 \equiv 0$;
 \item if $u_{\al}$ has been defined, let $ u_{\al+1} = \lim_{k} \widetilde{u_{\al} + \tau_k}$;
 \item if $u_{\beta}$ has been defined for all $\beta < \al$ for a limit ordinal  $\al$, let $u_{\al} = \widetilde{\sup_{\beta < \al} u_{\beta}}$.
\end{itemize}
\end{defn}

Note that for each $\al$, either $u_{\al} \equiv +\infty$ or $u_{\al}$ is u.s.c. (since a non-increasing limit of u.s.c. functions is u.s.c.). Furthermore, the sequence $(u_{\al})$ is non-decreasing in $\al$. It is also sub-additive in the following sense.

\begin{prop} \label{UisZero}
Let $\H$ be a candidate sequence on $E$. Then for any two ordinals $\al$ and $\beta$,
\begin{equation*}
 u_{\al+\beta} \leq u_{\al} + u_{\beta}.
\end{equation*}
\end{prop}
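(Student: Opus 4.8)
Proof plan for Proposition \ref{UisZero} ($u_{\al+\beta} \le u_\al + u_\beta$)

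The plan is to argue by transfinite induction on $\beta$, with $\al$ fixed. The base case $\beta = 0$ is trivial since $u_{\al+0} = u_\al = u_\al + 0 = u_\al + u_0$. The content of the proposition lies in comparing the transfinite sequence of $\H$ ``started at stage $\al$'' with the transfinite sequence started from $u_0 \equiv 0$. The key auxiliary observation is a monotonicity/shift principle: if $(v_\gamma)$ is defined by the same transfinite recursion as $(u_\gamma)$ but with an arbitrary u.s.c.\ starting function $v_0 = w$ in place of $u_0 \equiv 0$, and if $w \le u_\al$, then $v_\gamma \le u_{\al + \gamma}$ for all $\gamma$; applying this with $w = u_\beta$ (wait---rather, one wants to recognize $u_{\al+\beta}$ as the stage-$\beta$ term of the recursion started from $u_\al$). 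More precisely, I would first prove the lemma that the recursion started from $u_\al$ reaches, at stage $\gamma$, exactly $u_{\al+\gamma}$: this is itself a transfinite induction on $\gamma$, using that the successor step and limit step of the recursion depend only on the current function and on the fixed data $(\tau_k)$, together with the identity $\al + (\gamma+1) = (\al+\gamma)+1$ and, at limit stages, that $\sup_{\delta < \gamma}(\al+\delta) = \al+\gamma$ when $\gamma$ is a limit ordinal. Once this ``shift lemma'' is in hand, the proposition reduces to showing that replacing the starting value $u_0 \equiv 0$ of the recursion by $u_\al$ raises the stage-$\beta$ output by at most $u_\al$.

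For that final reduction I would again induct on $\beta$. At a successor step, suppose $v_\beta \le u_\al + u_\beta$ where $v_\gamma$ denotes the recursion started from $u_\al$; then $v_{\beta+1} = \lim_k \widetilde{v_\beta + \tau_k} \le \lim_k \widetilde{(u_\al + u_\beta) + \tau_k}$, and since $u_\al$ is u.s.c.\ (assuming $u_\al \not\equiv +\infty$; if it is, the inequality is trivial), the subadditivity of the u.s.c.\ envelope with equality when one summand is u.s.c.\ — actually one needs a touch more care, since $u_\al$ is u.s.c.\ but not necessarily continuous, so $\widetilde{u_\al + g} \le u_\al + \widetilde{g}$ holds by $\widetilde{f+g} \le \widetilde f + \widetilde g$ together with $\widetilde{u_\al} = u_\al$. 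This gives $v_{\beta+1} \le u_\al + \lim_k \widetilde{u_\beta + \tau_k} = u_\al + u_{\beta+1}$. At a limit step, $v_\beta = \widetilde{\sup_{\delta<\beta} v_\delta} \le \widetilde{\sup_{\delta<\beta}(u_\al + u_\delta)} = \widetilde{u_\al + \sup_{\delta<\beta} u_\delta} \le u_\al + \widetilde{\sup_{\delta<\beta} u_\delta} = u_\al + u_\beta$, again using u.s.c.\ of $u_\al$ and $\widetilde{f+g}\le\widetilde f+\widetilde g$. Combining the shift lemma with this second induction yields $u_{\al+\beta} = v_\beta \le u_\al + u_\beta$.

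The step I expect to be the main obstacle is the successor case of the shift-lemma induction, specifically verifying that the recursion ``started from $u_\al$'' genuinely tracks $(u_{\al+\gamma})$ and not merely some upper or lower bound for it: one must check that $u_{\al+\gamma+1} = \lim_k \widetilde{u_{\al+\gamma}+\tau_k}$ is literally the defining formula, which it is, so the lemma is almost a tautology once stated correctly — the real care is bookkeeping with ordinal sums at limit stages and handling the degenerate case $u_\al \equiv +\infty$ (or $u_{\al+\gamma}\equiv+\infty$), where all the envelope manipulations collapse to $+\infty \le +\infty$ and the claimed inequalities hold vacuously. A secondary subtlety is ensuring the envelope inequality $\widetilde{u_\al + g}\le u_\al + \widetilde g$ is used only when $u_\al + g$ is bounded (so that $\widetilde{\,\cdot\,}$ means the envelope rather than $+\infty$); if $u_\al$ is bounded and $g$ is bounded this is fine, and the unbounded cases are again handled by the trivial $+\infty$ bound.
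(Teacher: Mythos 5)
Your proof is correct and takes essentially the same route as the paper's: both argue by transfinite induction on $\beta$ with $\al$ fixed, and the key step in both is applying $\widetilde{f+g}\le\widetilde{f}+\widetilde{g}$ together with $\widetilde{u_\al}=u_\al$ (valid since $u_\al$ is u.s.c.\ or identically $+\infty$) to extract $u_\al$ from under the envelope. Your explicit ``shift lemma'' is a piece of bookkeeping that the paper leaves implicit by writing $u_{\al+\beta}=\lim_k\widetilde{u_{\al+(\beta-1)}+\tau_k}$ directly from the defining recursion; the mathematical content is identical.
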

\begin{proof}
Let $\al$ be any ordinal. We prove the statement by transfinite induction on $\beta$. For $\beta=0$, the statement is trivial. Now assume by induction that the statement is true for $\gamma < \beta$. If $\beta$ is a successor ordinal, then by the inductive hypothesis,
\begin{equation*}
u_{\al+\beta} = \lim_k \widetilde{(u_{\al + (\beta-1)} + \tau_k)} \leq u_{\al} + \lim_k \widetilde{(u_{\beta-1} + \tau_k)} = u_{\al}+u_{\beta}.
\end{equation*}
If $\beta$ is a limit ordinal, then by the inductive hypothesis,
\begin{equation*}
u_{\al+\beta} = \widetilde{\sup_{\gamma< \beta} u_{\al+\gamma}} \leq u_{\al}+\widetilde{\sup_{\gamma<\beta}  u_{\gamma}} \leq u_{\al}+u_{\beta}.
\end{equation*}
\end{proof}

If $\H$ is a candidate sequence on $E$, then by Theorem 3.3 in \cite{BD}, there exists a countable ordinal $\al$ such that the associated transfinite sequence satisfies $u_{\al} = u_{\al + 1}$, which then implies that $u_{\beta} = u_{\al}$ for all $\beta > \al$.
\begin{defn}
In this setting, the least ordinal $\al$ such that $u_{\al} = u_{\al + 1}$ is called the \textbf{order of accumulation} of the candidate sequence $\H$, which we write as either $\al_0(\H)$ or $\al_0^{\H}$.
\end{defn}

Both the transfinite sequence and the order of accumulation are independent of the choice of representative of uniform equivalence class \cite{D}.

While it is true that $u_{\al}=u_{\al+1}$ implies $u_{\al} = u_{\beta}$ for all $\beta > \al$, it is not true that for a fixed $x$, $u_{\al}(x) = u_{\al+1}(x)$ implies $u_{\beta}(x) = u_{\al}(x)$ for all $\beta > \al$. In fact, in many of the constructions in Section \ref{examples} there is a point $\0$ and an ordinal $\al$ such that $u_{\gamma}(\0)=0$ for all $\gamma < \al$ and $u_{\al}(\0)=a>0$. Nonetheless, we make the following definition.
\begin{defn}
Let $\H$ be a candidate sequence on $E$. Then for each $x$ in $E$, we define the \textbf{pointwise order of accumulation} of $\H$ at $x$, $\al_0^{\H}(x)$ or $\al_0(x)$, as
\begin{equation*}
\al_0^{\H}(x) = \inf \{ \al : u_{\beta}(x) = u_{\al}(x) \text{ for all } \beta > \al \}.
\end{equation*}
\end{defn}

\begin{rmk} \label{accumVarPrin}
Note that $\al_0^{\H}(x)$ is always a countable ordinal, and
\begin{equation*} \al_0(\H) = \sup_{x \in E} \al_0^{\H}(x).
\end{equation*}
\end{rmk}

The following proposition relates the pointwise topological rank (Definition \ref{ptwiseCBDef}) to the pointwise order of accumulation.

\begin{prop} \label{pointwiseBound}
Let $\H$ be a candidate sequence on $E$. Then for any $x$ in $E$,
\begin{equation*}
\al_0(x) \leq \left\{
\begin{array}{ll}
 r(x) & \text{ if } r(x) \text{ is finite} \\
 r(x) +1 & \text{ if } r(x) \text{ is infinite}.
\end{array} \right.
\end{equation*}
\end{prop}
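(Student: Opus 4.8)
The plan is to prove the bound by transfinite induction, but rather than inducting directly on the statement as written, it is cleaner to prove the sharper auxiliary claim: for every ordinal $\al$, if $x \notin \Gamma^{\al}(E)$, then $u_{\al}^{\H}(x) = 0$ — equivalently, $u_{\al}$ vanishes on the complement of $\Gamma^{\al}(E)$, i.e.\ on the set of points of topological rank strictly less than $\al$. Once this is established, the proposition follows quickly: if $r(x)$ is finite, say $r(x) = n$, then $x \notin \Gamma^{n+1}(E)$, so $u_{n+1}(x) = 0$; but $u$ is non-decreasing, so $u_{n+1}(x) = 0$ forces $u_\beta(x) = 0$ for all $\beta \le n+1$ and hence $u_\beta(x) = u_{n+1}(x)$ for all $\beta$, giving $\al_0(x) \le n+1$. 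A small extra argument — that $u_n(x) = u_{n+1}(x)$ already, using that $x$ isolated in $\Gamma^n(E)$ and the local-maximum structure of the u.s.c.\ envelope — upgrades this to $\al_0(x) \le n$ in the finite case. When $r(x)$ is infinite (but countable, as it always is for $x \notin \Gamma^{|E|_{CB}}(E)$), the same reasoning with $\al = r(x)+1$ gives $u_{r(x)+1}(x) = 0$ and hence $\al_0(x) \le r(x)+1$; for $x \in \Gamma^{|E|_{CB}}(E)$ we have $r(x) = \o_1$ and the inequality $\al_0(x) \le r(x)+1$ is vacuous since $\al_0(x)$ is always countable (Remark \ref{accumVarPrin}).

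For the auxiliary claim I would induct on $\al$. The base case $\al = 0$ is trivial since $\Gamma^0(E) = E$ and the complement is empty. For a limit ordinal $\al$, if $x \notin \Gamma^{\al}(E) = \bigcap_{\beta < \al} \Gamma^{\beta}(E)$, then $x \notin \Gamma^{\beta}(E)$ for some $\beta < \al$, so by induction $u_\beta(x) = 0$; but then $\sup_{\gamma < \al} u_\gamma$ vanishes on a neighborhood of $x$ (this is the crucial point — the complement of $\Gamma^{\beta}(E)$ is open, and by induction $u_\beta$ vanishes on all of it, hence so does $u_\gamma$ for $\gamma \le \beta$, and one needs the whole family $u_\gamma$ to vanish near $x$; in fact for $\gamma$ with $\beta \le \gamma < \al$ one re-runs the successor step locally), so passing to the u.s.c.\ envelope keeps the value $0$ at $x$, giving $u_\al(x) = 0$. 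For a successor $\al = \beta+1$: if $x \notin \Gamma^{\beta+1}(E) = (\Gamma^{\beta}(E))'$, then $x$ is not an accumulation point of $\Gamma^{\beta}(E)$, so there is an open neighborhood $V$ of $x$ with $V \cap \Gamma^{\beta}(E) \subseteq \{x\}$. By the inductive hypothesis $u_\beta$ vanishes on $E \setminus \Gamma^{\beta}(E) \supseteq V \setminus \{x\}$. Now $u_{\beta+1}(x) = \lim_k \widetilde{(u_\beta + \tau_k)}(x)$, and using $\widetilde{(u_\beta + \tau_k)}(x) = \max\big(u_\beta(x) + \tau_k(x),\ \limsup_{y \to x}(u_\beta(y) + \tau_k(y))\big)$: the $\limsup$ term only sees $y$ near $x$ with $y \neq x$, where $u_\beta(y) = 0$, so it equals $\limsup_{y\to x} \tau_k(y) \le \widetilde{\tau_k}(x)$, while the first term is $u_\beta(x) + \tau_k(x)$. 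Taking $k \to \infty$, $\tau_k \downarrow 0$ pointwise and $\widetilde{\tau_k} \downarrow 0$ as well (since $\tau_{k+1} - \tau_k = -(h_{k+1}-h_k)$... here one wants that the u.s.c.\ envelopes of $\tau_k$ decrease to $0$; this holds because $\tau_k$ is a non-increasing sequence of non-negative functions with $\tau_k \to 0$, and by Dini-type reasoning combined with u.s.c.\ of $\tau_k$... actually one needs u.s.c.\ of $\tau_k$, which need not hold, but $\widetilde{\tau_k}$ is non-increasing in $k$ and its pointwise limit is an u.s.c.\ function $\ge 0$ that is $\le$ every $\widetilde{\tau_j}$; by the standing assumption that the order of accumulation exists, or more directly by Theorem 3.3 of \cite{BD}, $\lim_k \widetilde{\tau_k} = u_1$, and one shows $u_1(x) = 0$ separately as the $\beta = 0$ case). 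Hence both terms go to $0$ and $u_{\beta+1}(x) = 0$.

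The main obstacle I anticipate is the successor step, specifically controlling $\lim_k \widetilde{(u_\beta + \tau_k)}(x)$ near a point $x$ where $u_\beta$ vanishes on a punctured neighborhood: one must argue that the "new" contribution at $x$ is no worse than $\lim_k \widetilde{\tau_k}(x)$, and then know that this limit is $0$. The cleanest way to handle the latter is to fold it into the induction by noting $\lim_k \widetilde{\tau_k} = u_1$ and that $\Gamma^1(E) = E'$, so $u_1$ vanishing off $E'$ is exactly the $\al = 1$ instance of the auxiliary claim — which in turn reduces to: at an isolated point $x$ of $E$, $\widetilde{\tau_k}(x) = \tau_k(x) \to 0$. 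That last fact is immediate because an isolated point has a neighborhood equal to $\{x\}$, so the u.s.c.\ envelope does nothing there. Thus the whole argument is bootstrapped from the behavior at isolated points, propagated up the Cantor–Bendixson hierarchy, with the sub-additivity Proposition \ref{UisZero} available if one prefers to package the successor step as $u_{\beta+1} \le u_\beta + u_1$ and localize. I would present it in the cleaner "vanishing off $\Gamma^\al(E)$" form and derive the stated inequalities on $\al_0(x)$ as corollaries, taking care that in the finite case one extracts the slightly better bound $\al_0(x) \le r(x)$ rather than $r(x)+1$.
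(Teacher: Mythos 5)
Your auxiliary claim --- that $u_{\al}$ vanishes on $E \setminus \Gamma^{\al}(E)$ --- is false, and since the rest of the argument is derived from it, the proof does not go through. What is true is that the transfinite sequence \emph{stabilizes} (possibly at a nonzero value) at points of low topological rank, not that it vanishes. A concrete counterexample appears in the paper itself: take $E = \o+1$ and the candidate sequence from the base case of the proof of Theorem~\ref{mainTech} ($h_k(n) = a$ for $k > n$, $h_k(n) = 0$ for $k \le n$, $h_k(\o)=0$). The point $x = \o$ has $r(x)=1$, so $x \notin \Gamma^2(E)$; yet the computation there gives $u_1(x)=a>0$, and by monotonicity $u_2(x) \ge a > 0$. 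More globally, if your claim held, then for every $x$ with $r(x) < \o_1$ one would get $u_\beta(x)=0$ for all $\beta$ (since $x\notin\Gamma^{\beta}(E)$ for all large enough $\beta$, and the $u_\beta$ are nondecreasing), hence $\al_0(x)=0$ --- wildly stronger than the proposition and incompatible with every construction in Section~\ref{examples}.

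The break in your successor step is exactly at a point $x$ with $r(x)=\beta$. Your neighbourhood $V$ and the inductive hypothesis do force $u_\beta=0$ on $V\setminus\{x\}$, so the $\limsup$ term in $\widetilde{(u_\beta+\tau_k)}(x)$ is bounded by $\limsup_{y\to x}\tau_k(y)$; but the other term in the max is $u_\beta(x)+\tau_k(x)$, and for $x\in\Gamma^\beta(E)\setminus\Gamma^{\beta+1}(E)$ the inductive hypothesis says nothing about $u_\beta(x)$ itself. If it is positive, then $u_{\beta+1}(x)\ge u_\beta(x)>0$. Likewise your appeal to ``$u_1(x)=0$ from the $\al=1$ case'' fails here, because that case only covers isolated $x$, while the present $x$ lies in $\Gamma^\beta(E)\subset\Gamma^1(E)$. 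The local picture you set up does give something correct, namely $u_{\beta+1}(x)=u_\beta(x)$ once the right quantities are compared --- stabilization, not vanishing --- and this is precisely what the paper proves, by a double transfinite induction: the outer induction on $r(x)$ supplies the punctured neighbourhood where $r(y)<r(x)$, and the inner induction on $\beta$ identifies $u_\beta(x)$ with the fixed value $u_\epsilon(x)$, where $\epsilon = r(x)$ if $r(x)$ is finite and $\epsilon = r(x)+1$ if $r(x)$ is infinite. Keeping your local analysis of the u.s.c.\ envelope but aiming for stabilization rather than vanishing would put you on essentially the paper's path.
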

\begin{proof}
The proof proceeds by transfinite induction on $r(x)$. If $r(x) = 0$ it is easily seen that $u_{\gamma}(x)=0$ for all $\gamma$ and $\al_0(x)=0$.

Suppose the statement is true for all $y$ with $r(y) < \al$, and fix $x$ with $r(x) = \al$. If $\al$ is finite, let $\epsilon = \al$, and if $\al$ is infinite, let $\epsilon = \al+1$. We show that for all $\beta > \al$, $u_{\beta}(x) = u_{\epsilon}(x)$, and here we use transfinite induction on $\beta > \al$. Note that there is an open neighborhood $U$ of $x$ such that for all $y$ in $U$, $r(y) < r(x)$. Thus any real-valued function $f$ on $E$ satisfies $\limsup_{y \rightarrow x} f(y) = \limsup_{y \rightarrow x, \medspace r(y) < r(x)} f(y)$.

Suppose $\beta > \al$ is a successor. Then
\begin{equation*}
\widetilde{(u_{\beta -1} + \tau_k)}(x) = \max \Bigl( \limsup_{\substack{y \rightarrow x \\ \medspace r(y) < \al}} \, (u_{\beta-1}+\tau_{k})(y), \medspace (u_{\beta-1}+\tau_k)(x) \Bigr).
\end{equation*}
Applying the induction hypotheses to all $y$ with $r(y)<\al$ and $u_{\beta-1}(x)$ gives that
\begin{equation*}
\widetilde{(u_{\beta -1} + \tau_k)}(x) = \max \Bigl( \limsup_{\substack{y \rightarrow x \\ \medspace r(y) < \al}} \, (u_{\epsilon-1}+\tau_{k})(y), \medspace (u_{\epsilon}+\tau_k)(x) \Bigr).
\end{equation*}
Letting $k$ tend to infinity, we obtain $u_{\beta}(x) = u_{\epsilon}(x)$.

Suppose $\beta$ is a limit ordinal. Then the inductive hypotheses imply
\begin{align*}
u_{\beta}(x) & = \max \Bigl( \limsup_{\substack{ y\rightarrow x \\ \medspace r(y) < \al}} \, \sup_{\gamma < \beta} u_{\gamma}(y) , \medspace \sup_{\gamma < \beta} u_{\gamma}(x) \Bigr) \\
 & = \max \Bigl( \limsup_{\substack{y\rightarrow x \\ \medspace r(y) < \al}} \,  u_{\epsilon-1}(y) , \medspace \sup_{\gamma < \beta} u_{\epsilon}(x) \Bigr) \\
 & = u_{\epsilon}(x).
\end{align*}

\end{proof}
It follows from the proof of Theorem \ref{bauerThm} that these pointwise bounds on $\al_0(x)$ are optimal. Also, combining Remark \ref{accumVarPrin}, Proposition \ref{pointwiseBound}, and Proposition \ref{accumFacts} (3), we obtain the following result.
\begin{cor} \label{CBrankBound}
Let $\H$ be a candidate sequence on a countable, compact Polish space $E$. Then
\begin{equation*}
\al_0(\H) \leq \left\{ \begin{array}{ll}
                 |E|_{CB}-1, & \text{ if } |E|_{CB} \text{ is finite} \\
                 |E|_{CB}, & \text{ if } |E|_{CB} \text{ is infinite}.
                \end{array} \right.
\end{equation*}
\end{cor}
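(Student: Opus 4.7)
The plan is to unpack the corollary directly from the three cited ingredients, with no extra work beyond bookkeeping. First, I would invoke Remark \ref{accumVarPrin} to reduce the claimed global bound to a pointwise one: since $\al_0(\H) = \sup_{x \in E} \al_0^{\H}(x)$, it suffices to bound $\al_0^{\H}(x)$ by the right-hand side for every $x \in E$ and then take the supremum. Because $E$ is a countable, compact Polish space, the supremum is a countable ordinal, so no care is needed about leaving the class of countable ordinals.

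Second, I would use Proposition \ref{accumFacts}(3) to convert the Cantor--Bendixson rank of $E$ into a uniform bound on the pointwise topological rank: $|E|_{CB} = (\max_{x \in E} r(x))+1$, hence $r(x) \leq |E|_{CB}-1$ for every $x \in E$. (Proposition \ref{accumFacts}(1) guarantees that $|E|_{CB}$ is indeed a successor, so the notation $|E|_{CB}-1$ makes sense.)

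Third, I would split into cases according to whether $|E|_{CB}$ is finite or infinite. If $|E|_{CB}$ is finite, then $|E|_{CB}-1$ is a finite ordinal, and every $r(x)$ is finite with $r(x) \leq |E|_{CB}-1$; the finite branch of Proposition \ref{pointwiseBound} yields $\al_0(x) \leq r(x) \leq |E|_{CB}-1$, and taking the supremum over $x$ gives the claimed bound. If $|E|_{CB}$ is infinite, then for each $x$ I apply Proposition \ref{pointwiseBound} in both branches: if $r(x)$ is finite then $\al_0(x) \leq r(x) < \omega \leq |E|_{CB}$, and if $r(x)$ is infinite then $\al_0(x) \leq r(x)+1 \leq (|E|_{CB}-1)+1 = |E|_{CB}$. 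In either subcase $\al_0(x) \leq |E|_{CB}$, and taking the supremum finishes the proof.

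There is no real obstacle: the corollary is essentially arithmetic, and the only subtlety worth flagging is ensuring that the ``$+1$'' loss in the infinite branch of Proposition \ref{pointwiseBound} is exactly absorbed by the ``$+1$'' appearing in $|E|_{CB} = (\max r(x))+1$, which is why no analogous correction $|E|_{CB}+1$ appears on the right-hand side in the infinite case.
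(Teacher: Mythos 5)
Your proposal is correct and is exactly the argument the paper intends: the paper derives this corollary by "combining Remark \ref{accumVarPrin}, Proposition \ref{pointwiseBound}, and Proposition \ref{accumFacts} (3)," which is precisely the three-step reduction and case analysis you carry out. No gaps.
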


\subsection{Construction of Candidate Sequences}

Now we discuss various ways of creating candidate sequences. We first begin with elementary constructions that will be studied later in the context of Choquet simplices.
\begin{defn}
Let $\H$ be a candidate sequence on $E$. If $F$ is a compact subset of $E$, then we define the \textbf{restriction candidate sequence}, $\H|_{F}$, on $F$.
\end{defn}

\begin{defn}
Let $\H$ be candidate sequence on $E$, and let $F$ be a compact metrizable space with $\pi: F \rightarrow E$ a continuous surjection. Then the \textbf{lifted candidate sequence} of $\H$ to $F$, denoted $^{\pi} \H$, is the candidate sequence on $F$ given by $(^{\pi}h_k) = (h_k \circ \pi)$.
\end{defn}

\begin{defn}
Let $\F = (f_k)$ be a candidate sequence on $F$, and let $g:F\rightarrow E$ be an embedding (continuous injection). The \textbf{embedded candidate sequence}, $g\F = (h_k)$, on $E$ is defined to be
\begin{equation*}
h_k(x) = \left\{
\begin{array}{ll}
f_k \circ g^{-1}(x) & \text{ if } x \in g(F) \\
0 & \text{ if  } x \in E \setminus g(F).
\end{array} \right.
\end{equation*}
\end{defn}

While all of the constructions in this section will be used, the following two constructions (disjoint union and product candidate sequences) form the basis of the proofs of Theorem \ref{mainTech} and Corollary \ref{realizationCor}.

\begin{defn}
Let $(\H_n)$ be a countable collection of candidate sequences, where $\H_n = (h_k^n)$ is defined on $E_n$. Then we define the \textbf{disjoint union candidate sequence}, $\coprod \H_n$, as follows. Let $E$ be the one-point compactification of the disjoint union of the spaces $E_n$, with the point at infinity denoted $\0$. For each $k$, let $f_k$ be the function on $E$ such that $f_k|_{E_n} = h_k^n$ and $f_k(\0) = 0$. Then the disjoint union candidate sequence, $\coprod \H_n$, is defined to be $(f_k)$.
\end{defn}

Recall that $||f||$ denotes the supremum norm of the real-valued function $f$.

\begin{lem} \label{disUnionLemma}
Let $(\H_n)$ be a sequence of candidate sequences on $E_n$, where $h^n = \lim \H_n$. Let $\H = \coprod \H_n$. If $||h^n|| \rightarrow 0$, then for all $\beta$,
\begin{enumerate}
\item $u_{\beta}^{\H}(\0) = \limsup_{n} ||u_{\beta}^{\H_n}||$, and
\item $||u_{\beta}^{\H}|| = \sup_{n} ||u_{\beta}^{\H_n}||$.
\end{enumerate}
\end{lem}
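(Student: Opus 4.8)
The plan is to prove both assertions simultaneously by transfinite induction on $\beta$, exploiting the structure of the one-point compactification $E$ and the hypothesis $\|h^n\| \to 0$. The key structural fact I would establish first is that, for a function $f$ on $E$ whose restriction to each $E_n$ is some function $f_n$ and whose value at $\0$ is given, the upper semi-continuous envelope behaves locally: on each clopen piece $E_n$ (which is open in $E$), $\widetilde{f}|_{E_n} = \widetilde{f_n}$, since a neighborhood of any point of $E_n$ inside $E$ can be taken inside $E_n$; and at $\0$, $\widetilde{f}(\0) = \max\bigl(f(\0),\, \limsup_{x \to \0} f(x)\bigr)$, where $x \to \0$ forces $x$ to eventually lie in $E_n$ for larger and larger $n$, so $\limsup_{x\to\0} f(x) = \limsup_n \sup_{E_n} f_n = \limsup_n \|f_n\|$ when the $f_n$ are nonnegative. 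Applying this to $f = u_\al + \tau_k$ (noting $\tau_k = h - h_k$ restricts to $h^n - h_k^n = \tau_k^n$ on $E_n$, and $\tau_k(\0) = 0$ since $h(\0) = 0 = h_k(\0)$), and to $f = \sup_{\gamma < \beta} u_\gamma$, will let the transfinite recursion defining $u_\beta^{\H}$ decouple across the pieces.

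For the successor step, assume (1) and (2) hold for $\beta$. On $E_n$, the recursion gives $u_{\beta+1}^{\H}|_{E_n} = \lim_k \widetilde{(u_\beta^{\H}|_{E_n} + \tau_k)}|_{E_n} = \lim_k \widetilde{u_\beta^{\H_n} + \tau_k^n} = u_{\beta+1}^{\H_n}$, by the locality of the envelope and the inductive identification $u_\beta^{\H}|_{E_n} = u_\beta^{\H_n}$ (which itself follows from applying the local description at all stages; I should track this as part of the induction hypothesis). Hence $\sup_n \|u_{\beta+1}^{\H_n}\| \le \|u_{\beta+1}^{\H}\|$. At $\0$: for each fixed $k$, $\widetilde{(u_\beta^{\H} + \tau_k)}(\0) = \max\bigl(u_\beta^{\H}(\0),\, \limsup_n \|u_\beta^{\H_n} + \tau_k^n\|\bigr)$. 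Since $\|\tau_k^n\| \le \|h^n\| \to 0$ as $n \to \infty$, we get $\limsup_n \|u_\beta^{\H_n} + \tau_k^n\| = \limsup_n \|u_\beta^{\H_n}\|$; combined with the inductive formula $u_\beta^{\H}(\0) = \limsup_n \|u_\beta^{\H_n}\|$, this shows $\widetilde{(u_\beta^{\H} + \tau_k)}(\0) = \limsup_n \|u_\beta^{\H_n}\|$ independent of $k$, so $u_{\beta+1}^{\H}(\0) = \limsup_n \|u_\beta^{\H_n}\|$. But $\|u_\beta^{\H_n}\| \le \|u_{\beta+1}^{\H_n}\|$ by monotonicity of the transfinite sequence, so this is $\le \limsup_n \|u_{\beta+1}^{\H_n}\|$; the reverse inequality $u_{\beta+1}^{\H}(\0) \ge \limsup_n \|u_{\beta+1}^{\H_n}\|$ follows because each $u_{\beta+1}^{\H}$ is u.s.c. on $E$ and agrees with $u_{\beta+1}^{\H_n}$ on $E_n$, so $u_{\beta+1}^{\H}(\0) \ge \limsup_{x\to\0} u_{\beta+1}^{\H}(x) = \limsup_n \|u_{\beta+1}^{\H_n}\|$. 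This gives (1) for $\beta+1$, and then (2) for $\beta+1$ is immediate: $\|u_{\beta+1}^{\H}\| = \max\bigl(u_{\beta+1}^{\H}(\0),\, \sup_n \|u_{\beta+1}^{\H_n}\|\bigr) = \sup_n \|u_{\beta+1}^{\H_n}\|$, using $u_{\beta+1}^{\H}(\0) = \limsup_n \|u_{\beta+1}^{\H_n}\| \le \sup_n\|u_{\beta+1}^{\H_n}\|$.

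For a limit ordinal $\beta$, one argues similarly: on $E_n$, $\sup_{\gamma<\beta} u_\gamma^{\H}|_{E_n} = \sup_{\gamma<\beta} u_\gamma^{\H_n}$, and its envelope over $E_n$ is $u_\beta^{\H_n}$; at $\0$, $u_\beta^{\H}(\0) = \max\bigl(\sup_{\gamma<\beta} u_\gamma^{\H}(\0),\, \limsup_n \sup_{\gamma<\beta}\|u_\gamma^{\H_n}\|\bigr)$, and using the inductive formula $u_\gamma^{\H}(\0) = \limsup_n\|u_\gamma^{\H_n}\|$ together with $\sup_{\gamma<\beta}\|u_\gamma^{\H_n}\| = \|u_\beta^{\H_n}\|$ (monotonicity, and $u_\beta^{\H_n} = \widetilde{\sup_{\gamma<\beta}u_\gamma^{\H_n}} \ge \sup_{\gamma<\beta}u_\gamma^{\H_n}$, while its norm is attained as a sup of envelopes, which does not increase the supremum of the values — here I would be careful that $\|\widetilde{g}\| = \|g\|$ for bounded nonnegative $g$), one again gets $u_\beta^{\H}(\0) = \limsup_n \|u_\beta^{\H_n}\|$, and (2) follows as before. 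I expect the main obstacle to be bookkeeping: cleanly stating the strengthened induction hypothesis (which must include the pointwise identity $u_\beta^{\H}|_{E_n} = u_\beta^{\H_n}$, not just the two norm statements) and verifying the "locality of the u.s.c. envelope" lemma on the one-point compactification, particularly the clean evaluation $\limsup_{x\to\0} f(x) = \limsup_n \|f_n\|$ for nonnegative $f_n$ — this uses that the sets $\bigsqcup_{n\ge N} E_n \cup \{\0\}$ form a neighborhood base at $\0$, which is exactly the definition of the one-point compactification topology, valid because each $E_n$ is compact.
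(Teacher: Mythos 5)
Your proposal is correct, and the overall scheme matches the paper's: transfinite induction on $\beta$, the locality identity $u_\gamma^{\H}|_{E_n}=u_\gamma^{\H_n}$ coming from the fact that each $E_n$ is clopen, and evaluation of the u.s.c.\ envelope at $\0$ via the neighborhood basis $\{\0\}\cup\bigsqcup_{n\ge N}E_n$. The one substantive divergence is the successor step. You obtain $u_{\beta+1}^{\H}(\0)\le\limsup_n\|u_{\beta+1}^{\H_n}\|$ by directly computing $\widetilde{(u_\beta^{\H}+\tau_k)}(\0)=\max\bigl(u_\beta^{\H}(\0),\,\limsup_n\|u_\beta^{\H_n}+\tau_k^n\|\bigr)$, using $\|\tau_k^n\|\le\|h^n\|\to 0$ to discard $\tau_k^n$ in the $\limsup$, passing to the limit in $k$, and closing the squeeze with monotonicity and upper semi-continuity. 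The paper instead invokes the sub-additivity of the transfinite sequence (Proposition \ref{UisZero}), $u_\beta^{\H}(\0)\le u_{\beta-1}^{\H}(\0)+u_1^{\H}(\0)$, together with the single computation $u_1^{\H}(\0)\le\widetilde{h}(\0)=0$; this collapses all successor steps at once and avoids re-examining the envelope at $\0$ at each stage. Your route is somewhat longer and requires carrying the explicit envelope formula through the induction, but it is more self-contained (no appeal to the sub-additivity proposition), and it yields as a byproduct the identity $\limsup_n\|u_\beta^{\H_n}\|=\limsup_n\|u_{\beta+1}^{\H_n}\|$. The limit step, the u.s.c.\ lower bound, and the derivation of claim (2) from claim (1) all agree with the paper.
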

\begin{proof}
For each $n$, $E_n$ is a clopen subset of $E$. It follows that $u_{\gamma}^{\H}(x) = u_{\gamma}^{\H_n}(x)$ for all ordinals $\gamma$, and for all $x$ in $E_n$. Then (2) follows from the definitions and (1). Also, upper semi-continuity of $u_{\beta}^{\H}$ implies that $u_{\beta}^{\H}(\0) \geq  \limsup_n ||u_{\beta}^{\H_n}||$. It remains only to show the reverse inequality.

The hypotheses imply that
\begin{equation*}
u_1^{\H}(\0) \leq \widetilde{h}(\0) \leq \lim_n ||h^n || = 0.
\end{equation*}
Now we use transfinite induction on $\beta$. The case $\beta=0$ is trivial. Suppose $\beta$ is a successor. By sub-additivity of the transfinite sequence (Lemma \ref{UisZero}) $u_{\beta}^{\H}(\0) \leq u_{\beta-1}^{\H}(\0) + u_1^{\H}(\0) = u_{\beta-1}^{\H}(\0)$, which, along with induction, implies the desired inequality. Now suppose $\beta$ is a limit ordinal. Monotonicity of the transfinite sequence and induction again imply that
\begin{equation*}
u_{\beta}^{\H}(\0) = \max \Bigl( \limsup_{y \rightarrow \0} u_{\beta}^{\H}(y), \; \sup_{\gamma < \beta} u_{\gamma}^{\H}(\0) \Bigr) \leq \limsup_n ||u_{\beta}^{\H_n}||.
\end{equation*}
\end{proof}

By a marked space $(E, \0)$, we mean a compact, metrizable space $E$ together with a marked point $\0$ in $E$.
\begin{defn}
Let $\F = (f_k)$ and $\mathcal{G} = (g_k)$ be two candidate sequences defined on the marked spaces $(E_1, \0_1)$ and $(E_2, \0_2)$, respectively. Then we define the \textbf{product candidate sequence}, $\H = \F \times \mathcal{G}$, on the marked product space $(E_1 \times E_2, (\0_1,\0_2))$ as the sequence
\begin{equation*}
h_k(x,y) = \left\{
\begin{array}{ll}
f_k(x) & \text{ if } y = \0_2 \\
g_k(y) & \text{ if } y \neq \0_2
\end{array} \right.
\end{equation*}
\end{defn}
Note that this definition is not symmetric under transposition of $\F$ and $\mathcal{G}$. In other words, this product is not commutative, but one may check easily that it is associative.

Let $\H$ be a candidate sequence on the marked space $(E,\0)$. Define $(\H)^{\times p}$ to be the candidate sequence on the product space $(E^p,\0^p)$ given by iterated multiplication: $(\H)^{\times p} = \H^{\times (p-1)} \times \H$.
\renewcommand{\labelenumi}{(\roman{enumi})}
\begin{lem}[Powers Lemma] \label{powerLem}
Let $\H$ be a candidate sequence on the marked space $(E,\0)$. Suppose that for some limit ordinal $\al$ and real number $a >0$,
\begin{enumerate}
 \item $||u_{\gamma}|| \leq a$ for all $\gamma$, and $||u_{\gamma}|| < a$ for $\gamma < \al$;
 \item $u_{\gamma}(\0) = 0$, for all $\gamma < \al$, and $u_{\al}(\0)=a$;
 \item $\al_0(x) \leq \al$, for all $x$ in $E$.
\end{enumerate}
Then the transfinite sequence associated to  $(\H)^{\times p}$ satisfies
\renewcommand{\labelenumi}{(\arabic{enumi})}
\begin{enumerate}
  \item $||u_{\gamma}^{\H^{\times p}}|| \leq p a$ for all $\gamma$;
  \item $||u_{\al k}^{\H^{\times p}}|| \leq k a$ and $||u_{\gamma}^{\H^{\times p}}|| < k a$, for all $\gamma < \al k$ and $k \leq p$;
  \item $\al_0^{\H^{\times p}}(x) \leq \al p$, for all $x $ in $E^p$;
  \item $u_{\gamma}^{\H^{\times p}}(\0^p) = \ell a$, for all $\al \ell \leq \gamma < \al (\ell+1)$, and $\ell = 0, \dots , p$;
  \item $\al_0^{\H^{\times p}}(\0^p) = \al p$.
\end{enumerate}
\end{lem}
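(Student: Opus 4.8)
The plan is to prove all five conclusions simultaneously by induction on $p$, since they are tightly interlocked: conclusions (1)--(2) control the sup-norms of the iterates, (4) pins down the value at the marked point, and (3) and (5) about the pointwise order of accumulation follow once the first part of the argument is in place. The base case $p=1$ is exactly the hypotheses (i)--(iii) together with $\al_0(\0) = \al$ (which follows from (i),(ii) since $u_\gamma(\0) = 0 < a = u_\al(\0)$ and $||u_\gamma|| \le a$ forces stabilization at $\al$). For the inductive step, write $\H^{\times (p+1)} = \H^{\times p} \times \H$ and exploit the structure of the product candidate sequence on $(E^p \times E, (\0^p, \0))$: on the slice $E^p \times \{\0\}$ it restricts to $\H^{\times p}$, and on each slice $E^p \times \{y\}$ with $y \ne \0$ it is the lift of $\H|$ in the second coordinate (it depends only on $y$), so the transfinite sequence there is computed from $\H$ alone.

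The key computational lemma to isolate is a formula for $u_\gamma^{\H^{\times p} \times \H}$ at a point $(z, y)$. For $y \ne \0$ one checks, using Remark \ref{pushDownPullUp} and the fact that $h_k(z,y) = g_k(y)$ is independent of $z$, that $u_\gamma^{\H^{\times p} \times \H}(z,y) = u_\gamma^{\H}(y)$ — the dynamics in the $E^p$-coordinate is invisible. For $y = \0$, the relevant point is that the usc envelope at $(z,\0)$ sees limits both from within $E^p \times \{\0\}$ (governed by $\H^{\times p}$ and the inductive hypothesis) and from nearby slices $E^p \times \{y\}$, $y \to \0$; on those slices, by hypothesis (i)--(ii) applied to $\H$, the function $u_\gamma^{\H}(y)$ is bounded by $a$ and is $<a$ for $\gamma < \al$, tending to $0$ as $y \to \0$ for $\gamma < \al$. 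Carefully tracking how $\tau_k = h - h_k$ decomposes across the two coordinates, one gets the recursion: passing through $\al$ more steps in the transfinite sequence adds exactly one more copy of $a$ at $\0^{p+1}$, while never exceeding $(p+1)a$ in sup-norm. Conclusions (1), (2), (4) then drop out of this recursion; concretely, at the marked point $u_{\al\ell}^{\H^{\times(p+1)}}(\0^{p+1}) = \ell a$ and it is constant on $[\al\ell, \al(\ell+1)[$, and the sup-norm of $u_{\al k}$ is at most $ka$ with strict inequality below.

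Conclusion (3), $\al_0^{\H^{\times(p+1)}}(x) \le \al(p+1)$ for all $x \in E^{p+1}$, is then handled by the slice decomposition: for $x = (z,y)$ with $y \ne \0$ we have $u_\gamma^{\H^{\times(p+1)}}(z,y) = u_\gamma^{\H}(y)$, which stabilizes by $\al \le \al(p+1)$ by hypothesis (iii); for $y = \0$, $x = (z,\0)$, the transfinite sequence at $x$ is controlled by the inductive hypothesis for $\H^{\times p}$ at $z$ (which stabilizes by $\al p$) together with the limsup contributions from nearby slices, and the recursion above shows everything stabilizes by $\al(p+1)$. Finally (5) follows because, by (4), $u_{\al p}^{\H^{\times(p+1)}}(\0^{p+1}) = pa < (p+1)a = u_{\al(p+1)}^{\H^{\times(p+1)}}(\0^{p+1})$, so the order of accumulation at $\0^{p+1}$ is not below $\al(p+1)$, and combined with (3) it equals $\al(p+1)$.

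The main obstacle I anticipate is the bookkeeping in the inductive step at the marked point $\0^{p+1}$: one must correctly combine the usc-envelope operation across the two coordinates, show that the limsup along slices $E^p \times \{y\}$ contributes nothing extra before level $\al$ (here is where hypothesis (i), $||u_\gamma|| < a$ for $\gamma < \al$, is essential) and contributes exactly one fresh $a$ at level $\al$, and verify that the limsup within the slice $E^p \times \{\0\}$ is governed by the inductive hypothesis. The sub-additivity bound $u_{\al+\beta} \le u_\al + u_\beta$ (Proposition \ref{UisZero}) will be the main tool for the upper bounds in (1) and (2), reducing them to the single-step estimate "$\al$ more iterates add at most $a$," while the lower/strictness statements and the exact value at $\0^{p+1}$ require the explicit slice computation rather than just sub-additivity.
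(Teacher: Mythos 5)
Your proposal is correct and follows essentially the same route as the paper: induction on $p$ via the decomposition $\H^{\times(p+1)} = \H^{\times p}\times\H$, the slice observation that $u_\gamma^{p+1}(z,y)=u_\gamma^{\H}(y)$ for $y\ne\0$ (using a product neighborhood missing the slice $E^p\times\{\0\}$), the sub-additivity bound $u_{\gamma}^{p+1}(x,y)\le u_\gamma^{p}(x)+u_\gamma(y)$ for the upper estimates, the u.s.c.\ lower bound $u_\al^{p+1}(x,\0)\ge\limsup_{y\to\0}u_\al(y)=a$ to pin down the jump, and then the recursion $u_{\al+\gamma}^{p+1}(x,\0)=u_\gamma^p(x)+a$ to deduce (3)--(5). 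Your high-level description of the bookkeeping at $\0^{p+1}$ matches what the paper makes precise.
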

\begin{proof}
We argue by induction on $p$. For $p=1$, the claims (1)-(5) follow from (i)-(iii). 

Assume that (1)-(5) hold for $p$. We prove that (1)-(5) also hold with $p+1$ in place of $p$. Let $(u_{\al}^p)$ be the transfinite sequence for $\H^{\times p} = (h_k^p )$, and let $h^p = \lim \H^{\times p}$. Recall that $E^{p+1} = E^p \times E$. The definition of $\H^{\times (p+1)}$ is that
\begin{equation*}
h_k^{p+1}(x,y) = \left \{ \begin{array}{ll}
                            h_k^p(x), & \text{ if } y= \0 \\
                            h_k(y), & \text{ if } y \neq \0.
                           \end{array} \right.
\end{equation*}
For all $(x,y)$ in $E^{p+1}$, $(h^{p+1}-h^{p+1}_k)(x,y) \leq (h^p-h^p_k)(x) + (h-h_k)(y)$. It follows from transfinite induction that for all $\gamma$, $u_{\gamma}^{p+1}(x,y) \leq u_{\gamma}^p(x) + u_{\gamma}(y)$. Using the inductive hypotheses, we obtain that $||u_{\gamma}^{p+1}|| \leq ap+a= a(p+1)$ for all $\gamma$, proving (1).

It follows from subadditivity that $||u^{p+1}_{\al k+ \gamma}|| \leq k ||u^{p+1}_{\al}||+ ||u^{p+1}_{\gamma}||$, which means that in order to establish (2) we need only show that for all $\gamma < \al$, $||u^{p+1}_{\gamma}|| < a$. Furthermore, since $u_{\gamma}^{p+1}$ is u.s.c. and therefore attains its supremum, it suffices to show that for all $\gamma < \al$ and all $(x,y)$ in $E^{p+1}$, $u_{\gamma}^{p+1}(x,y) < a$. Let $\gamma < \al$ and let $(x,y)$ be in $E^{p+1}$. If $y \neq \0$, then there exists an open neighborhood $U$ of $(x,y)$ in $E^{p+1}$ such that for all $(s,t)$ in $U$, $t \neq \0$. Then $h^{p+1}_k(s,t) = h_k(t)$ for all $(s,t)$ in $U$. It follows that $u_{\gamma}^{p+1}(x,y) = u_{\gamma}(y) < a$. Now suppose $y = \0$. Let $\epsilon>0$. Since $u_{\gamma}(\0) = 0$ and $u_{\gamma}$ is u.s.c., there exists an open neighborhood $U$ of $\0$ in $E$ such that for all $s$ in $U$, $u_{\gamma}(s) \leq \epsilon$. Then for all $(t,s)$ in the open set $E^p \times U$, $u_{\gamma}^{p+1}(t,s) \leq u_{\gamma}^p(t) + u_{\gamma}(s) \leq u_{\gamma}^{p}(t) + \epsilon$. Since $\epsilon$ was arbitrary, we obtain that $u_{\gamma}^{p+1}(x,\0) \leq u_{\gamma}^{p}(x)$. Using the induction hypothesis for $\H^{\times p}$, we conclude that $u_{\gamma}^{p+1}(x,\0) < a$.

For any point $(x,y)$ in $E^{p+1}$ with $y \neq \0$, we have already shown that $u_{\gamma}^{p+1}(x,y) = u_{\gamma}(y)$ for all $\gamma$. For any point of the form $(x,\0)$, we have shown that $u_{\al}^{p+1}(x,\0) \leq a$. Furthermore, by upper-semicontinuity of $u_{\al}^{p+1}$, we have that
\begin{equation*}
u_{\al}^{p+1}(x,\0) \geq \limsup_{y \rightarrow \0} u_{\al}^{p+1}(x,y) = \limsup_{y \rightarrow \0} u_{\al}(y) = u_{\al}(\0)  = a.
\end{equation*}
Thus $u_{\al}^{p+1}(x,\0)= a$ for all points of the form $(x,\0)$. This fact, in combination with the fact that $u_{\gamma}^{p+1}(x,y) = u_{\gamma}(y) \leq a$ for $y \neq \0$ and all $\gamma$, immediately implies that $u_{\al+\gamma}^{p+1}(x,\0) = u_{\gamma}^p(x)+a$ for all $x$ in $E^p$. Then induction gives statements (3)-(5).
\end{proof}

\begin{defn}
For the rest of this work, we let $\H^p$ denote the \textbf{renormalized product} of $\H$ taken $p$ times: if $\H^{\times p} = (h_k^{\times p})$, then let $\H^p = (h_k^p) = (\frac{1}{p} h_k^{\times p})$.
\end{defn}

Now we discuss more general products than just powers of the same candidate sequence. We will only consider products of marked spaces. Let $x$ be a point in the product space $(E_N \times \dots \times E_1, \0)$, where $\0 = (\0_N, \dots, \0_1)$. Let $\pi_i$ be projection onto $E_i$. Then define the function
\begin{equation*}
\ind(x) = \left\{
\begin{array}{ll}
\min\{ i : \pi_i(x) \neq \0_i \} & \text{ if } x\neq \0 \\
N & \text{ if } x = \0.
\end{array} \right.
\end{equation*}
Also, let $\eta_i(x_N, \dots, x_1) = (x_N, \dots, x_i)$. Note that with these notations, if $(h_k) = \H_N \times \dots \times \H_1 $, then $h_k(x) = h_k^{\H_{\ind(x)}}(\pi_{\ind(x)}(x))$ for all $x$.


\begin{lem}[Product Lemma] \label{prodLemma}
Let $\al$ be any non-zero countable ordinal, and let $\al = \o^{\beta_1} m_1 + \dots + \o^{\beta_N} m_N$ be the Cantor Normal Form of $\al$. Let $a>0$ be a real number, and suppose $a_1 > \dots > a_N > 0$ such that
\begin{equation*}
\sum_{i=1}^N a_i = a,
\end{equation*}
and for each $j = 1, \dots, N-1$,
\begin{equation} \label{aiCondition}
\frac{a_j}{m_j} \geq \sum_{i=j+1}^N a_i.
\end{equation}
(Note that for any $a>0$, such $a_1, \dots, a_N$ exist.) Now suppose that for each $j$ in $\{1, \dots, N\}$, $\F_j$ is a candidate sequence on $(E_j,\0_j)$ such that
\renewcommand{\labelenumi}{(\roman{enumi})}
\begin{enumerate}
 \item $||u_{\gamma}^{\F_j}|| \leq a_j$  for all $\gamma$, and $||u_{\gamma}^{\F_j}|| < a_j$  for $\gamma < \o^{\beta_j}$;
 \item $u_{\gamma}^{\F_j}(\0_j) = 0$, for all $\gamma < \o^{\beta_j}$;
 \item $u_{\o^{\beta_j}}^{\F_j}(\0_j)=a_j$;
 \item $\al_0(x) \leq \o^{\beta_j}$, for all $x \neq \0_j$;
 \item $\al_0(\0_j) = \o^{\beta_j}$.
\end{enumerate}
Denote $\H_j = \F_j^{m_j}$ and $\al_j = \o^{\beta_j} m_j$. Then the product $\H_N \times \dots \times \H_1$ satisfies
\renewcommand{\labelenumi}{(\arabic{enumi})}
\begin{enumerate}
 \item $||u_{\gamma}|| \leq  a$ for all $\gamma$, and $||u_{\gamma}|| <  a$ for $\gamma < \al$;
 \item $\al_0(x) \leq \al$, for all $x \neq \0$;
 \item $\al_0(\0) = \al$, and $u_{\al_0}(\0) = a$. In particular, $\al_0(\H_N \times \dots \times \H_1) = \al$.
\end{enumerate}
\end{lem}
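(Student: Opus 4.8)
The plan is to induct on the number $N$ of terms in the Cantor Normal Form of $\al$, peeling off the leading term $\al_1=\o^{\beta_1}m_1$ at each stage, using the Powers Lemma (Lemma~\ref{powerLem}) to handle the factors $\H_j=\F_j^{m_j}$ and the associativity of the product to reorganize $\H_N\times\cdots\times\H_1$. Applying the Powers Lemma to $\F_j$ with $p=m_j$, the limit ordinal $\o^{\beta_j}$ and the constant $a_j$, and renormalizing by $1/m_j$, shows that each $\H_j$ satisfies: $\|u_\gamma^{\H_j}\|\le a_j$ for all $\gamma$ with $\|u_\gamma^{\H_j}\|<a_j$ for $\gamma<\al_j:=\o^{\beta_j}m_j$; $\al_0^{\H_j}(x)\le\al_j$ for every $x$; and at the marked point of $\H_j$ one has the ``staircase'' $u_\gamma^{\H_j}=\tfrac{\ell a_j}{m_j}$ for $\o^{\beta_j}\ell\le\gamma<\o^{\beta_j}(\ell+1)$, $\ell=0,\dots,m_j$, with pointwise order of accumulation there equal to $\al_j$. (If $\beta_N=0$ then $\o^{\beta_N}=1$ is not a limit ordinal, so the Powers Lemma does not apply to $\F_N$ verbatim; the same conclusions follow from a routine variant of its proof, in which the successor step bounds the envelope $\widetilde{u_\gamma^{\times(p+1)}+\tau_k}$ at a slice point by the maximum of the two single-factor envelopes instead of invoking the limit-ordinal clause. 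This only affects the base case below.)

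For $N=1$ we have $\H_N\times\cdots\times\H_1=\H_1=\F_1^{m_1}$, and (1)--(3) are precisely the Powers Lemma conclusions just recorded. For the inductive step, write $\al=\al_1+\al''$ with $\al''=\o^{\beta_2}m_2+\cdots+\o^{\beta_N}m_N$ and $a''=a-a_1=a_2+\cdots+a_N$. The hypotheses (i)--(v) on $\F_2,\dots,\F_N$ are unchanged and the inequalities (\ref{aiCondition}) for $j=2,\dots,N-1$ are among those assumed, so the inductive hypothesis applies and gives that $\mathcal{K}:=\H_N\times\cdots\times\H_2$ satisfies (1)--(3) with $\al''$ and $a''$ in place of $\al$ and $a$; in particular $\|u_\gamma^{\mathcal{K}}\|\le a''$ (strictly for $\gamma<\al''$), $\al_0^{\mathcal{K}}(x)\le\al''$ off the marked point $\0_{\mathcal{K}}$, and $\al_0^{\mathcal{K}}(\0_{\mathcal{K}})=\al''$ with $u_{\al''}^{\mathcal{K}}(\0_{\mathcal{K}})=a''$. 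By associativity, $\H_N\times\cdots\times\H_1=\mathcal{K}\times\H_1=:\mathcal{H}$, a product of marked spaces in which the first factor $\mathcal{K}$ occupies the slice where the $\H_1$-coordinate is $\0_{\H_1}$ and the second factor $\H_1$ is active off that slice; as in the Powers Lemma it is the off-slice factor whose ascent comes first in the ordinal ordering, so we should expect $\H_1$ to climb $\al_1$ steps to value $a_1$ and then $\mathcal{K}$ to climb $\al''$ further steps to value $a''$ on top of it.

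The core of the proof is to compute the transfinite sequence of $\mathcal{H}$ by transfinite induction on $\gamma$. Off the slice, $\mathcal{H}$ is locally the lift of $\H_1$ along the projection to the $\H_1$-coordinate, so (exactly as in the Powers Lemma proof) $u_\gamma^{\mathcal{H}}(x,y)=u_\gamma^{\H_1}(y)$ there for every $\gamma$; these values never exceed $a_1<a$ and have pointwise order of accumulation at most $\al_1\le\al$. On the slice one proves
\[
u_\gamma^{\mathcal{H}}(x,\0_{\H_1}) \;=\; u_\gamma^{\H_1}(\0_{\H_1})+u_{\gamma'}^{\mathcal{K}}(x) \;=\; \tfrac{\ell a_1}{m_1}+u_{\gamma'}^{\mathcal{K}}(x),
\]
where $\ell$ is the number of full copies of $\o^{\beta_1}$ fitting inside $\gamma$ (capped at $m_1$) and $\gamma'$ is the remaining ordinal; each successor and limit step of the induction combines the on-slice contribution (a step of $\mathcal{K}$'s own recursion, shifted by the constant $u_\gamma^{\H_1}(\0_{\H_1})$) with the off-slice contribution $\limsup_{y\to\0_{\H_1}}(\cdots)$. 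This is the main obstacle. The delicate point is the behavior at the ``landings'' $\gamma=\o^{\beta_1}\ell$, which are limit ordinals because $\beta_1\ge1$: there the u.s.c.\ envelope in the limit clause pulls the off-slice contribution up to $u_{\o^{\beta_1}\ell}^{\H_1}(\0_{\H_1})=\tfrac{\ell a_1}{m_1}$ uniformly in $x$, and one must verify that this value absorbs the accumulated on-slice contribution, which (using $\al_0^{\mathcal{K}}(x)\le\al''<\o^{\beta_1}$) is at most $\tfrac{(\ell-1)a_1}{m_1}+u_{\al''}^{\mathcal{K}}(x)\le\tfrac{(\ell-1)a_1}{m_1}+a''\le\tfrac{\ell a_1}{m_1}$ --- and this last inequality is exactly hypothesis (\ref{aiCondition}) for $j=1$, namely $\tfrac{a_1}{m_1}\ge a''$. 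Thus $\mathcal{K}$'s progress is reset at each landing and resumes on top of the new constant, and past $\gamma=\al_1$ the apex of $\H_1$ has frozen at $a_1$, so the formula becomes $u_{\al_1+\gamma'}^{\mathcal{H}}(x,\0_{\H_1})=a_1+u_{\gamma'}^{\mathcal{K}}(x)$.

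Conclusions (1)--(3) then read off from this formula. For (1): on the slice the value is at most $\tfrac{(m_1-1)a_1}{m_1}+a''\le a_1<a$ when $\gamma<\al_1$, and equals $a_1+u_{\gamma'}^{\mathcal{K}}(x)\le a_1+a''=a$ for $\gamma\ge\al_1$, strictly less than $a$ whenever $\gamma'<\al''$, i.e.\ whenever $\gamma<\al$; combined with the off-slice bound $\le a_1<a$ and the fact that u.s.c.\ functions attain their suprema, this gives $\|u_\gamma^{\mathcal{H}}\|\le a$ always and $<a$ for $\gamma<\al$. For (2): at a slice point $(x,\0_{\H_1})$ with $x\ne\0_{\mathcal{K}}$ the formula is eventually constant in $\gamma$ once $\gamma'\ge\al_0^{\mathcal{K}}(x)$, so $\al_0^{\mathcal{H}}(x,\0_{\H_1})\le\al_1+\al_0^{\mathcal{K}}(x)\le\al_1+\al''=\al$, while off the slice $\al_0^{\mathcal{H}}\le\al_1\le\al$. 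For (3): at $\0=(\0_{\mathcal{K}},\0_{\H_1})$ one gets $u_\al^{\mathcal{H}}(\0)=a_1+u_{\al''}^{\mathcal{K}}(\0_{\mathcal{K}})=a_1+a''=a$ while $u_\gamma^{\mathcal{H}}(\0)<a$ for all $\gamma<\al$, so $\al_0^{\mathcal{H}}(\0)=\al$ and $u_{\al_0}^{\mathcal{H}}(\0)=a$; and the identity $u_\al^{\mathcal{H}}=u_{\al+1}^{\mathcal{H}}$ (checked pointwise from the formula, using $u_{\al''}^{\mathcal{K}}=u_{\al''+1}^{\mathcal{K}}$ and $u_\al^{\H_1}=u_{\al+1}^{\H_1}$) yields $\al_0(\mathcal{H})=\al$, completing the induction.
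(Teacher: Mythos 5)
Your proof is correct and follows the same inductive strategy as the paper: peel off the leading term of the Cantor Normal Form, apply the Powers Lemma to $\H_1 = \F_1^{m_1}$, and use hypothesis (\ref{aiCondition}) to show that the off-slice ($\H_1$) contribution dominates the accumulated on-slice ($\mathcal{K}$) contribution at each landing $\o^{\beta_1}\ell$, so that $\mathcal{K}$'s ascent stacks cleanly on top of $a_1$ past $\al_1$. Your observation that the Powers Lemma's limit-ordinal hypothesis is not met in the base case $N=1$ with $\beta_1 = 0$ (so $\o^{\beta_1} = 1$) is a valid point the paper's one-line base case glosses over, and the variant you sketch---bounding the envelope at a slice point by the maximum of the two single-factor envelopes at the successor step---is a sound fix.
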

\begin{proof}
The proof proceeds by induction on $N$. The case $N=1$ follows from (i)-(v). Now we assume that $N>1$ and the statement holds for $N-1$, and we show that it holds for $N$.

Let $\H_N \times \dots \times \H_1 = (h_k)$ be as above, with $h = \lim_k h_k$, and let $\H_{N} \times \dots \times \H_2 = (h'_k)$ with $h'= \lim_k h'_k$. By the definition of the product candidate sequence, we observe that $(h-h_k)(x) \leq (h'-h'_k)(\eta_2(x)) + (h^1 - h^1_k)(\pi_1(x))$. It follows that $u_{\al}(x) \leq u_{\al}^{\H_{N} \times \dots \times \H_2}(\eta_2(x)) + u_{\al}^{\H_1}(\pi_1(x))$ for all $x$ in $E$ and $\al$.

Let $x$ be in $E$. Then there exists an open neighborhood $U$ in $E$ such that for all $y$ in $U$, $\ind(y) \leq \ind(x)$.

If $\ind(x) = 1$, the existence of the neighborhood $U$ implies that $u_{\gamma}^{\H}(x) = u_{\gamma}^{\H_1}(\pi_1(x))$ for all $\gamma$.

Now we prove that for $\gamma < \o^{\beta_1}$ and $x$ such that $\ind(x) > 1$, we have $u_{\gamma}^{\H}(x) \leq u_{\gamma}^{\H_N \times \dots \times \H_2}(\eta_2(x))$. Since $\F_1$ satisfies the hypotheses $(i)-(v)$, we may apply Lemma \ref{powerLem} and conclude that $\H_1$ satisfies conclusions (1)-(5) in Lemma \ref{powerLem}. Now let $\gamma < \o^{\beta_1}$ and let $x$ be in $E$ with $\ind(x) >1$. By conclusion (4) in Lemma \ref{powerLem} applied to $\H_1$, $u_{\gamma}^{\H_1}(\0_1) = 0$. Then for any $\epsilon >0$, using that $u_{\gamma}^{\H_1}$ is u.s.c., there exists an open neighborhood $V$ of $x$ such that for all $y$ in $V$, $u_{\gamma}^{\H}(y) \leq u_{\gamma}^{\H_N \times \dots \times \H_2}(\eta_2(y)) + \epsilon$. Since $\epsilon >0$ was arbitrary, we have the desired inequality.

By the induction hypothesis on $N-1$ applied to $\H_N \times \dots \times \H_2$, we have $\sup_{\gamma < \o^{\beta_1}} u_{\gamma}^{\H_N \times \dots \H_2}(\eta_2(x)) \leq \sum_{j=2}^N a_j$. By conclusion (2) in Lemma \ref{powerLem} applied to $\H_1$, $||u_{\o^{\beta_1}}^{\H_1}|| \leq \frac{a_1}{m_1}$. Hence, for $x$ in $E$,
\begin{equation*}
u_{\o^{\beta_1}}^{\H}(x) = \widetilde{ \bigl(\sup_{\gamma < \o^{\beta_1}} u_{\gamma}^{\H} \bigr)}(x) \leq \max \Bigl( \frac{a_1}{m_1}, \, \sum_{j=2}^{N} a_j \Bigr) \leq \frac{a_1}{m_1}.
\end{equation*}
Then by upper semi-continuity of $u_{\o^{\beta_1}}^{\H}$, we have that for any $x$ with $\ind(x) >1$,
\begin{equation*}
u_{\o^{\beta_1}}^{\H}(x) \geq  \limsup_{\substack{y \rightarrow x \\ \ind(y)=1 }} u_{\o^{\beta_1}}^{\H}(y) = \limsup_{\substack{y \rightarrow x \\ \ind(y)=1 }} u_{\o^{\beta_1}}^{\H_1}(\pi_1(y)) = u_{\o^{\beta_1}}^{\H_1}(\0_1) = \frac{a_1}{m_1}.
\end{equation*}
We conclude that for any $x$ with $\ind(x)>1$, $u_{\o^{\beta_1}}^{\H}(x) = \frac{a_1}{m_1}$. By sub-additivity (Proposition \ref{UisZero}), we have that $u_{\o^{\beta_1}m_1}^{\H}(x) \leq a_1$. By upper semi-continuity, for all $x$ with $\ind(x)>1$,
\begin{equation*}
u_{\o^{\beta_1}m_1}^{\H}(x) \geq \limsup_{\substack{y \rightarrow x \\ \ind(y)=1 }} u_{\o^{\beta}m_1}^{\H_1}(\pi_1(y)) = u_{\o^{\beta}m_1}^{\H_1}(\0_1) = a_1.
\end{equation*}
It follows that $u_{\o^{\beta_1}m_1}^{\H}(x) = a_1$ for all $x$ with $\ind(x) >1$, and then $u_{\o^{\beta_1}m_1+ \gamma}^{\H}(x) = a_1 + u_{\gamma}^{\H_N \times \dots \times \H_2}(\eta_2(x))$ for all $x$ with $\ind(x) >1$ and all $\gamma$. Now with the induction hypothesis on $N-1$ applied to $\H_N \times \dots \times \H_2$, the properties (1)-(3) follow immediately.
\end{proof}

We end this section by stating the semi-continuity properties of these new candidate sequences.

\renewcommand{\labelenumi}{(\arabic{enumi})}
\begin{prop}
\begin{enumerate}
 \item If $\H_k$ is a sequence of u.s.c.d. candidate sequences and $||h^k|| \rightarrow 0$, then $\H = \coprod \H_k$ is a u.s.c.d. candidate sequence.
 \item If $\H_1$ and $\H_2$ are u.s.c.d. candidate sequences and $(\lim\H_2)(\0_2)=0$, then $\H = \H_1 \times \H_2$ is a  u.s.c.d. candidate sequence.
 \item If $\H$ is a u.s.c.d. candidate sequence on $E$ and $F$ is closed subset of $E$, then $\H|_F$ is a u.s.c.d. candidate sequence.
 \item If $\H$ is a u.s.c.d. candidate sequence on $E$ and $^{\pi} \H$ is the lift of $\H$ to $F$, where $\pi : F \rightarrow E$ is a continuous surjection, then $^{\pi} \H$ is a u.s.c.d. candidate sequence.
\end{enumerate}
\end{prop}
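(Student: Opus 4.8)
The plan is to verify the four assertions one at a time; in each case the candidate-sequence axioms (each term is a non-negative real-valued function, the sequence is non-decreasing in the index, the zeroth term vanishes, and the sequence converges pointwise) are immediate from the defining formulas, so the only content is upper semicontinuity of the differences of the new sequence. For (3) and (4) this is essentially automatic: the differences of $\H|_F$ are the restrictions $(h_{k+1}-h_k)|_F$ of u.s.c.\ functions, which are u.s.c.\ in the subspace topology, and the differences of ${}^{\pi}\H$ are the compositions $(h_{k+1}-h_k)\circ\pi$ of a u.s.c.\ function with a continuous map, which are u.s.c.\ by Remark \ref{pushDownPullUp}(4). So the real work is in (1) and (2).

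For (1), write $\coprod\H_k=(f_\ell)_\ell$, with $\H_k=(h^k_\ell)_\ell$ and $h^k=\lim_\ell h^k_\ell$. On $E_k$ the difference $f_{\ell+1}-f_\ell$ equals the u.s.c.\ function $h^k_{\ell+1}-h^k_\ell$, and since each $E_k$ is clopen in $E$ this shows $f_{\ell+1}-f_\ell$ is u.s.c.\ on the open set $E\setminus\{\0\}$; the only remaining point is $\0$, where $(f_{\ell+1}-f_\ell)(\0)=0$. Here I would use that the sets $\{\0\}\cup\bigcup_{k>N}E_k$ form a neighborhood basis of $\0$ and that $0\le f_{\ell+1}-f_\ell\le h^k_{\ell+1}\le h^k\le\|h^k\|$ on $E_k$, which gives $\limsup_{x\to\0}(f_{\ell+1}-f_\ell)(x)\le\sup_{k>N}\|h^k\|$ for every $N$; the hypothesis $\|h^k\|\to0$ then forces this $\limsup$ to be $0=(f_{\ell+1}-f_\ell)(\0)$, so $f_{\ell+1}-f_\ell$ is u.s.c.\ at $\0$.

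For (2), write $\H_1=(f_\ell)_\ell$, $\H_2=(g_\ell)_\ell$, and $\H=\H_1\times\H_2=(h_\ell)_\ell$. On the open set $E_1\times(E_2\setminus\{\0_2\})$ one has $h_{\ell+1}-h_\ell=(g_{\ell+1}-g_\ell)\circ\pi_2$, which is u.s.c.\ there, so it remains to check u.s.c.\ at a point $(x,\0_2)$, where $(h_{\ell+1}-h_\ell)(x,\0_2)=f_{\ell+1}(x)-f_\ell(x)$. The hypothesis $(\lim\H_2)(\0_2)=0$, together with $g_0(\0_2)=0$ and monotonicity, forces $g_\ell(\0_2)=0$ for every $\ell$, hence $g_{\ell+1}(\0_2)-g_\ell(\0_2)=0$. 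I would then take an arbitrary sequence $(s_j,t_j)\to(x,\0_2)$ and split it into the indices with $t_j=\0_2$, along which $h_{\ell+1}-h_\ell$ equals $f_{\ell+1}(s_j)-f_\ell(s_j)$ so that $\limsup\le f_{\ell+1}(x)-f_\ell(x)$ by u.s.c.\ of $f_{\ell+1}-f_\ell$, and the indices with $t_j\ne\0_2$, along which $h_{\ell+1}-h_\ell$ equals $g_{\ell+1}(t_j)-g_\ell(t_j)$ so that $\limsup\le g_{\ell+1}(\0_2)-g_\ell(\0_2)=0\le f_{\ell+1}(x)-f_\ell(x)$ by u.s.c.\ of $g_{\ell+1}-g_\ell$ and nonnegativity of $f_{\ell+1}-f_\ell$. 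Combining the two parts yields $\limsup_{(s,t)\to(x,\0_2)}(h_{\ell+1}-h_\ell)(s,t)\le(h_{\ell+1}-h_\ell)(x,\0_2)$, which completes (2).

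I do not expect a genuine obstacle: the proof is a string of elementary verifications. The only mildly delicate points are the two $\limsup$ computations at the marked points, where one must identify exactly where the hypotheses $\|h^k\|\to0$ (in (1)) and $(\lim\H_2)(\0_2)=0$ (in (2)) are used --- precisely to kill the contribution of sequences approaching the marked point ``through the $E_k$'s'' and ``through $E_2\setminus\{\0_2\}$'', respectively.
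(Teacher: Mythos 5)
Your proof is correct and follows the same approach as the paper's (one-line) proof: upper semicontinuity of the differences is inherited away from the marked point (or on $E\setminus\ex(K)$-type complements) by clopenness/continuity, and the hypotheses $\|h^k\|\to 0$ (resp.\ $(\lim\H_2)(\0_2)=0$) are used exactly to control the $\limsup$ at $\0$ (resp.\ $(x,\0_2)$). You have simply supplied the elementary $\limsup$ estimates that the paper leaves to the reader.
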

\begin{proof}
(1) The condition $||h^k|| \rightarrow 0$ implies that $\H$ has u.s.c. differences at $\0$ for all $k$. \\
(2) Because $\H_1$ is u.s.c.d., the condition $(\lim\H_2)(\0_2)=0$ implies that $\H$ has u.s.c. differences at $(x,\0_2)$ for all $x$ and $k$. \\
(3) The restriction of any u.s.c. function to a subset is also u.s.c. \\
(4) The lift of any u.s.c. function under a continuous map is also u.s.c. \\
\end{proof}

\subsection{Choquet Simplices and Candidate Sequences} \label{simplices}

The relevant chapters of \cite{Ph} provide a good reference for most of the basic facts about simplices required in this work.

Let $K$ be a metrizable, compact, convex subset of a locally convex topological vector space. Then the extreme points of $K$, $\ex(K)$, form a non-empty $G_{\delta}$ subset of $K$. We call a function $f: K \rightarrow \R$ \textbf{affine} (resp. \textbf{convex, concave}) if $f(tx + (1-t)y) = tf(x) + (1-t)f(y)$ (resp. $\leq, \geq$) for all $x$ and $y$ in $K$ and all $t$ in $[0,1]$.
\begin{defn}
Let $K$ be a compact, convex subset of a locally convex topological vector space. Then $K$ is a \textbf{Choquet simplex} if the dual of the continuous affine functions on $K$ is a lattice.
\end{defn}

For any Polish space $E$, let $\mathcal{M}(E)$ be the space of all Borel probabilities on $E$ with the weak* topology. If $E$ is compact, then $\mathcal{M}(E)$ is a Choquet simplex, with the extreme points given by the point measures.

\begin{defn}
Let $K$ be a Choquet simplex. Then we define the \textbf{barycenter map}, $\bary : \mathcal{M}(K) \rightarrow K$, to be the function given for each $\mu$ in $\M(K)$ by
\begin{equation*}
\bary(\mu) = \int y \medspace d\mu(y),
\end{equation*}
where the integral means that for all continuous, affine functions $f: K \rightarrow \R$,
\begin{equation*}
f(\bary(\mu)) = \int_K f \medspace d\mu.
\end{equation*}
\end{defn}
The barycenter map is well-defined, continuous, affine, and surjective (see \cite{Ph}).

If $K$ is a metrizable Choquet simplex, then a function $f: K \rightarrow \R$ is called \textbf{harmonic} (resp. sub-harmonic, sup-harmonic) if for all $\mu$ in $\M(K)$,
\begin{equation*}
f(\bary(\mu))  = \int\limits_{\ex(K)} f \; d\mu,
\end{equation*}
(resp. $\leq, \geq$). A harmonic (resp. sub-harmonic, sup-harmonic) function is always affine (resp. convex, concave), but an affine (resp. convex, concave) function need not be harmonic (resp. sub-harmonic, sup-harmonic). On the other hand, a continuous affine (resp. convex, concave) function is always harmonic (resp. sub-harmonic, sup-harmonic). Furthermore, by standard arguments, any u.s.c. affine (resp. concave) function is harmonic (resp. sup-harmonic). It is shown in the proof of Fact \ref{fonEharIsUSC} (see Appendix B, Section \ref{proofAppendix}) that any u.s.c. convex function is sub-harmonic.

In the metrizable case, Choquet proved the following characterization of Choquet simplices.
\begin{thm}[Choquet] \label{SimplexThm}
Let $K$ be a metrizable, compact, convex subset of a locally convex topological vector space. Then $K$ is a Choquet simplex if and only if for each point $x$ in $K$, there exists a unique Borel probability measure $\P_x$ on $\ex(K)$ such that for every continuous affine function $f : K \rightarrow \R$,
\begin{equation*}
f(x) = \int\limits_{\ex(K)} f \; d\P_x.
\end{equation*}
\end{thm}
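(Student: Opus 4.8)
This is a classical theorem of Choquet (its uniqueness content being the Choquet--Meyer theorem), and I would obtain it from the standard machinery of maximal measures in the Choquet ordering, as developed in the relevant chapters of \cite{Ph}. For $f\in C(K)$ write $\widehat f$ for the \emph{concave upper envelope}, $\widehat f(x)=\inf\{\,g(x):g\ \text{continuous affine},\ g\ge f\,\}$; this is a concave u.s.c.\ majorant of $f$, with dual formula $\widehat f(x)=\sup\{\int_K f\,d\mu:\mu\in\M(K),\ \bary(\mu)=x\}$. Order $\M(K)$ by the Choquet order: $\mu\succ\nu$ iff $\int f\,d\mu\ge\int f\,d\nu$ for every continuous convex $f$ on $K$. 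The argument has an \emph{existence} half --- valid for every metrizable compact convex $K$ --- and an \emph{equivalence} half, in which the lattice hypothesis enters; together they yield both implications of the theorem.

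\emph{Existence.} Fix $x\in K$ and let $M_x=\{\mu\in\M(K):\bary(\mu)=x\}$, a nonempty ($\delta_x\in M_x$) weak* compact convex set. Every $\succ$-chain in $M_x$ has an upper bound by weak* compactness, so Zorn's lemma produces a $\succ$-maximal $\P_x\in M_x$. Build a strictly convex $g\in C(K)$ from a dense sequence $(a_n)$ in the unit ball of the continuous affine functions by $g=\sum_n 2^{-n}a_n^2$; maximality of $\P_x$ forces $\int(\widehat g-g)\,d\P_x=0$, hence $\widehat g=g$ $\P_x$-a.e., and strict convexity gives $\{g=\widehat g\}\subseteq\ex(K)$, so $\P_x(\ex(K))=1$ --- using that $\ex(K)$ is a $G_\delta$, hence Borel, subset of the metrizable $K$. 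Evaluating $f(\bary(\P_x))=\int f\,d\P_x$ on continuous affine $f$ gives the required integral representation. Conversely, any $\mu\in M_x$ with $\mu(\ex(K))=1$ is automatically $\succ$-maximal, since $\widehat f=f$ on $\ex(K)$ for every $f\in C(K)$ (an extreme point $e$ has $\delta_e$ as its unique representing measure, so the dual formula forces $\widehat f(e)=f(e)$). Hence, in the metrizable setting, ``representing measure concentrated on $\ex(K)$'' and ``maximal representing measure'' coincide, and the theorem reduces to: \emph{$K$ is a simplex if and only if every $x$ has a unique maximal representing measure.}

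\emph{Equivalence.} On the abstract side, $K$ is a simplex precisely when the cone $P=\{ty:t\ge 0,\ y\in K\}$ (with $K$ placed at height $1$ in the product of the ambient space with $\R$) is a lattice in its induced order, equivalently when $A(K)^*$ has the Riesz interpolation property. The link to measures is the Choquet--Meyer envelope criterion: $K$ is a simplex iff the concave upper envelope is additive, $\widehat{f_1+f_2}=\widehat{f_1}+\widehat{f_2}$ for all $f_1,f_2\in C(K)$ (the inequality $\le$ always holds, as a sum of affine majorants is an affine majorant). Granting this, uniqueness of the maximal measure follows: given maximal $\mu_1,\mu_2\in M_x$, additivity of the envelope lets one produce $\nu\in M_x$ with $\nu\succ\mu_1$ and $\nu\succ\mu_2$; maximality of each $\mu_i$ forces $\nu\succ\mu_i$ to be equality in the Choquet order, and since $\nu,\mu_1,\mu_2$ are all carried by $\ex(K)$ in the metrizable case, Choquet-order equality upgrades to equality of measures, so $\mu_1=\nu=\mu_2$. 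For the converse, if every $x$ has a unique maximal representing measure and the envelope failed to be additive at some $x$, one could manufacture two distinct maximal measures over $x$ by separately maximizing $\int f_1\,d\mu$ and $\int f_2\,d\mu$ over $M_x$, a contradiction; the envelope identity then translates back into the Riesz interpolation property of $A(K)^*$, i.e.\ into $K$ being a simplex.

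\emph{Main obstacle.} The technical heart is the equivalence half: proving the Choquet--Meyer envelope criterion and extracting uniqueness of maximal measures from it. This rests on Riesz-decomposition arguments in the cone $P$ and on a careful calculus of the Choquet order on $\M(K)$ --- the lattice operations on measures, and the translation between order relations among measures and among continuous affine functionals --- and it is developed over several sections of \cite{Ph}; I would cite it rather than reproduce it. The existence half is, by contrast, a routine Zorn's-lemma argument once the strictly convex function is in hand, and it is the metrizability of $K$ (via the $G_\delta$-ness of $\ex(K)$) that permits the clean formulation in terms of Borel probability measures on $\ex(K)$, in place of the Baire-theoretic formulation needed in the non-metrizable case.
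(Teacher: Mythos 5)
The paper does not prove this theorem: it is stated in Section~\ref{simplices} as a classical result of Choquet (with \cite{Ph} as the general reference for facts about simplices), and no proof is offered. So there is no internal argument to compare against. Your proposal is the standard treatment found in Phelps --- the existence half via Zorn's lemma and a strictly convex function $g=\sum_n 2^{-n}a_n^2$ forcing maximal measures onto the Borel $G_\delta$ set $\ex(K)$, and the equivalence half via the Choquet--Meyer additivity criterion $\widehat{f_1+f_2}=\widehat f_1+\widehat f_2$ --- and it is essentially correct as a sketch; citing Phelps for the equivalence half is entirely reasonable given the length of that argument. Two small points of imprecision, neither fatal to a sketch that defers to the literature: in the uniqueness step you produce a common Choquet-order upper bound $\nu$ of $\mu_1,\mu_2$ and then conclude $\mu_1=\nu=\mu_2$ from maximality; the cleaner route (and the one Phelps follows) is to note that for a maximal $\mu$ over $x$ and $f$ continuous convex, Mokobodzki's criterion plus affineness of $\widehat f$ gives $\int f\,d\mu=\widehat f(x)$, pinning down $\mu$ on the dense linear span of convex functions, and this avoids having to argue that $M_x$ is upward directed. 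Also, the remark that ``since $\nu,\mu_1,\mu_2$ are all carried by $\ex(K)$\ldots Choquet-order equality upgrades to equality of measures'' attaches an unnecessary hypothesis: agreement against all continuous convex functions already determines a measure on any metrizable $K$, with no reference to $\ex(K)$. In the converse direction (``manufacture two distinct maximal measures\dots''), you should maximize over maximal representing measures rather than all of $M_x$, and justify that the maximizers can be taken maximal by dilating and using convexity of $f_i$; as written this step is vague, but again it is the part you explicitly delegate to \cite{Ph}.
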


\begin{defn}If $K$ is a metrizable Choquet simplex and $f: \ex(K) \rightarrow \R$ is measurable, the \textbf{harmonic extension} $f^{har} : K \rightarrow \R$ of $f$ is defined as follows: for $x$ in $K$, let
\begin{equation*}
f^{har}(x) = \int\limits_{\ex(K)} f \; d\P_x.
\end{equation*}
\end{defn}
\begin{rmk} \label{harmRemark}
Using Choquet's characterization of metrizable Choquet simplices, it is not difficult to show that if $f : K \to \R$ is a measurable function and for each $x$ in $K$,
\begin{equation*}
f(x) = \int f \; d\P_x,
\end{equation*}
then $f$ is harmonic. It follows that the harmonic extension of a function on $\ex(K)$ is, in fact, harmonic.
\end{rmk}

In the metrizable case, the following theorem of Choquet characterizes exactly which topological spaces appear as the set of extreme points of a Choquet simplex.
\begin{thm}[Choquet \cite{Cho}]  \label{choquetWeak}
The topological space $E$ is homeomorphic to the set of extreme points of a metrizable Choquet simplex if and only if E is a Polish space.
\end{thm}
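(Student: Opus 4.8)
\textbf{The ``only if'' direction} is the easy one. If $M$ is a metrizable Choquet simplex, then $M$ is a compact metrizable --- hence Polish --- space, and $\ex(M)$ is a $G_\delta$ subset of $M$, as recalled at the beginning of Section~\ref{simplices}. A $G_\delta$ subset of a Polish space is again Polish: it is separable and metrizable as a subspace, and completely metrizable by a classical result (Alexandrov's theorem). Hence $\ex(M)$ is Polish, and nothing beyond standard descriptive set theory is required.

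\textbf{For the ``if'' direction}, given a Polish space $E$, the goal is to construct a metrizable Choquet simplex $M$ with $\ex(M)$ homeomorphic to $E$. I would first dispose of the case in which $E$ is compact metrizable: then $M := \mathcal M(E)$ is a Bauer, hence Choquet, metrizable simplex with $\ex(M) = \{\delta_x : x \in E\} \cong E$, using the facts recalled in Section~\ref{simplices}. For the general case, I would reduce to a concrete model: every Polish space embeds as a $G_\delta$ subset of the Hilbert cube $Q = [0,1]^{\mathbb N}$, so we may assume $E = \bigcap_{n \ge 1} U_n$ with $U_n \subseteq Q$ open and decreasing, and fix continuous functions $f_n : Q \to [0, 2^{-n}]$ with $\{ f_n > 0 \} = U_n$, so that $x \in E$ if and only if $f_n(x) > 0$ for every $n$. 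The task then becomes: ``thicken'' the Bauer simplex $\mathcal M(Q)$ so that the point mass $\delta_x$ ceases to be an extreme point precisely when $x \notin E$, while no new extreme points are created and the subspace topology on the surviving set $\{\delta_x : x \in E\}$ is undisturbed.

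\textbf{The construction.} I would carry this out in one of two essentially equivalent frameworks. One possibility is to realize $M$ as the state space of a suitable closed, unital, point-separating subspace $A \subseteq C(\widehat Q)$, where $\widehat Q$ is obtained from $Q$ by ``splitting'' each point outside $E$ into a small segment whose size is controlled by the $f_n$; here $A$ must be chosen to be \emph{simplicial} (so that the state space really is a simplex), and the point is to arrange that the Choquet boundary of $A$ --- equivalently $\ex(M)$ --- is homeomorphic to $E$. Alternatively, one can present $M$ as an inverse limit $\varprojlim(\Delta_n, \phi_n)$ of finite-dimensional simplices, with the bonding maps $\phi_n : \Delta_{n+1} \to \Delta_n$ chosen so that the limit is again a Choquet simplex and so that the combinatorics of $(\Delta_n, \phi_n)$ records a refining sequence of finite approximations to the pair $(Q, E)$; one then determines which threads $(x_n) \in \varprojlim \Delta_n$ are extreme and verifies that this set, with its subspace topology, is homeomorphic to $E$. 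In either framework the role of the functions $f_n$ is the same: the approximations must be fine enough near $E$ to recover its topology, yet coarse enough away from $E$ not to introduce spurious extreme points.

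\textbf{The main obstacle} is the combined requirement in the general case: the object one builds must be a genuine Choquet simplex (unique representing measures on the extreme boundary) \emph{and} its extreme boundary must carry exactly the topology of $E$, and these pull against one another --- shrinking the glued-in pieces helps recover the topology of $E$ but threatens the simplex structure and the control over spurious extreme points, while coarser pieces do the opposite. Getting the balance right via the auxiliary functions $f_n$, and then checking that the outcome does not depend on the choices made, is the heart of the argument; this is essentially Choquet's original proof, and a later, more streamlined proof of the realization statement is due to Haydon.
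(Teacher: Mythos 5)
Your ``only if'' direction is complete and correct: $\ex(M)$ is a $G_\delta$ subset of the compact metrizable space $M$, hence Polish by Alexandrov's theorem, and the compact-$E$ case of the ``if'' direction via the Bauer simplex $\mathcal{M}(E)$ is also fine. For the general ``if'' direction, however, what you have written is a description of what must be constructed, not a construction. You correctly reduce to $E$ a $G_\delta$ in the Hilbert cube and correctly identify the goal (make $\delta_x$ non-extreme exactly for $x \notin E$, without creating new extreme points or disturbing the topology on $\{\delta_x : x \in E\}$), but you then offer two ``frameworks'' --- a function-system/state-space picture over a split space $\widehat Q$, and an inverse-limit picture --- and carry out neither; you say explicitly that the balancing act via the $f_n$ is ``the heart of the argument'' and leave it there. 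That balancing act \emph{is} the theorem, so this is a genuine gap, not a proof.

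For orientation: the paper does not prove Theorem~\ref{choquetWeak} either --- it is cited to Choquet --- but the paper's Lemma~\ref{ConstructKLem} is a strengthened reworking of Haydon's proof, and that argument proceeds differently from either of your two frameworks. One fixes a metrizable compactification $T \supseteq E$, chooses a countable dense family $(w_k)$ of points in $E$ together with Borel functions $H_k : T \to [0,1]$ satisfying several compatibility conditions, defines the linear map
\begin{equation*}
\xi(\mu) \;=\; \mu - \sum_k \Bigl(\int H_k \, d\mu\Bigr)\,\epsilon_{w_k}
\end{equation*}
on signed measures, sets $\M = \xi(\SM(T\setminus E))$, and takes $K$ to be the image of the probability measures under the quotient $\SM(T) \to \SM(T)/\M$. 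The actual content is then proving that $\M$ is weak* closed, that $K$ is a Choquet simplex, that the induced map $\phi : T \to K$ is injective, and that $\ex(K) = \phi(E)$ with the right topology. None of those verifications appears in your sketch, and they cannot be waved away --- the weak* closedness of $\M$ in particular requires the careful choice of the $H_k$ (with shrinking supports and continuity on an exhausting sequence of compacta) that the statement of Lemma~\ref{ConstructKLem} records. If you want to complete the proof, the cleanest route is to specialize that lemma rather than start from either of your two frameworks.
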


The following fact is stated as Fact 2.5 in \cite{DM}, where there is a sketch of the proof. We include a proof as Appendix B (Section \ref{proofAppendix}) for the sake of completeness.

\begin{fact} \label{fonEharIsUSC}
Let $K$ be a metrizable Choquet simplex, and let $f: K \rightarrow [0, \infty)$ be convex and u.s.c. Then $(f|_{\ex(K)})^{har}$ is u.s.c.
\end{fact}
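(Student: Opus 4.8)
The plan is to identify the function $(f|_{\ex(K)})^{har}$ with the \emph{concave upper envelope} of $f$, namely $\bar f := \inf\{\ell : \ell \text{ continuous affine on } K,\ \ell \geq f\}$. Since $f$ is u.s.c.\ on the compact space $K$ it is bounded above, so $\bar f$ is a finite, nonnegative function; as an infimum of continuous functions $\bar f$ is u.s.c.\ (Definition \ref{USCdef}), and as an infimum of affine functions it is concave. Everything therefore reduces to proving $(f|_{\ex(K)})^{har} = \bar f$.

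I would establish this in three steps. First, \emph{$\bar f$ is affine}: on a metrizable Choquet simplex the upper envelope of a bounded convex function is affine, a standard consequence of the simplex property (see \cite{Ph}; the form needed here, for a u.s.c.\ convex function, is also recorded in \cite{DM}). Granting this, $\bar f$ is u.s.c.\ and affine, hence harmonic (see Subsection \ref{simplices}). Second, \emph{$\bar f = f$ on $\ex(K)$}: one always has $\bar f \geq f$, and if $g_n \downarrow f$ with each $g_n$ continuous (Definition \ref{USCdef}), then $\bar f \leq \overline{g_n}$ by monotonicity of the concave envelope, while for an extreme point $e$ the only probability measure with barycenter $e$ is $\delta_e$, so the standard formula $\overline{g}(x) = \sup\{\int g\,d\mu : \bary(\mu)=x\}$ for continuous $g$ gives $\overline{g_n}(e) = g_n(e)$; letting $n\to\infty$, $\bar f(e) \leq \inf_n g_n(e) = f(e)$. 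Third, \emph{combine the two}: since $\bar f$ is harmonic, $\bar f(x) = \int_{\ex(K)}\bar f\,d\P_x = \int_{\ex(K)} f\,d\P_x = (f|_{\ex(K)})^{har}(x)$ for all $x$, the middle equality by the second step; thus $(f|_{\ex(K)})^{har} = \bar f$ is u.s.c.

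The one delicate point in the bookkeeping is that $f$ is merely u.s.c., not continuous, which is exactly why the second step is routed through the approximating sequence $(g_n)$ rather than testing $\ell = f$ directly. The substantial input is the first step, where the simplex hypothesis --- equivalently the uniqueness clause of Theorem \ref{SimplexThm} --- is used; this is the step I expect to be the main obstacle for anyone wanting a self-contained argument in place of a citation. Finally, the displayed chain above also gives $f \leq \bar f = \int_{\ex(K)} f\,d\P_x$, which is the statement, quoted in the text just before this Fact, that u.s.c.\ convex functions are sub-harmonic. A proof of that which avoids the first step runs thus: taking $g_n \downarrow f$ continuous with $g_n \geq f \geq 0$, let $c_n$ be the convex lower envelope of $g_n$, i.e.\ the supremum of the continuous affine functions below $g_n$, so that $f \leq c_n \leq g_n$ (the first inequality because $f$ is itself a convex minorant of $g_n$); since $c_n$ is a supremum of harmonic functions, $\int_{\ex(K)} c_n\,d\P_x \geq c_n(x) \geq f(x)$, hence $\int_{\ex(K)} g_n\,d\P_x \geq f(x)$, and letting $n\to\infty$ yields $\int_{\ex(K)} f\,d\P_x \geq f(x)$.
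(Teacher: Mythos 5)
Your proposal takes a genuinely different route, but the crux, Step 1, is not established, and the gap is not cosmetic. Given your other steps, the claim that $\bar f$ is affine is \emph{logically equivalent} to the Fact itself: if $(f|_{\ex(K)})^{har}$ is u.s.c., then it is a u.s.c.\ affine function which (by your sub-harmonicity coda) dominates $f$, hence equals the infimum of its continuous affine majorants; these majorants also dominate $f$, so $(f|_{\ex(K)})^{har}\ge\bar f$; and the reverse inequality follows by integrating any continuous affine $\ell\ge f$ against $\P_x$ and using $\bar f=f$ on $\ex(K)$; thus $\bar f=(f|_{\ex(K)})^{har}$ is affine. Conversely your Step 3 derives the Fact from Step 1. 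Citing \cite{DM} for Step 1 is therefore circular --- their Fact 2.5 \emph{is} the Fact being proved. And the Choquet--Meyer characterization in \cite{Ph} is stated for \emph{continuous} convex $f$; the transfer to u.s.c.\ convex $f$ is not automatic, since the approximants $g_n\downarrow f$ from Definition \ref{USCdef} are continuous but need not be convex, so $\hat g_n$ need not be affine. You rightly flag Step 1 as ``the main obstacle,'' but as written the proposal reduces the Fact to an equivalent formulation without supplying the substantive argument.

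For contrast, the paper works with $G(x)=\sup\{\int f\,d\mu:\bary(\mu)=x\}$, which is u.s.c.\ by Remark \ref{pushDownPullUp}(3), and derives $G=(f|_{\ex(K)})^{har}$ by proving the sub-harmonicity inequality $f(\bary\mu)\le\int f\,d\mu$ from scratch, via approximation of an arbitrary $\mu$ by finitely supported measures with the same barycenter. Your own sub-harmonicity coda quietly contains the same idea: the step $f\le c_n$ is not immediate from ``$f$ is a convex minorant of $g_n$,'' since $c_n$ is only the largest \emph{lower} semicontinuous convex minorant of $g_n$ and $f$ is merely u.s.c.; the honest justification is $c_n(x)=\inf\{\int g_n\,d\mu:\bary(\mu)=x\}$, Jensen on finitely supported $\mu$, and approximation of general $\mu$ by such --- which is precisely the paper's central move. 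So both the missing ingredient in Step 1 and the implicit step in your coda are the paper's sub-harmonicity lemma under a different name.
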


If $K$ is a metrizable Choquet simplex, we denote by $\M(\ex(K))$ the set of measures $\mu$ in $\M(K)$ such that $\mu(K \setminus \ex(K)) = 0$. Consider the map $\pi : \M(\ex(K)) \rightarrow K$ given by the restriction of the barycenter map to $\M(\ex(K))$. This restriction inherits the continuity and affinity of the barycenter map. Furthermore, this restriction is always bijective (by Choquet's characterization of metrizable Choquet simplices, Theorem \ref{SimplexThm}), but it may not have a continuous inverse. In fact, $\pi$ has a continuous inverse if and only if $\ex(K)$ is closed in $K$. These considerations lead to the study of Bauer simplices.

\begin{defn}
A metrizable, compact, convex subset $K$ of a locally convex topological vector space is a \textbf{Bauer simplex} if $K$ is a Choquet simplex such that $\ex(K)$ is a closed subset of $K$.
\end{defn}

If $E$ is any compact, metrizable space, then $\M(E)$ is a Bauer simplex with $\ex(\M(E))$ homeomorphic to $E$. If $K$ is a Bauer simplex, then the restriction of the barycenter map $\pi : \M(\ex(K)) \rightarrow K$ has a continuous inverse and is therefore an affine homeomorphism from $\mathcal{M}(\ex(K))$ to $K$.

\begin{prop} \label{uscBauerharm}
If $K$ is a Bauer simplex and $f: K \rightarrow [0,\infty)$ is bounded and harmonic, then $\widetilde{f}$ is harmonic and $\widetilde{f}|_{\ex(K)} = \widetilde{f|_{\ex(K)}}$.
\end{prop}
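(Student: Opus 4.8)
The plan is to use the affine homeomorphism $\pi : \M(\ex(K)) \to K$ supplied by the Bauer property, which lets us transfer everything to the concrete simplex $\M(E)$ with $E = \ex(K)$ a compact metrizable space, where the barycenter map and Choquet representation are transparent. First I would observe that since $K$ is compact and $f$ is bounded, $\widetilde{f}$ is a bounded u.s.c. function, so by the remarks in Section \ref{simplices} it is automatically sup-harmonic; the real content is the reverse inequality, i.e. that $\widetilde{f}$ is sub-harmonic (equivalently, convex, since u.s.c. convex implies sub-harmonic by Fact \ref{fonEharIsUSC}'s proof). So Step 1 is to reduce "harmonic" to "$\widetilde f$ is convex": a bounded u.s.c. affine function is harmonic, and $\widetilde{f}$ of an affine $f$ is affine because for affine $f$ on a compact convex set, $\limsup_{y\to x} f(y)$ is again affine — this uses that $K$ is a simplex only mildly, or can be argued directly from convexity/concavity of the limsup of an affine function. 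Actually the cleaner route: $f$ harmonic $\Rightarrow$ $f$ affine $\Rightarrow$ $\widetilde f$ concave (upper envelope preserves concavity on convex domains) and also $\widetilde f \geq f$ with $f$ affine forces... here I need convexity too, so Step 1 is really to show $\widetilde f$ is affine.

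Step 2, the crux, is to show $\widetilde{f}$ is affine. Given $x = tx_1 + (1-t)x_2$, concavity gives $\widetilde f(x) \geq t\widetilde f(x_1) + (1-t)\widetilde f(x_2)$, so I must prove the reverse. Take sequences $y_n^{(i)} \to x_i$ with $f(y_n^{(i)}) \to \widetilde f(x_i)$; then $t y_n^{(1)} + (1-t) y_n^{(2)} \to x$ and by affinity of $f$ this has $f$-value $t f(y_n^{(1)}) + (1-t) f(y_n^{(2)}) \to t\widetilde f(x_1)+(1-t)\widetilde f(x_2)$, whence $\widetilde f(x) \geq t\widetilde f(x_1) + (1-t)\widetilde f(x_2)$ — wait, that is the same direction. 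The correct reverse inequality requires: any sequence $z_n \to x$ can be decomposed so as to lie below combinations of sequences converging to $x_1, x_2$; this is where the simplex structure enters, via the fact that in a Bauer simplex the map $x \mapsto \P_x$ is weak* continuous (because $\ex K$ is closed, $\pi^{-1}$ is continuous), so $\widetilde{f}(x) = \limsup_{z\to x}\int_E (f|_E) \, d\P_z$ and one controls $\P_{z_n}$ via $\P_{x} = t\P_{x_1} + (1-t)\P_{x_2}$ together with continuity of $z \mapsto \P_z$. So the honest statement is: $\widetilde f = \big(\widetilde{f|_E}\big)^{har}$ on the nose, and once that identity is in hand, harmonicity of $\widetilde f$ is immediate from Remark \ref{harmRemark}, and the affine/convex issue evaporates.

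Therefore I would organize the proof as: (a) via $\pi$ identify $K$ with $\M(E)$, $\ex(K)$ with $E$, and $f$ with a bounded harmonic function on $\M(E)$, so $f(\mu) = \int_E (f|_E)\, d\mu$; (b) show $\widetilde{f}(\mu) = \int_E \widetilde{(f|_E)}\, d\mu$ for all $\mu \in \M(E)$: the inequality $\leq$ follows because $\nu \mapsto \int_E \widetilde{(f|_E)}\,d\nu$ is u.s.c. (upper envelope is u.s.c. and bounded, integration against it is u.s.c. in the weak* topology on a compact space) and dominates $f$, hence dominates $\widetilde f$; the inequality $\geq$ follows by approximating $\widetilde{(f|_E)}$ pointwise from above by continuous functions $g_m \downarrow \widetilde{(f|_E)}$, noting $\int g_m\, d\mu = $ value at $\mu$ of the continuous-hence-harmonic extension of $g_m|_E \geq f|_E$, so $\int g_m\, d\mu \geq \widetilde f(\mu)$... again I must be careful with directions, but the point is the standard fact that for a bounded u.s.c. function $\phi$ on $E$ its harmonic extension to $\M(E)$, namely $\mu \mapsto \int \phi\, d\mu$, is the u.s.c. envelope of the harmonic extension of $f|_E$; (c) conclude $\widetilde f$ is harmonic by Remark \ref{harmRemark} and read off $\widetilde{f}|_{\ex K} = \widetilde{f|_{\ex K}}$ by evaluating at point masses $\delta_e \in \ex(\M(E))$, since $\widetilde f(\delta_e) = \int_E \widetilde{(f|_E)}\, d\delta_e = \widetilde{(f|_E)}(e)$. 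The main obstacle is part (b): getting the two inequalities between $\widetilde f$ and the integral of $\widetilde{f|_E}$ exactly right, keeping straight which envelope is taken on $E$ versus on $\M(E)$, and invoking at the right moment that weak* integration against a fixed bounded u.s.c. function is u.s.c. on the compact space $\M(E)$ — it is here, and only here, that the Bauer hypothesis (closedness of $\ex(K)$, i.e. continuity of $x \mapsto \P_x$) is essential.
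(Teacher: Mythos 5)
Your organizing idea --- transport everything to $\M(E)$ via the Bauer homeomorphism and prove the single identity $\widetilde f(\mu)=\int_E\widetilde{(f|_E)}\,d\mu$, from which harmonicity and the restriction identity both drop out --- is exactly the content of the paper's proof, and your argument for the inequality $\widetilde f(\mu)\le\int_E\widetilde{(f|_E)}\,d\mu$ is correct and essentially the paper's: the right-hand side is u.s.c.\ (weak$^*$ integration against a fixed bounded u.s.c.\ function) and dominates $f$, hence dominates $\widetilde f$.

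The gap is the reverse inequality. Both arguments you give under step (b) for ``$\ge$'' in fact prove ``$\le$'' again: the approximation $g_m\downarrow\widetilde{(f|_E)}$ by continuous functions with $g_m\ge f|_E$ produces continuous functions on $\M(E)$ that dominate $f$, so they dominate $\widetilde f$, giving $\int\widetilde{(f|_E)}\,d\mu\ge\widetilde f(\mu)$ --- the same direction, as you half-suspect when you write ``again I must be careful with directions.'' The ``standard fact'' you invoke at the end of (b) is precisely the statement to be proved, not something you may cite. Moreover, your opening observation that bounded u.s.c.\ already implies sup-harmonic is false; sup-harmonicity needs concavity as well, and concavity of $\widetilde f$ is exactly the missing ingredient. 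The paper establishes it by a short direct computation (peel off one variable at a time: $\widetilde f(ax+by)\ge a\widetilde f(x)+bf(y)$, then $\ge a\widetilde f(x)+b\widetilde f(y)$), concludes $\widetilde f$ is u.s.c.\ concave hence sup-harmonic, and only then gets $\widetilde f(t)\ge\int_E\widetilde f|_E\,d\P_t\ge\int_E\widetilde{(f|_E)}\,d\P_t$, which is your missing ``$\ge$.'' Your step 2, where you tried and failed to show $\widetilde f$ affine by patching together sequences, was aiming too high: you only need one-sided concavity for sup-harmonicity, and the two-sided affinity comes for free once harmonicity is established. Repair the proof by inserting the concavity argument and deducing sup-harmonicity from it, and the rest of your plan goes through.
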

\begin{proof} Since $f$ is harmonic, in particular $f$ is affine. Let $x$ and $y$ be in $K$, and let $ax+by$ be a convex combination in $K$. We have $\widetilde{f}(ax+by) \geq f(ax+by) = af(x) + bf(y)$. For fixed $a,b,$ and $y$, the above formula implies that $\widetilde{f}(ax+by) \geq a\widetilde{f}(x) +b f(y)$. Now fixing $a,b,$ and $x$, we obtain $\widetilde{f}(ax+by) \geq  a\widetilde{f}(x) +b \widetilde{f}(y)$. Now since $\widetilde{f}$ is u.s.c. and concave, it follows that $\widetilde{f}$ is sup-harmonic.

Let $E = \ex(K)$. It follows from the definitions that
\begin{equation} \label{fIneq}
f(t) = \int_E f|_E \; d \P_t \leq \int_E \widetilde{(f|_E)} \; d\P_t \leq \int_{E} \widetilde{f} \; d\P_t.
\end{equation}
Now consider the two functions $g_1, g_2 : K \to \R$, given for each $t$ in $E$ by
\begin{align*}
g_1(t) & = \left\{ \begin{array}{ll} \widetilde{f|_E}(t), & \text{ if } t \in E, \\
                                   0, & \text{ if } t \notin E,
            \end{array} \right. \\
g_2(t) & = \left\{ \begin{array}{ll} \widetilde{f}|_E(t), & \text{ if } t \in E, \\
                                   0, & \text{ if } t \notin E.
            \end{array} \right.
\end{align*}
Since $E$ is closed, $g_1$ and $g_2$ are u.s.c. They are also obviously convex. Then by Fact \ref{fonEharIsUSC}, $G_1 = \bigl((g_1)|_E\bigr)^{har}$ and $G_2 = \bigl((g_2)|_E\bigr)^{har}$ are u.s.c. Note that for $t \in K$,
\begin{equation*}
G_1(t) = \int_E \widetilde{(f|_E)} \; d\P_t, \text{ and } G_2(t) = \int_{E} \widetilde{f} \; d\P_t.
\end{equation*}
Thus, taking the u.s.c. envelope of the expressions in Equation (\ref{fIneq}) and using that $G_1$ and $G_2$ are u.s.c., we have that
\begin{equation} \label{inequalities}
\widetilde{f}(t) \leq \int_E \widetilde{(f|_E)} \; d\P_t \leq \int_{E} \widetilde{f} \; d\P_t,
\end{equation}
which shows that $\widetilde{f}$ is sub-harmonic. Now we have shown that $\widetilde{f}$ is harmonic and the inequalities in Equation (\ref{inequalities}) are all equalities.
\end{proof}


A candidate sequence $\H = (h_k)$ on a Choquet simplex is said to be harmonic if each $h_k$ is harmonic. The following proposition relates the transfinite sequence of a candidate sequence $\H$ on a Bauer simplex $K$ to the transfinite sequence of $\H|_{\ex(K)}$.
\begin{prop}\label{Uaffine}
If $\H$ is a harmonic candidate sequence on the Bauer simplex $K$, then for each $\al$, $u_{\al}^{\H}$ is harmonic and
\begin{equation}\label{uBauer}
u_{\al}^{\H} = (u_{\al}^{\H|_{\ex(K)}})^{har}.
\end{equation}
\end{prop}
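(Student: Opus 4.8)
The plan is to proceed by transfinite induction on $\alpha$, proving simultaneously that $u_\alpha^\H$ is harmonic and that the identity (\ref{uBauer}) holds. Write $E = \ex(K)$ and $\pi : \M(E) \to K$ for the (affine homeomorphism) restriction of the barycenter map, so that $\P_x \in \M(E)$ is the unique representing measure. For a bounded function $f$ on $E$, recall $f^{har}(x) = \int_E f \, d\P_x$. The base case $\alpha = 0$ is immediate since $u_0 \equiv 0$ is harmonic and its restriction's harmonic extension is again $0$. For the successor step, suppose $u_\alpha^\H$ is harmonic with $u_\alpha^\H = (u_\alpha^{\H|_E})^{har}$. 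The key observation is that $u_\alpha^\H + \tau_k = u_\alpha^\H + (h - h_k)$ is a difference of a harmonic function and the harmonic functions $h_k$; but $\tau_k$ need not be u.s.c.\ or even harmonic in a usable way, so instead I would argue as follows. Since $u_\alpha^\H$ is harmonic and u.s.c.\ (being the limit of a non-increasing sequence of u.s.c.\ functions, or identically $\infty$), and $h_k$ is harmonic and u.s.c., the sum $u_\alpha^\H + h - h_k$... the cleanest route is to apply Proposition \ref{uscBauerharm} to the harmonic function $u_\alpha^\H + h_j - h_k$ for suitable indices and pass to limits. Concretely: I claim $\widetilde{u_\alpha^\H + \tau_k}$ is harmonic with restriction equal to $\widetilde{(u_\alpha^{\H|_E} + \tau_k^E)}$, where $\tau_k^E = \tau_k|_E$. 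To see this, write $u_\alpha^\H + \tau_k = \lim_j (u_\alpha^\H + h_j - h_k)$ as a non-decreasing limit (in $j$) of harmonic u.s.c.\ functions when $j \geq k$ — wait, $u_\alpha^\H + h_j - h_k$ increases to $u_\alpha^\H + h - h_k$ pointwise, and each is harmonic by linearity of the barycenter integral. Taking u.s.c.\ envelopes does not commute with increasing limits in general, so here is where the real work lies.

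The main obstacle, then, is exactly the interaction between the u.s.c.\ envelope operation $f \mapsto \widetilde f$ and (a) the harmonic-extension operation and (b) the limit over $k$. For (a), Proposition \ref{uscBauerharm} handles a single bounded harmonic $f$: it gives $\widetilde f$ harmonic and $\widetilde f|_E = \widetilde{f|_E}$. So if I can express $u_\alpha^\H + \tau_k$ itself — not merely its envelope — in a way that reduces to that proposition, I win. The trick: on the Bauer simplex, $\tau_k = h - h_k$ where $h = \lim h_j$. Since each $h_j$ is harmonic (hence affine) and the increasing limit of affine functions is affine, $h$ is affine; and $h$ restricted to $E$ determines $h$ via $h = (h|_E)^{har}$ provided $h$ is, e.g., the pointwise limit — one needs to check $h(x) = \int_E h \, d\P_x$, which follows from monotone convergence applied to $h_j(x) = \int_E h_j \, d\P_x$. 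Hence $h$, and therefore $u_\alpha^\H + \tau_k = u_\alpha^\H + h - h_k$, is harmonic (as a combination of harmonic functions, interpreting $+\infty$ appropriately). Now Proposition \ref{uscBauerharm} applies to $g := u_\alpha^\H + h - h_k$ when it is bounded, giving $\widetilde g$ harmonic and $\widetilde g|_E = \widetilde{g|_E}$. Since $g|_E = u_\alpha^\H|_E + h|_E - h_k|_E = u_\alpha^{\H|_E} + \tau_k^{\H|_E}$ by the inductive hypothesis and the definition of the restriction candidate sequence, we get $\widetilde{u_\alpha^\H + \tau_k}\,|_E = \widetilde{u_\alpha^{\H|_E} + \tau_k^{\H|_E}}$ together with harmonicity. (If $g$ is unbounded on a set of positive $\P_x$-measure the envelope is $\infty$ there; one tracks this case separately, noting $u_{\alpha+1}^\H \equiv \infty$ whenever some $\widetilde{u_\alpha^\H + \tau_k} \equiv \infty$, consistently with the right-hand side.)

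Finally, taking the limit over $k$: the functions $\widetilde{u_\alpha^\H + \tau_k}$ are non-increasing in $k$ (since $\tau_k$ is non-increasing), so $u_{\alpha+1}^\H = \lim_k \widetilde{u_\alpha^\H + \tau_k}$ is a non-increasing limit of harmonic u.s.c.\ functions. Harmonicity is preserved under non-increasing pointwise limits: for each $x$, $u_{\alpha+1}^\H(x) = \lim_k \widetilde{u_\alpha^\H + \tau_k}(x) = \lim_k \int_E \widetilde{u_\alpha^\H + \tau_k} \, d\P_x = \int_E \lim_k \widetilde{u_\alpha^\H + \tau_k}\,d\P_x$ by monotone (or dominated) convergence, and the integrand is $\lim_k \widetilde{u_\alpha^{\H|_E}+\tau_k^{\H|_E}} = u_{\alpha+1}^{\H|_E}$ by the successor step of the transfinite construction applied to $\H|_E$. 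This yields both that $u_{\alpha+1}^\H$ is harmonic and that $u_{\alpha+1}^\H = (u_{\alpha+1}^{\H|_E})^{har}$. For the limit-ordinal step, if $\lambda$ is a limit ordinal and the claim holds for all $\beta < \lambda$, then $\sup_{\beta<\lambda} u_\beta^\H$ is a non-decreasing limit of harmonic functions, hence harmonic (by monotone convergence in the representing integral, exactly as for $h$), with restriction $\sup_{\beta<\lambda} u_\beta^{\H|_E}$; applying Proposition \ref{uscBauerharm} to this bounded harmonic function (or handling the unbounded case as above) gives that $u_\lambda^\H = \widetilde{\sup_{\beta<\lambda} u_\beta^\H}$ is harmonic with restriction $\widetilde{\sup_{\beta<\lambda} u_\beta^{\H|_E}} = u_\lambda^{\H|_E}$. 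This completes the induction. The only genuinely delicate points are the bookkeeping of the $+\infty$ case and the repeated appeals to monotone convergence to push harmonicity through limits; everything else is a direct application of Proposition \ref{uscBauerharm}.
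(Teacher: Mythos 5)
Your proof takes essentially the same route as the paper: observe that $\tau_k = h - h_k$ (and hence $u_{\al}^{\H} + \tau_k$) is harmonic, apply Proposition~\ref{uscBauerharm} to get harmonicity and the restriction identity for the envelope, and pass to the limit in $k$ (and through the limit-ordinal step) by monotone convergence. It is correct, and it even makes explicit a couple of points the paper leaves implicit, namely the harmonicity of $h$ itself and the bookkeeping in the unbounded case.
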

\begin{proof}
The proof proceeds by transfinite induction on $\al$. For all $k$, since $h_k$ and $h$ are harmonic, $\tau_k = h - h_k$ is harmonic.

Suppose $u^{\H}_{\al}$ is harmonic and Equation (\ref{uBauer}) holds. Then $u^{\H}_{\al}+\tau_k$ is harmonic. By Proposition \ref{uscBauerharm}, we deduce that $\widetilde{u^{\H}_{\al}+\tau_k}$ is harmonic, and for $t$ in $K$,
\begin{equation*}
(u^{\H}_{\al}+\tau_k)(t) = \int_E \widetilde{(u_{\al}^{\H}+\tau_k)|_E} d \P_t = \int_E \widetilde{(u_{\al}^{\H|_E}+\tau_k)|_E} d \P_t.
\end{equation*}
Recall that $\{u_{\al}+\tau_k\}_k$ is a non-increasing sequence in $k$. Thus we can take the limit in $k$ and apply the Monotone Convergence Theorem to obtain that $u^{\H}_{\al+1}$ is also harmonic, and for $t$ in $K$,
\begin{equation*}
u_{\al+1}^{\H}(t) = \int_E u_{\al+1}^{\H|_E} d \P_t,
\end{equation*}
which implies that Equation (\ref{uBauer}) holds with $\al+1$ in place of $\al$.

The previous arguments apply in a similar way to the case when $\al$ is a limit ordinal.

\end{proof}

\begin{rmk} \label{ConcaveRmk}
Let $K$ be a Choquet simplex which is not necessarily Bauer. Even when the candidate sequence $\H$ on $K$ is harmonic, the functions $u_{\al}^{\H}$ are not in general harmonic. However, we check now that if $\H$ is harmonic, then $u_{\al}^{\H}$ is concave for all $\al$. Assuming by induction that $u_{\al}^{\H}$ is concave, we have that $\widetilde{u_{\al}^{\H}+\tau_k}$ is concave, as it is the u.s.c. envelope of a concave function. Then $u_{\al+1}^{\H}$ is the limit of a sequence of concave functions, and so $u_{\al+1}^{\H}$ is concave. Now for any countable limit ordinal $\al$, there is a strictly increasing sequence $(\al_n)$ of ordinals tending to $\al$. Then $\sup_{\beta < \al} u_{\beta}^{\H} = \lim_n u_{\al_n}^{\H}$ since the sequence $(u_{\beta}^{\H})$ is increasing in $\beta$. Then $\sup_{\beta < \al} u_{\beta}^{\H}$ is concave, as it is the limit of a sequence of concave functions (by induction), and thus $u_{\al}^{\H}$ is concave for any countable limit ordinal as well.
\end{rmk}

When $\ex(K)$ is not compact, $\mathcal{M}(\ex(K))$ is not a Bauer simplex, and the restriction of the barycenter map to this set is not a homeomorphism. Instead of using this restriction in such cases, we consider the Bauer simplex $\mathcal{M}(\overline{\ex(K)})$ and the continuous surjection $\pi : \mathcal{M}(\overline{\ex(K)}) \rightarrow K$, where $\pi$ is the restriction of the barycenter map to $\mathcal{M}(\overline{\ex(K)})$. In the following two lemmas we consider candidate sequences which may arise as embedded candidate sequences.

\begin{lem} \label{embedIsHarmUSC}
Let $E$ be a compact, metrizable space, and let $K$ be a metrizable Choquet simplex. Suppose there exists a continuous injection $g : E \rightarrow K$. Let $\F$ be a u.s.c.d. candidate sequence on $E$, let $\H' = (h'_k)$ be the embedded candidate sequence $g\F$, and let $\H$ be the harmonic extension of $\H'|_{\ex(K)}$ to $K$. If $h'_{k+1}-h'_{k}$ is convex for each $k$, then $\H$ is u.s.c.d. In particular, if $g(E) \subset \ex(K)$ then $\H$ is u.s.c.d.
\end{lem}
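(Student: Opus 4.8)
The plan is to prove the two assertions in turn: first that $\H$ is a candidate sequence at all (i.e.\ the harmonic extensions $h_k = (h'_k|_{\ex(K)})^{har}$ form a non-decreasing sequence of non-negative functions converging pointwise), and then the main point, that $h_{k+1}-h_k$ is u.s.c. for every $k$. For the first part, note that $\H'=g\F$ is a u.s.c.d.\ candidate sequence on $K$ (it is $f_k\circ g^{-1}$ on $g(E)$ and $0$ elsewhere; monotonicity and non-negativity are inherited from $\F$ because $f_0\equiv 0$ so each $f_k\ge 0$), so $\H'|_{\ex(K)}$ is a non-decreasing sequence of non-negative measurable functions on $\ex(K)$ converging pointwise to $(\lim\F)\circ g^{-1}$ on $g(E)\cap\ex(K)$ and to $0$ elsewhere; harmonic extension (integration against $\P_x$) preserves non-negativity, monotonicity in $k$, and, by the Monotone Convergence Theorem, pointwise convergence. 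Thus $\H$ is a candidate sequence on $K$, and each $h_k$ is harmonic by Remark \ref{harmRemark}.

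The heart of the matter is u.s.c.\ of the differences $h_{k+1}-h_k$. Write $\delta_k = h'_{k+1}-h'_k$, a bounded non-negative function on $K$ which is u.s.c.\ (since $\H'$ is u.s.c.d.) and, by hypothesis, convex. Because the harmonic extension operator $f\mapsto (f|_{\ex(K)})^{har}$ is linear on bounded measurable functions (the map $f\mapsto\int_{\ex(K)}f\,d\P_x$ is linear in $f$, and $\P_x$ is a fixed probability measure for each $x$), we have $h_{k+1}-h_k = (\delta_k|_{\ex(K)})^{har}$. Now apply Fact \ref{fonEharIsUSC}: since $\delta_k:K\to[0,\infty)$ is convex and u.s.c., $(\delta_k|_{\ex(K)})^{har}$ is u.s.c. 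Hence $h_{k+1}-h_k$ is u.s.c., so $\H$ is u.s.c.d.

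For the final sentence, if $g(E)\subset\ex(K)$ then $h'_k$ vanishes off $\ex(K)$; in this case I want to observe that each $\delta_k = h'_{k+1}-h'_k$ is automatically convex. Indeed, $\delta_k$ is non-negative and equals $0$ at every point of $K$ outside $\ex(K)$; for any proper convex combination $x = tx_1+(1-t)x_2$ with $0<t<1$ and $x_1\ne x_2$, the point $x$ lies in $K\setminus\ex(K)$, so $\delta_k(x)=0 \le t\delta_k(x_1)+(1-t)\delta_k(x_2)$ by non-negativity; and convexity is trivial when $x_1=x_2$ or $t\in\{0,1\}$. So the convexity hypothesis is met and the previous paragraph applies.

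The step I expect to be the main obstacle is the appeal to Fact \ref{fonEharIsUSC} — everything rests on it, and one must be slightly careful that the hypotheses are literally satisfied: $\delta_k$ must be genuinely defined and convex on all of $K$ (not merely on $\ex(K)$), non-negative, and u.s.c.\ as a function on $K$. The convexity of $\delta_k$ on $K$ in the general case is exactly what is being assumed; the non-negativity follows from monotonicity of $\F$; and the u.s.c.\ on $K$ is precisely the u.s.c.d.\ property of the embedded sequence $\H'=g\F$, which in turn comes from the u.s.c.d.\ property of $\F$ together with $g(E)$ being closed in $K$ (as $E$ is compact and $g$ continuous injective, $g$ is a homeomorphism onto its closed image, so $f_k\circ g^{-1}$ extends u.s.c.\ by $0$). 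Aside from verifying these hypotheses, the argument is a short chain of standard facts.
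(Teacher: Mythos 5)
Your proof is correct and follows the same route as the paper: extend the convex, u.s.c., non-negative difference $h'_{k+1}-h'_k$ harmonically (using linearity of $f\mapsto(f|_{\ex(K)})^{har}$ so that $h_{k+1}-h_k$ is indeed this extension) and invoke Fact~\ref{fonEharIsUSC}; for the last sentence, you correctly observe that a non-negative function vanishing off $\ex(K)$ is automatically convex. Your write-up is simply more explicit than the paper's terse version about the preliminary verifications (that $\H'$ is u.s.c.d.\ via closedness of $g(E)$, that $\H$ is a candidate sequence, and the linearity step).
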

\begin{proof}
Since $\F$ is u.s.c.d. and $g(E)$ is closed, we have that $h'_{k+1}-h'_k$ is u.s.c. for each $k$. Then $h'_{k+1}-h'_k$ is convex and u.s.c. for each $k$. By applying Fact \ref{fonEharIsUSC}, we obtain that $h_{k+1}-h_{k}$ is u.s.c. for each $k$. Thus $\H$ is u.s.c.d.

In particular, if $g(E) \subset \ex(K)$, then $h'_{k+1}-h'_k$ takes non-zero values only on $\ex(K)$. Therefore $h'_{k+1}-h'_k$ is convex for each $k$, and by the previous argument, $\H$ is u.s.c.d.
\end{proof}

The following lemma is used repeatedly throughout the rest of this work. The utility of this statement lies in the fact that it allows one to compute the transfinite sequence on a (frequently much simpler) subset of the simplex and then write the transfinite sequence on the entire simplex in terms the transfinite sequence on this subset. When $K$ is a Choquet simplex that is not Bauer and $\H$ is a harmonic candidate sequence on $K$, then this statement takes the place of an integral representation of $u_{\al}^{\H}$.

\begin{lem}[Embedding Lemma]\label{embeddingThm}
Let $K$ be a metrizable Choquet simplex with $E = \ex(K)$. Suppose $\H$ is a harmonic candidate sequence on $K$ and there is a set $F \subset E$ such that the sequence $\{(h-h_k)|_{E \setminus F} \}$ converges uniformly to zero. Let $L = \overline{F}$, and let $\pi:\M(\overline{E}) \rightarrow K$ be the restriction of the barycenter map. Then for all ordinals $\al$ and for all $x$ in $K$,
\begin{equation}\label{embeddingEqn}
u_{\al}^{\H}(x) = \max_{\mu \in \pi^{-1}(x)} \int_{L} u_{\al}^{\H|_L} d\mu,
\end{equation}
and $\al_0(\H) \leq \al_0(\H|_L)$. In particular, if $F$ is compact, then $u_{\al}^{\H}|_F = u_{\al}^{\H|_F}$ for all $\al$ and $\al_0(\H) = \al_0(\H|_F)$.
\end{lem}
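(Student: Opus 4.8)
The plan is to reduce everything to the integral identity (\ref{embeddingEqn}); the other two assertions follow from it formally, using the elementary fact that restriction to a closed subset never increases the transfinite sequence (if $\G$ is a candidate sequence on a compact metrizable space $Y$ and $Z\subseteq Y$ is closed, a transfinite induction gives $u_\al^{\G|_Z}\le u_\al^\G|_Z$, since the tails agree and the envelope over $Z$ is pointwise below the envelope over $Y$). Granting (\ref{embeddingEqn}): if $\al\ge\al_0(\H|_L)$ then $u_\al^{\H|_L}=u_{\al+1}^{\H|_L}$, so $u_\al^\H=u_{\al+1}^\H$ and $\al_0(\H)\le\al_0(\H|_L)$. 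If $F$ is compact then $L=F\subseteq\overline E$, and for $x\in F$ we have $\delta_x\in\pi^{-1}(x)$; taking $\mu=\delta_x$ in (\ref{embeddingEqn}) gives $u_\al^\H(x)\ge u_\al^{\H|_F}(x)$, the reverse inequality being the special case $Z=F$ above, so $u_\al^\H|_F=u_\al^{\H|_F}$; with Remark \ref{accumVarPrin} this yields $\al_0(\H|_F)=\sup_{x\in F}\al_0^\H(x)\le\al_0(\H)\le\al_0(\H|_F)$.

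To prove (\ref{embeddingEqn}) I would induct transfinitely on $\al$, using the Bauer simplex $B=\M(\overline E)$ as a bridge. Identify $\ex(B)$ with $\overline E$ by $e\mapsto\delta_e$, let $\pi\colon B\to K$ be the restriction of the barycenter map (a continuous affine surjection), and let $\G={}^{\pi}\H=(h_k\circ\pi)$ be the lifted candidate sequence on $B$. Then $\G$ is harmonic on $B$ — the lift along $\pi$ of a harmonic function is harmonic, the Choquet measure of $\mu\in B$ on $\ex(B)$ being the image of $\mu$ under $e\mapsto\delta_e$ (cf.\ Remark \ref{harmRemark}) — and $\G|_{\ex(B)}=\H|_{\overline E}$, so Proposition \ref{Uaffine} applies on $B$ and gives, for all $\al$ and $\mu\in B$,
\begin{equation*}
u_\al^\G(\mu)=\int_{\overline E}u_\al^{\H|_{\overline E}}\,d\mu .
\end{equation*}

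The inequality ``$\ge$'' in (\ref{embeddingEqn}) is then routine: a transfinite induction shows $u_\al^\G\le u_\al^\H\circ\pi$ on $B$ (using only continuity of $\pi$, so that $\widetilde{f\circ\pi}\le\widetilde f\circ\pi$, and sub-additivity of the envelope), and combining this with the displayed formula and with $u_\al^{\H|_L}\le u_\al^{\H|_{\overline E}}|_L$ we obtain, for every $x\in K$ and $\mu\in\pi^{-1}(x)$, that $\int_L u_\al^{\H|_L}\,d\mu\le\int_{\overline E}u_\al^{\H|_{\overline E}}\,d\mu=u_\al^\G(\mu)\le u_\al^\H(x)$; since $\pi^{-1}(x)$ is compact and $\mu\mapsto\int_L u_\al^{\H|_L}\,d\mu$ is u.s.c., the supremum is attained.

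The reverse inequality ``$\le$'' is the heart of the matter and the only point where the hypothesis $(h-h_k)|_{E\setminus F}\to 0$ uniformly is used. I would first replace $\H$ by a uniformly equivalent harmonic candidate sequence $\widehat\H$ with tails concentrated on $F$, namely $\widehat h_k=\bigl(h-(h-h_k)\charfun_{L\cap E}\bigr)^{har}$ (with $\widehat h_0\equiv 0$): since $E\setminus L\subseteq E\setminus F$ we get $\|\widehat h_k-h_k\|\le\sup_{E\setminus L}(h-h_k)\to 0$, hence $\widehat\H\cong\H$ (so $u_\al^{\widehat\H}=u_\al^\H$ for all $\al$) and likewise $\widehat\H|_L\cong\H|_L$; moreover $\widehat\tau_k(z)=\int_{L\cap E}(h-h_k)\,d\P_z$, which up to a uniformly small error equals $\int_F(h-h_k)\,d\P_z$. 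One then runs the transfinite induction for $u_\al^{\widehat\H}$: at each successor step the envelope $\widetilde{u_{\al-1}^{\widehat\H}+\widehat\tau_k}$ at a point $y$ is controlled, via a weak* compactness argument on representing measures, by the supremum over $\mu\in\pi^{-1}(y)$ of an integral over $\overline E$ built from $u_{\al-1}^{\widehat\H}$ and $\widehat\tau_k$, and by the inductive hypothesis together with the ensuing limit in $k$ this supremum is at most $\max_{\mu\in\pi^{-1}(y)}\int_L u_\al^{\H|_L}\,d\mu$ (the uniform errors vanishing in the limit); the limit-ordinal step is analogous. The main obstacle is carrying out this last argument uniformly across the transfinite induction while handling the discontinuity of $x\mapsto\P_x$ and the non-closedness of $E$, hence of $F$ — in effect, showing that at a point $y$ off $L$ the upper envelope of the transfinite sequence sees only what the representing measures of $y$ supported on $\overline E$ see on $L$, which is exactly the content of (\ref{embeddingEqn}); passing to $\widehat\H$ first is precisely what makes these estimates tractable.
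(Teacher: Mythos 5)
Your argument for the $\geq$ direction is correct and is a genuinely different route from the paper's: you lift $\H$ to $\G={}^\pi\H$ on the Bauer simplex $B=\M(\overline E)$, apply Proposition~\ref{Uaffine} there to get the integral representation $u_\al^\G(\mu)=\int_{\overline E}u_\al^{\H|_{\overline E}}d\mu$, and then push down via the easy comparison $u_\al^\G\le u_\al^\H\circ\pi$. The paper instead deduces $\geq$ directly from sup-harmonicity of $u_\al^\H$ (concavity plus u.s.c., Remark~\ref{ConcaveRmk}) together with $u_\al^\H|_L\ge u_\al^{\H|_L}$. Both work; yours has the appeal of relocating the reasoning to the Bauer simplex where harmonic extension is well-behaved, at the cost of needing the (correct) observation that $h_k\circ\pi$ is harmonic on $B$.

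The $\leq$ direction, however, has a genuine gap. You correctly identify that the heart of the matter is the successor step, and the $\widehat\H$ substitution you propose is legitimate (I checked that $\widehat h_k=(h-(h-h_k)\charfun_{L\cap E})^{har}$ gives a non-decreasing harmonic candidate sequence with $\|\widehat h_k-h_k\|\le\sup_{E\setminus L}\tau_k\to 0$). But after that substitution you assert that the envelope $\widetilde{u_{\al-1}^{\widehat\H}+\widehat\tau_k}$ at $x$ is ``controlled, via a weak* compactness argument on representing measures,'' and then that ``the ensuing limit in $k$'' yields the bound. These are precisely the two exchanges the paper isolates and proves as separate statements: passing $\limsup_{y\to x}$ inside the fiber-maximum (Lemma~\ref{limsupLemma}, using upper semi-continuity of the projected functional), and passing $\lim_k$ inside the fiber-maximum (Lemma~\ref{limLemma}, which invokes Proposition~2.4 of \cite{BD}). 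The second of these is a non-trivial commutation of a decreasing pointwise limit with a maximum over a compact, $x$-dependent fiber $\pi^{-1}(x)$; ``weak* compactness'' alone does not give it without the monotonicity-plus-u.s.c.\ structure that \cite{BD} exploits. You also need the step the paper carries out explicitly before any limits are taken: an extremizing measure $\mu_y\in\pi^{-1}(y)$ may be chosen supported on $L\cup E$, and harmonicity of $\tau_k$ (applied to $\mu_y\in\M(\overline E)$) is what converts $\tau_k(y)$ into $\int_L\tau_k\,d\mu_y+\int_{E\setminus L}\tau_k\,d\mu_y$, with the latter $\le\epsilon$ by the uniform-convergence hypothesis. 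Your closing sentence — that one must show ``the upper envelope \ldots sees only what the representing measures of $y$ supported on $\overline E$ see on $L$, which is exactly the content of (\ref{embeddingEqn})'' — is a restatement of the goal, not a step toward it; passing to $\widehat\H$ tidies the tails but does not by itself dissolve the two limit-exchange lemmas, which you would still need to state and prove.
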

\begin{proof}
Note that Equation (\ref{embeddingEqn}) implies immediately that $\al_0(\H) \leq \al_0(\H|_L)$. Further, suppose $F$ is compact. Then $L = F \subset \ex(K)$, and if $x$ is in $F$, then $\pi^{-1}(x)=\{\epsilon_x\}$, where $\epsilon_x$ is the point mass at $x$. In this case Equation (\ref{embeddingEqn}) implies that $u_{\al}^{\H}|_F = u_{\al}^{\H|_F}$ for all $\al$ and $\al_0(\H) = \al_0(\H|_F)$. We now prove Equation (\ref{embeddingEqn}).

Observe that since $L$ is closed and $u_{\al}^{\H|_L}$ is u.s.c., the function $\mathbf{1}_L \cdot u_{\al}^{\H|_L}$ is u.s.c., where $\mathbf{1}_L$ is the characteristic function of the set $L$. Then the function $\mu \mapsto \int_L u_{\al}^{\H|_L} \, d\mu$ is u.s.c., and therefore by Remark \ref{pushDownPullUp} (3), for each $x$ in $K$,
\begin{equation*}
\sup_{\mu \in \pi^{-1}(x)} \int_L u_{\al}^{\H_L} \, d\mu = \max_{\mu \in \pi^{-1}(x)} \int_L u_{\al}^{\H_L} \, d\mu.
\end{equation*}

Let $x$ be in $K$. Since $u_{\al}^{\H}$ is concave (see Remark \ref{ConcaveRmk}) and u.s.c., it follows that $u_{\al}^{\H}$ is sup-harmonic. Therefore
\begin{equation*}
u_{\al}^{\H}(x) \geq \int_{K} u_{\al}^{\H} d\mu, \quad \text{ for all } \mu \in \pi^{-1}(x).
\end{equation*}
Using the fact that $u_{\al}^{\H}|_L \geq u_{\al}^{\H|_L}$, we obtain, for all $\mu \in \pi^{-1}(x)$,
\begin{equation*}
u_{\al}^{\H}(x) \geq \int_{K} u_{\al}^{\H} d\mu \geq \int_{L} u_{\al}^{\H} d\mu \geq \int_{L} u_{\al}^{\H|_L} d\mu.
\end{equation*}
It follows that for each ordinal $\al$,
\begin{equation*}
u_{\al}^{\H}(x) \geq \max_{\mu \in \pi^{-1}(x)} \int_{L} u_{\al}^{\H|_L} d\mu.
\end{equation*}

We now prove using transfinite induction on $\al$ that for all $\al$ and $x$ in $K$,
\begin{equation} \label{reverseIneq}
u_{\al}^{\H}(x) \leq \max_{\mu \in \pi^{-1}(x)} \int_{L} u_{\al}^{\H|_L} d\mu,
\end{equation}
which will complete the proof of the Lemma.

The inequality in Equation (\ref{reverseIneq}) is trivial for $\al=0$. Suppose Equation (\ref{reverseIneq}) holds for some ordinal $\al$. For the sake of notation, we allow $y=x$ in all expressions involving $\limsup_{y \to x}$ below. First we claim that for any $y$ in $K$, there exists a measure $\mu_y$ supported on $L \cup E$ such that $\mu_y$ is in $\pi^{-1}(y)$ and
\begin{equation} \label{maxExpression}
\max_{\mu \in \pi^{-1}(y)} \int_L u_{\al}^{\H|_L} d\mu = \int_L u_{\al}^{\H|_L} d\mu_y.
\end{equation}
Indeed, suppose the maximum is obtained by the measure $\nu$. If $\nu(L)=1$, then we are done. Now suppose $\nu(L) < 1$. Then $\nu = \nu(L) \nu_L + (1-\nu(L)) \nu_{\overline{E} \setminus L}$, where $\nu_S$ is the zero measure on $S$ if $\nu(S)=0$ and otherwise $\nu_S(A) = \frac{1}{\nu(S)} \nu(S \cap A)$. Let $z = \bary(\nu_{\overline{E}\setminus L})$, which exists since $\nu_{\overline{E} \setminus L}$ is in $\M(\overline{E})$ (using that $\nu(\overline{E} \setminus L) = 1 - \nu(L) > 0$). Now let $\mu_y = \nu(L) \nu_L + (1-\nu(L)) \P_{z}$. Then $\mu_y$ is supported on $L \cup E$, $\bary(\mu_y) = y$, and
\begin{equation*}
\int_L u_{\al}^{\H|_L} d\nu \leq \int_L u_{\al}^{\H|_L} d\mu_y.
\end{equation*}
Thus the maximum in Equation (\ref{maxExpression}) is obtained by the measure $\mu_y$, which is supported on $L \cup E$ and satisfies $\bary(\mu)=y$.

Now let $\epsilon >0$. Since $\H$ is harmonic, we also have that $\tau_k$ is harmonic. Then for any $y$ in $K$ and $k$ large enough (depending only on $\epsilon$),
\begin{align}
u_{\al}^{\H}(y)+\tau_k(y) & = \max_{\mu \in \pi^{-1}(y)} \int_L u_{\al}^{\H|_L} d\mu + \tau_k(y) \label{induction} \\
& = \int_L u_{\al}^{\H|_L} d\mu_y + \int \tau_k d\mu_y \label{harmonicity} \\
& = \int_L u_{\al}^{\H|_L} d\mu_y + \int_L \tau_k d\mu_y + \int_{E \setminus L} \tau_k d\mu_y \label{integral} \\
& \leq \int_L u_{\al}^{\H|_L} d\mu_y + \int_L \tau_k d\mu_y + \epsilon \label{uniformity} \\
& =  \int_L (u_{\al}^{\H|_L} + \tau_k) d\mu_y + \epsilon \label{subset} \\
& \leq \int_L \widetilde{(u_{\al}^{\H|_L} + \tau_k)|_L} d\mu_y + \epsilon \\
& \leq \max_{\mu \in \pi^{-1}(y)} \int_L \widetilde{(u_{\al}^{\H|_L} + \tau_k)|_L} d\mu + \epsilon.
\end{align}
Then we have (allowing $y=x$ in the limit suprema) that
\begin{align}
u_{\al+1}^{\H}(x) & = \lim_k \thinspace \limsup_{y \rightarrow x} \thinspace u_{\al}^{\H}(y)+\tau_k(y) \\
 & \leq \lim_k \thinspace \limsup_{y \rightarrow x} \thinspace \max_{\mu \in \pi^{-1}(y)} \int_L \widetilde{(u_{\al}^{\H|_L} +\tau_k)|_L} d\mu + \epsilon \label{usc} \\
 & \leq \lim_k \thinspace \max_{\mu \in \pi^{-1}(x)} \int_L \widetilde{(u_{\al}^{\H|_L} +\tau_k)|_L} d\mu + \epsilon \label{limsup} \\
 & \leq \max_{\mu \in \pi^{-1}(x)} \int_L u_{\al+1}^{\H|_L} d\mu + \epsilon \label{lim},
\end{align}
where the inequalities in (\ref{limsup}) and (\ref{lim}) are justified by Lemmas \ref{limsupLemma} and \ref{limLemma}, respectively. Since $\epsilon$ was arbitrary, we have shown the inequality in Equation (\ref{reverseIneq}) with the ordinal $\al$ replaced by $\al+1$.

Now suppose the inequality in Equation (\ref{reverseIneq}) holds for all $\beta < \al$, where $\al$ is a limit ordinal. Using monotonicity of the sequence $u_{\al}^{\H|_L}$, we see that (allowing $y=x$ in the limit suprema)
\begin{align*}
u_{\al}^{\H}(x) & = \widetilde{\sup_{\beta < \al} u_{\beta}^{\H}}(x) \\
 & = \limsup_{y \rightarrow x} \thinspace \sup_{\beta< \al} \thinspace \max_{\mu \in \pi^{-1}(y)} \int_L u_{\beta}^{\H|_L} d\mu \\
 & \leq \limsup_{y \rightarrow x} \thinspace \max_{\mu \in \pi^{-1}(y)} \int_L u_{\al}^{\H|_L} d\mu \\
 & \leq \max_{\mu \in \pi^{-1}(x)} \int_L u_{\al}^{\H|_L} d\mu,
\end{align*}
where Lemma \ref{limsupLemma} justifies the last inequality. Thus we have shown that the inequality in Equation (\ref{reverseIneq}) holds for $\al$, which completes the induction and the proof.
\end{proof}

\begin{rmk} \label{embeddingRemark}
Given the assumptions of the Embedding Lemma, if $x$ is in $\ex(K)$, then $\pi^{-1}(x) = \{ \epsilon_x \}$, where $\epsilon_x$ is the point mass at $x$. It follows that, if $x$ is in $L \cap \ex(K)$, then $u_{\al}^{\H}(x) = u_{\al}^{\H|_{L}}(x)$ for all $\al$. Further, if $x$ is in $\ex(K) \setminus L$, then $u_{\al}^{\H}(x)=0$ for all $\al$.
\end{rmk}
\begin{rmk} \label{embeddingRemarktwo}
With the notation of the Embedding Lemma, Equation (\ref{embeddingEqn}) implies that $||u_{\al}^{\H}||=||u_{\al}^{\H|_L}||$ for all $\al$.
\end{rmk}

\begin{lem} \label{limsupLemma}
Let $K$ be a metrizable Choquet simplex and $L$ a closed subset of $K$. Let $f: K \rightarrow [0, \infty)$ be u.s.c. Then for all $x$ in $K$,
\begin{equation*}
\limsup_{y \rightarrow x} \max_{\mu \in \pi^{-1}(y)} \int_L f d\mu \leq \max_{\mu \in \pi^{-1}(x)} \int_L f d\mu,
\end{equation*}
where $\pi$ is the restriction of the barycenter map on $\M(K)$ to $\M(\overline{\ex(K)})$.
\end{lem}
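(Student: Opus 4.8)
The plan is to deduce the inequality from the single assertion that the function $\Phi : K \to [0,\infty)$ defined by $\Phi(x) = \max_{\mu \in \pi^{-1}(x)} \int_L f \, d\mu$ is well-defined (the maximum being attained) and upper semi-continuous on $K$; by Definition \ref{USCdef}(4) this u.s.c.\ property \emph{is} the claimed inequality $\limsup_{y\to x}\Phi(y)\le\Phi(x)$. So the whole proof reduces to producing $\Phi$ as the projection, in the sense of Remark \ref{pushDownPullUp}, of a suitable u.s.c.\ function on $\M(\overline{\ex(K)})$.

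First I would check that $\mathbf{1}_L \cdot f$ is a bounded u.s.c.\ function on $K$. It is bounded because $f$ is u.s.c.\ on the compact space $K$, hence bounded above, and $f \geq 0$; and it is u.s.c.\ because for $r > 0$ the set $\{\mathbf{1}_L f \geq r\} = L \cap \{f \geq r\}$ is closed ($L$ is closed, $f$ is u.s.c.), while for $r \leq 0$ the set is all of $K$. Here the hypothesis $f\ge 0$ is essential: it is what keeps $\mathbf{1}_L f$ from jumping \emph{up} at the boundary of $L$. By Definition \ref{USCdef}(2) we may then fix a non-increasing sequence $(g_n)$ of continuous functions on $K$ with $g_n \downarrow \mathbf{1}_L f$. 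Since $\overline{\ex(K)}$ is a closed subset of the compact metrizable space $K$, it is itself compact metrizable, $\M(\overline{\ex(K)})$ is compact metrizable, and the barycenter restriction $\pi : \M(\overline{\ex(K)}) \to K$ is a continuous surjection, as recalled before the statement. For $\mu \in \M(\overline{\ex(K)}) \subset \M(K)$, monotone convergence gives $\int_L f \, d\mu = \int_K \mathbf{1}_L f \, d\mu = \inf_n \int_K g_n \, d\mu$, and each $\mu \mapsto \int_K g_n \, d\mu$ is weak\textsuperscript{*}-continuous on $\M(\overline{\ex(K)})$; hence $g(\mu) := \int_L f \, d\mu$ is an infimum of continuous functions and so u.s.c.\ by Definition \ref{USCdef}(1). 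It is also bounded, since $0 \le f \le \|f\| < \infty$ forces $0 \le g \le \|f\|$.

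Finally I would invoke Remark \ref{pushDownPullUp}(3) for the continuous surjection $\pi : \M(\overline{\ex(K)}) \to K$ and the bounded u.s.c.\ function $g$: the projection $g^{[K]}$ is u.s.c.\ on $K$ and the supremum in its definition is attained, i.e.\ $g^{[K]}(x) = \max_{\mu \in \pi^{-1}(x)} \int_L f \, d\mu = \Phi(x)$ for every $x \in K$ (with $\pi^{-1}(x) \ne \emptyset$ by surjectivity). Thus $\Phi$ is u.s.c., which is the assertion. I do not expect a genuine obstacle here: the argument is simply the composition of three standard facts (a non-negative u.s.c.\ function times the indicator of a closed set is u.s.c.; integrating a bounded u.s.c.\ function is weak\textsuperscript{*}-u.s.c.; the projection of a bounded u.s.c.\ function under a continuous surjection of compact metrizable spaces is u.s.c.\ with the supremum attained). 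The only points needing a little care are tracking the use of $f \geq 0$ in the first step, and insisting on working with $\M(\overline{\ex(K)})$, which is compact, rather than with $\M(\ex(K))$, which in general is not.
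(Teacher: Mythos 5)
Your proposal is correct and follows essentially the same route as the paper: you observe that $\mathbf{1}_L f$ is u.s.c.\ (using $f \geq 0$ and $L$ closed), deduce that $\mu \mapsto \int_L f\,d\mu$ is u.s.c.\ on $\M(\overline{\ex(K)})$, and then apply Remark~\ref{pushDownPullUp}(3) to the continuous surjection $\pi$. Your write-up fills in the weak\textsuperscript{*}-semicontinuity step via a decreasing sequence of continuous functions and monotone convergence, which the paper's terser proof leaves implicit.
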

\begin{proof}
Let $T : \M(K) \rightarrow \R$ be defined by $T(\mu) = \int_L f d\mu$. We have that $f \chi_L$ is u.s.c. since $f$ is non-negative and u.s.c. and $L$ is closed. It follows that $T$ is u.s.c. Then the result follows from Remark \ref{pushDownPullUp} (3).
\end{proof}

\begin{lem} \label{limLemma}
Let $K$ be a metrizable Choquet simplex and $L$ a closed subset of $K$. Let $\{f_k: K \rightarrow [0, \infty) \}$ be a non-increasing sequence of u.s.c. functions, with $\lim_k f_k = f$. Then for all $x$ in $K$,
\begin{equation*}
\lim_{k \rightarrow \infty} \max_{\mu \in \pi^{-1}(x)} \int_L f_k d\mu \leq \max_{\mu \in \pi^{-1}(x)} \int_L f d\mu,
\end{equation*}
where $\pi$ is the restriction of the barycenter map on $\M(K)$ to $\M(\overline{\ex(K)})$.
\end{lem}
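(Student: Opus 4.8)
The plan is to fix $x \in K$, produce near-optimal measures realizing the maxima on the left-hand side, extract a weak* convergent subsequence of them, and then interchange two limiting operations carefully using the monotonicity in $k$ and the Monotone Convergence Theorem.

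First I would note that for each $k$ the function $f_k \charfun_L$ is bounded (a u.s.c. function on the compact space $K$ attains its supremum, and $f_k \geq 0$) and u.s.c. (a non-negative u.s.c. function times the indicator of a closed set), so $\mu \mapsto \int_L f_k \, d\mu$ is u.s.c. on the compact metrizable space $\M(\overline{\ex(K)})$. Hence, exactly as in the proof of Lemma \ref{limsupLemma} and by Remark \ref{pushDownPullUp} (3) applied to $\pi$, the maximum defining the left side is attained at some $\mu_k \in \pi^{-1}(x)$. Since $(f_k)$ is non-increasing, the numbers $c_k := \int_L f_k \, d\mu_k = \max_{\mu \in \pi^{-1}(x)} \int_L f_k \, d\mu$ are non-increasing in $k$, so $c := \lim_k c_k$ exists and is precisely the left side of the asserted inequality.

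Next, by compactness of $\M(\overline{\ex(K)})$ I would pass to a subsequence along which $\mu_{k_j} \to \mu_\infty$ in the weak* topology; continuity of the barycenter map gives $\mu_\infty \in \pi^{-1}(x)$. For a fixed index $m$ and all $j$ with $k_j \geq m$ one has $f_{k_j} \leq f_m$, hence $c_{k_j} = \int_L f_{k_j}\, d\mu_{k_j} \leq \int_L f_m \, d\mu_{k_j}$; letting $j \to \infty$ and using weak* upper semi-continuity of $\mu \mapsto \int_L f_m \, d\mu$ (the case $k = m$ of the observation above) yields $c \leq \int_L f_m \, d\mu_\infty$. This holds for every $m$, and since $f_m \charfun_L$ decreases pointwise to $f \charfun_L$ with $f_0 \charfun_L$ bounded and hence integrable against the probability measure $\mu_\infty$, the Monotone Convergence Theorem gives $\int_L f_m \, d\mu_\infty \to \int_L f \, d\mu_\infty$ as $m \to \infty$. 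Therefore $c \leq \int_L f \, d\mu_\infty \leq \max_{\mu \in \pi^{-1}(x)} \int_L f \, d\mu$, which is the desired bound.

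There is no serious obstacle here; the one delicate point is the order of the two limiting operations — one must first use the monotonicity in $k$ to freeze a late index $m$ before sending $j \to \infty$, and only afterwards let $m \to \infty$ — together with the legitimacy of extracting the weak* limit of the $\mu_{k_j}$, which is exactly the reason for working with measures on the compact set $\overline{\ex(K)}$ rather than on $\ex(K)$ itself.
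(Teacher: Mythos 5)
Your proof is correct. Every step checks out: $f_k\charfun_L$ is indeed u.s.c.\ and bounded on the compact space $\overline{\ex(K)}$, the maxima are attained because $\pi^{-1}(x)$ is compact and $\mu \mapsto \int_L f_k\,d\mu$ is u.s.c., the weak* limit $\mu_\infty$ lies in $\pi^{-1}(x)$ by continuity of $\pi$, the ``freeze $m$, send $j\to\infty$, then send $m\to\infty$'' order of limits is exactly right, and the decreasing form of the Monotone Convergence Theorem applies because $f_0\charfun_L$ is bounded, hence $\mu_\infty$-integrable.

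The route is genuinely different from the paper's, however. The paper does not prove the interchange of limit and maximum from scratch: it defines $T_k(\mu) = \int_L f_k\,d\mu$, observes that the $T_k$ form a non-increasing sequence of u.s.c.\ functionals, and then invokes Proposition 2.4 of \cite{BD}, which asserts in general that $\lim_k \max_{\mu \in \pi^{-1}(x)} T_k(\mu) = \max_{\mu \in \pi^{-1}(x)} \lim_k T_k(\mu)$ for such sequences; the Monotone Convergence Theorem is then used only to identify $\lim_k T_k(\mu)$ with $\int_L f\,d\mu$. Your argument is essentially a direct proof of the relevant special case of that cited proposition: extracting near-optimizers $\mu_k$, compactness to pass to a weak* limit, and the monotonicity-plus-u.s.c.\ diagonal argument to push the bound through. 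What the paper's approach buys is brevity and reuse of an established tool; what yours buys is self-containment and the explicit construction of a witness measure $\mu_\infty \in \pi^{-1}(x)$ achieving (or at least dominating) the limit, which also quietly confirms that the supremum on the right-hand side is attained.
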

\begin{proof}
Let $x$ be in $K$. Define $T_k : \M(K) \rightarrow \R$ and $T: \M(K) \rightarrow \R$ by the equations
\begin{equation*}
T_k(\mu)  = \int_L f_k d\mu, \; \text{ and } \; T(\mu) = \int_L f d\mu.
\end{equation*}
Since $f_k \chi_L$ and $f \chi_L$ are u.s.c., $T$ and $T_k$ are u.s.c. Proposition 2.4 of \cite{BD} states (in slightly greater generality) that
\begin{equation}\label{BDEqn}
\lim_k \max_{\mu \in \pi^{-1}(x)} T_k(\mu) = \max_{\mu \in \pi^{-1}(x)} \lim_k T_k(\mu).
\end{equation}
By the Monotone Convergence Theorem,
\begin{equation}\label{TequalsLimTk}
T(\mu) = \lim_k T_k(\mu).
\end{equation}
Combining Equations (\ref{BDEqn}) and (\ref{TequalsLimTk}) concludes the proof.
\end{proof}

Even when the hypotheses of the Embedding Lemma are satisfied, it is possible to have $\al_0(\H) < \al_0(\H|_L)$, as the next example shows.
\begin{example} \label{HlessThanPiH} This example provides a candidate sequence $\H$ satisfying the hypotheses of the Embedding Lemma and  $\al_0(\H) < \al_0(\H|_L)$, which proves that the inequality $\al_0(\H) \leq \al_0(\H|_L)$ is not an equality in general.  Suppose the set of extreme points of $K$ consists of two points, $b_1$ and $b_2$, sequences $\{ c_n \}$ and $\{d_n \}$ with $c_n \rightarrow b_1$ and $d_n \rightarrow b_2$, and a countable collection $\{ a_n \}$. Let $b = \frac{1}{2}(b_1+b_2)$ in $K$. Suppose further that with the subspace topology inherited from $K$, the set $\{ a_n \} \cup \{b\}$ is homeomorphic to $\o^2 +1$, with the homeomorphism given by $g_1: \o^2 +1 \rightarrow \{ a_n \} \cup \{b\}$ and $g_1(\o^2)=b$. One may construct such a simplex $K$ as the image of $\M(\{a_n\} \cup \{b, b_1, b_2\} \cup \{c_n\} \cup \{d_n\})$ under a continuous affine map (Lemma \ref{ConstructKLem}). Let $\F_1 = (f_k^1)$ be u.s.c.d. candidate sequence on $\o^2+1$ such that $\al_0(\F_1)=2$, $u_1^{\F_1}(t) = u_2^{\F_1}(t)$ for $t \neq \o^2$, and $||u_{2}^{\F_1}||=1$. Such a sequence is given by Corollary \ref{realizationCor}. Let $\F_2 = (f_k^2)$ be the u.s.c.d. candidate sequence on $\{c_n\} \cup \{d_n\} \cup \{b_1,b_2\}$ given, for $x$ in $\{c_n\} \cup \{d_n\} \cup \{b_1,b_2\}$ and $k \geq 1$, by
\begin{equation*}
 f_k^2(x) = \left \{ \begin{array}{ll} 0 & \text{ if } x = c_n \text{ or } x=d_n, \text{ with } k < n \\
                    1 & \text{ otherwise}.
                   \end{array}
            \right.
\end{equation*}

Now consider the candidate sequence $\H' = (h'_k)$ on $K$ such that for $x$ in $K$,
\begin{equation*}
h'_k(x) = \left \{
\begin{array}{ll}
f_k^1(g_1^{-1}(x)) & \text{ if } x = a_n \\
f_k^2(x) & \text{ if } x = c_n, \medspace d_n \\
0 & \text{ otherwise } \\
\end{array} \right.
\end{equation*}
Note that $\H'$ is u.s.c.d., convex, and $h'_{k+1} - h'_k$ is convex. Let $\H$ be the harmonic extension of $\H'|_{\ex(K)}$ on $K$. Then by Lemma \ref{embedIsHarmUSC}, $\H$ is harmonic and u.s.c.d.

Let $F = \ex(K)$ and $L = \overline{F} = \{a_n\} \cup \{b, b_1, b_2\} \cup \{c_n\} \cup \{d_n\}$. Since $L$ is the disjoint union the two (clopen in $L$) sets $\{a_n\} \cup \{b\}$ and $\{b_1, b_2\} \cup \{c_n\} \cup \{d_n\}$, we see that for $t$ in $L$,
\begin{align*}
u_{\al}^{\H|_L} = \left\{ \begin{array}{ll}
                  u_{\al}^{\F_1}(t), & \text{ if } t \in \{a_n\} \cup \{b\} \\
                  u_{\al}^{\F_2}(t), & \text{ if } t \in \{b_1, b_2\} \cup \{c_n\} \cup \{d_n\}.
              \end{array} \right.
\end{align*}
Thus $\al_0(\H|_L) = \max(\al_0(\F_1),\al_0(\F_2)) = \al_0(\F_1) = 2$ and $||u_2^{\H|_L}|| \leq 1$. Also, for all $t \neq b$, $u_1^{\H|_L}(t) = u_2^{\H|_L}(t)$, and for $t \in \{b_1,b_2\}$, $u_1^{\H|_L}(t) = 1$.

Applying the Embedding Lemma, we have that for all $t$ in $K$,
\begin{equation} \label{ETexampleEqn}
u_{\al}^{\H}(t) = \max_{\mu \in \pi^{-1}(t)} \int_L u_{\al}^{\H|_L} \, d\mu.
\end{equation}
If $\mu \in \pi^{-1}(t)$ and $\mu(\{b\}) >0$, then let $\nu = \frac{1}{2}\mu(\{b\})(\epsilon_{b_1} + \epsilon_{b_2}) + (1-\mu(\{b\})) \mu_{L \setminus \{b\}}$, where $\mu_{L \setminus \{b\}}$ is the measure $\mu$ conditioned on the set $L \setminus \{b\}$. Then $\nu \in \pi^{-1}(t)$, $\nu(\{b\}) = 0$, and $\int_L u_{i}^{\H|_L} \, d\mu \leq \int_L u_{i}^{\H|_L} \, d\nu$ for $i \in \{1, 2\}$. Thus the maximum in Equation (\ref{ETexampleEqn}) is obtained by a measure $\mu$ with $\mu(\{b\}) = 0$. Now if $\mu \in \pi^{-1}(t)$ and $\mu(\{b\}) = 0$, then $\int_L u_{1}^{\H|_L} \, d\mu = \int_L u_{2}^{\H|_L} \, d\mu$ since $u_1^{\H|_L}(s) = u_2^{\H|_L}(s)$ for $s \in L \setminus \{b\}$. From these facts we deduce $u_{1}^{\H}(t) = u_2^{\H}(t)$ for all $t$ in $K$, and therefore $\al_0(\H) = 1 < \al_0(\H|_L)$.
\end{example}


\section{Realization of Transfinite Orders of Accumulation} \label{examples}

Recall that for every countable ordinal $\al$, $\o^{\al}+1$ is a countable, compact, Polish space. Then let $K_{\al}$ be the (unique up to affine homeomorphism) Bauer simplex with $\ex(K_{\al}) = \o^{\al}+1$. For notation, let $\0_{\al}$ be the point $\o^{\al}$ in $K_{\al}$, and let $E_{\al} = \ex(K_{\al})$. In this section we construct, for each countable $\al$, a harmonic, u.s.c.d. candidate sequence $\H_{\al}$ on $K_{\al}$ such that $\al_0(\H_{\al})=\al$.

The idea of the following theorem is to construct, for each countable, irreducible ordinal $\al$, a candidate sequence $\H$ such that the transfinite sequence does not converge uniformly at $\al$, in some sense. The main tools of the proof are the disjoint union candidate sequence and the powers candidate sequences.

\begin{thm} \label{mainTech}
For all real numbers $0< \epsilon < a$, and for all countable, irreducible ordinals $\delta$ and $\al$, with $\delta < \al$, there exists a harmonic, u.s.c.d candidate sequence $\H_{\al}$ on $K_{\al}$ such that
\begin{enumerate}
  \item $||h|| \leq a$ if $\al$ is finite, and $||h|| \leq \epsilon$ if $\al$ is infinite;
  \item $||u_{\delta} || \leq \epsilon$;
  \item $||u_{\gamma}|| \leq a$ for all $\gamma$, and $||u_{\gamma}|| < a$ for $\gamma < \al$;
  \item $h(\0_{\al})=0, \medspace u_{\gamma}(\0_{\al}) = 0$, for all $ \gamma < \al$, and $u_{\al}(\0_{\al})=a$;
  \item $\al_0(\H_{\al}) = \al$.
\end{enumerate}
\end{thm}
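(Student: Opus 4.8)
The plan is to argue by transfinite induction on countable irreducible ordinals $\al$, constructing at each stage a harmonic, u.s.c.d.\ candidate sequence $\H_\al$ on $K_\al$. It suffices to build the sequence on $E_\al=\ex(K_\al)=\o^\al+1$ with properties (1)--(5): since $K_\al$ is a Bauer simplex with $\ex(K_\al)=E_\al$, I can pass to the harmonic extension $\H_\al$, which is harmonic by construction and u.s.c.d.\ by Lemma \ref{embedIsHarmUSC} (applied to the identity inclusion $E_\al\hookrightarrow K_\al$, whose image lies in $\ex(K_\al)$), while Proposition \ref{Uaffine} gives $u_\gamma^{\H_\al}=(u_\gamma^{\H_\al|_{E_\al}})^{har}$, so the transfinite sequence, the order of accumulation, and all relevant sup-norms are unchanged and $\0_\al$, being extreme, keeps $h(\0_\al)=0$ and $u_\al(\0_\al)=a$. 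By Corollary \ref{irrCor}, an infinite irreducible $\al$ either (i) equals $\widetilde\al\cdot\o$ for some irreducible $\widetilde\al<\al$, or (ii) is the supremum of a strictly increasing sequence of irreducibles $\al_k<\al$; I would treat $\al=1$, case (ii), and case (i) in turn.

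For the base case $\al=1$, realize $E_1=\o+1$ as $\{1/n:n\ge1\}\cup\{0\}$ with $\0_1=0$ and put $h_k(1/n)=a$ for $n\le k$, $h_k(1/n)=0$ for $n>k$, and $h_k(0)=0$. Then $\tau_k=h-h_k$ equals $a$ on $\{1/n:n>k\}$ and $0$ elsewhere, so a one-line computation gives $u_1(0)=a$, $u_1(1/n)=0$, and $u_2=u_1$; thus $\al_0=1$, $\|u_1\|=a$, and (1)--(5) hold (with (2) vacuous, there being no irreducible ordinal below $1$). Applied to finite topological products of this base sequence, the argument of the Powers Lemma adapts verbatim to the ordinal $1$ (the only ordinal below $1$ being $0$), and so for each finite $n$ it produces a u.s.c.d.\ candidate sequence on the countable compact space $(\o+1)^n$ with order of accumulation $n$ and sup-norms controlled linearly in $n$; this is the substitute for the Powers Lemma I need in case (i) when $\widetilde\al=1$, i.e.\ $\al=\o$, since the Powers Lemma as stated requires a limit ordinal.

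In case (ii), write $\al=\sup_k\al_k$ with $\al_k$ irreducible and strictly increasing; discarding an initial segment I may assume $\al_1>\delta$. Fix reals $a_k\uparrow a$ with $a_k<a$ and $\epsilon_k\downarrow0$ with $\epsilon_k<\min(a_k,\epsilon)$, and apply the inductive hypothesis with parameters $(a_k,\epsilon_k,\widetilde\delta_k)$ --- where $\widetilde\delta_1=\delta$ and $\widetilde\delta_k=\al_{k-1}$ for $k\ge2$ --- to obtain $\H_{\al_k}$ on $K_{\al_k}$; its restriction $\F_k=\H_{\al_k}|_{E_{\al_k}}$ to the (compact) extreme boundary satisfies $\al_0(\F_k)=\al_k$, $\|u_\gamma^{\F_k}\|=a_k$ for $\gamma\ge\al_k$, $\|u_{\widetilde\delta_k}^{\F_k}\|\le\epsilon_k$, and $\|h^{\F_k}\|\le\epsilon_k\to0$, by the Embedding Lemma and Remark \ref{embeddingRemarktwo}. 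Let $\H=\coprod_k\F_k$, a u.s.c.d.\ candidate sequence (by the semicontinuity proposition for disjoint unions, using $\|h^{\F_k}\|\to0$) living on the one-point compactification $Z$ of $\coprod_k E_{\al_k}$. Since $E_{\al_k}=\o^{\al_k}+1$ has Cantor--Bendixson rank $\al_k+1\le\al$ and $\al_k\uparrow\al$, the space $Z$ has Cantor--Bendixson rank $\al+1$ with $\Gamma^{\al}(Z)$ the single point at infinity, so $Z\cong\o^{\al}+1=E_\al$ by Theorem \ref{classification}, that point corresponding to $\0_\al$. Now Lemma \ref{disUnionLemma} computes $u_\gamma^{\H}(\0_\al)=\limsup_k\|u_\gamma^{\F_k}\|$, which is $0$ for $\gamma<\al$ (for large $k$ one has $\widetilde\delta_k=\al_{k-1}>\gamma$, whence $\|u_\gamma^{\F_k}\|\le\epsilon_k\to0$) and $a$ for $\gamma=\al$ (since $\|u_\al^{\F_k}\|=a_k\to a$); and $\|u_\gamma^{\H}\|=\sup_k\|u_\gamma^{\F_k}\|$, which for $\gamma<\al$ is the maximum of finitely many numbers $<a$ together with a tail tending to $0$, hence $<a$, and for $\gamma\ge\al$ equals $\sup_k a_k=a$. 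Property (2) holds because $\|u_\delta^{\F_k}\|\le\|u_{\widetilde\delta_k}^{\F_k}\|\le\epsilon_k\le\epsilon$ for every $k$; property (1) because $\|h^{\H}\|=\sup_k\|h^{\F_k}\|\le\epsilon$; and $\al_0(\H)=\sup_x\al_0^{\H}(x)=\al$ since the values $\al_0^{\F_k}(\0_{\al_k})=\al_k$ are cofinal in $\al$.

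In case (i), write $\al=\widetilde\al\cdot\o$ and let $\F=\H_{\widetilde\al}|_{E_{\widetilde\al}}$ come from the inductive hypothesis, which by (3)--(5) of the inductive hypothesis and the Embedding Lemma satisfies the hypotheses of the Powers Lemma with the ordinal $\widetilde\al$ and constant $\tilde a$ whenever $\widetilde\al$ is a limit ordinal (for $\widetilde\al=1$ I use instead the finite-powers construction above). Choose an integer $P_0\ge a/\epsilon$ and reals $b_n$ with $b_n=\epsilon$ for $n\le P_0$ and $b_n\uparrow a$ strictly for $n>P_0$, and set $\G_n=(b_n/\tilde a)\,\F^n$ with $\F^n$ the renormalized $n$-th power, so the Powers Lemma gives $\al_0(\G_n)=\widetilde\al n$, $\|u_\gamma^{\G_n}\|=b_n$ for $\gamma\ge\widetilde\al n$, $\|u_{\widetilde\al k}^{\G_n}\|\le b_n k/n$ for $k\le n$, and $\|h^{\G_n}\|\le b_n/n\to0$. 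As in case (ii), $\H=\coprod_n\G_n$ is u.s.c.d.\ and lives on a one-point compactification of $\coprod_n E_{\widetilde\al}^{\,n}$; each $(\o^{\widetilde\al}+1)^n$ is a countable compact space of Cantor--Bendixson rank $\widetilde\al n+1$ (the standard product computation, the top rank being the natural sum of $n$ copies of $\widetilde\al$), and $\widetilde\al n\uparrow\al$, so again this space is $\cong\o^{\al}+1=E_\al$ with the point at infinity corresponding to $\0_\al$. Lemma \ref{disUnionLemma} then yields $u_\gamma^{\H}(\0_\al)=\limsup_n\|u_\gamma^{\G_n}\|$, equal to $a$ for $\gamma=\al$ and $0$ for $\gamma<\al$ (if $\gamma<\widetilde\al k_0$ then $\|u_\gamma^{\G_n}\|\le b_n k_0/n\to0$), while $\|u_\gamma^{\H}\|=\sup_n\|u_\gamma^{\G_n}\|$ is $a$ for $\gamma\ge\al$ and $<a$ for $\gamma<\al$. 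The truncation at $P_0$ is precisely what forces (2): every irreducible $\delta<\al=\widetilde\al\o$ satisfies $\delta\le\widetilde\al$, so $\|u_\delta^{\G_n}\|\le b_n$ when $n=1$ and $\le b_n/n$ when $n\ge2$, whence $\sup_n\|u_\delta^{\G_n}\|\le\max(\epsilon,a/P_0)=\epsilon$, and likewise $\|h^{\H}\|\le\epsilon$; then (1) follows and $\al_0(\H)=\al$ as before. The bulk of the work --- and the main obstacle --- is the careful choice of the scaling parameters ($a_k,\epsilon_k$ in case (ii); $b_n,P_0$ in case (i)) so that the strict inequalities $\|u_\gamma\|<a$ for $\gamma<\al$ in property (3) survive passage through disjoint unions and renormalized powers, together with the Cantor--Bendixson-rank identifications of the disjoint-union and product spaces with the prescribed Bauer simplices $K_\al$; the limit-ordinal hypothesis of the Powers Lemma is a minor nuisance, absorbed by the separate treatment of $\al=\o$.
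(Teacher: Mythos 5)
Your proposal is correct and follows essentially the same route as the paper: transfinite induction on irreducible ordinals (your cases (i)/(ii) from Corollary \ref{irrCor} correspond exactly to the paper's successor and limit cases for the exponent $\beta$ in $\al=\o^{\beta}$), reduction to the Bauer boundary via Lemma \ref{embedIsHarmUSC} and Proposition \ref{Uaffine}, disjoint unions for the limit step, and renormalized powers feeding a disjoint union for the successor step. The minor departures --- scaling a fixed $\F$ by $b_n/\tilde a$ rather than re-invoking the inductive hypothesis with parameter $a_n$, identifying the full product $(\o^{\widetilde\al}+1)^n$ with $\o^{\widetilde\al n}+1$ via Theorem \ref{classification} instead of the paper's restriction to a subset, and your explicit caution about the limit-ordinal hypothesis in the Powers Lemma (which the paper in fact also applies with $\al=\o^0=1$ without comment) --- are all cosmetic and do not change the argument.
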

\begin{proof}
Suppose that we have constructed an u.s.c.d. candidate sequence $\H'$ on $\o^{\al}+1$ and shown that it possesses properties (1)-(5). Since $K_{\al}$ is Bauer, Proposition \ref{Uaffine} implies that we can let $\H_{\al}$ be the harmonic extension of $\H'$ to $K_{\al}$ and properties (1)-(5) carry over exactly. So without loss of generality, we will define $\H_{\al}$ directly on $E_{\al}$ and work exclusively on $E_{\al}$.

The rest of the proof proceeds by transfinite induction on the non-zero irreducible ordinals $\al$ ($\al$ is non-zero because $\delta<\al$). This is equivalent, by Proposition \ref{irreducible}, to writing $\al = \o^{\beta}$ and using transfinite induction on $\beta$. The base case is when $\beta = 0$.

\begin{case}[$\beta = 0$]
In this case $E_{\o^0} = E_1 = \o + 1$, the one-point compactification of the natural numbers. Now $\delta$ must be $0$ and by definition $u_0 \equiv 0$. Let $\H= (h_k)$, where $h_k(n) = 0$ if $k \leq n$, $h_k(n) = a$ if $k > n$, and $h_k(\0_1) = 0$. Then $h \leq a$. Since each $n$ is isolated in $E_{1}$, $r(n) = 0$, which implies that $\al_0(n) = 0$ and $u_{\gamma}(n)=0$ for all $\gamma$ (by Proposition \ref{pointwiseBound}). The point at infinity, $\0_1$, has topological order of accumulation $1$, which implies that $\al_0(\0_1) \leq 1$ (by Proposition \ref{pointwiseBound}). It only remains to check that $u_1(\0_1)=a$. Fix $k$. For any $n > k$, $\tau_k(n) = h(n)-h_k(n) = a$. Thus $\widetilde{\tau_k}(\0_1) \geq a$. Letting $k$ go to infinity gives that $u_1(\0_1) \geq a$. Since $u_1 \leq \widetilde{h} \leq a$, we obtain that $u_1(\0_1) = a$, as desired.

\end{case}

\begin{case}[$\beta$ implies $\beta+1$]
We assume the statement is true for $\o^{\beta}$, and we need to show that it is true for $ \o^{\beta+1} = \sup_n \o^{\beta} n$. In this case $E_{\o^{\beta+1}}$ is homeomorphic to the one-point compactification of the disjoint union of the spaces $(E_{\o^{\beta} n})$ (by Theorem \ref{classification}). With this homeomorphism, we may assume without loss of generality that $E_{\o^{\beta+1}}$ is the one-point compactification of the disjoint union of the spaces $E_{\o^{\beta}n}$. Fix $0 < \epsilon < a$, and let $\{a_p\}$ be a sequence of positive real numbers such that $a_p < a$ for all $p$ and $\lim_p a_p = a$. Using the induction hypothesis, for each $p$, we choose a u.s.c.d. candidate sequence $\H_{\o^{\beta}}$ on $E_{\o^{\beta}}$ which satisfies conditions (1)-(5) with parameters $a_p$, $\epsilon$, and $\delta < \o^{\beta}$. For each $p$, let $\H_{\o^{\beta}}^p$ be the $p$-power sequence of this $\H_{\o^{\beta}}$ restricted to $E_{\o^{\beta} p}$ (note that $\o^{\o^{\beta}p}+1 \subset (\o^{\o^{\beta}}+1)^p$). Then $||\lim (\H_{\o^{\beta}}^p)|| \leq \frac{a}{p}$, and $||u_{\o^{\beta}}^{\H_{\o^{\beta}}^p}|| \leq \frac{a_p}{p}$. Let $N$ be such that $\frac{a}{N} \leq \epsilon$, and define $\H_{\o^{\beta+1}} = \coprod_{n \geq N} \H_{\o^{\beta}}^n$. It remains to check (1)-(5) for $\H_{\o^{\beta+1}}$. \\
(1) Using that $h(\0_{\o^{\beta+1}})=0$,
\begin{equation*}
||h|| = \sup_{n \geq N} ||\lim \H_{\o^{\beta}}^n|| = ||\lim \H_{\o^{\beta}}^N|| \leq \frac{a}{N} \leq \epsilon < a.
\end{equation*} \\
(2) For irreducible $\delta < \o^{\beta +1}$, we have $\delta \leq \o^{\beta}$. Monotonicity of the transfinite sequence implies
\begin{equation*} ||u_{\delta}^{\H_{\o^{\beta}}^n}|| \leq ||u_{\o^{\beta}}^{\H_{\o^{\beta}}^n}||,
\end{equation*}
for every $n$. Also, Lemma \ref{disUnionLemma} implies
\begin{equation*}
||u_{\delta}|| = \sup_{n \geq N} ||u_{\delta}^{\H_{\o^{\beta}}^n}||.
\end{equation*}
Putting these inequalities together gives
\begin{equation*}
||u_{\delta} || = \sup_{n \geq N} ||u_{\delta}^{\H_{\o^{\beta}}^n}|| \leq \sup_{n \geq N} ||u_{\o^{\beta}}^{\H_{\o^{\beta}}^n}|| \leq \frac{a}{N} \leq \epsilon.
\end{equation*} \\
(3) For every $\gamma$, Lemma \ref{disUnionLemma} and Lemma \ref{powerLem} (1) imply
\begin{equation*}
||u_{\gamma}|| = \sup_{n \geq N} ||u_{\gamma}^{\H_{\o^{\beta}}^n}|| \leq a.
\end{equation*}
Further, for any $\gamma < \al$, there exists $m$ such that $\gamma < \o^{\beta} m$. Using subadditivity (Lemma \ref{UisZero}), $||u_{\gamma}^{\H^n_{\o^{\beta}}}|| \leq ||u_{\o^{\beta}m}^{\H^n_{\o^{\beta}}}|| \leq \frac{m}{n} a_n$. Then
\begin{equation*}
||u_{\gamma}|| = \sup_{n \geq N} ||u_{\gamma}^{\H_{\o^{\beta}}^n}|| \leq \max \Bigl( a_1, \dots, \, a_m, \, \sup_{n > m} \frac{m}{n} a_n \Bigr) < a.
\end{equation*} \\
(4) By definition, $h(\0_{\o^{\beta+1}}) = 0$. Let $\gamma < \al$. There exists a $k$ such that $\gamma < \o^{\beta} k$. Then Lemma \ref{disUnionLemma}, monotonicity, and Lemma \ref{powerLem} imply
\begin{equation*}
u_{\gamma}(\0_{\o^{\beta+1}}) \leq \limsup_{n \rightarrow \infty} ||u_{\gamma}^{\H_{\o^{\beta}}^n}|| \leq \limsup_{n \rightarrow \infty} ||u_{\o^{\beta}k}^{\H_{\o^{\beta}}^n}|| \leq \limsup_{n \rightarrow \infty} \frac{k a}{n} = 0.
\end{equation*}
Also, Lemma \ref{disUnionLemma} and Lemma \ref{powerLem} imply
\begin{equation*}
u_{\al}(\0_{\o^{\beta+1}}) \geq \limsup_{n \rightarrow \infty} u_{\al}^{\H_{\o^{\beta}}^n}(\0_{\o^{\beta} n}) = a,
\end{equation*}
which (combining with (3)) implies that $u_{\al}(\0_{\o^{\beta+1}}) = a$. \\
(5) For $x \neq \0_{\o^{\beta+1}}$, there exists $n$ such that $x \in E_{\o^{\beta} n}$, which implies that $r(x) \leq \o^{\beta} n$. Then Proposition \ref{pointwiseBound} gives that $\al_0(x) \leq \o^{\beta} n +1 < \o^{\beta+1}$. The fact that $\al_0(\0_{\o^{\beta+1}}) = \o^{\beta+1}$ then follows immediately from (3) and (4). Thus $\al_0(\H) = \o^{\beta+1}$.
\end{case}

\begin{case}[$\beta$ limit ordinal]
We assume the statement is true for all $\o^{\xi}$ with $\xi < \beta$, and we need to show that it is true for $\o^{\beta}$. In this case there is a strictly increasing sequence of irreducible ordinals $(\o^{\beta_n})$ with $\sup_n \o^{\beta_n} = \o^{\beta}$, and $E_{\o^{\beta}}$ is homeomorphic to the one-point compactification of the disjoint union of the $E_{\o^{\beta_n}}$ (by Remark \ref{disjUnion}). With this homeomorphism, we may assume without loss of generality that $E_{\o^{\beta}}$ is the one-point compactification of the disjoint union of the spaces $E_{\o^{\beta_n}}$. Fix $0 < \epsilon < a$, and let $\{a_n\}$ be a sequence of positive real numbers with $a_n<a$ for all $n$ and $\lim_n a_n =a$. By the induction hypothesis, for each $n>1$, there exists a u.s.c.d. candidate sequence $\H_{\o^{\beta_n}}$ on $E_{\o^{\beta_n}}$ satisfying (1)-(5) with parameters $a_n$, $\frac{\epsilon}{n}$, $\o^{\beta_n}$ and $\delta_n = \o^{\beta_{n-1}}$. Now fix $\delta$ irreducible with $\delta < \o^{\beta}$. Since $\sup_n \o^{\beta_n} = \o^{\beta}$, there exists $N$ such that $\o^{\beta_{N-1}} > \delta$. Let $\H_{\o^{\beta}} = \coprod_{n \geq N} \H_{\o^{\beta_n}}$. All that remains is to verify (1)-(5). \\
(1) Using that $h(\0_{\o^{\beta}})=0$, we get
\begin{equation*}
||h|| = \sup_{n \geq N} ||\lim \H_{\o^{\beta_n}}|| \leq \frac{\epsilon}{N} \leq \epsilon .
\end{equation*} \\
(2) Since $\delta < \o^{\beta_{N-1}}$, Lemma \ref{disUnionLemma} and monotonicity imply (as in the previous case)
\begin{equation*}
||u_{\delta}|| \leq \sup_{n \geq N} ||u_{\delta}^{\H_{\o^{\beta_n}}}|| \leq \sup_{n \geq N} ||u_{\o^{\beta_{n-1}}}^{\H_{\o^{\beta_n}}}|| \leq \sup_{n \geq N} \frac{\epsilon}{n} \leq \epsilon.
\end{equation*} \\
(3) For any $\gamma$, by construction,
\begin{equation*}
||u_{\gamma}|| \leq \sup_{n \geq N} ||u_{\gamma}^{\H_{\o^{\beta_n}}}|| \leq a.
\end{equation*}
Further, for $\gamma < \al$, there exists $m$ such that $\gamma < \o^{\beta_m}$. For $n > m$, $||u_{\gamma}^{\H_{\o^{\beta_n}}}|| \leq \frac{\epsilon}{n}$. Then
\begin{equation*}
||u_{\gamma}|| \leq \sup_{n \geq N} ||u_{\gamma}^{\H_{\o^{\beta_n}}}|| \leq \max \Bigl( a_1, \dots, \, a_m, \, \sup_{n > m} \frac{\epsilon}{n} \Bigr) < a.
\end{equation*} \\
(4) By definition, $h(\0_{\o^{\beta}}) = 0$. For any $\gamma < \o^{\beta}$, there exists some $k$ such that for all $n \geq k$, $\o^{\beta_n} > \gamma$. Then
\begin{equation*}
u_{\gamma}(\0_{\o^{\beta}}) \leq \limsup_{n \rightarrow \infty} ||u_{\gamma}^{\H_{\o^{\beta_n}}}|| \leq \limsup_{n\rightarrow \infty} ||u_{\o^{\beta_{n-1}}}^{\H_{\o^{\beta_n}}}|| \leq \limsup_{n \rightarrow \infty} \frac{\epsilon}{n} = 0.
\end{equation*} \\
(5) For any $x \neq \0_{\al}$, there exists $n$ such that $x \in E_{\o^{\beta_n}}$. Then $\al_0(x) \leq r(x) \leq \o^{\beta_n} < \o^{\beta}$. By (3) and (4), $\al_0(\0_{\o^{\beta}}) = \o^{\beta}$. Therefore $\al_0(\H)=\o^{\beta}$.
\end{case}
\end{proof}

\begin{cor}\label{realizationCor}
For all positive real numbers $a$ and non-zero countable ordinals $\al$, there exists a harmonic, u.s.c.d. candidate sequence $\H$ on $K_{\al}$ such that the transfinite sequence corresponding to either $\H$ or $\H|_{\ex(K_{\al})}$ satisfies
\begin{enumerate}
  \item $||u_{\gamma}|| \leq a$ for all $\gamma$, and $||u_{\gamma}|| < a$ for all $\gamma < \al$;
  \item $h(\0_{\al})=0$, and $u_{\al}(\0_{\al})=a$;
  \item $\al_0(\H) = \al_0(\H|_{\ex(K_{\al})})= \al$.
\end{enumerate}
\end{cor}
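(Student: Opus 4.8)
The plan is to derive the corollary from Theorem \ref{mainTech} and the Product Lemma (Lemma \ref{prodLemma}), splitting into two cases according to whether $\al$ is irreducible.

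If $\al$ is irreducible, write $\al = \o^{\beta}$ and apply Theorem \ref{mainTech} to this $\al$, the given constant $a$, any irreducible ordinal $\delta < \al$ (one may take $\delta = 0$, as in the base case of that proof), and any $\epsilon \in (0,a)$; its proof in fact produces a harmonic, u.s.c.d. sequence $\H$ on $K_{\al}$ whose restriction to $E_{\al}$ is built directly, and properties (1), (3), (4), (5) of Theorem \ref{mainTech} are exactly (1)--(3) of the corollary for $\H$. Since $K_{\al}$ is Bauer, Proposition \ref{Uaffine} gives $u_{\gamma}^{\H} = (u_{\gamma}^{\H|_{E_{\al}}})^{har}$ for every $\gamma$, and restricting back to $E_{\al}$ (where $\P_x = \epsilon_x$) returns $u_{\gamma}^{\H|_{E_{\al}}}$; hence $\|u_{\gamma}^{\H}\| = \|u_{\gamma}^{\H|_{E_{\al}}}\|$ and $\al_0(\H) = \al_0(\H|_{E_{\al}})$, so the statements about $\H|_{\ex(K_{\al})}$ also hold.

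If $\al$ is not irreducible, let $\al = \o^{\beta_1}m_1 + \dots + \o^{\beta_N}m_N$ be its Cantor Normal Form, with $\beta_1 > \dots > \beta_N$. Choose $a_1 > \dots > a_N > 0$ with $\sum_i a_i = a$ and satisfying \eqref{aiCondition}, as allowed in the Product Lemma, and for each $j$ apply Theorem \ref{mainTech} to the irreducible ordinal $\o^{\beta_j}$ with constant $a_j$ (and auxiliary parameters $\delta < \o^{\beta_j}$ irreducible, $\epsilon \in (0,a_j)$), obtaining a u.s.c.d. candidate sequence $\F_j$ on $E_{\o^{\beta_j}} = \o^{\o^{\beta_j}}+1$. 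Properties (1), (3), (4), (5) of Theorem \ref{mainTech} for $\F_j$, combined with Remark \ref{accumVarPrin}, are precisely hypotheses (i)--(v) of the Product Lemma, which then delivers $\G := \H_N \times \dots \times \H_1$, $\H_j := \F_j^{m_j}$, a candidate sequence on $\tilde E := \prod_{j=1}^{N}(E_{\o^{\beta_j}})^{m_j}$ with $\|u_{\gamma}^{\G}\| \le a$ for all $\gamma$ (strictly for $\gamma < \al$), with $\al_0^{\G}(x) \le \al$ for $x \ne \0$, and with $\al_0^{\G}(\0) = \al$, $u_{\al}^{\G}(\0) = a$, so that $\al_0(\G) = \al$; that $\G$ is u.s.c.d. follows from the semicontinuity properties of product and power sequences established above, using $(\lim \F_j)(\0_j) = h^{\F_j}(\0_j) = 0$ from Theorem \ref{mainTech} (4).

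It remains to move $\G$ onto $K_{\al}$. The key point is the standard fact that the topological rank of a point in a finite product of compact ordinal spaces equals the natural (Hessenberg) sum $\oplus$ of its coordinate ranks, so that $(\o^{\gamma}+1)\times(\o^{\gamma'}+1) \cong \o^{\gamma\oplus\gamma'}+1$; since $\beta_1 > \dots > \beta_N$ (so that the natural sum of the terms $\o^{\beta_j}m_j$ equals their ordinary sum $\al$), this iterates to $\tilde E \cong \o^{\al}+1 = E_{\al}$, and, topological rank being a homeomorphism invariant, any such homeomorphism sends the unique rank-$\al$ point $\0$ of $\tilde E$ to $\0_{\al}$. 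Transporting $\G$ along it yields a u.s.c.d. candidate sequence $\H'$ on $E_{\al}$ with the same norms and orders of accumulation, and letting $\H$ be its harmonic extension to the Bauer simplex $K_{\al}$ (harmonic and u.s.c.d. by Lemma \ref{embedIsHarmUSC}), Proposition \ref{Uaffine} shows that $\H$ inherits all of this data, so (1)--(3) hold. I expect the main obstacle to be exactly this identification $\tilde E \cong \o^{\al}+1$ --- the rank-additivity computation for products together with the Mazurkiewicz--Sierpi\'{n}ski classification (Theorem \ref{classification}) --- along with the routine bookkeeping that norms, base-point values, and order of accumulation survive both the change of space and the passage to the harmonic extension; the rest is a direct appeal to Theorem \ref{mainTech} and the Product Lemma.
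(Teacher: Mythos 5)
Your proof is correct and follows essentially the same route as the paper: apply Theorem~\ref{mainTech} to each term $\o^{\beta_j}$ of the Cantor Normal Form with the weight $a_j$, form the renormalized powers $\H_j = \F_j^{m_j}$, take the product $\H_N \times \dots \times \H_1$, and invoke the Product Lemma for properties (1)--(3); harmonicity and u.s.c.d. on $K_{\al}$ then come from Proposition~\ref{Uaffine} / Lemma~\ref{embedIsHarmUSC}. The one place where you are more explicit than the paper is the passage from the product space $\tilde E = \prod_j (E_{\o^{\beta_j}})^{m_j}$ to $E_{\al} = \o^{\al}+1$: you justify this by the Hessenberg rank-additivity $r(x,y) = r(x)\oplus r(y)$ together with the Mazurkiewicz--Sierpi\'{n}ski classification, whereas the paper writes ``restricted to $K_{\al}$'' and leaves the identification implicit (it is in fact a homeomorphism, so the restriction loses nothing). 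Your preliminary case split into irreducible versus non-irreducible $\al$ is harmless but unnecessary, as the irreducible case is just the $N=1$, $m_1=1$ instance of the general argument.
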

\begin{proof}
Let $\al$ be a non-zero countable ordinal, and suppose the Cantor Normal Form of $\al$ (as in Theorem \ref{CNF}) is given by
\begin{equation*}
 \al = \al_1 m_1 + \dots + \al_N m_N.
\end{equation*}
Let $a_1 > \dots > a_N > 0$ be real numbers such that $\sum a_j = a$ and for each $j =1, \dots, N-1$,
\begin{equation*}
\frac{a_j}{m_j} \geq \sum_{i=j+1}^N a_i.
\end{equation*}
For each $j = 1, \dots, N$, let $\F_j$ be a harmonic, u.s.c.d. candidate sequence given by Theorem \ref{mainTech} with parameters $a_j$ and $\al_j$. Define $\H_j$ to be the product sequence $\F_j^{m_j}$ restricted to $K_{\al_j m_j}$, and let $\H = \H_N \times   \dots  \times \H_1 $ restricted to $K_{\al}$. By definition of $\H$, $h(\0_{\al})=0$. The rest of properties (1)-(3) follow from Lemma \ref{prodLemma}.

\end{proof}

\begin{cor} \label{alphaPlusOne}
Let $a>0$, and let $\al$ be a countable, infinite ordinal. Then there is a harmonic, u.s.c.d. candidate sequence $\H$ on $K_{\al}$ such that the transfinite sequence corresponding to either $\H$ or $\H|_{\ex(K_{\al})}$ satisfies
\begin{enumerate}
 \item $||u_{\gamma}|| \leq a$ for all $\gamma$, and $||u_{\gamma}|| < a$ for $\gamma < \al+1$;
 \item $h(\0_{\al})=0$ and $u_{\al+1}(\0_{\al}) = a$;
 \item $\al_0(\H) = \al_0(\H|_{\ex(K_{\al})}) =  \al+1$.
\end{enumerate}
\end{cor}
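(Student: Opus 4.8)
The plan is as follows. Since $K_\al$ is a Bauer simplex with $\ex(K_\al)=E_\al=\o^\al+1$, Proposition~\ref{Uaffine} lets us first build a suitable u.s.c.d.\ candidate sequence $\H$ directly on $E_\al$ and then pass to its harmonic extension, which inherits the whole transfinite sequence; so all the work takes place on $E_\al$. The marked point $\0_\al=\o^\al$ has topological rank $r(\0_\al)=\al$, so by Proposition~\ref{pointwiseBound} any candidate sequence on $E_\al$ satisfies $\al_0^{\H}(\0_\al)\le\al+1$, while every other point of $E_\al$ has rank $<\al$ and hence pointwise order of accumulation $\le\al$. Thus by Remark~\ref{accumVarPrin} it suffices to produce $\H$ with the norm bounds in (1), with $h(\0_\al)=0$, and with a strict inequality $u_\al^{\H}(\0_\al)<u_{\al+1}^{\H}(\0_\al)=a$; everything else then follows formally.

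The construction realizes $E_\al$ as the one-point compactification of $\coprod_n D_n$, with $\0_\al$ the point at infinity. When $\al$ is a limit ordinal I take $\gamma_n\nearrow\al$ with $\gamma_n<\al$ and $D_n=\o^{\gamma_n}+1$; the one-point compactification then has Cantor–Bendixson rank $\al+1$ with a single point at the top level, hence is homeomorphic to $\o^\al+1$ by Theorem~\ref{classification}. On $D_n$ I put $\G_n=\F_n+\mathcal B_n$, where $\F_n$ is (a refinement, by linear interpolation, of) the candidate sequence realizing order $\gamma_n$ on $D_n$ with parameter $\beta:=a/2$; refining preserves the uniform equivalence class, so $\al_0(\F_n)=\gamma_n$, $\|u^{\F_n}_\gamma\|\le\beta$ for all $\gamma$, $u^{\F_n}_{\gamma_n}$ equals $\beta$ at the top of $D_n$, $h^{\F_n}$ vanishes there, and the differences of $\F_n$ have norm $\le\eps_n\to0$; crucially, I also arrange $\|u^{\F_n}_\gamma\|\to0$ as $n\to\infty$ for each fixed $\gamma<\al$. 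The summand $\mathcal B_n=(b^n_k)$ is a ``slow continuous bump'': $b^n_k\equiv(a-\beta)\min(1,k/n)$ as a constant function on $D_n$, so $h^{\mathcal B_n}\equiv a-\beta$ is continuous, $b^n_k\to h^{\mathcal B_n}$ uniformly, $\al_0(\mathcal B_n)=0$, and its differences have norm $\le(a-\beta)/n$. Because $h^{\mathcal B_n}$ is continuous and the convergence is uniform, adding $\mathcal B_n$ does not change the transfinite sequence ($\widetilde{f+c}=\widetilde f+c$ for continuous $c$), so $u^{\G_n}_\gamma=u^{\F_n}_\gamma$; and since all differences of $\G_n$ have norm tending to $0$, $\H:=\coprod_n\G_n$ is a u.s.c.d.\ candidate sequence.

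To compute the transfinite sequence at $\0_\al$, note that on $D_n$ one has $\tau^{\H}_k=\tau^{\F_n}_k+(a-\beta)\max(0,1-k/n)$ with $0\le\tau^{\F_n}_k\le h^{\F_n}\le\eps_n$, and $u^{\H}_\gamma|_{D_n}=u^{\G_n}_\gamma$ as the $D_n$ are clopen. A transfinite induction using these facts together with $\|u^{\F_n}_\gamma\|\to0$ for fixed $\gamma<\al$ gives $u^{\H}_\gamma(\0_\al)=a-\beta=a/2$ for all $1\le\gamma<\al$ and $u^{\H}_\al(\0_\al)=\max(a-\beta,\beta)=a/2$, whence $\|u^{\H}_\gamma\|\le\beta<a$ for $\gamma\le\al$. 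Writing $t_n$ for the top point of $D_n$ (so $t_n\to\0_\al$), one has $u^{\H}_\al(t_n)+\tau^{\H}_k(t_n)=\beta+(a-\beta)\max(0,1-k/n)\to a$ as $n\to\infty$, so $\widetilde{u^{\H}_\al+\tau^{\H}_k}(\0_\al)\ge a$ and hence $u^{\H}_{\al+1}(\0_\al)\ge a$; the reverse inequality is subadditivity (Proposition~\ref{UisZero}): $u^{\H}_{\al+1}(\0_\al)\le u^{\H}_\al(\0_\al)+u^{\H}_1(\0_\al)=a/2+a/2=a$. Thus $u^{\H}_{\al+1}(\0_\al)=a>a/2=u^{\H}_\al(\0_\al)$, giving (1), (2), and $\al_0(\H)=\al+1$ via the pointwise bounds; harmonic extension to $K_\al$ then finishes the argument. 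When $\al=\al'+1$ is a successor there is no compact space of Cantor–Bendixson rank $\le\al$ realizing order $\al$ among ``$\gamma_n<\al$'', so instead I take $D_n=\o^{\al'}n+1$ (again of rank $\al+1$ after one-point compactification) and apply the statement inductively to $\al'$, realizing order $\al'+1=\al$ on a clopen copy of $\o^{\al'}+1$ inside each $D_n$, before adding the same bumps.

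The main obstacle is the quantitative claim $\|u^{\F_n}_\gamma\|\to0$ for each fixed $\gamma<\al$: Corollary~\ref{realizationCor} only yields $\|u^{\F_n}_\gamma\|<\beta$, which is insufficient (if $\|u^{\F_n}_\gamma\|$ stayed close to $\beta$ for some infinite $\gamma<\al$, the jump in $u^{\H}_\bullet(\0_\al)$ would already occur before stage $\al+1$). This is precisely where the $\eps$-parameter ``$\|u_\delta\|\le\eps$'' of Theorem~\ref{mainTech} is used: for irreducible $\gamma_n$ it applies directly with $\delta=\delta_n\nearrow\al$, and for reducible orders one propagates that refined estimate through the Product Lemma~\ref{prodLemma}, i.e.\ runs an $\eps$-tracking version of the proof of Corollary~\ref{realizationCor}. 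The remainder is routine transfinite bookkeeping of the type already carried out in the proof of Theorem~\ref{mainTech}.
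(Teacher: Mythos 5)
Your approach for limit $\al$ (disjoint union of $\F_n + \mathcal B_n$, where $\mathcal B_n$ is a ``slow constant bump'') is genuinely different from the paper's: the paper modifies a disjoint union only at the single marked point of each piece (Stage 1 of Corollary~\ref{alphaPlusOne}, with $h_k(\0_n)=b/2$ for $k>n$), and handles all non-irreducible $\al$ by \emph{products} (Stage 2) rather than by disjoint unions. The key structural reason the paper switches to products is that your scheme cannot work when $\al$ is a successor. Indeed, suppose $\al=\al'+1$, $D_n=\o^{\al'}n+1$, and $\G_n=\F_n+\mathcal B_n$ with $\al_0(\F_n)=\al$ and $\|u_\al^{\F_n}\|=\beta$. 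Since $\widetilde{u_{\al'}^{\F_n}+\tau_k^{\F_n}}\ge u_\al^{\F_n}$ and the u.s.c.\ envelope preserves the sup norm, one has $\sup_{D_n}(u_{\al'}^{\F_n}+\tau_k^{\F_n})\ge\beta$ for every $k$ and $n$. Adding the constant bump $(a-\beta)\max(0,1-k/n)$ (which tends to $a-\beta$ as $n\to\infty$ with $k$ fixed), we get
\begin{equation*}
u_\al^\H(\0_\al)=\lim_k\widetilde{\bigl(u_{\al'}^\H+\tau_k^\H\bigr)}(\0_\al)
\ \ge\ \lim_k\ \limsup_n\Bigl(\beta+(a-\beta)\max(0,1-k/n)\Bigr)=a,
\end{equation*}
so $u_\al^\H(\0_\al)=a=u_{\al+1}^\H(\0_\al)$ and $\al_0(\H)=\al$, not $\al+1$. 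This happens for \emph{any} admissible $\F_n$; the bump necessarily fires one step too early. In the limit case you are saved only because the $\al$-th step is $\widetilde{\sup_{\gamma<\al}u_\gamma^\H}$, which does not involve any $\tau_k$, so the bump does not contribute there.

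There is also a second, independent gap: your acknowledged ``main obstacle'' $\|u_\gamma^{\F_n}\|\to 0$ for fixed $\gamma<\al$ is not merely an $\eps$-tracking refinement of Corollary~\ref{realizationCor}, and it fails for reducible limit $\al$. Take $\al=\o\cdot 2$. Any $\gamma_n\nearrow\o\cdot 2$ is eventually of the form $\o+n$, and the Product Lemma construction of Corollary~\ref{realizationCor} for $\o+n=\o^1\cdot 1+\o^0\cdot n$ forces $a_1\ge a_2+\dots+a_N$ with $\sum a_i=\beta$, hence $a_1\ge\beta/2$; in the resulting $\F_n$ one has $u_{\o}^{\F_n}(x)=a_1/m_1=a_1\ge\beta/2$ for every point with $\ind(x)>1$, so $\|u_{\o}^{\F_n}\|\ge\beta/2$ for all $n$. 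Thus the estimate you need for $\gamma=\o$ cannot be produced by ``propagating the $\eps$-parameter through Lemma~\ref{prodLemma}.'' The paper sidesteps both obstacles by building the successor and reducible cases as \emph{products} $\H'=\F'_N\times\H_N\times\dots\times\H_1$ (or $\H_N\times\F'_{N-1}\times\dots$), reserving the disjoint-union construction for the irreducible part alone.
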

\begin{proof}
Using Proposition \ref{Uaffine}, we may deal exclusively with u.s.c.d. candidate sequences on $E_{\al}$ (as opposed to $K_{\al}$), and all properties will carry over to $K_{\al}$.

The proof is executed in two stages. First we prove the statement for the countably infinite, irreducible ordinals. In the second stage, we prove the statement for all countable, infinite ordinals.

\textbf{Stage 1.} Let $\al$ be a countably infinite, irreducible ordinal. Let $\al = \o^{\beta}$ (since $\al$ is infinite, $\beta >0$). and let $b = \frac{2}{3} a$. Let $\F$ be given by Theorem \ref{mainTech} with parameters $b$, $\al$, $\epsilon$, and $\delta$.  Recall from the proof of Theorem \ref{mainTech} that we may take $\F = \sqcup \F_n$, where the exact form of the $\F_n$ is as follows. Let $\{a_n\}$ be a sequence of positive real numbers with $a_n < b$ for all $n$ and $\lim_n a_n = b$. If $\beta$ is a successor, then we may take $\F_n = \G^n$, where $\G$ satisfies the conclusions of Theorem \ref{mainTech} with parameters $a_n$, $\epsilon$, $\o^{\beta-1}$, and $\delta$. Otherwise, if $\beta$ is a limit with $\beta_n$ increasing to $\beta$, then $\F_n$ satisfies the conclusions of Theorem \ref{mainTech} with parameters $a_n$, $\epsilon$, $\o^{\beta_n}$, and $\delta$. Let $\F = (f_k)$, and let $\0_n$ denote the marked point in $E_{\beta_n}$ (so $E_{\beta_n}$ is the domain of $\F_n$). Let $\H = (h_k)$ be defined by the rule
\begin{equation*}
h_k(x) = \left\{
\begin{array}{ll}
f_k(x) & \text{ if } x \neq \0_n \\
0 & \text{ if } x = \0_n, \medspace k \leq n \\
\frac{b}{2} & \text{ if } x = \0_n, \medspace k > n.
\end{array} \right.
\end{equation*}
By definition, let $h_k(\0_{\al})=0$. Note that $\H$ is again an u.s.c.d. sequence on $E_{\al}$, and $u_{\gamma}^{\H}(x) = u_{\gamma}^{\F}(x)$ for all $\gamma$ and all $x \neq \0_{\al}$. It follows that $u_{\gamma}^{\H}(x) \leq b$ for all $\gamma$ and all $x \neq \0_{\al}$. Computing the transfinite sequence at $\0_{\al}$, we see that
\begin{align*}
 u_{\ell}^{\H}(\0_{\al}) & = \frac{b}{2}, \text{ for } 1 \leq \ell < \al \\
 u_{\al}^{\H}(\0_{\al}) & = b \\
 u_{\al+1}^{\H}(\0_{\al}) & = b + \frac{b}{2} = a.
\end{align*}
Since $\al_0(\0_{\al}) \leq r(\0_{\al}) + 1 = \al+1$, we conclude that $\al_0(\0_{\al}) = \al+1$. Thus we obtain properties (1)-(3).

\textbf{Stage 2.} Let $\al = \o^{\beta_1} m_1 + \dots \o^{\beta_N} m_N$ be the Cantor Normal Form of $\al$.

The construction proceeds by cases. In the first case, suppose $\o^{\beta_N}$ is infinite. Let $a>0$, and select $a_1 > \dots > a_N$ as in Lemma \ref{prodLemma}. Let $\F_j$ be given by Lemma \ref{mainTech} with parameters $a_j$ and $\o^{\beta_j}$, for $j = 1, \dots , N$. Let $\F'_N$ be given by Stage 1 corresponding to $\frac{a_N}{m_N}$ and $\o^{\beta_N}$. For $j=1, \dots, N-1$, let $\H_j = \F_j^{m_j}$, and for $j=N$, if $m_N >1$, let $\H_j = \F_N^{m_N-1}$. Now let $\H'$ be given by the product (where $\H_N$ is omitted if $m_N = 1$)
\begin{equation*}
\H' = \F'_N \times (\H_N) \times \dots \times (\H_1),
\end{equation*}
Let $\H$ be the restriction of $\H'$ to $E_{\o^{\al}+1}$. Note that $h(\0_{\al})=0$. Then using Lemmas \ref{powerLem} and \ref{prodLemma}, we conclude that
\begin{equation*}
\al_0(\H) = \big( \sum_{i=1}^{N-1} \o^{\beta_i} m_i \big) + \o^{\beta_N} (m_N-1) + (\o^{\beta_N}+1) = \al+1.
\end{equation*}

For the second case, we suppose that $\o^{\beta_N}$ is finite, which implies that $\o^{\beta_N} = 1$. Let $a>0$, and select $a_1 > \dots > a_N$ as in Lemma \ref{prodLemma}, with the additional condition that $\frac{a_{N-1}}{3 m_{N-1}} \geq a_N$. Let $\F_j$ be given by Lemma \ref{mainTech} with parameters $a_j$ and $\o^{\beta_j}$, for $j = 1, \dots , N$.  Since $\al$ is infinite, it follows that $\o^{\beta_{N-1}}$ is infinite. Let $\F'_{N-1}$ be given by Stage 1 corresponding to $\frac{a_{N-1}}{m_{N-1}}$ and $\o^{\beta_{N-1}}$ (so that the condition $\frac{a_{N-1}}{3 m_{N-1}} \geq a_N$ implies $b/2 \geq a_N$ in the notation of Stage 1). For $j \in \{1, \dots, N-2, N\}$, let $\H_j = \F_j^{m_j}$. If $m_{N-1} >1$, let $\H_{N-1} = \F_{N-1}^{m_{N-1} - 1}$. Now let $\H'$ be given by the product (where $\H_{N-1}$ is omitted if $m_{N-1} = 1$):
\begin{equation*}
\H' =  (\H_N) \times \F'_{N-1} \times (\H_{N-1}) \times \dots \times (\H_1),
\end{equation*}
Let $\H$ be the restriction of $\H'$ to $E_{\o^{\al}+1}$. Note that $h(\0_{\al})=0$. Then the reader may easily adapt the proofs of Lemmas \ref{powerLem} and \ref{prodLemma} with the additional assumption that $\frac{a_{N-1}}{3 m_{N-1}} \geq a_N$ to check that
\begin{align*}
||u_{\o^{\beta_1}m_1 + \dots + \o^{\beta_{N-1}}m_{N-1}}^{\H} || & = \sum_{i=1}^{N-2} a_i + \bigl(\frac{a_{N-1}}{m_{N-1}}(m_{N-1}-1)\bigr) + \frac{a_{N-1}}{m_{N-1}}\bigl(\frac{2}{3}\bigr) \\
||u_{\o^{\beta_1}m_1 + \dots + \o^{\beta_{N-1}}m_{N-1}+1}^{\H} || & = \sum_{i=1}^{N-1} a_i  \\
||u_{\o^{\beta_1}m_1 + \dots + \o^{\beta_{N-1}}m_{N-1}+1 + k}^{\H} || & = \sum_{i=1}^{N-1} a_i + \frac{a_N}{m_N} k, \, \text{ for $k = 1, \dots, m_N$,}
\end{align*}
and,
\begin{equation*}
\al_0(\H) = \big( \sum_{i=1}^{N-2} \o^{\beta_i} m_i \big) + \o^{\beta_{N-1}} (m_{N-1}-1) + (\o^{\beta_{N-1}}+1) + m_N  = \al+1.
\end{equation*}
\end{proof}

\begin{rmk} \label{LimsupToLimitRmk}
In Corollaries \ref{realizationCor} and \ref{alphaPlusOne}, one may further require that $\H|_{\ex(K_{\al})}$ has the following property (P): for any $t$ in $\ex(K_{\al})$, for any sequence $\{s_n\}$ of isolated points in $\ex(K_{\al})$ that converges to $t$, $\limsup_n \tau_k(s_n) = \lim_n \tau_k(s_n)$. Let us prove this fact. In the case $\al = 1$, there is only one sequence of isolated points in $\ex(K_1) \cong \o +1$, and the candidate sequence $\F$ constructed in the proof of Theorem \ref{mainTech} satisfies (P). Then we note that if each of the candidate sequences $\F_1, \dots, \F_N$ satisfies this property, then so does the product $\F = \F_1 \times \dots \times \F_N$. To see this fact, note that the projection $\pi_N$ onto the last coordinate of any isolated point $x$ in the product space is not the marked point $\mathbf{0}_N$, and thus $\F(x) = \F_N( \pi_N(x) )$. Hence the product candidate sequence satisfies property (P) because $\F_N$ does.  Now suppose there is a sequence $(\F_n)_n$ of candidate sequences such that each $\F_n$ satisfies (P). Let $h^n = \lim \F_n$ and let $I_n$ be the set of isolated points in the domain of $\F_n$. Further suppose that $h^n|_{I_n}$ converges uniformly to $0$. Then $\coprod_n \F_n$ satisfies (P) as well (to see this, note that property (P) is satisfied on the domain of each candidate sequence $\F_n$ separately because $\F_n$ has property (P), and then it is satisfied at the point at infinity because $h^n|_{I_n}$ converges uniformly to $0$). The constructions used in the proofs of Theorem \ref{mainTech}, Corollary \ref{realizationCor} and Corollary \ref{alphaPlusOne} only rely on these three types of constructions ($\al = 1$, product sequences, and disjoint union sequences with $h^n|_{I_n}$ tending uniformly to $0$), and thus at each step we may choose candidate sequences satisfying (P). Making these choices yields $\H|_{\ex(K_{\al})}$ with the desired property.
\end{rmk}

We conclude this section by stating these results in the language of dynamical systems. The following corollary follows from Corollary \ref{realizationCor} by appealing to the Downarowicz-Serafin realization theorem (Theorem \ref{realization}).
\begin{cor} \label{DynRealizationCor}
For every countable ordinal $\al$, there is a minimal homeomorphism $T$ of the Cantor set such that $\al$ is the order of accumulation of entropy of $T$.
\end{cor}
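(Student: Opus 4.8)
The plan is to deduce the corollary from the functional-analytic construction of Corollary~\ref{realizationCor} by invoking the realization theorem of Downarowicz and Serafin (Theorem~\ref{realization} here). Recall that this theorem takes as input a metrizable Choquet simplex $M$ together with a candidate sequence $\H$ on $M$ satisfying properties (1)--(3) of the introduction (a non-decreasing sequence of harmonic, u.s.c.\ functions with u.s.c.\ differences and bounded limit, together with the technical property (P) recorded in Remark~\ref{LimsupToLimitRmk}), and produces a \emph{minimal homeomorphism} $T$ of the Cantor set such that $M(X,T)$ is affinely homeomorphic to $M$ and the entropy structure of $(X,T)$ is uniformly equivalent to $\H$ (transported by that affine homeomorphism). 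Since the transfinite sequence $(u_\al)$, and hence the order of accumulation, depends only on the uniform equivalence class and is carried along by affine homeomorphisms of the underlying simplex, it will follow that $\al_0(X,T)=\al_0(\H)$.

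First suppose $\al \ge 1$. Apply Corollary~\ref{realizationCor} with $a=1$ to obtain a harmonic, u.s.c.d.\ candidate sequence $\H$ on the Bauer simplex $K_{\al}$, with $\ex(K_{\al})=\o^{\al}+1$, such that $\lim\H=h$ is bounded and $\al_0(\H)=\al$; by Remark~\ref{LimsupToLimitRmk} we may in addition arrange that $\H|_{\ex(K_{\al})}$ has property (P), so that $\H$ satisfies all the hypotheses of Theorem~\ref{realization}. For $\al=0$ one may instead take $\H$ to be the identically zero sequence on any metrizable Choquet simplex (for instance a single point), which trivially satisfies the same hypotheses and has $\al_0(\H)=0$; alternatively, the case $\al=0$ is already contained in the results of \cite{BD}.

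Now apply Theorem~\ref{realization} to the simplex $M=K_{\al}$ (or the point simplex when $\al=0$) and the candidate sequence $\H$ just constructed. This yields a minimal homeomorphism $T$ of the Cantor set whose entropy structure is uniformly equivalent to $\H$ after the affine identification of $M(X,T)$ with $M$. Consequently $\al_0(X,T)=\al_0(\H)=\al$, which is what was claimed. The substance of the argument is entirely in Corollary~\ref{realizationCor}; the only point that requires care is checking that the candidate sequence produced there meets \emph{every} hypothesis of Theorem~\ref{realization}, and in particular the property (P) of Remark~\ref{LimsupToLimitRmk}, which is precisely why that remark was established.
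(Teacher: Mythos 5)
Your proof is correct and is essentially the paper's intended argument: apply Corollary~\ref{realizationCor} to obtain a harmonic, u.s.c.d.\ candidate sequence $\H$ on $K_\alpha$ with $\alpha_0(\H)=\alpha$, then invoke Theorem~\ref{realization}, noting that $\alpha_0$ depends only on the uniform equivalence class of the candidate sequence and transports along the affine identification of $M(X,T)$ with $K_\alpha$. One small correction: Theorem~\ref{realization} does not require property~(P) of Remark~\ref{LimsupToLimitRmk}---its only hypothesis is that the sequence be uniformly equivalent to a harmonic candidate sequence with u.s.c.\ differences, which the output of Corollary~\ref{realizationCor} already is directly; property~(P) is used only in Lemma~\ref{inclusionLemma}, so your invocation of it here is harmless but unnecessary.
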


\section{Characterization of Orders of Accumulation on Bauer Simplices} \label{Bauer}

\begin{defn}
For any non-empty countable Polish space $E$, we define
\begin{equation*}
\rho(E) = \left \{ \begin{array}{ll}
                      |E|_{CB}-1, & \text{ if } |E|_{CB} \text{ is finite} \\
                      |E|_{CB}, & \text{ if } |E|_{CB} \text{ is infinite}
                     \end{array} \right.
\end{equation*}
For any uncountable Polish space $E$, we let $\rho(E) = \o_1$, the first uncountable ordinal.
\end{defn}

\begin{defn} \label{SKdef}
For any metrizable Choquet simplex $K$, we define
\begin{equation*}
S(K) = \{ \gamma : \text{ there exists a harmonic, u.s.c.d sequence } \H \text{ on } K \text{ with } \al_0(\H) = \gamma \}.
\end{equation*}
\end{defn}
Recall our conventions that if $\beta < \o_1$, then $[\al, \beta]$ denotes the ordinal interval $\{ \gamma : \al \leq \gamma \leq \beta \}$, and if $\beta = \o_1$, then $[\al, \beta] = \{ \gamma : \al \leq \gamma < \beta \}$. We also require the use of ``open'' or ``half-open'' intervals, which have the usual definitions.

\begin{thm} \label{bauerThm}
Let $K$ be a Bauer simplex. Then
\begin{equation*}
S(K) = [0,\rho(\ex(K))].
\end{equation*}
\end{thm}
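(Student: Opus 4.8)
The plan is to establish the two inclusions $S(K) \subseteq [0,\rho(\ex(K))]$ and $[0,\rho(\ex(K))] \subseteq S(K)$ separately, using the fact that $K$ Bauer means the barycenter map restricts to an affine homeomorphism $\M(\ex(K)) \to K$, so via Proposition \ref{Uaffine} every harmonic candidate sequence on $K$ is determined up to the transfinite sequence by its restriction to $E := \ex(K)$, and $\al_0(\H) = \al_0(\H|_E)$. Thus I reduce the whole problem to candidate sequences on the compact (when $E$ is compact) or merely Polish space $E$. First I split into cases according to whether $E$ is countable or uncountable.

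For the upper bound (the inclusion $S(K) \subseteq [0,\rho(E)]$): when $E$ is uncountable, $\rho(E) = \o_1$, so $[0,\rho(E)]$ is the set of all countable ordinals, and there is nothing to prove since $\al_0(\H)$ is always a countable ordinal (Remark \ref{accumVarPrin}). When $E$ is countable, I note that $E$ is a countable compact Polish space (it is closed in $K$), so Corollary \ref{CBrankBound} gives directly $\al_0(\H) \le |E|_{CB}-1$ if $|E|_{CB}$ is finite and $\al_0(\H) \le |E|_{CB}$ if $|E|_{CB}$ is infinite, i.e. $\al_0(\H) \le \rho(E)$. This is the easy direction and is essentially already recorded.

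For the lower bound (the inclusion $[0,\rho(E)] \subseteq S(K)$), I must exhibit, for each $\gamma \le \rho(E)$, a harmonic u.s.c.d. candidate sequence on $K$ with order of accumulation exactly $\gamma$. The zero sequence handles $\gamma = 0$. For $\gamma \ge 1$ the idea is to realize $\gamma$ inside a suitable compact subset of $E$ and then transport the construction to $K$ using the embedded candidate sequence followed by harmonic extension (Lemma \ref{embedIsHarmUSC}, which applies since the image sits inside $\ex(K)$) together with the Embedding Lemma (Lemma \ref{embeddingThm}) to conclude $\al_0(\H) = \al_0(\H|_F) = \gamma$ when $F$ is the compact carrier. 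Concretely: if $E$ is uncountable, Corollary \ref{uncountPolish} embeds $\o^{\al}n+1$ into $E$ for every countable $\al$ and every $n$, so given any countable $\gamma \ge 1$ with Cantor Normal Form $\gamma = \o^{\beta_1}n_1 + \dots + \o^{\beta_k}n_k$ I can embed a copy of $\o^{\beta_1}n_1 + 1$ (or more carefully a space carrying the construction of Corollary \ref{realizationCor}) and push forward the candidate sequence $\H$ on $K_{\gamma}$ from Corollary \ref{realizationCor} with $\al_0 = \gamma$. If $E$ is countable with $|E|_{CB} = \al+1$, I need that $E$ contains, as a clopen (hence compact) subset, a homeomorphic copy of $\o^{\delta}+1$ for every irreducible $\delta$ below the relevant threshold, and more generally a copy of any countable compact space of Cantor-Bendixson rank $\le \al+1$ with appropriately many top-rank points; this follows from the Mazurkiewicz–Sierpiński classification (Theorem \ref{classification}) and Proposition \ref{accumFacts} — since $\Gamma^{\al}(E)$ is a nonempty finite set, $E$ has a clopen neighborhood of one of its top-rank points homeomorphic to $\o^{\al}+1$, and clopen neighborhoods of lower-rank points give all smaller ranks. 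Then I realize $\gamma \le \rho(E)$ on such a clopen copy using Corollary \ref{realizationCor} (for $\gamma$ not of the form $\delta+1$ with $\delta$ infinite, or in general) and Corollary \ref{alphaPlusOne} (to hit ordinals $\delta+1$ with $\delta$ infinite, which matters precisely when $|E|_{CB}$ is infinite so that $\rho(E) = |E|_{CB}$ is attained).

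The main obstacle I anticipate is the bookkeeping at the very top of the range in the countable case: showing that $\gamma = \rho(E)$ itself is realized when $|E|_{CB} = \al+1$ is infinite, so that $\rho(E) = \al$. If $\al$ is a limit ordinal this requires realizing $\al$ on a space of Cantor-Bendixson rank $\al+1$, which is exactly what Corollary \ref{realizationCor} provides on $\o^{\al}+1 \hookrightarrow E$; if $\al$ is a successor, say $\al = \delta+1$ with $\delta$ infinite, then I need order of accumulation $\delta+1$ on a space of rank $\delta+2$, and this is the content of Corollary \ref{alphaPlusOne} (applied on a clopen copy of $\o^{\delta}+1$, whose rank is $\delta+1 = \al$, hmm — one must instead use a clopen copy of a rank-$(\al+1)$ space and check the construction still fits). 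Getting the embedded space to have small enough Cantor-Bendixson rank while still supporting a construction with the prescribed (possibly successor) order of accumulation, and verifying the hypotheses of the Embedding Lemma (that $h - h_k$ converges uniformly to zero off the compact carrier, trivially true here since the embedded sequence vanishes off its support), is where the care is needed; everything else is an assembly of the already-established realization corollaries and the transfer lemmas for Bauer simplices.
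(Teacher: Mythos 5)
Your plan is essentially the paper's proof: reduce to $\ex(K)$ via Proposition \ref{Uaffine}, get the upper bound from Proposition \ref{pointwiseBound}/Corollary \ref{CBrankBound}, and get the lower bound by locating a clopen copy of $\o^{\al}+1$ in $\ex(K)$ (via Theorem \ref{classification} in the countable case, via Corollary \ref{uncountPolish} in the uncountable case), pushing forward the sequence from Corollary \ref{realizationCor} by harmonic extension (Lemma \ref{embedIsHarmUSC}) and invoking the Embedding Lemma, with Corollary \ref{alphaPlusOne} supplying the endpoint when $|\ex(K)|_{CB}$ is infinite. One arithmetic slip in your final paragraph should be straightened out: when $|E|_{CB}=\al+1$ is infinite, $\rho(E)=|E|_{CB}=\al+1$ (not $\al$), and this endpoint is realized directly by applying Corollary \ref{alphaPlusOne} with parameter $\al$ (an infinite ordinal) on a clopen copy of $\o^{\al}+1\subset E$, which has rank $\al+1$ — no further "checking that the construction fits" is needed beyond the corollary itself.
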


\begin{proof}
Let $\H$ be a harmonic, u.s.c.d. candidate sequence on $K$. Proposition \ref{pointwiseBound} implies that
\begin{equation*}
\al_0(\H|_{\ex(K)}) \leq \rho(\ex(K)).
\end{equation*}
and it is always true that $\al_0(\H|_{\ex(K)}) < \o_1$.
Then since $K$ is Bauer, Proposition \ref{Uaffine} implies the same bounds for $\al_0(\H)$. It remains to show that if $\ex(K)$ is countable, then $S(K) \supset [0,\rho(\ex(K))]$, and if $\ex(K)$ is uncountable, then $S(K) \supset [0, \o_1 [$.

Suppose $E = \ex(K)$ is countable. Let $\al < |E|_{CB}$. Then by Proposition \ref{accumFacts}, there exists $x$ in $E$ such that $r(x) = \al$, which implies that $x$ is isolated in $\Gamma^{\al}(E)$. Let $U$ be a clopen neighborhood of $x$ in $E$ such that $U \cap (\Gamma^{\al}(E)\setminus \{ x \} ) = \emptyset$. Then $|U|_{CB} = \al + 1$ and $|\Gamma^{\al}(U)| = 1$. Then by the classification of countable, compact Polish spaces (Theorem \ref{classification}), there is a homeomorphism $g : \o^{\al}+1 \rightarrow U$. Let $\H'$ be the u.s.c.d candidate sequence on $\o^{\al}+1$ given by Corollary \ref{realizationCor} with $\al_0(\H') = \al$. Define $\H$ on $K$ to be harmonic extension of the embedded candidate sequence $g\H'$, which is harmonic and u.s.c.d by Lemma \ref{embedIsHarmUSC}. Since $\H|_{E \setminus g(\o^{\al}+1)} \equiv 0$, the Embedding Lemma (Lemma \ref{embeddingThm}) applies. Since $g(\o^{\al}+1)$ is a compact subset of $\ex(K)$, we obtain that $\al_0(\H) = \al_0(\H') = \al$. Since $\al < |\ex(K)|_{CB}$ was arbitrary, this argument shows that $S(K) \supset [0,|\ex(K)|_{CB}-1]$ (note that since $K$ is Bauer, $\ex(K)$ is compact and $|\ex(K)|_{CB}$ is a successor). If $|\ex(K)|_{CB}$ is infinite, then let $\al = |\ex(K)|_{CB}-1$ and repeat the above argument with $\H'$ given by Corollary \ref{alphaPlusOne} so that $\al_0(\H)=\al+1$. In this case we obtain that $S(K) \supset [0,|\ex(K)|_{CB}]$. In any case, we conclude that $S(K) \supset [0,\rho(\ex(K))]$, as desired.

Now suppose $E = \ex(K)$ is uncountable. Fix $\al < \o_1$. Let $g:\o^{\al}+1 \rightarrow E$ be the embedding given by Proposition \ref{uncountPolish}, and let $\H_{\al}$ be the u.s.c.d. candidate sequence on $\o^{\al}+1$ given by Corollary \ref{realizationCor}. Then the harmonic extension $\H$ of the embedded candidate sequence $g\H_{\al}$ on $K$ is harmonic and u.s.c.d. by Lemma \ref{embedIsHarmUSC}. Furthermore, $\H$ satisfies $\al_0(\H)=\al_0(\H_{\al}) =\al$, by the Embedding Lemma (as $g(\o^{\al}+1)$ is a compact subset of $\ex(K)$). Since $\al < \o_1$ was arbitrary, $S(K) \supset [0,\o_1 [$.

\end{proof}

\section{Orders of Accumulation on Choquet Simplices} \label{Choquet}

In this section we address the extent to which the orders of accumulation that appear on a metrizable Choquet simplex $K$ are constrained by the topological properties of the pair $(\ex(K), \overline{\ex(K)})$.

We will require a relative version of Cantor-Bendixson rank, whose definition we give here.
\begin{defn} \label{RelCBDerivative} Given a Polish space $X$ contained in the Polish space $T$, we define the sequence $\{ \Gamma_X^{\al}(T) \}$ of subsets of $T$ using transfinite induction. Let $\Gamma_X^0(T) = T$. If $\Gamma_X^{\al}(T)$ has been defined, then let $\Gamma_X^{\al+1}(T) = \{ t \in T : \exists (t_n) \in \Gamma_X^{\al}(T) \cap X \setminus \{t\} \text{ with } t_n \rightarrow t \}$. If $\Gamma_X^{\beta}(T)$ has been defined for all $\beta < \al$, where $\al$ is a limit ordinal, then we let $\Gamma_X^{\al}(T) = \cap_{\beta < \al} \Gamma_X^{\beta}(T)$.
\end{defn}
Note that $\Gamma_X^{\al}(T)$ is closed in $T$ for all $\al$, and $\Gamma_X^{\al}(T) \subset \Gamma_X^{\beta}(T)$ for $\al > \beta$. For $X$ and $T$ Polish, there exists a countable ordinal $\beta$ such that $\Gamma_X^{\al}(T) = \Gamma_X^{\beta}(T)$ for all $\al > \beta$.
\begin{defn} \label{RelCBRank}
The \textbf{Cantor-Bendixson rank of} $T$ \textbf{relative to} $X$, denoted $|T|_{CB}^X$, is the least ordinal $\beta$ such that $\Gamma_X^{\al}(T) = \Gamma_X^{\beta}(T)$ for all $\al > \beta$.
\end{defn}
If $X$ is countable, then $\Gamma_X^{\al}(T) = \emptyset$ if and only if $\al \geq |T|_{CB}^X$. If $X$ is countable and $T$ is compact, then by the finite intersection property, $|T|_{CB}^X$ is a successor ordinal.
\begin{defn} \label{ptwiseRelTopRank} For $t$ in $T$, we also define the pointwise relative topological rank $r_X(t)$ of $t$ with respect to $X$:
\begin{equation*}
r_X(t) = \left\{
\begin{array}{ll}
\sup \{ \al : t \in \Gamma_X^{\al}(T) \} & \text{if } t \notin \Gamma_X^{|T|^X_{CB}}(T) \\
\o_1 & \text{if } t \in \Gamma_X^{|T|^X_{CB}}(E).
\end{array} \right.
\end{equation*}
\end{defn}
It follows that for $X$ countable, for all $t$ in $T$, $r_X(t) \leq |X|_{CB}$, and thus $|T|_{CB}^X \leq |X|_{CB} +1$. Also, $|X|_{CB} \leq |T|_{CB}^X \leq |T|_{CB}$.

For a Polish space $T$, the usual Cantor-Bendixson rank is obtained from the relative version by taking $X = T$ in the above construction. Thus, we have $|T|_{CB}^T = |T|_{CB}$.

\subsection{Results for Choquet Simplices}

\begin{defn}
Let $X$ and $T$ be non-empty Polish spaces, with $X \subset T$. If $X$ is countable, let
\begin{equation*}
\rho_X(T) = \left \{ \begin{array}{ll}
                      |T|_{CB}^X-1, & \text{ if } |T|_{CB}^X \text{ is finite} \\
                      |T|_{CB}^X, & \text{ if } |T|_{CB}^X \text{ is infinite}
                     \end{array} \right.
\end{equation*}
If $X$ is uncountable, let $\rho_X(T) = \o_1$.
\end{defn}

Now we present bounds on the set $S(K)$ (see Definition \ref{SKdef}) for any metrizable Choquet simplex $K$. Recall our convention that for a countable ordinal $\beta$, $[0,\beta] = \{ \al : 0 \leq \al \leq \beta \}$, but for $\beta= \o_1$, $[0,\beta] = \{ \al : 0 \leq \al < \o_1\} = [0,\o_1 [.$

\begin{thm} \label{topBounds}
Let $K$ be a metrizable Choquet simplex. Then
\begin{equation*}
[0, \rho_{\ex(K)}(\overline{\ex(K)}) \, ] \; \subset S(K) \subset [0, \rho(\overline{\ex(K)}) \,].
\end{equation*}
\end{thm}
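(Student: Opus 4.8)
The plan is to prove the two inclusions separately: the right-hand one is a short consequence of the Embedding Lemma, while the left-hand one requires a relative Cantor--Bendixson refinement of the realization constructions of Section \ref{examples}. \textbf{For the upper bound}, let $\H$ be any harmonic, u.s.c.d.\ candidate sequence on $K$ and put $E=\ex(K)$. Apply the Embedding Lemma (Lemma \ref{embeddingThm}) with $F=E$: its hypothesis holds vacuously (here $E\setminus F=\emptyset$), $L=\overline E$, and the lemma yields $\al_0(\H)\le\al_0(\H|_{\overline E})$. Now $\H|_{\overline E}$ is a u.s.c.d.\ candidate sequence on the compact metrizable (hence Polish) space $\overline E$. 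If $\overline E$ is uncountable then $\rho(\overline E)=\o_1$ and there is nothing to prove, since the order of accumulation of any candidate sequence is a countable ordinal; if $\overline E$ is countable then Corollary \ref{CBrankBound} gives $\al_0(\H|_{\overline E})\le\rho(\overline E)$. In either case $\al_0(\H)\le\rho(\overline E)$, so $S(K)\subset[0,\rho(\overline E)]$.

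\textbf{For the lower bound, uncountable case.} If $E=\ex(K)$ is uncountable then $\rho_E(\overline E)=\o_1$ and, since $[0,\o_1]=[0,\o_1[$ by convention, we must put every countable ordinal into $S(K)$. This repeats the uncountable case of the proof of Theorem \ref{bauerThm}: given $\al<\o_1$, take an embedding $g:\o^\al+1\to E$ (Corollary \ref{uncountPolish}), let $\H_\al$ be the realization sequence of Corollary \ref{realizationCor} with $\al_0(\H_\al)=\al$, and let $\H$ be the harmonic extension of the embedded candidate sequence $g\H_\al$. By Lemma \ref{embedIsHarmUSC} (here $g(\o^\al+1)\subset\ex K$), $\H$ is harmonic and u.s.c.d., and since $g(\o^\al+1)$ is a compact subset of $\ex K$, the Embedding Lemma gives $\al_0(\H)=\al_0(\H_\al)=\al$.

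\textbf{For the lower bound, countable case}, set $T=\overline E$; this is where the relative rank does its work. When $\rho_E(T)$ is infinite --- equivalently $|T|_{CB}^{E}=\lambda+1$ with $\lambda$ infinite, so $\rho_E(T)=\lambda+1$ --- the extremal value $\al=\lambda+1$ is produced from a point of relative rank $\lambda$ by the ``$+1$'' modification of Corollary \ref{alphaPlusOne}; so it suffices to realize each $\al<|T|_{CB}^{E}$. For such $\al$ there is a point $t\in T$ with $r_E(t)=\al$, i.e.\ $t\in\Gamma_E^\al(T)\setminus\Gamma_E^{\al+1}(T)$. The plan is to build, by transfinite recursion on $\al$ paralleling Theorem \ref{mainTech}, a u.s.c.d.\ candidate sequence $\F$ supported on a countable set $F\subset E$ whose closure $\overline F$ --- a countable compact subspace of $T$ homeomorphic to $\o^\al+1$ --- has $t$ as its distinguished point $\0$: the base case $\al=1$ uses a sequence in $E$ converging to $t$ (which exists because $t\in\Gamma_E^1(T)$), and the successor and limit steps are disjoint unions (and renormalized products) of lower-rank pieces placed in pairwise disjoint relatively open subsets of $E$, whose new accumulation points are reached only through points of $E$ --- exactly the content of the definition of $\Gamma_E^{\al+1}(T)$. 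One arranges that $\F$ inherits the analogues of conditions (1)--(5) of Theorem \ref{mainTech} at $t$, so that $\al_0(\F)=\al$, $\|\lim\F\|$ is small, $u_\gamma^\F(t)=0$ for $\gamma<\al$, and $u_\al^\F(t)>0$. Restricting $\F$ to $F$, extending by zero to $E$, and taking the harmonic extension $\H$ to $K$, one sees that the differences $h_{k+1}-h_k$ are non-negative, supported on extreme points (hence convex on $K$), and u.s.c.\ on $K$ (their positive level sets are closed subsets of the closed set $\overline F\subset K$); so Fact \ref{fonEharIsUSC} makes $\H$ harmonic and u.s.c.d. Finally the Embedding Lemma, applied with this $F$, gives $u_\gamma^\H(x)=u_\gamma^{\H|_{\overline F}}(x)$ for $x\in F$ and $\al_0(\H)\le\al_0(\H|_{\overline F})$, and one checks that $\H|_{\overline F}$ still satisfies the invariants (1)--(5) at $t$ --- the harmonic averaging at the distinguished points only adds values bounded by $\|\lim\F\|$, which is too small to disturb them --- whence $\al_0(\H)=\al$.

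\textbf{The main obstacle} is the whole countable lower bound, and there are two intertwined difficulties. The structural one is that the recursion of Theorem \ref{mainTech} must be carried out \emph{relative to} $E$: at every disjoint-union stage the new accumulation point must be approachable purely through extreme points of $K$, and it is precisely this that the relative derivatives $\Gamma_E^\al(T)$ record, with the bound $\al\le\rho_E(T)$ being exactly what lets the recursion run for $\al$ stages. The quantitative one is that the Embedding Lemma only delivers $\al_0(\H)\le\al_0(\H|_{\overline F})$, and $\H|_{\overline F}$ need not equal $\F$ off $E$; to get equality one must thread the same numerical invariants as in Theorem \ref{mainTech} (smallness of $\|\lim\F\|$, vanishing of $u_\gamma$ at the distinguished point for $\gamma<\al$, positivity of $u_\al$ there) all the way through the recursion, using Remark \ref{embeddingRemark} to read off $u_\gamma^\H$ exactly at extreme points. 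Keeping the rank bookkeeping and the metric bookkeeping synchronized is the crux.
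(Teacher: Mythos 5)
Your overall strategy is correct and matches the paper's: the upper bound via the Embedding Lemma plus Corollary \ref{CBrankBound}, and the lower bound via embedding copies of $\o^\al+1$ into $\ex(K)\cup\{t\}$, taking harmonic extensions, and applying the Embedding Lemma; the uncountable case and the top value via Corollary \ref{alphaPlusOne} are also as in the paper. The one genuine divergence is in how the countable lower bound produces the candidate sequence near a point $t$ of relative rank $\al$. You propose re-running the entire transfinite recursion of Theorem \ref{mainTech} ``relative to $E$,'' building the compact set $\overline F\subset E\cup\{t\}$ and the u.s.c.d.\ sequence on it simultaneously. The paper's route is lighter: it observes (essentially as a corollary of Definition \ref{RelCBDerivative}) that $t\in\Gamma_E^\al(\overline E)$ already yields an embedding $g:\o^\al+1\to K$ with $g(\o^\al+1)\subset\ex(K)\cup\{t\}$ and $g(\0_\al)=t$, and then simply pushes the ready-made sequence of Corollary \ref{realizationCor} across $g$. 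Decoupling the topological fact from the analytic construction avoids repeating Theorem \ref{mainTech}. A smaller point: your ``too small to disturb'' heuristic for the discrepancy between $\H|_{\overline F}$ and $\F$ at $t$ is not quite the right observation. What one actually needs is that $\tau_k(t)=h(t)-h_k(t)\to 0$ monotonically, so the transfinite sequence of $\H|_{\overline F}$ at $t$ \emph{coincides exactly} with that of $\F$ at $\0_\al$; this is cleaner and is in fact required in the finite-$\al$ case, where Theorem \ref{mainTech} only guarantees $\|\lim\F\|\le a$, not $<a$, so a mere $\eps$-perturbation bound does not immediately give $u_\gamma^{\H}(t)<a$ for $\gamma<\al$.
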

\begin{proof}
First we prove the lower bound on $S(K)$.

Suppose $\ex(K)$ is uncountable, then by Corollary \ref{uncountPolish}, for any countable $\al$, there exists a map $g : \o^{\al}+1 \rightarrow \ex(K)$, where $g$ is a homeomorphism onto its image. Let $\F$ be a u.s.c.d. sequence on $\o^{\al}+1$, and let $\H$ be the harmonic extension of the embedded sequence $g\F$ on $K$. $\H$ is a harmonic, u.s.c.d. candidate sequence on $K$ by Lemma \ref{embedIsHarmUSC}. Also, $\H|_{\ex(K) \setminus g(\o^{\al}+1)} \equiv 0$. Thus the Embedding Lemma (Lemma \ref{embeddingThm}) applies, and then since $g(\o^{\al}+1)$ is a compact subset of $\ex(K)$, we obtain that $\al_0(\H) = \al_0(\F)$. Letting $\F$ vary over all u.s.c.d. candidate sequences on $\o^{\al}+1$, it follows that $S(\M(\o^{\al}+1)) \subseteq S(K)$. By Theorem \ref{bauerThm}, $S(\M(\o^{\al}+1)) = [0, \rho(\o^{\al}+1)]$. Now $\rho(\o^{\al}+1) = \al$ if $\al$ is finite and $\rho(\o^{\al}+1) = \al+1$ if $\al$ is infinite. In either case, $\rho(\o^{\al}+1) \geq \al$. Hence $S(K) \supset [0,\al]$. Since this inclusion holds for any countable ordinal $\al$, we have that $S(K) \supset [0,\o_1 [$, as desired.

If $\ex(K)$ is countable, then $|\overline{\ex(K)}|_{CB}^{\ex(K)}$ is a successor ordinal. For each ordinal $\al < |\overline{\ex(K)}|_{CB}^{\ex(K)}$, we have $\Gamma_{\ex(K)}^{\al}(\overline{\ex(K)}) \neq \emptyset$. Fix $\al < |\overline{\ex(K)}|_{CB}^{\ex(K)}$, and let $t$ be in $\Gamma_{\ex(K)}^{\al}(\overline{\ex(K)})$. Since $t$ lies in $\Gamma_{\ex(K)}^{\al}(\overline{\ex(K)})$, there exists a map $g : \o^{\al}+1 \rightarrow K$, where $g$ is a homeomorphism onto its image, $g(\o^{\al}+1) \subset \ex(K) \cup \{t\}$ and $g(\0_{\al})=t$, where $\0_{\al}$ is the point $\o^{\al}$ in $\o^{\al}+1$. Given some real number $a>0$, let $\F = (f_k)$ be a u.s.c.d. candidate sequence on $\o^{\al}+1$ with $\al_0(\F)=\al$ and satisfying (1)-(3) of Corollary \ref{realizationCor}. Recall that $f_k(\0_{\al}) =0$ for all $k$. Then let $\H' = (h'_k)$ be the embedded candidate sequence $g \F$ on $K$.

Note that for $s$ in $K \setminus \ex(K)$, $(h'_{k+1}-h'_k)(s) = 0$. Also, for $s$ in $\ex(K)$, $(h'_{k+1}-h'_k)(s) \geq 0$. It follows that $h'_{k+1}-h'_k$ is convex on $K$.

Now let $\H = (h_k)$, where $h_k$ is the harmonic extension of $h'_k$ on $K$. By Lemma \ref{embedIsHarmUSC}, $\H$ is a u.s.c.d. candidate sequence on $K$.

Let $F = g(\o^{\al}+1) \cap \ex(K)$, and note that $\H|_{\ex(K) \setminus F} \equiv 0$. Also $\overline{F} = g(\o^{\al}+1)$ and $\H|_{\overline{F}} = \F \circ g^{-1}$. Applying the Embedding Lemma (Lemma \ref{embeddingThm}), we obtain that $\al_0(\H) \leq \al_0(\H|_{\overline{F}}) = \al_0(\F) = \al$. We now show the reverse inequality. Recall that $t = g(\0_{\al})$. For $\gamma < \al$, the Embedding Lemma (Lemma \ref{embeddingThm}) implies that $u_{\gamma}^{\H}(t) \leq ||u_{\gamma}^{\F}|| < a$ (where the strict inequality comes from Corollary \ref{realizationCor} (1)). Also, $u_{\al}^{\H}(t) \geq u_{\al}^{\F}(\0_{\al}) = a$. From these facts, we have that $\al \leq \al_0^{\H}(t) \leq \al_0(\H)$. Thus $\al_0(\H)=\al$.

Since $\al < |\overline{\ex(K)}|_{CB}^{\ex(K)}$ was arbitrary, we obtain that $S(K) \supset [0,|\overline{\ex(K)}|_{CB}^{\ex(K)} [ $. If $|\overline{\ex(K)}|_{CB}^{\ex(K)}$ is infinite, then we may let $\al =   |\overline{\ex(K)}|_{CB}^{\ex(K)}-1$ and repeat the above argument with $\F$ given by Corollary \ref{alphaPlusOne} so that $\al_0(\H)=\al+1$. Thus we have that $S(K) \supset [0, \, \rho_{\ex(K)}(\overline{\ex(K)}) \, ]$.

Here we prove the upper bound on $S(K)$. Suppose $\overline{\ex(K)}$ is uncountable. Then $\rho(\overline{\ex(K)}) = \o_1$. Since the order of accumulation of any candidate sequence on $K$ is countable, we have (trivially) that $S(K) \subset [0,\o_1)$. Now suppose $\overline{\ex(K)}$ is countable. If $\H$ is a u.s.c.d., harmonic candidate sequence on $K$, then by Corollary \ref{CBrankBound}, the restricted sequence $\H|_{\overline{\ex(K)}}$ satisfies
\begin{equation*}
\al_0(\H|_{\overline{\ex(K)}}) \leq \left\{ \begin{array}{ll}
             |\overline{\ex(K)}|_{CB}-1, & \text{ if } |\overline{\ex(K)}|_{CB} \text{ is finite} \\
             |\overline{\ex(K)}|_{CB}, & \text{ if } |\overline{\ex(K)}|_{CB} \text{ is infinite},
                                    \end{array} \right.
\end{equation*}
which is exactly the statement that $\al_0(\H|_{\overline{\ex(K)}}) \leq \rho(\overline{\ex(K)})$. Also, the Embedding Lemma (Lemma \ref{embeddingThm}) implies that $\al_0(\H) \leq \al_0(\H|_{\overline{\ex(K)}})$. This establishes the upper bound on $S(K)$.
\end{proof}

\subsection{Optimality of Results for Choquet Simplices} \label{optimality}

In this section we study the optimality of the results in Theorem \ref{topBounds}.

The following theorem answers a question of Jerome Buzzi, and answers the question of whether the bounds in Theorem \ref{topBounds} can be improved using only knowledge of the ordinals $\rho_{\ex(K)}(\overline{\ex(K)})$ and $\rho(\overline{\ex(K)})$.

\begin{thm} \label{threeAlphas}
Let $\al_1 \leq \al_2 \leq \al_3$ be ordinals such that $\al_1$ and $\al_2$ are countable successors and $\al_3$ is either a countable successor ordinal or $\o_1$. Then there exists a metrizable Choquet simplex $K$ such that $\rho_{\ex(K)}(\overline{\ex(K)}) = \al_1$, $S(K)=[0,\al_2]$, and $\rho(\overline{\ex(K)}) = \al_3$.
\end{thm}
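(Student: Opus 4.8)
The plan is to construct $K$ directly, via Lemma~\ref{ConstructKLem}, so that $\ex(K)$ and $\overline{\ex(K)}$ are prescribed Polish/compact spaces and the barycentric structure of $K$ is controlled. By Theorem~\ref{topBounds} it suffices to arrange $\rho_{\ex(K)}(\overline{\ex(K)})=\al_1$ and $\rho(\overline{\ex(K)})=\al_3$, and then to tune $S(K)$ — which is automatically squeezed between $[0,\al_1]$ and $[0,\al_3]$ — to be exactly $[0,\al_2]$. I would take $\overline{\ex(K)}$ to be the one-point compactification of a separated disjoint union of countably many ``gadgets'' (renormalized so that Lemma~\ref{disUnionLemma} governs their interaction at the point at infinity $\0$), of three kinds, each responsible for one invariant.

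The \emph{relative-rank gadget} is a Bauer block with $\ex\cong\o^{\mu}+1$ for $\mu$ chosen so that $\rho(\o^{\mu}+1)=\al_1$; by Theorem~\ref{bauerThm} it has spectrum $[0,\al_1]\subseteq[0,\al_2]$, and it drives the relative Cantor--Bendixson rank of $\overline{\ex(K)}$ up to the value giving $\rho_{\ex(K)}(\overline{\ex(K)})=\al_1$. The \emph{order gadgets} are, for each $\beta<\al_2$, a copy of $\o^{\beta}+1$ (or the space of Corollary~\ref{alphaPlusOne} when $\beta=\al_2-1$ is infinite), embedded into its simplex so that only the isolated ordinals are extreme points, while each limit ordinal is sent to a non-extreme point $\tfrac12(p+q)$ with $p,q$ ``trivial'' extreme points at which every $h_k$ in the relevant candidate sequence vanishes. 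Since the candidate sequences of Corollary~\ref{realizationCor} vanish at every marked point for \emph{all} $k$, the embedded differences $h'_{k+1}-h'_k$ remain convex, so Lemma~\ref{embedIsHarmUSC} (with Fact~\ref{fonEharIsUSC}) makes the harmonic extension u.s.c.d.; and since $\delta_z$ is always an admissible representing measure for $z\in\overline{\ex(K)}$ while $\|u_\gamma^{\H|_L}\|<a$ for $\gamma<\beta$ and $u_\beta^{\H|_L}(z)=a$ at the top $z$, the Embedding Lemma~\ref{embeddingThm} yields $\al_0(\H)=\beta$. This embedding keeps the \emph{relative} contribution of each order gadget bounded, since its limit ordinals are not extreme. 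Finally the \emph{absolute-rank gadget} carries a copy of $\o^{\nu}+1$ with $\rho(\o^{\nu}+1)=\al_3$ (or, when $\al_3=\o_1$, a Cantor set arising as the set of accumulation points of a countable discrete set of extreme points, so $\overline{\ex(K)}$ becomes uncountable while $\ex(K)$ stays countable); here the simplex structure must be rigged — via the collapse mechanism of Example~\ref{HlessThanPiH}, making the points of rank $>\al_2$ non-extreme and re-routable through extreme points carrying the eventual maximum of the transfinite sequence — so that no harmonic u.s.c.d.\ sequence attains order of accumulation above $\al_2$ on it, despite its large Cantor--Bendixson rank.

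With these ingredients the verification runs as follows. The absolute (resp.\ relative) Cantor--Bendixson rank of $\overline{\ex(K)}$ is $1$ plus the supremum of the gadgets' absolute (resp.\ relative) contributions, hence is governed by the absolute-rank gadget (giving $\rho(\overline{\ex(K)})=\al_3$, or $\o_1$ when uncountable) resp.\ by the relative-rank gadget (giving $\rho_{\ex(K)}(\overline{\ex(K)})=\al_1$). One gets $S(K)\supseteq[0,\al_2]$ from the order gadgets as above (and from the relative-rank gadget for $\beta\le\al_1$). For $S(K)\subseteq[0,\al_2]$ one decomposes an arbitrary harmonic u.s.c.d.\ $\H$ over the clopen gadgets: $u_\al^{\H}$ agrees on each gadget with the transfinite sequence of the restriction, as in Lemma~\ref{disUnionLemma}, so $\al_0(\H)$ is the supremum of the per-gadget orders together with the order at $\0$; the renormalizations keep the order at $\0$ at most the supremum of the gadget orders, and each gadget order is $\le\al_2$ — for the order gadgets by construction, for the relative-rank gadget because $\rho(\overline{\ex})=\al_1\le\al_2$ there, and for the absolute-rank gadget by the built-in collapse.

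The main obstacle is exactly the tension internal to the requirement $S(K)=[0,\al_2]$: a point of Cantor--Bendixson rank $\rho$ in $\overline{\ex(K)}$ forces an order of accumulation as large as $\rho$ \emph{unless} that point is ``short-circuitable'' in the sense of Example~\ref{HlessThanPiH}, and the construction must simultaneously arrange that the order gadgets are \emph{not} short-circuitable (their tops are reached only through rank-$0$ extreme points, where every $u_\gamma$ vanishes), that the absolute-rank gadget \emph{is} short-circuitable above level $\al_2$, and that none of this extra structure accumulates through extreme points deeply enough to violate $\rho_{\ex(K)}(\overline{\ex(K)})=\al_1$. Balancing these three competing demands through a suitable choice of the Choquet measures of the non-extreme points — which is precisely what Lemma~\ref{ConstructKLem} affords — together with the separate bookkeeping needed when $\al_3=\o_1$, is where essentially all the work of the proof lies.
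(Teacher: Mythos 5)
Your high-level plan---use Lemma~\ref{ConstructKLem} to fabricate $K$, control $\rho_{\ex(K)}(\overline{\ex(K)})$ and $\rho(\overline{\ex(K)})$ by separate blocks, and short-circuit the high-rank block through two extreme points so it does not raise $S(K)$---does match the paper's strategy in outline. But there are two substantive mismatches, one structural and one that I think is a genuine gap.

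Structurally, the paper does not use a one-point compactification of countably many gadgets. It takes a \emph{finite} disjoint union $T = T_1 \sqcup T_2 \sqcup T_3$: $T_1 = X_1 = \o^{\al_1}+1$ (a compact Bauer piece that pins down $\rho_{\ex(K)}(\overline{\ex(K)}) = \al_1$), a single block $T_2$ homeomorphic to $\o^{\al_2}+1$ or $\o^{\al_2 - 1}+1$ with $X_2$ the isolated ordinals (which by Lemma~\ref{inclusionLemma} already forces $S(K) \supset [0,\al_2]$ in one stroke), and $T_3 = X_3 \cup S$ with $S$ a compact set of rank $\al_3$, entirely short-circuited through the two fixed extreme points $x_0, y_0$. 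Your proposal replaces the single block $T_2$ with infinitely many order gadgets, one per $\beta < \al_2$, accumulating at a point at infinity $\0$. This reintroduces all the bookkeeping that the paper's finite union avoids: one now has to show that $\0$ does not raise the relative CB rank past $\al_1$, that the order attained at $\0$ reaches $\al_2$ (but not beyond), and that the harmonic extension remains u.s.c.d.\ across all the gadgets simultaneously. None of this is worked out, and since $\al_2$ is a successor, the per-gadget orders are all $\leq \al_2-1$, so the only way to actually attain $\al_2$ is to prove something nontrivial at $\0$---the paper gets $\al_2 \in S(K)$ directly from Lemma~\ref{inclusionLemma} applied to the compact $Y = T_1 \sqcup T_2$.

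The genuine gap is in the description of the order gadgets. You write that each limit ordinal is sent to $\tfrac12(p+q)$ with $p,q$ ``trivial'' extreme points ``at which every $h_k$ in the relevant candidate sequence vanishes.'' If $h_k(p) = h_k(q) = 0$ for all $k$, then harmonicity forces $h_k(z) = \tfrac12\bigl(h_k(p)+h_k(q)\bigr) = 0$ at \emph{every} limit ordinal $z$ of the gadget. But the candidate sequences furnished by Corollary~\ref{realizationCor} do \emph{not} vanish at all limit ordinals of $\o^\beta+1$---they vanish only at the single marked point $\0_\beta = \o^\beta$ (and at $k=0$). Forcing $h_k \equiv 0$ on all of $\Gamma^1(\o^\beta+1)$ is a different, strictly weaker, class of sequences, and the transfinite orders that such sequences attain would need a separate argument; your proposal does not supply one. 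The paper's Lemma~\ref{inclusionLemma} does something else entirely: it takes $u_m = z_m + 1$, $v_m = z_m + 2$ (immediate successors, which converge to the same limits as $z_m$), writes $z_m = \tfrac12(u_m + v_m)$, and defines $h'_k(u_m) = h'_k(v_m) = f_k(g(z_m))$---the \emph{nonzero} value of the reference sequence at $z_m$. It is precisely because $u_m,v_m$ carry the same value and ride in alongside $z_m$ in the limsup that the transfinite sequence of the embedded $\H$ agrees with that of $\F$; a long transfinite induction in Lemma~\ref{inclusionLemma}, together with property~(P) from Remark~\ref{LimsupToLimitRmk}, is needed to make this rigorous. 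That is the crux of the proof and is missing from your construction. (If instead you only intend the single top point $\o^\beta$ to be non-extreme, the block's relative CB rank would equal its absolute rank $\approx \al_2$, which destroys $\rho_{\ex(K)}(\overline{\ex(K)}) = \al_1$ whenever $\al_1 < \al_2$.)
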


We postpone the proof of Theorem \ref{threeAlphas} until after the proof of Theorem \ref{optimal}. The proofs of these theorems are very similar and we prefer not to repeat the arguments unnecessarily.

Now we address the following question: can the bounds in Theorem \ref{topBounds} be improved with knowledge of the homeomorphism class of the compactification $(\ex(K),\overline{\ex(K)})$? We will need some definitions.

\begin{defn}[\cite{Dug}]
If $E$ is a topological space, then a compactification of $E$ is a pair $(\overline{E},g)$, where $\overline{E}$ is a compact, Hausdorff space and $g$ is a homeomorphism of $E$ onto a dense subset of $\overline{E}$.
\end{defn}
If $E$ is a topological space and $(\overline{E},g)$ is a compactification of $E$, then we may identify $E$ with $g(E)$ and assume that $E$ is a subset of $\overline{E}$. In such instances, we may refer to $\overline{E}$ as a compactification of $E$, or we may refer to the pair $(E, \overline{E})$ as a compactification. 

Consider compactifications $(E, \overline{E})$, where $E$ is a topological space and $\overline{E}$ is a compactification of $E$. Suppose there are two such compactifications, $(E_1, \overline{E_1})$ and $(E_2, \overline{E_2})$. We say that the compactifications are homeomorphic, written $(E_1, \overline{E_1}) \simeq (E_2, \overline{E_2})$, if there is a homeomorphism $g : \overline{E_1} \rightarrow \overline{E_2}$ such that $g(E_1)=E_2$. Recall that Theorem \ref{choquetWeak} may be strengthened as follows.
\begin{thm}[Choquet \cite{Cho}]  \label{ChoquetStrong}
Let $E$ be a topological space and $\overline{E}$ a metrizable compactification of $E$. Then there exists a metrizable Choquet simplex $K$ such that $(\ex(K), \overline{\ex(K)}) \simeq (E, \overline{E})$ if and only if $E$ is Polish.
\end{thm}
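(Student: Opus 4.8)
The statement splits into an easy necessity direction and a substantial sufficiency direction.

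For necessity, suppose $K$ is a metrizable Choquet simplex with $(\ex(K),\overline{\ex(K)}) \simeq (E,\overline{E})$. Then $E$ is homeomorphic to $\ex(K)$, which is a $G_\delta$ subset of the compact metrizable (hence Polish) space $K$; since a $G_\delta$ subset of a Polish space is Polish, $E$ is Polish.

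For sufficiency, assume $E$ is Polish and $\overline{E}$ is a metrizable compactification, so that $E$ is a dense $G_\delta$ in $\overline{E}$ and $\overline{E}\setminus E = \bigcup_{n\ge 1} L_n$ with each $L_n$ compact and $L_n\subseteq L_{n+1}$. The plan is to realize $K$ as a ``deconcentration'' of the Bauer simplex $\M(\overline{E})$: informally, to replace each point $z$ of the $F_\sigma$ set $\overline{E}\setminus E$, which we want to become non-extreme, by a probability measure supported on $E$ but concentrated near $z$. Concretely, I would first choose, for each $n$, a weak-$*$ continuous map $z\mapsto \mu^n_z\in\M(\overline{E})$ on $L_n$ such that $\mu^n_z$ is supported in the $\tfrac1n$-ball about $z$, $\mu^n_z(\{z\})=0$, and $\bary(\mu^n_z)=z$ (such maps exist by a partition-of-unity construction over a finite cover of $L_n$ by small balls, possibly after iterating). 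I would then build $K$ either as a suitable closed subsimplex of $\M(\overline{E})$ consisting of the measures already ``deconcentrated'' in this way, or as an inverse limit $K=\varprojlim(K_n,q_n)$ of metrizable Choquet simplices with $K_n$ obtained from $\M(\overline{E})$ by collapsing, for each $z\in L_n$, the extreme point $\delta_z$ onto $\bary(\mu^n_z)$ and $q_n$ the natural affine surjection; in either case one checks the result is a metrizable Choquet simplex (an inverse limit of metrizable Choquet simplices along affine surjections is again one, which is the kind of fact behind Lemma \ref{ConstructKLem}).

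It then remains to identify $\ex(K)$ and its closure. By construction $\overline{E}$ embeds as a closed subset of $K$, as the image of $\ex(\M(\overline{E}))$ under the (no longer injective) limit map. For $z\in\overline{E}\setminus E$ with $z\in L_n$, the point $z$ equals $\bary(\mu^n_z)$ and $\mu^n_z$ is supported in $E$, so $z\notin\ex(K)$; for $e\in E$ one argues that $e$ stays extreme, since no collapsing relation involves $\delta_e$ and a diagonal argument shows none is forced in the limit. Hence $\ex(K)=E$, and since $E$ is dense in the closed set $\overline{E}\subseteq K$ we get $\overline{\ex(K)}=\overline{E}$, i.e.\ $(\ex(K),\overline{\ex(K)})\simeq(E,\overline{E})$. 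I expect the main obstacle to be precisely this last bookkeeping: choosing the measures $\mu^n_z$ coherently across the stratification $L_1\subseteq L_2\subseteq\cdots$ so that the collapses at successive stages are compatible, so that \emph{every} point of $\overline{E}\setminus E$ is genuinely killed in the limit, and so that \emph{no} point of $E$ is accidentally removed from the extreme boundary, all while preserving the lattice (simplex) property at each finite stage.
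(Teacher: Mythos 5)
Your necessity direction is fine: $\ex(K)$ is a $G_\delta$ in the Polish space $K$, hence Polish, and homeomorphisms preserve this. For the sufficiency direction, the ``deconcentration'' heuristic you describe is the right intuition, but neither of the two constructions you propose is shown to work, and you yourself flag the central obstacles at the end (compatibility of the collapses across the stratification $L_1 \subset L_2 \subset \cdots$, that every point of $\overline{E} \setminus E$ is genuinely killed, that no point of $E$ is accidentally removed, and that the simplex property survives) without resolving them. Those are the entire substance of the theorem, not bookkeeping, so this is a genuine gap. You also mischaracterize the paper's Lemma \ref{ConstructKLem}: it is not an inverse limit of affine surjections, and the paper does not invoke closure of simplices under inverse limits there.

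The paper does not prove this theorem itself --- it cites Choquet --- but the machinery it builds around it is Haydon's proof, reproduced and strengthened as Lemma \ref{ConstructKLem}. That construction avoids all of your listed difficulties by working in one shot rather than inductively: one chooses Borel functions $H_k : \overline{E} \to [0,1]$ with $\sum_k H_k \equiv 1$ on $\overline{E} \setminus E$ (together with supporting conditions (i)--(vii)), associates to each a point $w_k \in E$, and sets $\xi(\mu) = \mu - \sum_k (\int H_k \, d\mu)\, \epsilon_{w_k}$. One then passes to the quotient $\SM(\overline{E})/\M$ by the closed linear subspace $\M = \xi(\SM(\overline{E}\setminus E))$ and takes $K$ to be the image of the probability measures. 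The technical hypotheses on the $H_k$ (built using the exhaustion by closed sets $C_n$, i.e.\ your $L_n$, and shrinking diameters) are exactly what make $\M$ weak-$*$ closed so that $K$ is a compact simplex, make the quotient map injective on $\overline{E}$, and pin down $\ex(K)$ as the image of $E$. Your subsimplex/inverse-limit scaffolding is an unnecessary detour, and I would encourage you to read Haydon's argument (or the paper's Lemma \ref{ConstructKLem}) to see how this is made precise.
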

Given a Polish space $E$ and a compactification $\overline{E}$, the proof of Theorem \ref{optimal} below involves constructing a metrizable Choquet simplex $K$ such that $(\ex(K), \overline{\ex(K)}) \simeq (E,\overline{E})$ while simultaneously controlling the possible harmonic, u.s.c.d. candidate sequences on $K$. In this sense Theorem \ref{optimal} may be viewed as a partial generalization of Theorem \ref{ChoquetStrong}.

\begin{rmk}
In Theorem \ref{optimal}, we restrict our attention to metrizable compactifications of Polish spaces. Since we are only interested in studying pairs $(\ex(K),\overline{\ex(K)})$ where $K$ is a metrizable Choquet simplex, Theorem \ref{ChoquetStrong} implies that there is no loss of generality in making this restriction.
\end{rmk}

\begin{thm} \label{optimal}
Let $E$ be a non-compact, countably infinite Polish space, and let $\overline{E}$ be a metrizable compactification of $E$.

\begin{enumerate}
\item If $\overline{E}$ is countable, then for each successor $\beta \in [\rho_E(\overline{E}), \medspace \rho(\overline{E})]$, there exists a Choquet simplex $K$ such that $(\ex(K), \overline{\ex(K)}) \simeq (E, \overline{E})$ and $S(K) = [0, \beta]$.
\vspace{1mm}
\item If $E$ is countable and $\overline{E}$ is uncountable, then for each countable ordinal $\beta \geq \rho_E(\overline{E})$, there exists a Choquet simplex $K$ such that $(\ex(K), \overline{\ex(K)}) \simeq (E, \overline{E})$ and $S(K) \supset [0,\beta]$.
\end{enumerate}
\end{thm}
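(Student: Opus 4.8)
The plan is to reduce Theorem \ref{optimal} to the combinatorial heart of the construction: given the target compactification $(E,\overline{E})$ and the target ordinal $\beta$, we build a metrizable Choquet simplex $K$ by specifying a well-chosen subset $F \subset \ex(K)$ on which an embedded candidate sequence will live, and then applying the Embedding Lemma (Lemma \ref{embeddingThm}) to transfer control of $\al_0$ from $F$ to all of $K$. The idea is that the ``extra'' extreme points of $K$ — those not in $F$ — contribute nothing to any harmonic u.s.c.d.\ candidate sequence whose differences vanish off $F$, so they can be used freely to realize the prescribed homeomorphism type of $(\ex(K),\overline{\ex(K)})$ without increasing $S(K)$ beyond what $F$ permits.

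The key steps, in order, are as follows. First, I would construct $K$ as a quotient of $\M(\overline{E})$ (using the analogue of Lemma \ref{ConstructKLem} cited in Example \ref{HlessThanPiH}, i.e.\ realizing $K$ as the image of some $\M(Z)$ under a continuous affine surjection) so that $(\ex(K),\overline{\ex(K)}) \simeq (E,\overline{E})$; since $E$ is Polish this is possible by Theorem \ref{ChoquetStrong}, and the point is to do it so that a designated relatively-rank-$\al$ point $t_0 \in \overline{E}$, where $\al = \beta$ (or $\al=\beta-1$ when $\beta$ is a successor and we want to use Corollary \ref{alphaPlusOne}), is the barycenter of a suitable measure supported on isolated points of $E$ converging to $t_0$. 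Second, using the relative Cantor-Bendixson structure, I would choose an embedding $g : \o^{\al}+1 \to \overline{E}$ with $g(\o^{\al}+1) \subset E \cup \{t_0\}$ and $g(\0_\al) = t_0$ — this is exactly the kind of embedding produced inside the proof of the lower bound in Theorem \ref{topBounds}, and it exists precisely because $t_0 \in \Gamma_{E}^{\al}(\overline{E})$. Third, I would take $\F$ from Corollary \ref{realizationCor} (resp.\ Corollary \ref{alphaPlusOne}) with $\al_0(\F) = \al$ (resp.\ $\al+1$) on $\o^{\al}+1$, form the embedded candidate sequence $g\F$, take its harmonic extension $\H$ via Lemma \ref{embedIsHarmUSC} (using that $h'_{k+1}-h'_k$ is convex since it vanishes off $\ex(K)$), and apply the Embedding Lemma with $L = g(\o^{\al}+1)$ to conclude $\al_0(\H)=\al$ or $\al+1$ — this shows $\beta \in S(K)$. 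Fourth, for the reverse containment $S(K) \subseteq [0,\beta]$ in part (1), I would invoke the upper bound from Theorem \ref{topBounds}: any harmonic u.s.c.d.\ $\H$ on $K$ satisfies $\al_0(\H) \le \al_0(\H|_{\overline{\ex(K)}}) \le \rho(\overline{\ex(K)})$, so if $\beta = \rho(\overline{E})$ we are done immediately, but for intermediate $\beta < \rho(\overline{E})$ I must instead arrange the construction of $K$ so that $|\overline{\ex(K)}|_{CB}^{\ex(K)}$ is bounded appropriately — i.e.\ choose the quotient so that the relative rank, not just the absolute rank, is controlled, which forces $S(K) \subseteq [0,\rho_{\ex(K)}(\overline{\ex(K)})] = [0,\beta]$ by the lower-bound argument run backwards together with Proposition \ref{pointwiseBound} applied relatively; this is the same bookkeeping that appears in Theorem \ref{threeAlphas}.

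The main obstacle, and the step I expect to require real care, is the simultaneous control in the fourth step: one must build $K$ so that the \emph{pair} $(\ex(K),\overline{\ex(K)})$ is homeomorphic to the prescribed $(E,\overline{E})$ \emph{and} the relative Cantor-Bendixson rank $|\overline{\ex(K)}|_{CB}^{\ex(K)}$ equals the desired value pinning down the upper bound on $S(K)$. The tension is that $(E,\overline{E})$ already determines its own relative rank $\rho_E(\overline{E})$ as a \emph{lower} bound, while $\rho(\overline{E})$ is the absolute-rank \emph{upper} bound, and $\beta$ ranges over the gap between them; to land exactly at $[0,\beta]$ we need a realization of $(E,\overline{E})$ inside some $\M(Z)$ in which the fibers of the barycenter map are arranged so that measures charging the ``high-rank-absolute but low-rank-relative'' part of $\overline{E}$ cannot be used to inflate $\al_0$ beyond $\beta$ — controlling this requires a careful choice of which points of $\overline{E}$ are barycenters of which fiber-measures, and verifying via the Embedding Lemma that the resulting $u_\al^{\H}$ on $K$ are bounded by the $u_\al$ of a restriction to a set of relative rank exactly $\beta+1$. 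For part (2), where $\overline{E}$ is uncountable, only the lower bound is asserted ($S(K) \supset [0,\beta]$), so the upper-bound difficulty disappears and the argument is essentially steps one through three together with the observation (as in the uncountable case of Theorem \ref{bauerThm}) that an uncountable Polish space admits embeddings of $\o^{\al}+1$ for every countable $\al$.
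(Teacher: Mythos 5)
There is a genuine gap in the lower-bound step, and a conceptual error in the upper-bound step.

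For the lower bound, your plan is to pick a point $t_0 \in \Gamma_E^{\al}(\overline{E})$ with $\al$ equal to $\beta$ (or $\beta-1$) and embed $\o^{\al}+1$ into $E \cup \{t_0\}$. But for every $\beta$ with $\rho_E(\overline{E}) < \beta \leq \rho(\overline{E})$ — which is exactly the regime the theorem is about — we have $\Gamma_E^{\al}(\overline{E}) = \emptyset$ (since $r_E(t) < |\overline{E}|_{CB}^E \leq \rho_E(\overline{E})+1$ for all $t$), so no such $t_0$ exists and the embedding you describe does not exist. The paper avoids this by choosing $Y \cong \o^{\beta_0}+1$ as a subset of $\overline{E}$ (not of $E$ plus one limit), accepting that $Y \setminus E$ is nonempty, and arranging the simplex so that each $z_m \in Y \setminus E$ is the midpoint $\frac{1}{2}(\phi(u_m)+\phi(v_m))$ of two nearby \emph{extreme} points $u_m, v_m$. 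A candidate sequence is then defined on $K$ by assigning to $u_m, v_m$ the value that the model sequence $\F$ on $\o^{\beta_0}+1$ takes at $z_m$; harmonicity at $z_m$ follows from the midpoint relation. Verifying that this sequence still has $\al_0 = \gamma$ is the content of Lemma \ref{inclusionLemma}, which is substantially more delicate than an Embedding-Lemma application (it requires comparing $\F$ with a modified sequence $\F'$ and relies on property (P) of Remark \ref{LimsupToLimitRmk}).

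For the upper bound, you suggest arranging the quotient so that $|\overline{\ex(K)}|_{CB}^{\ex(K)}$ is "bounded appropriately." That is not possible: the relative rank $\rho_{\ex(K)}(\overline{\ex(K)})$ is a topological invariant of the pair $(\ex(K),\overline{\ex(K)})$, which is required to be homeomorphic to $(E,\overline{E})$, so it is fixed at $\rho_E(\overline{E})$ and cannot be adjusted to track $\beta$. The paper's actual mechanism for the upper bound is independent of relative rank: it covers $T\setminus Y$ by intervals $V_k$ of constant relative rank, and constructs the simplex so that all points of $V_k$ lie on the segment between two extreme points $\phi(x_k),\phi(y_k)$ (Lemma \ref{U_tLemma}). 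Lemma \ref{pointwiseOrderBd} then shows that this affine collapse forces $\al_0^{\H|_T}(t) \leq r_X(t)$ (or $r_X(t)+1$) for all $t \notin Y$ and every harmonic u.s.c.d.\ $\H$, so that together with Proposition \ref{pointwiseBound} applied on $Y$ one gets $\al_0(\H) \leq \rho(Y) = \beta$. This affine-geometric constraint — not the relative rank — is what pins $S(K)$ down to $[0,\beta]$.
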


Observe that when $E$ is uncountable, Theorem \ref{topBounds} gives that for any metrizable Choquet simplex $K$ with $\ex(K)$ homeomorphic to $E$, $S(K) = [0,\o_1[$. The proofs of Theorem \ref{optimal} (1) and (2) rely very heavily Lemma \ref{ConstructKLem}, which in turn relies very heavily on Haydon's proof (see \cite{Hay} or \cite[pp. 126-129]{AE}) of Theorem \ref{choquetWeak}. 

\vspace{2 mm}

\noindent \textbf{Proof of Theorem \ref{optimal} (1)}.

\subsubsection{Setup for proof of Theorem \ref{optimal} (1)}

Let $\beta$ be a successor ordinal with $\rho_{E}(\overline{E}) \leq \beta \leq \rho(\overline{E})$. Let $\beta_0 = \beta$ if $\beta$ is finite, and let $\beta_0 = \beta-1$ if $\beta$ is infinite. For notation, we let $T = \overline{E}$ and $X = E$. Since $T$ is countable and compact, $T \cong \o^{|T|_{CB}-1} n + 1$ for some natural number $n$ (by Theorem \ref{classification}). We may assume without loss of generality that $n=1$ (if $n >1$, then $T$ is just the finite disjoint union of the case when $n=1$, and we may repeat the following constructions independently $n$ times). Using this homeomorphism of $T$ and $\o^{|T|_{CB}-1} + 1$, we obtain a well-ordering on $T$ such that the induced order topology coincides with the original topology on $T$. Thus we may assume without loss of generality that $T =\o^{|T|_{CB}-1} + 1$. Also, we fix a complete metric $d( \cdot, \cdot )$ on $T$.

Let $Y \subset T$ be the set $\o^{\beta_0}+1$ in $T = \o^{|T|_{CB}-1} + 1$. Let $Z = Y \setminus X$, which may be empty. There are two cases: either $Y = T$ or $Y \subsetneq T$. The case $Y=T$ occurs if and only if $\beta = \rho(T)$, while the case $Y \subsetneq T$ occurs if and only if $\beta < \rho(T)$. If $Y = T$, then one may ignore the constructions in Sections \ref{YnoteqTone}, \ref{YnoteqTtwo}, and \ref{YnoteqTthree}. If $Y \subsetneq T$, then $Z$ may be empty. If $Z$ is empty, then one may ignore the construction in Section \ref{ZnotEmpty}. We make the convention that an empty sum is zero.

\subsubsection{Definition of the points $z_m, u_m, v_m$} \label{ZnotEmpty}

Assuming $Z$ is not empty, we will define distinct points $z_m \in Z$ and $u_m, v_m \in X$. In the simplex $K$, they will satisfy $z_m = \frac{1}{2}(u_m+v_m)$, and it is exactly this formula which allows us to prove that $[0,\beta] \subseteq S(K)$.

Since $T$ is countable, $Z$ is countable, and we may enumerate $Z = \{ z_m \}$ (in the case when $Z$ is finite, this sequence is finite). If $z_m < \o^{|T|_{CB}-1}$ in $T$, then let $u_m = z_m+1$ and $v_m = z_m+2$ (successor ordinals). If $z_m = \o^{|T|_{CB}-1}$ in $T$, we let $u_m = 1$ and $v_m = 2$. Since $X$ is dense in $T$, any isolated point in $T$ must lie in $X$. Therefore any successor ordinals in $T$ must be in $X$. It follows that $u_m, v_m$ are points in $X$.

\subsubsection{Construction of the sets $V_k$} \label{YnoteqTone}

Here we will use notations defined previously, such as the relative topological rank, $r_X(x)$, of the point $x$ (Definition \ref{ptwiseRelTopRank}) and the relative Cantor-Bendixson derivatives $\Gamma_X^{\al}(Y)$ (Definition \ref{RelCBDerivative}). Also, since it is an important hypothesis in this section, we remind the reader that $Y$ is clopen in $T$.

In this section we assume that $T \setminus Y$ is not empty, which occurs exactly when $\beta < \rho(T)$, and we define certain sets $V_k$. The construction of the sets $V_k$ and the points $x_k$ and $y_k$ (see section \ref{YnoteqTtwo}) allows one to prove that $S(K) \subseteq [0,\beta]$. In the simplex $K$, all points in the set $V_k$ will lie in the convex hull of $x_k$ and $y_k$, which will imply that the order of accumulation cannot be increased by the points in $V_k \setminus \{ y_k \}$ (see Lemmas \ref{U_tLemma} and \ref{pointwiseOrderBd}).

Below, by an interval in a subset $A$ of $T$, we mean the intersection of an interval of $T$ (which may be a singleton) with $A$.

\begin{lem} \label{VkLemma}
If $T \setminus Y$ is not empty, then there exists a collection $\{V_k\}$ of non-empty subsets of $T$ with the following properties:
\begin{enumerate}
 \item if $V_k \cap V_j \neq \emptyset$, then $k = j$;
 \item for each $V_k$ there exists an ordinal $\al_k \geq 1$ such that $r_X(t) = \al_k$ for all $t$ in $V_k$;
 \item each $V_k$ is a clopen interval in $\Gamma_X^{\al_k}(T)$;
 \item if $V_k \cap X \neq \emptyset$, then $V_k \cap X = \{ \sup (V_k) \}$;
 \item $\Gamma_X^{1}(T) \setminus Y = \cup_k V_k$.
 \item $\lim_k \diam(V_k) = 0$.
\end{enumerate}
\end{lem}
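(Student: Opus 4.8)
The idea is to partition the target set $S:=\Gamma_X^{1}(T)\setminus Y$ first according to relative topological rank, then decompose each rank stratum into clopen intervals of the appropriate relative derivative, and finally refine those intervals to control both the number of points of $X$ they meet and their diameter. Throughout I will use the standard facts that $T=\omega^{|T|_{CB}-1}+1$ is a compact ordinal carrying the order topology, that every closed subset $P\subseteq T$ (in particular each $P_\alpha:=\Gamma_X^{\alpha}(T)$, and each interval produced below) is again a compact ordinal space whose subspace topology is the order topology of the inherited well-order, and that in such a space the sets $(\delta,p]\cap P$ with $\delta<p$, together with the singletons at isolated points, form a basis of clopen intervals. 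Since $X$ is countable, $\Gamma_X^{|T|_{CB}^X}(T)=\emptyset$, so each $t\in\Gamma_X^{1}(T)$ lies in $\Gamma_X^{\alpha}(T)\setminus\Gamma_X^{\alpha+1}(T)$ for $\alpha=r_X(t)\geq 1$; hence $\bigcup_{\alpha\geq1}\bigl(\Gamma_X^{\alpha}(T)\setminus\Gamma_X^{\alpha+1}(T)\bigr)=\Gamma_X^{1}(T)$, and $\{r_X(t):t\in T\}$ is countable. For each $\alpha\geq1$ I set $F_\alpha:=P_\alpha\cap\bigl(\Gamma_X^{\alpha+1}(T)\cup Y\bigr)$, which is closed in $P_\alpha$ because $\Gamma_X^{\alpha+1}(T)$ is closed and $Y$ is clopen in $T$; then $W_\alpha:=P_\alpha\setminus F_\alpha$ is open in $P_\alpha$ and consists exactly of the points of $T\setminus Y$ of relative rank precisely $\alpha$, so the $W_\alpha$ are pairwise disjoint and $\bigsqcup_{\alpha\geq1}W_\alpha=\Gamma_X^{1}(T)\setminus Y=S$.

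Next I would decompose each open set $W_\alpha$ into clopen intervals of $P_\alpha$: cover $W_\alpha$ by its basic clopen interval neighborhoods in $P_\alpha$, extract a countable subcover $\{J_i\}$ (Lindel\"of), and disjointify. Since in a linearly ordered space the complement of an interval is a union of at most two intervals, each $J_i\setminus(J_1\cup\cdots\cup J_{i-1})$ is a finite disjoint union of clopen intervals of $P_\alpha$; gathering these over all $i$ and over the countably many $\alpha$ with $W_\alpha\neq\emptyset$ exhibits $S$ as a countable disjoint union of clopen intervals $I$, each contained in some $P_\alpha=\Gamma_X^{\alpha}(T)$ with $I\cap\Gamma_X^{\alpha+1}(T)=\emptyset$. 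This already yields (1), (2), (3), (5), since every $t\in I$ lies in $\Gamma_X^{\alpha}(T)$ but not in $\Gamma_X^{\alpha+1}(T)$, so $r_X(t)=\alpha$. For (4) the crucial observation is that such an interval $I$ meets $X$ in only finitely many points: an accumulation point of $I\cap X$ inside $I$ would be a limit of points of $\Gamma_X^{\alpha}(T)\cap X$ distinct from it, hence would lie in $\Gamma_X^{\alpha+1}(T)$, which is impossible; so $I\cap X$ is closed and discrete in the compact set $I$, therefore finite. Listing $I\cap X=\{x_1<\cdots<x_r\}$ and $s=\sup I\in I$, I cut $I$ into the consecutive clopen subintervals $I\cap[0,x_1]$, $I\cap(x_1,x_2]$, \dots, $I\cap(x_{r-1},x_r]$, $I\cap(x_r,s]$ (the last possibly empty), each of which meets $X$ in at most its supremum. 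Finally, for (6): each piece obtained so far is a clopen interval, hence compact, and (again by covering with small basic clopen intervals and disjointifying) can be split into finitely many clopen subintervals of any prescribed diameter while keeping them consecutive, so (4) is preserved; enumerating all the pieces as $J_1',J_2',\dots$, subdividing $J_m'$ into clopen subintervals of diameter $<1/m$, and concatenating these finite blocks in order of $m$ gives the collection $\{V_k\}$ with $\diam(V_k)\to 0$.

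The heart of the matter, and the step I expect to need the most care, is the interaction of (1), (3) and (4): one must carve each relative-rank stratum into honest clopen \emph{intervals} of $\Gamma_X^{\alpha}(T)$ rather than arbitrary clopen sets — which is what the disjointification-in-a-linear-order argument provides — and then exploit that constancy of $r_X$ on such an interval $I$ forces $I\cap X$ to be finite, which is precisely what makes the normalization ``$I\cap X$ is empty or the singleton $\{\sup I\}$'' attainable by a single finite further subdivision. Once these are in place, the metric refinement giving (6) and the verifications of (2) and (5) are routine.
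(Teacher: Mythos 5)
Your proof is correct, and the decomposition you use is genuinely different from the paper's. The paper constructs, for each $x \in X \cap A_\alpha$ (where $A_\alpha$ is your $W_\alpha$), a specific maximal clopen interval $U_x$ ending at $x$ and meeting $X$ only at $x$, sets $U_\omega^\alpha = A_\alpha \setminus \bigcup_x U_x$, and then must argue that $U_\omega^\alpha$, if nonempty, is itself a clopen interval; this step requires a non-trivial observation (that a nonempty countable metrizable space with no isolated points is not Polish, yet $[y_0+1,\max T]\cap\Gamma_X^{\alpha}(T)\cap X$ is a $G_\delta$ in a Polish space) in order to rule out points of $\Gamma_X^{\alpha+1}(T)$ past $y_0 = \sup(X\cap A_\alpha)$. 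You instead first perform a generic decomposition of the open stratum $W_\alpha$ into a countable disjoint union of clopen intervals of $\Gamma_X^\alpha(T)$ (cover by basic clopen-interval neighborhoods, extract a countable Lindel\"{o}f subcover, disjointify using order-convexity of interval complements), and only then impose property (4) by a finite post-processing step: each resulting interval $I$ meets $X$ in finitely many points, since an accumulation point of $I\cap X$ inside $I$ would lie in $\Gamma_X^{\alpha+1}(T)\cap I=\emptyset$, and cutting $I$ at these finitely many points yields clopen subintervals each meeting $X$ at most in its supremum. This reordering of the construction sidesteps the paper's Polish-space argument entirely and is somewhat more elementary. The refinement for (6) is in the same spirit as the paper's (split into pieces of prescribed diameter and re-enumerate), though you are more explicit than the paper about why this preserves property (4). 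Both approaches are valid; yours buys simplicity in the key ``residual interval'' step, while the paper's more explicit construction makes the location of the $X$-points within each interval transparent from the outset.
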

\begin{proof}
Suppose  $\al \in [1, \rho(T)]$ and the set $A_{\al} = \{ t \in \Gamma_X^{\al}(T) \setminus Y : r_X(t) = \al \}$ is non-empty (which it must be for $\al = 1$ since $Y \neq T$). For $x \in X \cap A_{\al}$, let $a(x) = \min \{ a \in \Gamma_X^1(T) \setminus Y : [a,x] \cap (X \cap A_{\al}) = \{x\} \text{ and } [a,x]\cap \Gamma_X^{\al+1}(T) = \emptyset \}$. Let $U_x = [a(x),x] \cap \Gamma_X^{\al}(T)$ and note that $U_x \subset A_{\al}$. The set $\Gamma_X^{\al+1}(T)$ is closed and does not intersect $A_{\al}$, and the set $X \cap A_{\al}$ has no accumulation points in $A_{\al}$. Thus each $U_x$ is a clopen interval in $\Gamma_X^{\al}(T)$. Now let $U_{\o}^{\al} = A_{\al} \setminus \cup_{x \in X \cap A_{\al}} U_x$, which may be empty.

If $U_{\o}^{\al}$ is non-empty, then we claim that it is also a clopen interval in $\Gamma_X^{\al}(T)$. Let $y_0 = \sup(X \cap A_{\al})$. Note that $Y$ is an initial subinterval of $T$ and $X \cap A_{\al} \subset T \setminus Y$, which implies that $[y_0, \max(T)] \subset T \setminus Y$. We also have that $y_0$ is in $X \cup \Gamma_X^{\al+1}(T)$, which implies that $y_0$ is not in $U_{\o}^{\al}$. We will show that $U_{\o}^{\al} = [y_0+1, \max(T)] \cap \Gamma_X^{\al}(T)$. To see this fact, first note that if $y \leq x$ with $y \in A_{\al}$ and $x \in X \cap A_{\al}$, then $y \in \cup_{x \in X \cap A_{\al}} U_x$. Thus we have that $U_{\o}^{\al} \subset [y_0+1, \max(T)] \cap \Gamma_X^{\al}(T)$. To show the reverse inclusion, we show that $[y_0+1, \max(T)] \cap A_{\al} = [y_0+1, \max(T)] \cap \Gamma_X^{\al}(T)$. We assume for the sake of contradiction that there is a point $t$ in $[y_0+1, \max(T)] \cap \Gamma_X^{\al+1}(T)$. From this assumption and the fact that $[y_0+1, \max(T)]$ is open it follows that $[y_0+1, \max(T)] \cap \Gamma_X^{\al}(T) \cap X$ has $t$ as an accumulation point (and so, in particular, this set is non-empty). If $[y_0+1, \max(T)] \cap \Gamma_X^{\al}(T) \cap X$ contains a single point $s$ with $r_X(s)= \al$, then we see that $s \in X \cap A_{\al}$ and $s > y_0$, which contradicts the definition of $y_0$. Now suppose that for all $s$ in $[y_0+1, \max(T)] \cap \Gamma_X^{\al}(T) \cap X$, $r_X(s) > \al$. Then $[y_0+1, \max(T)] \cap \Gamma_X^{\al}(T) \cap X$ is a non-empty, countable, metrizable space with no isolated points, which implies that it is not Polish. But $[y_0+1, \max(T)] \cap \Gamma_X^{\al}(T)$ is closed in $T$, which implies that it is a $G_{\delta}$ in $T$, and $X$ is Polish in $T$, which implies it is a $G_{\delta}$ in $T$, and the intersection of two $G_{\delta}$ sets is a $G_{\delta}$. Also, any $G_{\delta}$ set in a Polish space is Polish. Thus, $[y_0+1, \max(T)] \cap \Gamma_X^{\al}(T) \cap X$ is Polish, and we arrive at a contradiction.

Let $\{V'_k\}$ be an enumeration of all the non-empty sets $U_x$ and $U_{\o}^{\al}$ constructed above, for any $\al \in [1, \rho(T)]$. The collection $\{ V'_k \}$ satisfies properties (1)-(5) but not necessarily (6). However, given $V'_k$ a clopen interval in $\Gamma_X^{\al}(T)$ contained in $A_{\al}$, we may find a finite collection of pairwise disjoint clopen intervals (in $\Gamma_X^{\al}(T)$)  $V'_{k,i}$, contained in $A_{\al}$, whose union is $V'_k$, such that each $V'_{k,i}$ has diameter at most $\frac{1}{k}$. Re-enumerating the collection $\{V'_{k,i} \}$, we obtain the required collection $\{V_k\}$.
\end{proof}

Note that since $T \setminus X \subset \Gamma_X^{1}(T)$, we have that $T \setminus (X \cup Y) = \sqcup_k V_k \setminus X$.

\subsubsection{Definition of the points $x_k$ and $y_k$} \label{YnoteqTtwo}

The points $x_k$ and $y_k$ are part of the construction that allows one to bound the possible orders of accumulation on $K$ from above.

Assuming $\beta < \rho(T)$, we let $\{V_k\}$ be a collection of non-empty subsets of $T$ given by Lemma \ref{VkLemma}, and fix a natural number $k$. There are two cases: either $V_k \cap X = \emptyset$ or $V_k \cap X \neq \emptyset$. Suppose $V_k \cap X = \emptyset$. Then choose a point $t_k$ in $V_k$. If $t_k = \sup(T)$, let $x_k=\o^{\beta_0}+3$ and $y_k = \o^{\beta_0}+4$, and otherwise let $x_k = t_k +1$ and $y_k = t_k+2$. If $V_k \cap X \neq \emptyset$, then let $y_k = \sup(V_k)$ (which is in $X$ by conclusion (4) of Lemma \ref{VkLemma}). If $y_k = \sup(T)$, let $x_k = \o^{\beta_0}+5$ and otherwise let $x_k = y_k+1$. The fact that the $V_k$ are pairwise disjoint implies that the points $x_k$ and $y_k$ are all distinct. Note that for all $k$, $x_k$ and $y_k$ are in $X$.

Notice that the points $x_k, y_k, z_m, u_m$, and $v_m$ and the sets $V_k$ have been chosen so that (i) the quantities $\diam(V_k)$, $\max_{t \in V_k} \dist(x_k , t)$, and $\max_{t \in V_k} \dist(y_k, t)$ each converge to zero as $k$ tends to infinity, (ii) $d(z_m, u_m)$ and $d(z_m, v_m)$ each converge to zero as $m$ tends to infinity, (iii) the points $x_k, y_k, z_m, u_m$, and $v_m$ are all distinct, (iv) the points $x_k, y_k, u_m, v_m$ are all in $X$, and (v) if $V_k \cap X \neq \emptyset$, then $V_k \cap X = \{y_k\}$ (v) for all $m$ and $k$, $z_m \notin V_k$, and (vi) the sets $V_k$ are pairwise disjoint.

\subsubsection{Definition of $F_k$ and $G_k$} \label{YnoteqTthree}

Choose Borel measurable functions $F_k: T \rightarrow [0,1]$ and $G_k : T \rightarrow [0,1]$ with the following properties:
\begin{enumerate}
 \item $F_k, G_k < 1$ on $T \setminus X$;
 \item $F_k$ and $G_k$ are continuous and injective on $V_k$ and $0$ on $T \setminus V_k$;
 \item $F_k+G_k = \chi_{V_k}$;
 \item $F_k(y_k) = 0$ and $G_k(y_k)=1$.
\end{enumerate}
The existence of such maps follows easily from the fact that $T$ can be order-embedded in $(0,1)$ and $V_k$ is closed.

\subsubsection{Conclusion of the proof of Theorem \ref{optimal} (1)}
Let $C_n=(\cup_{k=1}^n V_k) \cup \{z_1, \dots, z_n\}$ for each $n$. Consider the collection of points $\{x_k\}\cup\{y_k\}\cup\{u_m\}\cup\{v_m\}$. To each point $x_k$ we associate the function $F_k$. To each point $y_k$ we associate the function $G_k$. To each point $u_m$ or $v_m$, we associate the function $\frac{1}{2}\chi_{z_m}$. Then the hypotheses in Lemma \ref{ConstructKLem} are satisfied by the countable collection of closed sets $\{C_n\}\cup\{D_n\}$, the countable collection of points $\{x_k\}\cup\{y_k\}\cup\{u_m\}\cup\{v_m\}$ in $X$, and the associated functions $\{F_k\}\cup\{G_k\}\cup\{\frac{1}{2}\chi_{z_m}\}$. Lemma \ref{ConstructKLem} gives a metrizable Choquet simplex $K$  and a homeomorphism $\phi : T \to \overline{\ex(K)}$ such that $\phi(X)=\ex(K)$ and such that for all $t$ in $T \setminus X$,
\begin{equation} \label{phitEqnOne}
\phi(t) = \sum_k F_k(t) \phi(x_k) + G_k(t) \phi(y_k) + \frac{1}{2} \sum_m \chi_{z_m}(t) (\phi(u_m)+\phi(v_m)).
\end{equation}
\begin{lem}\label{U_tLemma}
Let $X,Y,T$, and $K$ be as above. Then for every $t \in T \setminus Y$, there exists an open (in $T$) neighborhood $U_t$ and points $x_t$ and $y_t$ in $X \setminus Y$ such that for all $s$ in $U_t$, either $r_X(s) < r_X(t)$ or else $r_X(s)=r_X(t)$ and $\phi(s) = a_s \phi(x_t) + b_s \phi(y_t)$ in $K$, with $0 \leq a_s, b_s \leq 1$ and $a_s + b_s = 1$.
\end{lem}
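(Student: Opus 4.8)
The statement will be proved directly, by splitting on the value of the relative rank $r_X(t)$ and invoking the explicit formula (\ref{phitEqnOne}) for $\phi$ on $T\setminus X$ together with the structural properties of the sets $V_k$ listed in Lemma \ref{VkLemma}. Throughout, recall that every point of $T\setminus X$ lies in $\Gamma_X^{1}(T)$ because $X$ is dense in $T$.

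First I would dispose of the case $r_X(t)=0$. Since $T\setminus X\subseteq\Gamma_X^{1}(T)$, this forces $t\in X$; and since $X$ is dense, $t$ must in fact be isolated in $T$ (any non-isolated point is an accumulation point of $X$). As $t\in T\setminus Y$, we get $t\in X\setminus Y$, and it suffices to take $U_t=\{t\}$ and $x_t=y_t=t$: for the unique point $s=t$ of $U_t$ we have $r_X(s)=r_X(t)$ and $\phi(s)=\tfrac12\phi(x_t)+\tfrac12\phi(y_t)$.

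Next I would handle the case $r_X(t)\ge 1$, i.e. $t\in\Gamma_X^{1}(T)$. By Lemma \ref{VkLemma} (5), $\Gamma_X^{1}(T)\setminus Y=\bigcup_k V_k$, so $t$ lies in exactly one $V_k$; set $\al_k=r_X(t)$, which is legitimate by Lemma \ref{VkLemma} (2). Using that $V_k$ is clopen in $\Gamma_X^{\al_k}(T)$ (Lemma \ref{VkLemma} (3)), choose an open $W\subseteq T$ with $W\cap\Gamma_X^{\al_k}(T)=V_k$ and put $U_t=W\setminus\Gamma_X^{\al_k+1}(T)$; this is open in $T$ and contains $t$, since $t\in V_k\subseteq W$ and $r_X(t)=\al_k$ gives $t\notin\Gamma_X^{\al_k+1}(T)$. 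Take $x_t=x_k$ and $y_t=y_k$, which lie in $X\setminus Y$ by the construction of Section \ref{YnoteqTtwo}. For $s\in U_t$: since $\Gamma_X^{\al}(T)\subseteq\Gamma_X^{\al_k+1}(T)$ whenever $\al>\al_k$, the condition $s\notin\Gamma_X^{\al_k+1}(T)$ yields $r_X(s)\le\al_k$; if $r_X(s)<\al_k$ there is nothing more to check, and if $r_X(s)=\al_k$ then $s\in\Gamma_X^{\al_k}(T)\cap W=V_k$. It then remains to verify that $s\in V_k$ implies $\phi(s)$ is a convex combination of $\phi(x_k)$ and $\phi(y_k)$: if $s\in V_k\cap X$ then $s=\sup(V_k)=y_k$ by Lemma \ref{VkLemma} (4) and the definition of $y_k$, so $\phi(s)=\phi(y_k)$; if $s\in V_k\setminus X$, then since the $V_j$ are pairwise disjoint (so $F_j(s)=G_j(s)=0$ for $j\ne k$) and $z_m\notin V_k$ for every $m$ (so $\chi_{z_m}(s)=0$), formula (\ref{phitEqnOne}) collapses to $\phi(s)=F_k(s)\phi(x_k)+G_k(s)\phi(y_k)$, where $F_k(s)+G_k(s)=\chi_{V_k}(s)=1$ by the defining property (3) of $F_k$ and $G_k$; thus $a_s=F_k(s)$ and $b_s=G_k(s)$ work.

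I do not anticipate a genuine difficulty: the argument is essentially bookkeeping. The only point demanding some care is the simultaneous choice of $U_t$ so that it confines points of rank $\al_k$ to $V_k$ while excluding every point of rank $>\al_k$; this is precisely what the clopenness of $V_k$ in $\Gamma_X^{\al_k}(T)$ and the closedness of $\Gamma_X^{\al_k+1}(T)$ provide. The remainder is a matter of correctly tracking which listed property of the $V_k$, of the functions $F_k,G_k$, and of the auxiliary points $z_m,x_k,y_k$ is being used at each step.
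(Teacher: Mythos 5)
Your proof is correct and takes essentially the same approach as the paper's: dispose of the isolated case $r_X(t)=0$ by density of $X$, and for $r_X(t)\ge1$ locate $t$ in a unique $V_k$, shrink to a neighborhood $U_t$ that confines the points of maximal rank to $V_k$, set $x_t=x_k$, $y_t=y_k$, and read off the convex combination from Equation (\ref{phitEqnOne}) using the disjointness of the $V_j$'s, the fact that $z_m\notin V_k$, and $F_k+G_k=\chi_{V_k}$. The only cosmetic difference is that you explicitly subtract $\Gamma_X^{\al_k+1}(T)$ from $W$ to force $r_X(s)\le\al_k$, whereas the paper's choice of $U_t$ (merely requiring $\Gamma_X^{r_X(t)}(T)\cap U_t\subseteq V_k$) already makes higher-rank points impossible since any $s$ with $r_X(s)\ge\al_k$ lies in $\Gamma_X^{\al_k}(T)\cap U_t\subseteq V_k$ and hence has rank exactly $\al_k$; both choices are fine.
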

\begin{proof}
Let $t \in T \setminus Y$. If $r_X(t)=0$, then $t$ is isolated in $T$ and $t$ is in $X$, since $X$ is dense in $T$. In this case we may choose $U_t = \{t\}$ and the requirement is trivially satisfied.

If $r_X(t) \geq 1$, then $t$ is in $V_k$ for some $k$. Let $U_t$ be any open (in $T$) neighborhood of $t$ with $\Gamma_X^{r_X(t)} \cap U_t \subseteq V_k$ (such a neighborhood exists since $V_k$ is an open interval in in $\Gamma_X^{r_X(t)}(T)$), and let $x_t = x_k$ and $y_t = y_k$. We have that for each $s$ in $U_t$, either $r_X(s) < r_X(t)$ or $s$ is in $V_k$. If $s$ is in $V_k$, then $r_X(s) = r_X(t)$, and it follows from Equation (\ref{phitEqnOne}) that $\phi(s) = F_k(s) \phi(x_k) + G_k(s) \phi(y_k)$ in $K$. Also, we have that $F_k(s) + G_k(s) = 1$.
\end{proof}

By Lemmas \ref{pointwiseOrderBd} and \ref{SKsubset}, we have that $S(K) \subset [0,\rho(Y)]$. By Lemma \ref{inclusionLemma}, $S(K) \supset [0, \rho(Y)]$. Thus $S(K)=[0, \rho(Y)] = [0,\beta]$.

\smallskip
\noindent \textit{This concludes the proof of Theorem \ref{optimal} (1)}.

\vspace{1 mm}

\noindent \textbf{Proof of Theorem \ref{optimal} (2)}.
\subsubsection{Setup for proof of Theorem \ref{optimal} (2)}

Let $\beta$ be a successor ordinal with $\beta \geq \rho_E(\overline{E})$. Let $\beta_0 = \beta$ if $\beta$ is finite and let $\beta_0 = \beta-1$ if $\beta$ is infinite. For notation, we let $T = \overline{E}$ and $X = E$. Fix a metric $d$ on $T$ that is compatible with the topology of $T$. Since $T$ is uncountable and compact, $T$ contains an uncountable perfect set $P$. Since $\beta_0$ is countable, $P$ contains a set $Y$ that is homeomorphic to $\o^{\beta_0}+1$. Let $\{a_{\al}\}_{\al=0}^{\o^{\beta_0}}$ be a transfinite sequence of real numbers $a_{\al}$ such that $0 < a_{\al} \leq 1$ and $\sum_{\al \leq \o^{\beta_0}} a_{\al} = 1$ (such a sequence exists since $\o^{\beta_0}$ is countable). Let $Z = Y \setminus X$, and choose an enumeration of $Z = \{z_m\}$. Note that $Z$ may be empty or finite. In the construction to follow, if $Z$ is empty then we do not choose points $u_m$ and $v_m$, and any summation over the index $m$ will be zero by convention.

Let $X_0 = X \sqcup Z = X \cup Y$. Recall that since $X$ is a completely metrizable subset of the the compact metrizable space $T$, $X$ is a $G_{\delta}$ in $T$ (see, for example, \cite{S}). $Y$ is a $G_{\delta}$ in $T$ because it is compact. Therefore $X_0$ is a $G_{\delta}$ in $T$, since it is the union of two $G_{\delta}$ sets in $T$. Thus we may let $X_0 = \cap_{n \in \N} G_n$, where $G_n$ is open, $G_1 = T$, and $G_{n+1} \subset G_n$. Let $F_n = T \setminus G_n$, which is compact. Fix $n$. Choose a sequence $\epsilon_{\ell}$ strictly decreasing to $0$. Let $D_{\epsilon}(F_n) = \{ t \in T : \dist(t, F_n) \geq \epsilon \}$, which is compact for any $\epsilon$. Then for each $\ell$ there exists a countable collection of open sets $\{U_{\ell}^{j}\}_{j=1}^{\infty}$ such that
\begin{itemize}
 \item $D_{\epsilon_{\ell}}(F_n) \subset \cup_j U_{\ell}^j \subset D_{\epsilon_{\ell+1}}(F_n)$;
 \item $\diam(U_{\ell}^j) \leq 2^{-(\ell+n)}$ for all $j$;
 \item $\diam(U_{\ell}^j)$ tends to $0$ as $j$ tends to infinity;
 \item the collection $\{U_{\ell}^j\}$ separates the points in $D_{\epsilon_{\ell}}(F_n)$.
\end{itemize}
Then we may enumerate the collection of all sets $U_{\ell}^j$ to form the sequence $\{V_k^n\}_{k=1}^{\infty}$. Repeating this procedure for all $n$, we obtain a collection of open sets $V_k^n$ such that $\diam(V_k^n) \leq 2^{-n}$  and $\diam(V_k^n)$ converges to $0$ as $k$ tends to infinity with $n$ fixed. The sets $V_k^n$ also satisfy $\cup_k V_k^n=G_n$ and separate points in $G_n$, for each $n$. For each $n$ and $k$, let $g_k^n(t) = \min( \dist(t, T \setminus V_k^n), 1)$ and $f_k^n = 2^{-k} g_k^n$. Then for each $n$, $\sum_k f_k^n$ converges uniformly on $T$. Now let
\begin{equation*}
h^n_k(t) = \left\{ \begin{array}{ll}
                    0, & \text{ if } \sum_k f^n_k(t) =0 \\
                    \frac{f^n_k(t)}{\sum_k f^n_k(t)}, & \text{ if } \sum_k f^n_k(t) > 0
                   \end{array} \right.
\end{equation*}
The functions $h^n_k$ are all continuous and satisfy $h^n_k(t) > 0$ if and only if $t \in V_k^n$. Furthermore, $\sum_k h^n_k = \chi_{G_n}$. Now we let $p^n_k = h^n_k \cdot \chi_{T \setminus G_{n+1}}$ and notice that $\sum_n \sum_k p^n_k = \chi_{T \setminus (X \cup Y)}$. Also, the collection $p^n_k$ separates points in the sense that if $t \neq s$ with $t$ and $s$ in $T \setminus (X \cup Y)$, then there exists $n$ and $k$ such that $p^n_k(t) >0$ and $p^n_k(s) = 0$.

Using induction (on $m$, $n$, and $k$ simultaneously) and the fact that $X$ is dense in $T$, we choose points $u_m$, $v_m$, $x^n_k$, and $y^n_k$ in $X$ such that (i) $d(z_m, u_m) \leq a_{\al_{z_m}}$ and $d(z_m, v_m) \leq a_{\al_{z_m}}$, (ii) for each $m$, $u_m$ and $v_m$ are not accumulation points of $Y$ (which is possible since the isolated points of $Y$, corresponding to successors of $\o^{\beta_0}+1$, are dense in $Y$ and the set $X \setminus Y$ accumulates at each of the isolated points of $Y$ that is not in $X$) (iii) $x^n_k$ and $y^n_k$ are in $V^n_k$, and (iv) the union of all of these points is a disjoint union.\

\subsubsection{Conclusion of the proof of Theorem \ref{optimal} (2)}

Let $C_n = (T \setminus G_n) \cup \{z_1, \dots, z_n\}$. To each point $x^n_k$ or $y^n_k$, we associate the function $\frac{1}{2}p^n_k$. To each point $u_m$ or $v_m$, we associate the function $\frac{1}{2}\chi_{z_m}$. Then the hypotheses of Lemma \ref{ConstructKLem} are satisfied by the countable collection of closed sets $\{C_n\}$, the countable collection of points $\{x^n_k\}\cup\{y^n_k\}\cup\{u_m\}\cup\{v_m\}$ in $X$, and the associated functions$\{\frac{1}{2}p^n_k\}\cup\{\frac{1}{2}\chi_{z_m}\}$. Lemma \ref{ConstructKLem} gives a metrizable Choquet simplex $K$ and a homeomorphism $\phi : T \to \overline{\ex(K)}$ such that $\phi(X)=\ex(K)$ and such that for all $t$ in $T \setminus X$,
\begin{equation*}
 \phi(t) = \frac{1}{2} \sum_n \sum_k p^n_k(t) (\phi(x^n_k)+\phi(y^n_k)) + \frac{1}{2} \sum_m \chi_{z_m}(t)( \phi(u_m)+\phi(v_m)).
\end{equation*}
By Lemma \ref{inclusionLemma}, $S(K) \supset [0,\beta]$.

\smallskip
\noindent \textit{This concludes the proof of Theorem \ref{optimal} (2)}.
\smallskip

\noindent \textbf{Proof of Theorem \ref{threeAlphas}}.

Fix $\al_1 \leq \al_2 \leq \al_3$ as above. Let $X_1 = \o^{\al_1}+1$, and let $T_1=X_1$. If $\al_2$ is finite, let $T_2 = \o^{\al_2}+1$, and if $\al_2$ is infinite, let $T_2 = \o^{\al_2-1}+1$. In either case, let $X_2$ be all the isolated points (successors) in $T_2$. Let $S$ be a non-empty compact subset of $(0,1) \times \{0\}$ in $\R^2$, chosen so that if $\al_3$ is finite, then $\rho(S) = \al_3-1$, and otherwise $\rho(S) = \al_3$. Let $X_3$ be a bounded, countable subset of $\R^2 \setminus (\R \times \{0\})$ whose set of accumulation points is exactly $S$. Let $T_3 = X_3 \cup S$. Now we let $T = T_1 \sqcup T_2 \sqcup T_3$ and $X = X_1 \sqcup X_2 \sqcup X_3$. Below we will construct a Choquet simplex $K$ such that $(X,T) \simeq (\ex(K),\overline{\ex(K)})$. Let $Y = T_1 \sqcup T_2$, and $Z = Y \setminus X$. Note that $Z$ is actually just the set of accumulation points in $T_2$. We have
\begin{equation*}
\rho_X(T) = \rho(X_1) = \al_1, \; \rho(T) = \rho(T_3) = \al_3, \text{ and } \rho(Y) = \rho(T_2) = \al_2.
\end{equation*}
Let $Z = \{z_m\}$. If $z_m < \sup(T_2)$, choose $u_m=z_m+1$ and $v_m=z_m+2$. If $z_m = \sup(T_2)$, choose $u_m =1$ and $v_m=2$ in $T_2$. Let $x_0$ and $y_0$ be a choice of two isolated points in $X_3$. Let $F:T \rightarrow [0,1]$ be the function such that, for a point $t$ in $T$,
\begin{equation*}
F(t) = \left \{ \begin{array}{ll} s, & \text{ if } t = (s,0) \in S \\
                                  0, & \text{ otherwise }.
                \end{array}
       \right.
\end{equation*}
Let $G: T \rightarrow [0,1]$ be such that for $t$ in $T$,
\begin{equation*}
 G(t) = \left \{ \begin{array}{ll} 1-s, & \text{ if } t = (s,0) \in S \\
                                  0, & \text{ otherwise }.
                \end{array}
       \right.
\end{equation*}
Let $C_n=S\cup\{z_1, \dots, z_n\}$ for each $n$. To each point $u_m$ or $v_m$, we associate the function $\frac{1}{2}\chi_{z_m}$. To the point $x_0$, we associate the function $F$, and to the point $y_0$ we associate the function $G$. Then the hypotheses in Lemma \ref{ConstructKLem} are satisfied by the collection of closed sets $\{C_n\}$, the collection of points $\{x_0,y_0\}\cup\{u_m,v_m\}$, and the associated functions $\{F,G\}\cup\{\frac{1}{2}\chi_{z_m}\}$. Lemma \ref{ConstructKLem} gives a Choquet simplex $K$ and a homeomorphism $\phi : T \to \overline{\ex(K)}$ such that $\phi(X)=\ex(K)$ and such that for all $t$ in $T \setminus X$,
\begin{equation} \label{phitEqnTwo}
\phi(t) = F(t) \phi(x_0) + G(t) \phi(y_0) + \frac{1}{2}\sum_m \chi_{z_m}(t) \bigl(\phi(u_m) + \phi(v_m)\bigr).
\end{equation}

It follows immediately that $\rho_{\ex(K)}(\overline{\ex(K)}) = \rho_X(T) = \al_1$ and $\rho(\overline{\ex(K)}) = \rho(T) = \al_3$. We show that $S(K) = [0,\al_2]$.
\begin{lem}\label{U_tLemmaThreeAlphas}
Let $X,Y,T$, and $K$ be as above. Then for every $t \in T \setminus Y$, there exists an open (in $T$) neighborhood $U_t$ and points $x_t$ and $y_t$ in $X \setminus Y$ such that for all $s$ in $U_t$, either $r_X(s) < r_X(t)$ or else $r_X(s)=r_X(t)$ and $\phi(s) = a_s \phi(x_t) + b_s \phi(y_t)$ in $K$, with $0 \leq a_s, b_s \leq 1$ and $a_s + b_s = 1$.
\end{lem}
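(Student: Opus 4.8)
The plan is to mimic the proof of Lemma~\ref{U_tLemma} almost verbatim: the compactification built for Theorem~\ref{threeAlphas} is simpler than the one used for Theorem~\ref{optimal}~(1), since the whole ``interesting'' part of $T \setminus Y$ is the single clopen piece $T_3 = X_3 \sqcup S$, with $S$ playing the role that a single $V_k$ played before and the fixed points $x_0,y_0$ playing the roles of $x_k, y_k$.

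First I would record the relevant (relative) Cantor--Bendixson facts. Since $Y = T_1 \sqcup T_2$ we have $T \setminus Y = T_3$ and $X \setminus Y = X_3$, and because $T_3$ is clopen in $T$ the derivatives $\Gamma_X^{\al}(T)$ intersect $T_3$ in sets computed entirely inside $T_3$. Each point of $X_3$ is isolated in $T_3$ (its accumulation points lie in $S \subset \R \times \{0\}$, which is disjoint from $X_3$), so $r_X(t) = 0$ for $t \in X_3$. Also $\Gamma_X^1(T) \cap T_3$ is the set of accumulation points of $X_3 = X \cap T_3$ in $T_3$, namely $S$; and since $S \cap X = \emptyset$ there are no points of $\Gamma_X^1(T) \cap X$ in $T_3$, whence $\Gamma_X^2(T) \cap T_3 = \emptyset$. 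Consequently $r_X(t) = 1$ for every $t \in S$, and $r_X$ takes only the values $0$ and $1$ on $T_3$.

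The verification then splits into two cases. If $t \in X_3 = X \setminus Y$, then $t$ is isolated in $T$; take $U_t = \{t\}$ and $x_t = y_t = t$, so that the only point $s$ of $U_t$ is $t$ itself, with $r_X(s) = r_X(t)$ and $\phi(s) = \tfrac{1}{2}\phi(x_t) + \tfrac{1}{2}\phi(y_t)$. If $t \in S$, then $r_X(t) = 1$; take $x_t = x_0$ and $y_t = y_0$ (both in $X_3 = X \setminus Y$) and let $U_t$ be any open neighborhood of $t$ in $T$ contained in $T_3$. For $s \in U_t$ we have $r_X(s) \in \{0,1\}$, and either $s \in X_3$, in which case $r_X(s) = 0 < 1 = r_X(t)$, or $s \in S$, say $s = (\sigma,0)$, in which case $r_X(s) = 1 = r_X(t)$ and Equation~(\ref{phitEqnTwo}) gives $\phi(s) = F(s)\phi(x_0) + G(s)\phi(y_0) = \sigma\,\phi(x_t) + (1-\sigma)\phi(y_t)$ with $\sigma + (1-\sigma) = 1$, exactly as required.

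I do not expect a genuine obstacle here; the only points meriting attention are the localization of the relative Cantor--Bendixson computation to the clopen piece $T_3$ and the remark that $S \cap X = \emptyset$ caps the relative rank of points of $S$ at $1$ no matter how large $S$ is (it may be uncountable when $\al_3 = \o_1$). Everything else is a direct substitution into Equation~(\ref{phitEqnTwo}).
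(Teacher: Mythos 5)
Your proof is correct and follows essentially the same approach as the paper's: split on whether $t$ is in $X_3$ (isolated, trivial case) or $t \in S$ (where $r_X(t)=1$, $U_t$ is any neighborhood inside the clopen set $T_3$, and $x_t = x_0$, $y_t = y_0$ with the convex-combination identity read off Equation~(\ref{phitEqnTwo})). Your added remarks localizing the relative Cantor--Bendixson computation to $T_3$ and observing $\Gamma_X^2(T)\cap T_3=\emptyset$ (hence $r_X\le 1$ on $T_3$) are a mild elaboration of what the paper asserts implicitly, and your choice $x_t=y_t=t$ in the isolated case fills in a detail the paper leaves as ``trivially satisfied.''
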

\begin{proof}
Let $t$ be in $T \setminus Y = T_3$. If $t$ is in $X_3$, then $t$ is isolated in $T$ and we let $U_t = \{t\}$. In this case the requirement on $U_t$ is trivially satisfied.

If $t$ is in $T_3 \setminus X_3$, then $t$ is in $S$ and $r_X(t) = 1$. Let $U_t$ be any open neighborhood of $t$ in $T_3$, and let $x_t = x_0$ and $y_t = y_0$. Let $s$ be in $U_t$. If $s$ is in $X_3$, then $r_X(s) = 0 < r_X(t)$. If $s$ is in $T_3 \setminus X_3$, then $s$ is in $S$, and then we have $r_X(s)=1$ and by Equation (\ref{phitEqnTwo}), $\phi(s) = F(s) \phi(x_0)+G(s)\phi(y_0)$, where $F(s) +G(s) = 1$.
\end{proof}

Now by Lemmas \ref{pointwiseOrderBd} and \ref{SKsubset}, we have that $S(K) \subset [0,\rho(Y)]$. By Lemma \ref{inclusionLemma} we have that $S(K) \supset [0,\rho(Y)]$. Then $S(K)=[0,\rho(Y)] = [0,\al_2]$.

\smallskip
\noindent \textit{This concludes the proof of Theorem \ref{threeAlphas}}.
\smallskip

\subsubsection{Helpful Lemmas}

Recall the following notations. Suppose $T$ is a compact, metrizable space. Let $\mathcal{SM}(T)$ denote the set of all signed, totally finite, Borel measures on $T$. Recall that $\mathcal{SM}(T) = C_{\R}(T)^*$, and therefore $\mathcal{SM}(T)$ inherits the structure of a normed topological vector space over $\R$. For $\mu$ in $\SM(T)$, let $\mu = \mu_1 - \mu_2$ be the Jordan decomposition of $\mu$. Let $|\mu| = \mu_1 + \mu_2$. The norm on $\SM(T)$ is then given by $||\mu|| = |\mu|(T)$. We will use $\mathcal{SM}(T \setminus X)$ to denote the set of measures $\mu$ in $\mathcal{SM}(T)$ such that $|\mu|(X) = 0$. We write $\mathcal{SM}_{prob}(T) = \{ \mu \in \mathcal{SM} : \mu \geq 0, \; ||\mu||=1\}$, and for any subset $\M$ of $\mathcal{SM}(T)$, $\M_1 = \{\mu \in \mathcal{SM} : ||\mu ||\leq 1\}$. Let $\epsilon_{x_k}$ be the point mass at $x_k$.
\begin{lem} \label{ConstructKLem}
Let $T$ be a compact, metric space, and let $X$ be a dense, Polish subset of $T$. Suppose $\{C_n\}$ is a countable collection of closed subsets of $T$. Suppose $\{w_k\}$ is a countable collection of distinct points in $X$, and to each point $w_k$ there is an associated Borel measurable function $H_k : T \rightarrow [0,1]$. Let $W_k = \supp(H_k)$. Furthermore, suppose the following conditions are satisfied:
\renewcommand{\labelenumi}{(\roman{enumi})}
\begin{enumerate}
 \item $C_n \subset C_{n+1}$ for all $n$, $C_0 = \emptyset$, and $\cup_n C_n \setminus X = T \setminus X$;
 \item $\sum_k H_k \leq 1$ and $\bigl(\sum_k H_k\bigr)|_{T\setminus X} \equiv 1$;
 \item for all $t$ in $T \setminus X$, $H_k(t)<1$;
 \item if $H_k(s)=H_k(t)$ for all $k$ with $t,s \in T \setminus X$, then $s=t$;
 \item for each $k$, there exists $n_k$ such that $W_k \subset C_{n_k+1} \setminus C_{n_k}$, and with this notation, $H_k$ is continuous on $C_{n_k+1}$;
 \item $\max_{t \in W_k} d(t,w_k)$ converges to $0$ as $k$ tends to infinity;
 \item if $H_k(x) >0$ for $x$ in $X$, then $x=w_k$ and $H_k(w_k)=1$.
\end{enumerate}
Let $\xi : \mathcal{SM}(T) \rightarrow \mathcal{SM}(T)$, where for $\mu$ in $\mathcal{SM}(T)$,
\begin{equation*}
\xi(\mu) = \mu -  \sum_k \bigl(\int H_k \; d\mu \bigr)\, \epsilon_{w_k}.
\end{equation*}
Let $\M = \{ \xi(\mu) : \mu \in \mathcal{SM}(T \setminus X) \}$, and let $q : \mathcal{SM}(T) \rightarrow \mathcal{SM}(T)/ \M$ be the natural quotient map. Let $\psi : T \rightarrow \mathcal{SM}_{prob}(T)$ be $\psi(t)=\epsilon_t$, and let $\phi = q \circ \psi$. Finally, let $K = q(\mathcal{SM}_{prob}(T))$. Then
\renewcommand{\labelenumi}{(\arabic{enumi})}
\begin{enumerate}
 \item $\M$ is a closed linear subspace of $\mathcal{SM}(T)$, and thus $\phi$ is continuous;
 \item $K$ is a metrizable Choquet simplex;
 \item $\phi$ is injective on $T$;
 \item $\ex(K) = \phi(X)$;
 \item for $t$ in $T \setminus X$, $\phi(t) = \sum_k H_k(t) \phi(w_k)$ in $K$.
\end{enumerate}
\end{lem}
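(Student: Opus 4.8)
The plan is to recognize this as Haydon's construction of a metrizable simplex attached to the compactification pair $(X,T)$, and to run Haydon's proof of Theorem~\ref{choquetWeak} (see \cite{Hay} and \cite[pp.~126--129]{AE}) in this more explicit setting, with hypotheses (i)--(vii) supplying exactly the control that argument needs. Conclusions (1) and (5) are essentially formal and I would dispose of them first. Hypothesis (ii) gives, for any $\mu\in\SM(T)$, the estimate $\sum_k\bigl|\int H_k\,d\mu\bigr|\le\int\bigl(\sum_k H_k\bigr)\,d|\mu|\le\|\mu\|$, so the series defining $\xi(\mu)$ converges in norm and $\xi$ is a bounded linear operator. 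The key observation is that a measure $\mu\in\SM(T\setminus X)$ is concentrated off $X$ while each $\epsilon_{w_k}$ is concentrated on $X$, so the restriction map $L\colon\nu\mapsto\nu|_{T\setminus X}$ is a bounded left inverse of $\xi$; a bounded operator with a bounded left inverse has closed range, so $\M=\xi(\SM(T\setminus X))$ is a closed linear subspace, and $\phi=q\circ\psi$ is continuous because $t\mapsto\epsilon_t$ is weak$^*$-continuous into $\SM(T)$ and $q$ is continuous for the quotient topology. For (5), one has $\xi(\epsilon_t)=\epsilon_t-\sum_k H_k(t)\epsilon_{w_k}\in\M$ for $t\in T\setminus X$, hence $\phi(t)=\sum_k H_k(t)\phi(w_k)$ in $K$.

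For conclusions (3) and (4) the crucial device is the ``push onto $X$'' map $\Theta(\mu)=\mu|_X+\sum_k\bigl(\int_{T\setminus X}H_k\,d\mu\bigr)\epsilon_{w_k}$ on $\SM_{prob}(T)$: by (ii) this is again a probability measure, now concentrated on $X$, and $\mu-\Theta(\mu)=\xi(\mu|_{T\setminus X})\in\M$, so $q(\mu)=q(\Theta(\mu))$ and therefore $K=q(\SM_{prob}(T))=q(\cN)$, where $\cN$ denotes the probability measures on $T$ concentrated on $X$. The identity $L\circ\xi=\mathrm{id}$ shows at once that $q$ is injective on $\cN$, so $q|_{\cN}$ is an affine bijection of $\cN$ onto $K$; since the extreme points of $\cN$ as a convex set are precisely the point masses $\{\epsilon_x:x\in X\}$, this yields (4), $\ex(K)=\phi(X)$. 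For (3), injectivity of $\phi$ on all of $T$, I would run a short case analysis showing $\epsilon_s-\epsilon_t\notin\M$ whenever $s\neq t$: the left-inverse identity forces the only candidate preimage to be $\lambda=(\epsilon_s-\epsilon_t)|_{T\setminus X}$, and then the case $s,t\in X$ is immediate (so $\lambda=0$), the case $s\in X,\,t\notin X$ uses (iii) together with $\sum_k H_k(t)=1$ (so $\sum_k H_k(t)\epsilon_{w_k}$ is a nontrivial convex combination of distinct point masses, hence not a point mass), and the case $s,t\notin X$ uses (iv) to force $s=t$; hypothesis (vii), which pins each $H_k$ to the single point $w_k$ on $X$, is what keeps this and the preceding bookkeeping consistent.

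The remaining conclusion (2) is where the real work sits, and the step I expect to be the main obstacle is topological rather than algebraic. The simplex property itself is then cheap: transporting structure along the affine bijection $q|_{\cN}$, the unique representing measure on $\ex(K)$ of a point $k=q(\mu)$ with $\mu\in\cN$ is $\phi_*\mu$ — one checks $k=\bary(\phi_*\mu)$, using that $q$ is weak$^*$-continuous and linear and hence commutes with the barycenter integral, and uses injectivity of $q|_{\cN}$ for uniqueness — so Choquet's characterization (Theorem~\ref{SimplexThm}) gives that $K$ is a Choquet simplex. The delicate point is that $K$ must be a compact convex subset of a \emph{Hausdorff} locally convex space and be metrizable, and for this one needs $\M$ to meet each bounded ball of $\SM(T)$ in a weak$^*$-closed set, equivalently that $\M$ be weak$^*$-closed, so that $\SM(T)/\M$ is Hausdorff, the restriction of $q$ to $\SM_{prob}(T)$ is a topological quotient map onto a compact set, and $K$ is metrizable as a Hausdorff continuous image of the compact metric space $\SM_{prob}(T)$. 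This is exactly Haydon's argument, and it is precisely here that hypotheses (i), (v), and (vi) enter: the increasing closed sets $C_n$ from (i) stratify $T\setminus X$ into shells, (v) confines each support $W_k$ to one shell $C_{n_k+1}\setminus C_{n_k}$ on which $H_k$ is continuous, and (vi) forces $\diam(W_k)\to0$ with $W_k$ bunching near $w_k$; together these guarantee that in a weak$^*$-convergent norm-bounded net $\xi(\mu_n)=\mu_n-\sum_k\bigl(\int H_k\,d\mu_n\bigr)\epsilon_{w_k}$, any mass of $\sum_k\bigl(\int H_k\,d\mu_n\bigr)\epsilon_{w_k}$ escaping to an accumulation point of $\{w_k\}$ is matched by mass of $\mu_n$ collapsing onto the same point, so the limit again lies in $\M$. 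Filling in this weak$^*$-closedness argument with the quantitative estimates coming from (i), (v), (vi) is the heart of the proof; the rest is routine.
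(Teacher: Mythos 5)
Your overall framing is right---this is Haydon's construction, and the paper itself proves only (1) and (3), deferring (2), (4), (5) to Haydon. Your case analysis for (3) is essentially identical to the paper's, your verification of (5) is correct, and your ``push onto $X$'' retraction $\Theta$ giving an affine bijection $q|_{\cN}\colon\cN\to K$ is a clean way to see (4).

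There is, however, a real gap in your treatment of conclusion (1), and it is exactly where the substantive work of the lemma sits. Conclusion (1) asserts that $\M$ is closed in the \emph{weak$^*$} topology of $\SM(T)=C_{\R}(T)^*$; this is what makes the quotient $\SM(T)/\M$ Hausdorff and $K=q(\SM_{prob}(T))$ a compact metrizable convex set. Your argument---that the restriction map $L\colon\nu\mapsto\nu|_{T\setminus X}$ is a bounded left inverse of $\xi$, hence $\xi$ has closed range---gives only \emph{norm}-closedness of $\M$, which is strictly weaker: $L$ is not weak$^*$-continuous (restriction to a Borel set never is), so the left-inverse trick does not transfer to the weak$^*$ topology, and a norm-closed subspace of a dual Banach space need not be weak$^*$-closed (compare $c_0\subset\ell^\infty$). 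You do correctly diagnose under conclusion (2) that weak$^*$-closedness of $\M$ is what is actually needed and that this is ``the heart of the proof,'' but you leave it as a gesture toward Haydon rather than an argument. The paper does carry it out: it reduces to weak$^*$-closedness of the unit ball $\M_1$ via the Banach--Dieudonn\'e theorem, then runs a diagonal compactness argument---using the stratification of $T\setminus X$ by the compacta $C_n$ from (i), the confinement and continuity of $H_k$ on each shell from (v), the shrinking supports from (vi), and the normalization (vii)---to show that any norm-bounded weak$^*$-limit of elements $\xi(\mu_i)$ is again of the form $\xi(\nu)$ for some $\nu\in\SM(T\setminus X)_1$. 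Without that argument, your proposal establishes the routine parts but omits the one step the paper judged non-routine enough to write out in full.
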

\begin{proof}
This lemma is almost entirely a restatement of Haydon's proof (see \cite{Hay} or \cite[pp. 126-129]{AE}) of Theorem \ref{choquetWeak}. There are two differences. Firstly, we allow $H_k$ to be positive on $X$, while Haydon does not. Secondly, we claim that $\phi$ is injective on all of $T$, whereas Haydon claims injectivity of $\phi$ only on $X$. For the proofs of properties (2), (4) and (5), theses differences do not play any role, and one may repeat Haydon's proof. For this reason, we will prove only (1) and (3).

(1) Note that $\mathcal{M}$ is a linear subspace. Recall that $\mathcal{M}$ being closed in the weak* topology is equivalent to $\mathcal{M}_1$ being closed in the weak* topology (a proof of this general fact, which follows from the Banach-Dieudonn\'{e} Theorem, can be found in \cite{SW}).
Let $\sigma_i$ be a sequence of measures in $\mathcal{M}_1$. Since $|| \xi(\mu) || \geq ||\mu||$ for all $\mu$ in $\mathcal{SM}(T \setminus X)$, there exist measures $\mu_i$ in $\mathcal{SM}(T \setminus X)_1$ such that $\sigma_i = \xi(\mu_i)$. Since each $C_{n}$ is compact, each $\mathcal{SM}(C_n)_1$ is compact in the weak* topology. Therefore a diagonal argument gives a subsequence $\{\nu_i\}$ of $\{\mu_i\}$ such that there exist measures $\hat{\nu}^n \in \mathcal{SM}(C_{n+1})$ such that $\nu_i|_{C_{n+1}}$ converges to $\hat{\nu}^n$ for each $n$. (We note that there may not be a measure $\hat{\nu}$ such that $\hat{\nu}|_{C_{n+1}} = \hat{\nu}^{n}$, since $\hat{\nu}^n|_{C_n}$ may not equal $\hat{\nu}^{n-1}$.)

Let $\nu^n = \hat{\nu}^n|_{C_{n+1} \setminus X}$, and let $\charfun_A$ be the characteristic function of the set $A$. Now define
\begin{equation*}
\nu = \sum_{n} \nu^n|_{C_{n+1} \setminus C_n} = \sum_{k,n} H_k \charfun_{C_{n+1} \setminus C_n} \nu^n,
\end{equation*}
where the second equality follows from hypotheses (i) and (ii). Let $g^n_k = H_k \cdot \charfun_{C_{n+1} \setminus C_n}$, and note that by hypothesis (v), $g^n_k$ is continuous on $C_{n+1}$ for all $k$ and $n$. Then $g^n_k \nu_i$ weak* converges to $g^n_k \hat{\nu}^n$ as $i$ tends to infinity. Since $||\nu_i|| \leq 1$ and $g^n_k \nu_i$ weak* converges to $g^n_k \hat{\nu}^n$, it follows that $||\nu|| \leq 1$ and $\nu$ is in $\mathcal{SM}(T \setminus X)$. Let us show that $\xi(\nu_i)$ converges to $\xi(\nu)$ in the weak* topology. Let $f \in C_{\R}(T)$. Then for any $\mu$ in $\mathcal{SM}(T \setminus X)$ we have
\begin{equation*}
\int f d\xi(\mu)  =  \; \int f d\mu - \sum_{k} \int f(w_k) H_k \; d\mu
  =  \; \sum_{k} \int (f-f(w_k))H_k  \; d\mu
  =  \; \sum_{n,k} \lambda^n_k(\mu) ,
\end{equation*}
where
\begin{equation*}
\lambda^n_k(\mu)  = \int (f-f(w_k))g^n_k \; d\mu.
\end{equation*}
For each $k$ and $n$, we have that $(f-f(w_k))g^n_k$ is continuous on $C_{n+1}$ by hypothesis (v). Therefore, by the choice of subsequence $\nu_i$, $\lambda^n_k(\nu_i)$ converges to $\lambda^n_k(\hat{\nu}^n)$. Also, using hypothesis (vii), we have that if $H_k(x) \charfun_{C_{n+1} \setminus C_n}(x) >0$ for some $x$ in $X$, then $x = w_k$. It follows that
\begin{align*}
\lambda^n_k( \hat{\nu}^n)  = & \int (f-f(w_k))H_k \charfun_{C_{n+1}\setminus C_n} \; d\hat{\nu}^n \\
  = & \int (f-f(w_k))H_k \charfun_{C_{n+1}\setminus C_n} \; d\nu^n \\
    & \quad + (f(w_k)-f(w_k))H_k(w_k) \charfun_{C_{n+1} \setminus C_n}(w_k) \hat{\nu}^n(\{w_k\})  \\
  = & \int (f-f(w_k))H_k \charfun_{C_{n+1} \setminus C_n} \; d\nu^n \\
  = & \lambda^n_k(\nu^n) = \lambda^n_k(\nu).
\end{align*}
This calculation shows that $\lambda^n_k(\nu_i)$ converges to $\lambda^n_k(\nu)$. For fixed $f$ in $C_{\R}(T)$ and $\epsilon >0 $, there exists a $\delta > 0$ such that $|f(t) - f(s)| < \epsilon$ whenever $d(t,s) < \delta$, by uniform continuity. Then since $\max_{t \in W_k} d(w_k,t)$ tends to zero as $k$ tends to infinity, there exists $k_0$ such that for $k \geq k_0$ and $z \in W_k$, $|f(z) - f(w_k)| < \epsilon$. Then for any $\mu$ in $\mathcal{SM}(T \setminus X)$, and $K \geq k_0$ and $N$,
\begin{align*}
\Bigl|\int f d\xi(\mu) -  \sum_{n=1}^N \sum_{k=1}^{K} \lambda^n_k(\mu)   \Bigr| &   =   \; \Bigl|\sum_{n > N} \sum_{k > K} \lambda^n_k(\mu) \Bigr| \\
 & \leq  \;  \sum_{n > N} \sum_{k > K} \int |f-f(w_k)|g^n_k \; d|\mu|
 \leq  \;  \epsilon ||\mu||,
\end{align*}
which implies that $\sum_{n=1}^N \sum_{k=1}^{K} \lambda^n_k(\mu)$ converges uniformly on $\mathcal{SM}(T \setminus X)_1$ to $\int f \; d\xi(\mu)$. Using this uniform convergence and the fact that $\lambda^n_k(\nu_i)$ converges to $\lambda^n_k(\nu)$, we conclude that $\xi(\nu_i)$ converges to $\xi(\nu)$.

(3) Suppose that $\phi(t) = \phi(s)$, or equivalently, $\epsilon_t - \epsilon_s$ is in $\mathcal{M}$. Thus there exists a measure $\mu$ in $\mathcal{M}(T \setminus X)$ such that $\epsilon_t - \epsilon_s = \xi(\mu)$. We consider three cases.

If $t$ and $s$ are both in $X$, then we notice that $\xi(\mu)=\epsilon_t-\epsilon_s$ has no mass in $T \setminus X$. As $w_k$ are all in $X$, it follows from the definition of $\xi(\mu)$ that we must have $|\mu|(T \setminus X) = 0$, which implies that $\mu$ is the zero measure. Then $\xi(\mu)$ is the zero measure, and we have that $\epsilon_t = \epsilon_s$, which means that $t = s$.

If exactly one of $t$ and $s$ is in $X$, then we may assume without loss of generality that $t \in X$ and $s \in T \setminus X$. In this case, we notice that $- \epsilon_s = (\epsilon_t - \epsilon_s)|_{T \setminus X} = \xi(\mu)|_{T \setminus X} = \mu|_{T \setminus X} = \mu$. Therefore we conclude that
\begin{equation}
\epsilon_t  = \xi(\mu) + \epsilon_s
  = \xi(\mu) - \mu
  =  \sum_{k} H_k(s) \cdot \epsilon_{w_k} . \label{epsilont}
\end{equation}
From this equation, we deduce that $t =  w_k$ for some $k$. Then $H_k(s) = 1$, which gives a contradiction since $H_k < 1$ on $T \setminus X$ by hypothesis (iii). 

If $t$ and $s$ are both in $T \setminus X$, then we see that $\xi(\mu) = \epsilon_t - \epsilon_s = \xi(\mu)|_{T\setminus X} = \mu$, which implies that $\int H_k d\mu = 0$ for all $k$. Hence $H_k(t) =H_k(s)$ for all $k$. By hypothesis (iv), we obtain that $t=s$.
\end{proof}

\begin{lem}\label{pointwiseOrderBd}
Let $K$ be a metrizable Choquet simplex. Let $X$ be a Polish subspace of a compact metrizable space $T$, and let $Y$ be clopen in $T$.  Let $\phi : T \rightarrow \overline{\ex(K)}$ be a homeomorphism with $\phi(X)=\ex(K)$. Suppose that for every point $t$ in $T \setminus Y$, there exists an open (in $T$) neighborhood $U_t$ and points $x_t$ and $y_t$ in $X \setminus Y$ such that for all $s$ in $U_t$, either $r_X(s) < r_X(t)$ or else $r_X(s)=r_X(t)$ and $\phi(s) = a_s \phi(x_t) + b_s \phi(y_t)$ in $K$, with $0 \leq a_s, b_s \leq 1$ and $a_s + b_s = 1$. Then for each point $t$ in $T \setminus Y$, and any harmonic, u.s.c.d. candidate sequence $\H$ on $K$,
\begin{equation} \label{pointwiseBdEqn}
 \al_0^{\H|_{\phi(T)}}(t) \leq \left\{
\begin{array}{ll}
 r_X(t) & \text{ if } r_X(t) \text{ is finite} \\
 r_X(t) +1 & \text{ if } r_X(t) \text{ is infinite}.
\end{array} \right.
\end{equation}
\end{lem}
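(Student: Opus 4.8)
The plan is to prove the bound by transfinite induction on the relative rank $\al:=r_X(t)$, closely following the proof of Proposition \ref{pointwiseBound} but using the two-point segment $[\phi(x_t),\phi(y_t)]$ supplied by the hypothesis as the substitute for the strict descent of rank that is automatic in the unrelativized situation. Throughout I identify $T$ with $\phi(T)=\overline{\ex(K)}$ and $X$ with $\ex(K)$ via $\phi$, write $u_\gamma$ for $u_\gamma^{\H|_{\phi(T)}}$, and set $\epsilon(\al)=\al$ if $\al$ is finite and $\epsilon(\al)=\al+1$ if $\al$ is infinite, noting that $\epsilon(\al)$ is a successor ordinal once $\al\ge 1$. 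Since $Y$ is clopen in $T$, I may shrink each $U_t$ to lie in $T\setminus Y$, so that the hypothesis and the induction hypothesis apply to all of $U_t$. For the base case $\al=0$: density of $X$ in $T$ forces $t\in X$, and a nonconstant sequence converging to $t$ would have a tail in $X\setminus\{t\}$ or in $T\setminus X\subseteq\Gamma_X^1(T)$, either contradicting $r_X(t)=0$; hence $\phi(t)$ is isolated in $\phi(T)$, and Proposition \ref{pointwiseBound} applied to $\H|_{\phi(T)}$ yields $\al_0(\phi(t))=0$.

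For the inductive step I fix $\al\ge 1$, assume the conclusion for every point of $T\setminus Y$ of relative rank $<\al$, and show that $u_\gamma(\phi(t))=u_{\epsilon(\al)}(\phi(t))$ for all $\gamma>\al$ and all $t$ with $r_X(t)=\al$; this is an inner transfinite induction on $\gamma$, performed \emph{simultaneously over all such $t$} (the simultaneity is what makes the limit stage work). The structural fact driving the argument is that, because $\H$ is harmonic and $\phi(x_t),\phi(y_t)\in\ex(K)$, Choquet's uniqueness theorem (Theorem \ref{SimplexThm}) forces the representing measure of any $\phi(s)=a_s\phi(x_t)+b_s\phi(y_t)$ to be $a_s\epsilon_{\phi(x_t)}+b_s\epsilon_{\phi(y_t)}$; hence each $h_k$, and therefore $h$ and $\tau_k=h-h_k$, is affine, in particular continuous, along the segment $[\phi(x_t),\phi(y_t)]$ even though $\tau_k$ need not be u.s.c. on $K$. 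So, as $\phi(s)\to\phi(t)$ through points $s$ with $r_X(s)=\al$, all of which lie on this segment (as does $\phi(t)$), we have $\tau_k(\phi(s))\to\tau_k(\phi(t))$.

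In the successor stage $\gamma=\zeta+1$, expand $\widetilde{(u_\zeta+\tau_k)}(\phi(t))$ as the maximum of $(u_\zeta+\tau_k)(\phi(t))$ and $\limsup_{\phi(s)\to\phi(t)}(u_\zeta+\tau_k)(\phi(s))$, and split the $\limsup$ according to whether $r_X(s)<\al$ or $r_X(s)=\al$. Over $r_X(s)<\al$, the outer induction hypothesis lets one replace $u_\zeta(\phi(s))$ by $u_{\epsilon(\al)-1}(\phi(s))$ (using $\al_0(\phi(s))\le\epsilon(r_X(s))\le\epsilon(\al)-1\le\zeta$, a routine ordinal computation), exactly as in Proposition \ref{pointwiseBound}, so this part of the limsup is $\le\widetilde{(u_{\epsilon(\al)-1}+\tau_k)}(\phi(t))$, whose limit in $k$ is $u_{\epsilon(\al)}(\phi(t))$. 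Over $r_X(s)=\al$, affinity of $\tau_k$ along the segment together with upper semicontinuity of $u_\zeta$ gives $\limsup_{\phi(s)\to\phi(t),\ r_X(s)=\al}(u_\zeta+\tau_k)(\phi(s))=\tau_k(\phi(t))+\limsup u_\zeta(\phi(s))\le(u_\zeta+\tau_k)(\phi(t))$. Since $u_\zeta(\phi(t))\le u_{\epsilon(\al)}(\phi(t))$ (the inner hypothesis if $\zeta>\al$, monotonicity if $\zeta=\al$), letting $k\to\infty$ and using that all of the above quantities are non-increasing in $k$ gives $u_\gamma(\phi(t))\le u_{\epsilon(\al)}(\phi(t))$; the reverse inequality is immediate since $\gamma\ge\epsilon(\al)$.

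The limit stage is handled the same way: $u_\gamma(\phi(t))=\widetilde{\sup_{\delta<\gamma}u_\delta}(\phi(t))$, and splitting $\limsup_{\phi(s)\to\phi(t)}\sup_{\delta<\gamma}u_\delta(\phi(s))$, the part over $r_X(s)<\al$ equals $\limsup u_{\epsilon(\al)-1}(\phi(s))\le u_{\epsilon(\al)-1}(\phi(t))\le u_{\epsilon(\al)}(\phi(t))$ (outer hypothesis and u.s.c. of $u_{\epsilon(\al)-1}$), while the part over $r_X(s)=\al$ is $\le\limsup u_{\epsilon(\al)}(\phi(s))\le u_{\epsilon(\al)}(\phi(t))$ (the \emph{simultaneous} inner hypothesis at the points $s$, then u.s.c. of $u_{\epsilon(\al)}$); combined with $\sup_{\delta<\gamma}u_\delta(\phi(t))=u_{\epsilon(\al)}(\phi(t))$ this closes the induction. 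Since $u_\gamma(\phi(t))=u_{\epsilon(\al)}(\phi(t))$ for all $\gamma>\al$ gives $\al_0(\phi(t))\le\epsilon(\al)$, this is the assertion of the lemma. The main obstacle is exactly the treatment of the ``horizontal'' neighbors $s$ of $t$ with $r_X(s)=r_X(t)$: one must extract from harmonicity of $\H$ that $\tau_k$ is affine, hence continuous, hence well-behaved under $\limsup$, along the relevant two-point segment, and one must organize the $\gamma$-induction uniformly over all points of a fixed relative rank, so that upper semicontinuity of $u_{\epsilon(\al)}$ can transfer the bound from these horizontal neighbors back to $\phi(t)$ at limit ordinals.
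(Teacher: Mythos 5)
Your proof is correct and follows essentially the same approach as the paper: transfinite induction on $r_X(t)$, with an inner induction on $\gamma$ performed simultaneously over all points of a fixed relative rank, splitting the limsup between lower-rank neighbors (handled by the outer induction, as in Proposition \ref{pointwiseBound}) and same-rank neighbors on the segment $[\phi(x_t),\phi(y_t)]$ (handled via harmonicity of $\tau_k$). The only cosmetic difference is that you argue continuity of $\tau_k$ along the segment, while the paper instead fixes $\epsilon>0$ and chooses $k$ large enough that $\tau_k(x_t),\tau_k(y_t)\le\epsilon$; both are immediate consequences of the affinity of $\tau_k$ on $[\phi(x_t),\phi(y_t)]$, and your explicit remark that the inner induction must be carried out uniformly over all points of rank $\al$ is a helpful clarification of something the paper leaves implicit.
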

\begin{proof} For the sake of notation, we identify $X,Y$, and $T$ with their images under $\phi$. Observe that $T \setminus Y$ is clopen in $T$. Thus, for every $t$ in $T \setminus Y$, $u_{\beta}^{\H|_T}(t) = u_{\beta}^{\H|_{T\setminus Y}}(t)$ for all ordinals $\beta$, which implies $\al_0^{\H|_T}(t) = \al_0^{\H|_{T \setminus Y}}(t)$. For the sake of notation, we assume that $\H$ is defined only on $T \setminus Y$ and $u_{\beta}^{\H} = u_{\beta}$.

Now we prove the lemma by transfinite induction on $\al = r_X(t)$. For $\al = 0$, we have that $r_X(t) = 0$, and thus $t$ is isolated in $T$. Then $\al_0^{\H}(t) = 0$.

Suppose the lemma holds for all $t$ in $T \setminus Y$ such that $r_X(t) < \al$. If $\al$ is finite, let $\delta = \al$. If $\al$ is infinite, let $\delta = \al+1$. We now prove that for all $t$ in $T \setminus Y$ with $r_X(t)=\al$ and all $\gamma \geq \delta$, $u_{\gamma}(t)=u_{\delta}(t)$. The proof of this statement is by transfinite induction on $\gamma$.

Let $\gamma > \delta$ be a successor ordinal, and let $t$ be in $T \setminus Y$ with $r_X(t) = \al$. Let $U_t$ be an open neighborhood of $t$, and let $x_t$ and $y_t$ be corresponding to $U_t$ according the hypotheses. Fix $\epsilon > 0$. Choose $k_0$ such that $\max (\tau_k(x_t), \tau_k(y_t)) \leq \epsilon$ for all $k \geq k_0$. Then if $(s_n)$ is a sequence in $U_t$ with $r_X(s_n) < r_X(t)$ for all $n$, then using the inductive hypotheses, we get
\begin{equation*}
(u_{\gamma-1}+\tau_k)(s_n)  = (u_{\delta-1}+\tau_k)(s_n).
\end{equation*}
If $(s_n)$ is a sequence in $U_t$ with $r_X(s_n) \geq r_X(t)$, then by the hypotheses, we have that $r_X(s_n) = r_X(t) = \al$ and $s_n = a_{s_n} x_t +b_{s_n} y_t$. Then by the induction hypothesis on $\gamma$ and the harmonicity of $\tau_k$, we have that
\begin{equation*}
(u_{\gamma-1}+\tau_k)(s_n) = u_{\delta}(s_n) + a_{s_n} \tau_k(x_t) + b_{s_n} \tau_k(y_t) \leq \epsilon.
\end{equation*}
Thus we may conclude that
\begin{equation*}
\widetilde{(u_{\gamma-1}+\tau_k)}(t) \leq \max \bigl( \widetilde{(u_{\delta-1}+\tau_k)}(t), u_{\delta}(t) + \epsilon \bigr).
\end{equation*}
Letting $k$ tend to infinity, we obtain that $u_{\gamma}(t) \leq u_{\delta}(t) + \epsilon$. Since $\epsilon$ was arbitrary, we have that $u_{\gamma}(t) = u_{\delta}(t)$.

Now let $\gamma > \delta$ be a limit ordinal, and let $t$ be in $T \setminus Y$ with $r_X(t) = \al$. Fix $U_t$, $x_t$, and $y_t$ as in the hypotheses. Note that by the induction hypotheses, if $s$ is in $U_t$, then $u_{\beta}(s) = u_{\delta}(s)$ for all $\beta < \gamma$. Then $\sup_{\beta < \gamma} u_{\beta}(s) = u_{\delta}(s)$ for all $s$ in $U_t$. Taking upper semi-continuous envelope at $t$, we have that $u_{\gamma}(t) = u_{\delta}(t)$.

We conclude that for all $t$ in $T \setminus Y$ with $r_X(t) = \al$, $\al_0^{\H}(t) \leq \delta$, as desired.
\end{proof}

\begin{lem}\label{SKsubset}
Let $K$ be a metrizable Choquet simplex. Let $X$ be a Polish subspace of a compact metrizable space $T$, and let $Y$ be clopen in $T$.  Let $\phi : T \rightarrow \overline{\ex(K)}$ be a homeomorphism with $\phi(X)=\ex(K)$. Suppose that for each point $t$ in $T \setminus Y$ and any harmonic, u.s.c.d. candidate sequence $\H$ on $K$, Equation (\ref{pointwiseBdEqn}) holds. Further, suppose that $\rho_X(T) \leq \rho(Y)$. Then $S(K) \subset [0,\rho(Y)]$
\end{lem}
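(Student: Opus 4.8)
The plan is to show that every harmonic, u.s.c.d. candidate sequence $\H$ on $K$ has $\al_0(\H) \le \rho(Y)$, which is exactly the assertion $S(K)\subset[0,\rho(Y)]$. First I would dispatch the uncountable cases. Since the order of accumulation of any candidate sequence is a countable ordinal, $S(K)\subset[0,\o_1[$ always; hence if $Y$ is uncountable, so that $\rho(Y)=\o_1$ and $[0,\rho(Y)]=[0,\o_1[$, there is nothing to prove. If $\rho(Y)<\o_1$, then $Y$ is countable, and the standing hypothesis $\rho_X(T)\le\rho(Y)<\o_1$ forces $X$ to be countable as well. So I may assume both $X$ and $Y$ are countable; in particular $Y$, being clopen in the compact metrizable space $T$, is a countable compact Polish space, and $|T|^X_{CB}$ is a successor ordinal.

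Next I would pass from $K$ to $\overline{\ex(K)}$. Applying the Embedding Lemma (Lemma \ref{embeddingThm}) with $F=\ex(K)$, so that $L=\overline{\ex(K)}$ and the uniform-convergence hypothesis is vacuous, gives $\al_0(\H)\le\al_0(\H|_{\overline{\ex(K)}})$. Transporting along the homeomorphism $\phi$, I regard $\G:=\H|_{\overline{\ex(K)}}$ as a candidate sequence on $T$, identifying $X$ with $\ex(K)$ and $Y$ with its image; by Remark \ref{accumVarPrin} it then suffices to prove $\al_0^{\G}(t)\le\rho(Y)$ for every $t\in T$.

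I would split on whether $t$ lies in $Y$. If $t\in Y$: since $Y$ is clopen in $T$, the transfinite sequence is local there, so $u_\beta^{\G}(t)=u_\beta^{\G|_Y}(t)$ for all $\beta$ (exactly as in the proof of Lemma \ref{disUnionLemma}), whence $\al_0^{\G}(t)=\al_0^{\G|_Y}(t)\le\al_0(\G|_Y)\le\rho(Y)$, the last step being Corollary \ref{CBrankBound} applied to the countable compact Polish space $Y$. If $t\in T\setminus Y$: this is where the hypothesis of the lemma enters, since Equation (\ref{pointwiseBdEqn}) bounds $\al_0^{\G}(t)$ by $r_X(t)$, or by $r_X(t)+1$ when $r_X(t)$ is infinite. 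Because $X$ is countable, $\Gamma^{\al}_X(T)=\emptyset$ for all $\al\ge|T|^X_{CB}$, so $r_X(t)=\sup\{\al:t\in\Gamma^\al_X(T)\}$, the supremum being over ordinals strictly below $|T|^X_{CB}$, hence $r_X(t)\le|T|^X_{CB}-1$. A short case analysis — according to whether $|T|^X_{CB}$ is finite or infinite, unwinding the definition of $\rho_X(T)$ and using that $\gamma<\mu$ is equivalent to $\gamma+1\le\mu$ — then gives $\al_0^{\G}(t)\le\rho_X(T)\le\rho(Y)$ in every case.

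Taking the supremum over $t\in T$ yields $\al_0(\G)\le\rho(Y)$, hence $\al_0(\H)\le\rho(Y)$, and since $\H$ was arbitrary, $S(K)\subset[0,\rho(Y)]$. I do not expect a genuine obstacle here: the substantive input is already packaged in the Embedding Lemma, in Corollary \ref{CBrankBound}, and in the hypothesis (\ref{pointwiseBdEqn}); the only delicate point is the ordinal bookkeeping in the $t\notin Y$ case, namely making sure the extra ``$+1$'' that appears for infinite ranks is absorbed correctly when passing from $r_X(t)$ to $\rho_X(T)$ to $\rho(Y)$.
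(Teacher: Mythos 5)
Your proof is correct and follows essentially the same route as the paper's: apply the Embedding Lemma to reduce to $\overline{\ex(K)}\cong T$, bound $\al_0$ pointwise on $Y$ via locality of the transfinite sequence on a clopen set plus Corollary \ref{CBrankBound}, and bound it pointwise on $T\setminus Y$ via Equation (\ref{pointwiseBdEqn}) together with $r_X(t)\le|T|^X_{CB}-1$ and the definition of $\rho_X$. Your explicit preliminary disposal of the uncountable cases (and the observation that $\rho_X(T)\le\rho(Y)<\o_1$ forces $X$ countable) is slightly more careful than the paper, which tacitly assumes countability of $X$ when invoking that $|T|^X_{CB}$ is a successor.
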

\begin{proof}
Let $\H$ be a harmonic, u.s.c.d. candidate sequence on $K$. For $t$ in $Y$, we have that $\al_0^{\H|_T}(t) = \al_0^{\H|_Y}(t)$ since $Y$ is open in $T$. By Remark \ref{accumVarPrin}, $\al_0^{\H|_Y}(t) \leq \al_0(\H|_Y)$. By Proposition \ref{pointwiseBound}, $\al_0(\H|_Y) \leq \rho(Y) $. Putting these facts together, we obtain $\al_0^{\H|_T}(t) \leq \rho(Y)$ for all $t$ in $Y$.

For $t$ in $T \setminus Y$, Equation (\ref{pointwiseBdEqn}) gives that if $r_X(t)$ is finite, then $\al_0^{\H|_T}(t) \leq r_X(t)$, and if $r_X(t)$ is infinite, then $\al_0^{\H|_T}(t) \leq r_X(t)+1$. Since $X$ is countable and $T$ is compact, $|T|_{CB}^X$ is a successor, and we have $r_X(t)  \leq |T|_{CB}^X -1$. If $|T|_{CB}^X$ is finite, then for all $t$ in $T \setminus Y$ we have $\al_0^{\H|_T}(t) \leq r_X(t) \leq |T|_{CB}^X-1 = \rho_X(T)$. If $|T|_{CB}^X$ is infinite, then for all $t$ in $T \setminus Y$ we have $\al_0^{\H|_T}(t) \leq r_X(t)+1 \leq |T|_{CB}^X = \rho_X(T) \leq \rho(Y)$.

We have shown that for all $t$ in $T$, $\al_0^{\H|_T}(t) \leq \rho(Y)$. Taking supremum over all $t$ in $T$, we have that $\al_0(\H|_T) \leq \rho(Y)$. Now using the Embedding Lemma (Lemma \ref{embeddingThm}), we get that $\al_0(\H) \leq \al_0(\H|_T) \leq \rho(Y)$. Hence $S(K) \subset [0,\rho(Y)]$.
\end{proof}

\begin{lem} \label{inclusionLemma}
Let $K$ be a metrizable Choquet simplex. Let $X$ be a Polish subspace of a compact metric space $T$, and let $Y$ be a subset of $T$ with $Y \cong \o^{\beta_0}+1$, where $\beta_0$ is a countable ordinal.  Let $\phi : T \rightarrow \overline{\ex(K)}$ be a homeomorphism with $\phi(X)=\ex(K)$. Let $Y \setminus X = \{z_m\}$. Suppose there is countable collection of distinct points $W = \{u_m\}\cup\{v_m\}$ in $X$ such that each point $w$ in $W$ is isolated in $Y \cup W$ and for each $z_m$ in $Y \setminus X$, $\phi(z_m)=\frac{1}{2}(\phi(u_m)+\phi(v_m))$. Further suppose that $d(u_m,z_m)$ and $d(v_m,z_m)$ both tend to $0$ as $m$ tends to infinity. Then $S(K) \supset [0,\rho(Y)]$.
\end{lem}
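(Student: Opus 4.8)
The plan is to show $S(K)\supset[0,\rho(Y)]$ by realizing every $\gamma\in[0,\rho(Y)]$ as $\al_0(\H)$ for a harmonic, u.s.c.d.\ candidate sequence $\H$ on $K$. The case $\gamma=0$ is trivial ($\H\equiv0$), so fix $\gamma\ge1$. The guiding idea is that $\phi(Y)$ is a compact subset of $\overline{\ex(K)}$ homeomorphic to $\o^{\beta_0}+1$ on which we would like to copy the candidate sequences of Section~\ref{examples}; the only obstruction is that $\phi(Y)$ meets $\ex(K)$ only in $\phi(X\cap Y)$ and contains the non-extreme points $\phi(z_m)=\tfrac12(\phi(u_m)+\phi(v_m))$, and the hypotheses on $u_m,v_m$ are exactly what is needed to absorb this defect through harmonicity.

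First, by the argument in the proof of Theorem~\ref{bauerThm} applied to the Bauer simplex $\M(Y)$ (note $\rho(\ex(\M(Y)))=\rho(Y)$), together with Corollaries~\ref{realizationCor} and~\ref{alphaPlusOne}, fix a u.s.c.d.\ candidate sequence $\F=(f_k)$ on $Y$ with $\al_0(\F)=\gamma$, and by Remark~\ref{LimsupToLimitRmk} arrange in addition that $\F$ has property (P). Then define a candidate sequence $\G=(g_k)$ on $\ex(K)=\phi(X)$ by transplanting $\F$: put $g_k(\phi(x))=f_k(x)$ for $x\in X\cap Y$, put $g_k(\phi(u_m))=g_k(\phi(v_m))=f_k(z_m)$, and put $g_k\equiv0$ on the rest of $\ex(K)$. (When some $u_m$ or $v_m$ lies in $Y$ these prescriptions must be reconciled; since such points are isolated in $Y\cup W$, this is handled by a harmless modification of $\F$ at countably many isolated points of $Y$ that does not affect $\al_0(\F)$ or the u.s.c.d.\ property.) Let $\H=(h_k)$ be the harmonic extension of $\G$ to $K$, as in Lemma~\ref{embedIsHarmUSC}. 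Because $\phi(u_m),\phi(v_m)\in\ex(K)$ and $\phi(z_m)=\tfrac12(\phi(u_m)+\phi(v_m))$, Choquet's theorem forces $\P_{\phi(z_m)}=\tfrac12(\epsilon_{\phi(u_m)}+\epsilon_{\phi(v_m)})$, so $h_k(\phi(z_m))=f_k(z_m)$; together with $h_k|_{\phi(X\cap Y)}=f_k\circ\phi^{-1}$ this gives $\H|_{\phi(Y)}=\F\circ\phi^{-1}$.

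The crux, and the step I expect to be the main obstacle, is showing that $\H$ is u.s.c.d.: by Fact~\ref{fonEharIsUSC} and Lemma~\ref{embedIsHarmUSC} it suffices to produce a convex, u.s.c.\ function on $K$ restricting to $g_{k+1}-g_k$ on $\ex(K)$. I would extend $(f_{k+1}-f_k)\circ\phi^{-1}$ from $\phi(Y)$ to $\phi(T)=\overline{\ex(K)}$ by using the barycentric formula of Lemma~\ref{ConstructKLem}~(5) to set the value at $\phi(t)$, $t\in T\setminus X$, equal to $\sum_j H_j(t)(g_{k+1}-g_k)(\phi(w_j))$, and then extend this convexly (as a restriction of the harmonic extension) into $K$. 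Verifying that the resulting function is upper semicontinuous at the points of $\phi(Y)$ is where one must use $d(u_m,z_m),d(v_m,z_m)\to0$ (so that $\phi(u_{m'}),\phi(v_{m'})$ approach $\phi(z_m)$ only when the corresponding $z_{m'}$ do, and then $f_k(z_{m'})$ is controlled by upper semicontinuity of $f_k$ on $Y$), that the $u_m,v_m$ are isolated in $Y\cup W$, and property (P).

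Finally, to compute $\al_0(\H)$, apply the Embedding Lemma (Lemma~\ref{embeddingThm}) with $F=\phi\big((X\cap Y)\cup\{u_m,v_m:u_m,v_m\notin Y\}\big)$, on whose complement in $\ex(K)$ one has $h-h_k\equiv0$; this gives $\al_0(\H)\le\al_0(\H|_{\overline F})$, and one checks $\al_0(\H|_{\overline F})=\al_0(\F)=\gamma$ by using the correspondence $z_m\leftrightarrow u_m$ to identify $\overline F$, as far as $\H$ is concerned, with the space $Y$ carrying $\F$. For the reverse inequality one locates the point witnessing $\al_0(\F)=\gamma$: if it lies in $X\cap Y$, Remark~\ref{embeddingRemark} gives $u_{\al}^{\H}=u_{\al}^{\F}$ there; otherwise it is some $z_m$, and then one combines $u_{\al}^{\H}(\phi(z_m))\ge u_{\al}^{\H|_{\phi(Y)}}(\phi(z_m))=u_{\al}^{\F}(z_m)$ with the matching upper bound coming from the Embedding Lemma and $\al_0(\H|_{\overline F})=\gamma$ to conclude $\al_0^{\H}(\phi(z_m))\ge\gamma$. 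Hence $\al_0(\H)=\gamma$, and since $\gamma\le\rho(Y)$ was arbitrary, $S(K)\supset[0,\rho(Y)]$.
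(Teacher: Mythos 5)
Your overall strategy is the paper's: transplant a candidate sequence from $Y$ onto $K$ via the substitution $z_m \leftrightarrow (u_m,v_m)$, take the harmonic extension, and compute the transfinite sequence through the Embedding Lemma with $L = Y \cup W$. But the two steps you yourself identify as delicate are genuine gaps, not minor bookkeeping.

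First, for the u.s.c.d.\ verification you invoke the barycentric formula $\phi(t)=\sum_j H_j(t)\phi(w_j)$ of Lemma~\ref{ConstructKLem}~(5). Lemma~\ref{inclusionLemma} is stated for an arbitrary metrizable Choquet simplex $K$ subject only to its listed hypotheses; the sole affine constraint available is $\phi(z_m)=\tfrac12(\phi(u_m)+\phi(v_m))$ at the points $z_m\in Y\setminus X$, and nothing is assumed about how the remaining points of $\overline{\ex(K)}\setminus\ex(K)$ sit in $K$. So your proposed extension of $g_{k+1}-g_k$ to $\phi(T)$ cannot be carried out. (Also, "extend this convexly, as a restriction of the harmonic extension" is circular: Fact~\ref{fonEharIsUSC} requires a convex u.s.c.\ function already defined on all of $K$ \emph{before} taking the harmonic extension.) The paper sidesteps both problems by defining $h'_k$ on all of $K$ by fiat: $h'_k\equiv 0$ off $Y\cup W$, $h'_k = f_k\circ g$ on $Y\setminus W$, and $h'_k(u_m)=h'_k(v_m)=f_k(g(z_m))$. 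Convexity is checked pointwise (using only $\P_{z_m}=\tfrac12(\epsilon_{\phi(u_m)}+\epsilon_{\phi(v_m)})$ and non-negativity), and upper semicontinuity uses the compactness of $Y\cup W$, the hypothesis that each $w\in W$ is isolated in $Y\cup W$, and $d(u_m,z_m),\,d(v_m,z_m)\to0$. No barycentric representation beyond the single hypothesized identity is needed.

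Second, overwriting $f_k$ at those $g(u_m),g(v_m)$ that lie in $Y$ by the value $f_k(g(z_m))$ is not a "harmless modification that does not affect $\al_0(\F)$." It produces a new sequence $\F'$, and the modified isolated points can accumulate at points of positive topological rank, changing $\widetilde{\tau_k}$ there. Showing $u_{\al}^{\F'}=u_{\al}^{\F}$ for all $\al$ is in fact the technical heart of the paper's proof: it requires arranging property (P) for $\F$ (Remark~\ref{LimsupToLimitRmk}), a separate topological argument that every non-isolated point of $Y$ is approached by isolated points of $Y$ lying \emph{outside} $W$, and a two-sided comparison of $\widetilde{\tau_k^{\F'}}$ and $\widetilde{\tau_k^{\F}}$. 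Your sentence asserting harmlessness is precisely what has to be proved. Once these two steps are supplied, the remainder of your outline (the Embedding Lemma reduction, identifying $u_\al^{\H|_{L}}$ with $u_\al^{\F}\circ g^{-1}$ on $Y$, and the endpoint witness at $g^{-1}(\0_\gamma)$) matches the paper.
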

\begin{proof}
For the sake of notation, we identify $X,Y,W$ and $T$ with their images under $\phi$ and refer to these sets as subsets of $K$. Then let $g : Y \to \o^{\beta_0}+1 $ be a homeomorphism. For any $\gamma$ in $[0,\beta_0]$, there is an u.s.c.d. candidate sequence $\F$ on $\o^{\gamma}+1$ given by Corollary \ref{realizationCor} with $\al_0(\F) = \gamma$. Since $\o^{\gamma}+1 \subset \o^{\beta_0}+1$, we may extend $\F$ to a u.s.c.d. candidate sequence on $\o^{\beta_0}+1$ (still denoted $\F$) by letting $\F$ be uniformly $0$ off of $\o^{\gamma}+1$. Note that $\F$ on $\o^{\beta_0}+1$ still has the properties stated in Corollary \ref{realizationCor}. We now construct a harmonic, u.s.c.d. sequence $\H$ on $K$ such that $\al_0(\F) = \al_0(\H)$. Let $\F = (f_k)$ be given as above. Then let $\H' = (h'_k)$ be the candidate sequence on $K$ defined as follows. For $t$ in $K$, let
\begin{equation*}
h'_k(t) = \left \{ \begin{array}{ll}
                   0, & \text{ if } t \notin Y \cup W \\
                   f_k(g(t)), & \text{ if } t \in Y \setminus W \\
                   f_k(g(z_m)), & \text{ if } t = u_m \text{ or } t = v_m.
                  \end{array} \right.
\end{equation*}
We claim that for each $k$, $h'_{k+1}-h'_k$ is convex and u.s.c. Let $t$ be in $K$. If $t$ is in $X$, then $(h'_{k+1}-h'_k)(t) = \int_X (h'_{k+1}-h'_k) d\P_t$ since $\P_t = \epsilon_t$. If $t$ is in $K \setminus (Y \cup W)$, then $0 =(h'_{k+1}-h'_k)(t) \leq \int_X (h'_{k+1}-h'_k) d\P_t$. If $t$ is in $(Y \cup W) \setminus X = Y \setminus X = Z$, then $t=z_m$ for some $m$, and we have that $\P_{z_m} = \frac{1}{2}(\epsilon_{u_m}+\epsilon_{v_m})$. Then
\begin{align*}
(h'_{k+1}-h'_k)(z_m) = (f_{k+1}-f_k)(g(z_m)) & = \frac{1}{2}\Bigl( (h'_{k+1}-h'_k)(u_m) + (h'_{k+1}-h'_k)(v_m) \Bigr)  \\  & = \int_X h'_{k+1}-h'_k d\P_t.
\end{align*}
We have shown that $h'_{k+1}-h'_k$ is convex.

Let us prove that $h'_{k+1}-h'_{k}$ is u.s.c. Since $\{u_m\}$, $\{v_m\}$ and $\{z_m\}$ each have the same limit points, which are in $Y$ (since $\{z_m\}$ is in $Y$ and $Y$ is closed), we obtain that $Y\cup W$ is compact in $K$. Thus if $t$ is in $K \setminus (Y \cup W)$, then $\widetilde{(h'_{k+1}-h'_k)}(t)=0 = (h'_{k+1}-h'_k)(t)$. For $t$ in $Y \setminus W$, assume $\{t_n\}$ is a sequence in $K \setminus \{t\}$ converging to $t$ in $K$. Since $(h'_{k+1}-h'_k)|_{K \setminus (Y \cup W)} \equiv 0$, we may assume that $t_n$ lies in $Y \cup W$ for all $n$. For each $n$, if $t_n$ is not in $Y$, then there exists a natural number $m_n$ such that $t_n \in \{u_{m_n}, v_{m_n}\}$. If $t_n$ is in $Y$, then $(h'_{k+1}-h'_k)(t_n) = (f_{k+1}-f_k)(g(t_n))$, and if $t_n$ is not $Y$, then there exists a natural number $m_n$ such that $t_n \in \{u_{m_n},v_{m_n}\}$ and $(h'_{k+1}-h'_k)(t_n) = (f_{k+1}- f_k)(g(z_{m_n}))$. By the choice of $\{u_m\}$ and $\{v_m\}$, we have that$\{z_{m_n}\}$ also converges to $t$. Then since $\F$ is u.s.c.d. and $g$ is continuous, we have that $\limsup_n (h'_{k+1}-h'_k)(t_n) \leq (f_{k+1}-f_{k})(g(t)) = (h'_{k+1}-h'_k)(t)$. Thus $\widetilde{(h'_{k+1}-h'_k)}(t) = (h'_{k+1}-h'_k)(t)$. For $t$ in $W$, $t$ is isolated in $Y \cup W$, and we conclude that $\widetilde{(h'_{k+1}-h'_k)}(t) = (h'_{k+1}-h'_k)(t)$. Thus $(h'_{k+1}-h'_k)$ is u.s.c.

Now for $t$ in $K$, let $\H = (h_k)$, where $h_k$ is the harmonic extension of $h'_k$ on $K$. $\H$ is harmonic by definition. Fact \ref{fonEharIsUSC} states that the harmonic extension of a non-negative, convex, u.s.c. function on a Choquet simplex $K$ is a harmonic, u.s.c. function on $K$. Applying this fact to each element in the sequence $(h'_{k+1}-h'_k)$, we obtain that $\H$ is a harmonic, u.s.c.d. candidate sequence.

Let $F = (Y \cap X) \cup W$ and $L = \overline{F} = Y \cup W$. Note that $\H|_{X \setminus F} \equiv \H'|_{X \setminus F} \equiv 0$, which implies that we may apply the Embedding Lemma (Lemma \ref{embeddingThm}). The Embedding Lemma gives that for all ordinals $\al$ and all $t$ in $K$,
\begin{equation*}
u^{\H}_{\al}(t) = \max_{\mu \in \pi^{-1}(t)} \int_{L} u_{\al}^{\H|_L} \; d\mu.
\end{equation*}

Let us now show that for all $t$ in $K$,
\begin{equation} \label{computeH}
u^{\H}_{\al}(t) =  \max_{\mu \in \pi^{-1}(t)} \int_{Y} u_{\al}^{\H|_L} \; d\mu  =  \max_{\mu \in \pi^{-1}(t)} \int_{Y} u_{\al}^{\H|_Y} \; d\mu =  \max_{\mu \in \pi^{-1}(t)} \int_{Y} u_{\al}^{\F} \circ g \; d\mu.
\end{equation}
The first equality in Equation (\ref{computeH}) has already been justified as an application of the Embedding Lemma. The second equality in (\ref{computeH}) will be justified by showing that for all ordinals $\al$, $u_{\al}^{\H|_L}|_{L \setminus Y} \equiv 0$ and $u_{\al}^{\H|_L}|_Y = u_{\al}^{\H|_Y}$. Recall that $\H|_Y = \F' \circ g$, where $\F' = (f'_k)$ is the candidate sequence on $\o^{\beta_0}+1$ defined in terms of $\F = (f_k)$ as follows. If $t$ is in $(\o^{\beta_0}+1) \setminus g(W \cap Y)$, then $f'_k(t) = f_k(t)$, and if $t$ is in $g(W \cap Y)$, then $f'_k(t) = f_k(z_m)$ for $t = g(u_m)$ or $t = g(v_m)$. Since $g$ is a homeomorphism, we have that $u_{\al}^{\H|_Y} = u_{\al}^{\F'} \circ g$ for all ordinals $\al$. Then we will justify the third equality in Equation (\ref{computeH}) by proving that $u_{\al}^{\F} = u_{\al}^{\F'}$ for all ordinals $\al$. We proceed with these steps and then conclude the proof of the lemma using Equation (\ref{computeH}).

Notice that for all $t$ in $W$, $r_L(t) = 0$ ($t$ is isolated in $L$ by hypothesis). Thus, if $t \in W$, then $u_{\al}^{\H|_L}(t) = 0$ for all $\al$.

For $t$ in $Y$, suppose there is a sequence $s_n \in W$ such that $s_n$ converges to $t$ and $\limsup_{s \to t} \tau_k^{\H|_L}(s) = \lim_n \tau_k^{\H|_L}(s_n)$. Since $s_n$ is in $W$, for each $n$ there exists $m_n$ such that $s_n \in \{u_{m_n}, v_{m_n}\}$. Then $\tau_k^{\H|_L}(s_n) = \tau_k^{\H|_L}(z_{m_n})$, $z_{m_n}$ also converges to $t$, and since $z_{m_n}$ is in $Y$, $\tau_k^{\H|_Y}(z_{m_n}) = \tau_k^{\H|_L}(z_{m_n})$. Thus $\limsup_{s \to t} \tau_k^{\H|_L}(s) = \lim_n \tau_k^{\H|_Y}(z_{m_n})$. By these considerations, we have that for all $t$ in $Y$, $\widetilde{\tau_k^{\H|_L}}(t) = \widetilde{\tau_k^{\H|_Y}}(t)$. Letting $k$ tend to infinity gives that $u_1^{\H|_L}(t) = u_1^{\H|_Y}(t)$, for all $t$ in $Y$.

Now we show by transfinite induction that $u_{\al}^{\H|_Y} = u_{\al}^{\H|_L}|_Y$ for all ordinals $\al$. The equality holds for $\al = 1$ by the previous paragraph. Suppose by induction that it holds for some ordinal $\al$. For the sake of notation, we allow $s = t$ in the following limit suprema. Also, the limit supremum over an empty set is assumed to be $0$ by convention. For $t$ in $Y$, the induction hypothesis implies that
\begin{align*}
\widetilde{(u_{\al}^{\H|_L}+\tau_k)}(t) = & \max \Bigl( \limsup_{\substack{ s \to t \\ s \in W}} u_{\al}^{\H|_L}(s)+\tau_k(s), \, \limsup_{\substack{ s \to t \\ s \in Y}} u_{\al}^{\H|_L}(s)+\tau_k(s) \Bigr) \\
 = & \max \Bigl( \limsup_{\substack{ s \to t \\ s \in W}} \tau_k(s), \, \limsup_{\substack{ s \to t \\ s \in Y}} u_{\al}^{\H|_Y}(s)+\tau_k(s) \Bigr)
\end{align*}
Taking the limit as $k$ tends to infinity gives that
\begin{equation*}
u_{\al+1}^{\H|_L}(t) = \max \Bigl( u_1^{\H|_L}(t), \, u_{\al+1}^{\H|_Y}(t)\Bigr) = \max \Bigl( u_1^{\H|_Y}(t), \, u_{\al+1}^{\H|_Y}(t)\Bigr) = u_{\al+1}^{\H|_Y}(t).
\end{equation*}
Thus we conclude that $u_{\al+1}^{\H|_Y} = u_{\al+1}^{\H|_L}|_Y$, proving the successor case of our induction. For the limit case, suppose the equality holds for all ordinals $\beta$ less than a limit ordinal $\al$. Then for $t$ in $Y$, we have
\begin{align*}
u_{\al}^{\H|_L}(t) = & \max \Bigl( \limsup_{\substack{ s \to t \\ s \in W}} \sup_{\beta < \al} u_{\beta}^{\H|_L}(s), \, \limsup_{\substack{ s \to t \\ s \in Y}} \sup_{\beta < \al} u_{\beta}^{\H|_L}(s) \Bigr) \\
= & \max \Bigl( 0, \limsup_{\substack{ s \to t \\ s \in Y}} \sup_{\beta < \al} u_{\beta}^{\H|_Y}(s) \Bigr) \\
= & u_{\al}^{\H|_Y}(t),
\end{align*}
which concludes the limit step of the transfinite induction.

Now we turn our attention towards showing that $u_{\al}^{\F'} = u_{\al}^{\F}$ for all ordinals $\al$. By Remark \ref{LimsupToLimitRmk}, we assume (without loss of generality) that $\F$ has the property (P) that for $t$ in $\o^{\beta_0}+1$,
\begin{equation} \label{LimsupToLimitEqn}
\limsup_{\substack{s \to t \\ r(s) = 0}} \tau_k^{\F}(s) = \lim_{\substack{s \to t \\ r(s) = 0}} \tau_k^{\F}(s).
\end{equation}
We also require the following topological fact. For every point $t$ in $Y \setminus I$, there is a sequence in $I \setminus W$ that tends to $t$, where $I$ is the set of isolated points in $Y$. To prove this fact, let $t$ be a point with $r(t) \geq 1$ and let $U$ be an open  (in $Y$) neighborhood of $t$. Suppose for the sake of contradiction that $(I \setminus W) \cap U = \emptyset$. Since $Y \cong \o^{\beta_0}+1$ (a countable, compact Polish space), we have that $I$ is dense in $Y$ and $\Gamma^1(Y) \setminus \Gamma^2(Y)$ is dense in $\Gamma^1(Y)$. Since $\Gamma^1(Y) \setminus \Gamma^2(Y)$ is dense in $\Gamma^1(Y)$, there is a point $t'$ in $U$ with $r(t')=1$. Since $I$ is dense in $Y$, there is a sequence $w_n$ in $I \cap U$ tending to $t'$. Since $(I \setminus W) \cap U = \emptyset$, we must have that $w_n$ is in $W$ and then there is a sequence $m_n$ such that $w_n \in \{u_{m_n},v_{m_n}\}$ for all $n$. Then $z_{m_n}$ tends to $t'$. Note that $z_{m_n}$ is not in $W$ by hypothesis, and since $r(t')=1$, we must have that $z_{m_n}$ is isolated in $Y$ for all large $n$. Thus $(I \setminus W) \cap U \neq \emptyset$, a contradiction.

Using that $\F$ satisfies property (P) and the topological fact from the previous paragraph, let us show that for any non-isolated point $t$ in $\o^{\beta_0}+1$, we have $\widetilde{\tau_k^{\F'}}(t) = \widetilde{\tau_k^{\F}}(t)$. First note that for every sequence $s_n$ converging to $t$, there is a sequence $t_n$ converging to $t$ such that $\tau_k^{\F'}(s_n) = \tau_k^{\F}(t_n)$: if $s_n$ is not in $g(W \cap Y)$, then let $t_n = s_n$, and if $s_n$ is in $g(W \cap Y)$, then there exists $m_n$ such that $s_n \in \{g(u_{m_n}),g(v_{m_n})\}$, and one may take $t_n = g(z_{m_n})$. It follows that $\limsup_{s \to t} \tau_k^{\F'}(s) \leq \limsup_{s \to t} \tau_k^{\F}(s)$. Also, since $t$ is not isolated, $t$ is not in $g(W \cap Y)$ and $\tau_k^{\F'}(t) = \tau_k^{\F}(t)$. We deduce that $\widetilde{\tau_k^{\F'}}(t) \leq \widetilde{\tau_k^{\F}}(t)$. Now we show the reverse inequality. If $s_n$ is a sequence converging to $t$ with $r(s_n) >0$, then $s_n$ is not in $g(W \cap Y)$ and thus $\tau_k^{\F'}(s_n) = \tau_k^{\F}(s_n)$. In such a case, we have $\limsup_n \tau_k^{\F'}(s_n) = \limsup_n \tau_k^{\F}(s_n)$. Now let $s_n$ be a sequence converging to $t$ with $r(s_n)=0$. By the topological fact from the previous paragraph, there is a sequence $t_n$ of isolated points in $\o^{\beta_0}+1$ that are not in $g(W \cap Y)$ such that $t_n$ converges to $t$. Using the fact that $\F$ satisfies property (P) (see Equation (\ref{LimsupToLimitEqn})), we have $\limsup_n \tau_k^{\F}(s_n) = \limsup_n \tau_k^{\F}(t_n)$. Since the points $t_n$ are not in $g(W \cap Y)$ we also have that $\limsup_n \tau_k^{\F}(t_n) = \limsup_n \tau_k^{\F'}(t_n) \leq \limsup_{s \to t} \tau_k^{\F'}(s)$. We have shown that for every sequence $s_n$ converging to $t$, $\limsup_n \tau_k^{\F}(s_n) \leq \limsup_{s \to t} \tau_k^{\F'}(s)$. It follows that $\widetilde{\tau_k^{\F'}}(t) \geq \widetilde{\tau_k^{\F}}(t)$, and therefore we have shown that $\widetilde{\tau_k^{\F'}}(t) = \widetilde{\tau_k^{\F}}(t)$.

Finally, we show that for all ordinals $\al$, $u_{\al}^{\F'} = u_{\al}^{\F}$ by transfinite induction on $\al$. We make the conventions that we allow $s = t$ in the following limit suprema, and the limit supremum over an empty set is $0$. Note that if $t$ is isolated in $\o^{\beta_0}+1$, then $u_{\al}^{\F}(t) = 0 = u_{\al}^{\F'}(t)$ for all $\al$, and thus we need only show the equality at non-isolated points $t$ in $\o^{\beta_0}+1$. For the sake of induction, suppose the equality holds for an ordinal $\al$. Let $t$ be in $(\o^{\beta_0}+1) \setminus g(I)$. For every sequence $s_n$ converging to $t$, there is a sequence $t_n$ converging to $t$ such that $(u_{\al}^{\F'}+\tau_k^{\F'})(s_n) = (u_{\al}^{\F}+\tau_k^{\F})(t_n)$: if $s_n$ is not in $g(W \cap I)$, then let $t_n = s_n$, and if $s_n$ is in $g(W \cap I)$, then there exists $m_n$ such that $s_n \in \{g(u_{m_n}),g(v_{m_n})\}$, and one may take $t_n = z_{m_n}$. It follows that $\limsup_{s \to t} (u_{\al}^{\F'} + \tau_k^{\F'})(s) \leq \limsup_{s \to t} (u_{\al}^{\F} + \tau_k^{\F})(s)$. Thus we have that $\widetilde{(u_{\al}^{\F'} + \tau_k^{\F'})}(t) \leq \widetilde{(u_{\al}^{\F} + \tau_k^{\F})}(t)$. Now we show the reverse inequality. Let $s_n$ be a sequence in $g(I)$ converging to $t$. Then $(u_{\al}^{\F} + \tau_k^{\F})(s_n) = \tau_k^{\F}(s_n)$ and so $\limsup_n (u_{\al}^{\F} + \tau_k^{\F})(s_n) = \limsup_n \tau_k^{\F}(s_n) \leq \widetilde{\tau_k^{\F}}(t) = \widetilde{\tau_k^{\F'}}(t)$ (recall that we showed the last equality in the previous paragraph). Now let $s_n$ be a sequence in $(\o^{\beta_0}+1) \setminus g(I)$ converging to $t$. Since $s_n$ is not isolated, $s_n$ is not in $g(W \cap Y)$, and we have $(u_{\al}^{\F} + \tau_k^{\F})(s_n) = (u_{\al}^{\F'} + \tau_k^{\F'})(s_n)$. Also, $\limsup_n (u_{\al}^{\F'} + \tau_k^{\F'})(s_n) \leq \widetilde{(u_{\al}^{\F'} + \tau_k^{\F'})}(t)$. Combining these considerations, we have shown that
\begin{equation*}
 \widetilde{(u_{\al}^{\F} + \tau_k^{\F})}(t) \leq \max\Bigl( \widetilde{\tau_k^{\F'}}(t), \widetilde{(u_{\al}^{\F'} + \tau_k^{\F'})}(t) \Bigr) = \widetilde{(u_{\al}^{\F'} + \tau_k^{\F'})}(t).
\end{equation*}
Then we deduce that $\widetilde{(u_{\al}^{\F} + \tau_k^{\F})} = \widetilde{(u_{\al}^{\F'} + \tau_k^{\F'})}$. Taking the limit in $k$ gives that $u_{\al+1}^{\F} = u_{\al+1}^{\F'}$, which concludes the successor step of the transfinite induction. For the limit step, assume that $u_{\beta}^{\F} = u_{\beta}^{\F'}$ for all ordinals $\beta$ less than a limit ordinal $\al$. We show that $u_{\al}^{\F} = u_{\al}^{\F'}$. For $t$ in $\o^{\beta_0}+1$, the induction hypothesis gives that (allowing $s = t$ in the the limit suprema)
\begin{equation*}
 u_{\al}^{\F}(t) = \limsup_{s \to t} \sup_{\beta < \al} u_{\beta}^{\F}(s) = \limsup_{s \to t} \sup_{\beta < \al} u_{\beta}^{\F'}(s) = u_{\al}^{\F'}(t).
\end{equation*}
We conclude that $u_{\al}^{\F} = u_{\al}^{\F'}$ for all ordinals $\al$. This fact completes the verification of Equation (\ref{computeH}).

It follows immediately from Equation (\ref{computeH}) that $\al_0(\H) \leq \al_0(\F) = \gamma$. We now show the reverse inequality. Let $\0_{\gamma}$ be the marked point in Corollary \ref{realizationCor}, and let $t=g^{-1}(\0_{\gamma})$. Then $u_{\gamma}^{\H}(t) \geq u_{\gamma}^{\F}(\0_{\gamma}) = a$. For an arbitrary $\al < \gamma$, we also have that $u_{\al}^{\H}(t) \leq ||u_{\al}^{\F}|| < a$ by Equation (\ref{computeH}) and Corollary \ref{realizationCor} (1). Thus $\gamma = \al_0(t) \leq \al_0(\H)$, and we conclude that $\al_0(\H) = \gamma$.

Since $\gamma \leq \beta_0$ was arbitrary, we deduce that $S(K) \supset [0,\beta_0]$. For $\beta$ finite, $\beta_0 = \beta$ and the proof is finished in this case. On the other hand, if $\beta$ is infinite, then $\beta_0 = \beta-1$ and we may repeat the above argument starting with $\F$ on $\o^{\beta_0}+1$ given by Corollary \ref{alphaPlusOne} such that $\al_0(\F)=\beta_0+1$. In this case, we conclude that $S(K) \supset [0,\beta_0+1] = [0, \beta]$, which concludes the proof.
\end{proof}

\subsection{Open Questions} \label{OpenQuestions}

In general, our analysis leaves open the following problem.
\begin{ques}
For a metrizable Choquet simplex $K$, what is $S(K)$?
\end{ques}

Theorem \ref{topBounds} completely answers this question when $\rho_{\ex(K)}(\overline{\ex(K)}) = \rho(\overline{\ex(K)})$. In particular, when $K$ is Bauer or when $\ex(K)$ is uncountable, Theorem \ref{topBounds} gives a complete answer. In general, Theorem \ref{topBounds} gives upper and lower bounds on $S(K)$.

Theorem \ref{threeAlphas} shows that the bounds in Theorem \ref{topBounds} cannot be improved using only knowledge of the ordinals $\rho_{\ex(K)}(\overline{\ex(K)})$ and $\rho(\overline{\ex(K)}))$. Theorem \ref{optimal} (1) shows that if $\overline{\ex(K)}$ is countable, then the bounds in Theorem \ref{topBounds} cannot be improved using only knowledge of the homeomorphism class of the compactification $(\ex(K),\overline{\ex(K)})$. Theorem \ref{optimal} (2) shows that the upper bound in Theorem \ref{topBounds} cannot be improved using only knowledge of the homeomorphism class the compactification $(\ex(K),\overline{\ex(K)})$. Thus we have the following question remaining.
\begin{ques} \label{Qone}
Let $E$ be a countable, non-compact Polish space, and let $\overline{E}$ be an uncountable metrizable compactification of $E$. Let $\beta$ be a successor in $[\rho_E(\overline{E}), \o_1[$. Must there exist a metrizable Choquet simplex $K$ such that $(E,\overline{E}) \simeq (\ex(K),\overline{\ex(K)})$ and $S(K) = [0,\beta]$?
\end{ques}
Also, when $E$ is countable and $\overline{E}$ is uncountable, we do not know whether the upper bound on $S(K)$ may be attained. We state this problem as a question as follows.
\begin{ques} \label{Qtwo}
Let $E$ be a countable, non-compact Polish space, and let $\overline{E}$ be an uncountable metrizable compactification of $E$. Must there exist a metrizable Choquet simplex $K$ such that $(E,\overline{E}) \simeq (\ex(K),\overline{\ex(K)})$ and $S(K) = [0,\o_1 [$?
\end{ques}

If the answers to Questions \ref{Qone} and \ref{Qtwo} are affirmative, then one could conclude that the bounds in \ref{topBounds} cannot be improved using knowledge of the homeomorphism class of the compactification $(\ex(K),\overline{\ex(K)})$, and furthermore, one could conclude that these bounds are obtained.

Notice that for every simplex $K$ for which we can compute $S(K)$, $S(K)$ is either $[0,\o_1 [$ or $[0,\beta]$ for a countable successor $\beta$. This observation leads to the following two questions.
\begin{ques} \label{Qthree}
If $K$ is a metrizable Choquet simplex, must $S(K)$ be an ordinal interval?
\end{ques}
\begin{ques} \label{Qfour}
If $K$ is a metrizable Choquet simplex, must $S(K)$ be either $[0,\o_1 [$ or $[0,\beta]$ for a countable successor $\beta$?
\end{ques}

If the answers to Questions \ref{Qone}, \ref{Qtwo}, \ref{Qthree}, and \ref{Qfour} are all affirmative, then these results would give a complete description of the constraints imposed on orders of accumulation by the compactification of the ergodic measures for a dynamical system.

\vspace{1mm}
\textbf{Acknowledgment:} The authors would like to express special thanks to Mike Boyle, who contributed many helpful ideas, conversations, and hours to this work.

\appendix
\section{Entropy Structures, Symbolic Extensions, and Dynamical Systems} \label{Rel2DynSys}

For general references on the ergodic theory of topological dynamical systems, see \cite{Glas,Pet,W}. For a topological dynamical system $(X,T)$, we write $M(X,T)$ to denote the space of Borel probability measures on $X$ which are invariant under $T$. We give $M(X,T)$ the weak* topology. It is well known that in this setting $M(X,T)$ is a metrizable, compact, convex subset of a locally convex topological vector space (see, for example, \cite{Glas, Pet}). The set of extreme points of $M(X,T)$ is the set of ergodic measures, $M_{\erg}(X,T)$. Furthermore, the fact that each measure $\mu$ in $M(X,T)$ has a unique ergodic decomposition (see \cite{Glas,Pet}) translates to the fact that $M(X,T)$ is a Choquet simplex. Since we are only interested in simplices arising from dynamical systems, we consider only metrizable Choquet simplices. It was shown in \cite{D3} that every metrizable Choquet simplex $K$ can be obtained as the space of invariant Borel probability measures for a dynamical system.

We write $h : M(X,T) \rightarrow [0, \medspace \infty)$ to denote the function that assigns to each measure $\mu$ in $M(X,T)$ its metric entropy. For any dynamical system $(X,T)$, Boyle and Downarowicz defined a reference candidate sequence $\H_{ref}(X,T)$ on $M(X,T)$ that is u.s.c.d. and harmonic. Further, Downarowicz defined an \textbf{entropy structure} on $M(X,T)$ to be any candidate sequence on $M(X,T)$ that is uniformly equivalent to $\H_{ref}$ (see Section \ref{candSeqs} for definitions). Almost all known methods of defining or computing entropy can be adapted to form an entropy structure \cite{D}. The work of Downarowicz and Serafin \cite{DS} implies the following realization theorem:
\begin{thm}[\cite{D,DS}] \label{realization}
Let $\H$ be a candidate sequence on a Choquet simplex $K$ that is uniformly equivalent to a harmonic candidate sequence with u.s.c. differences. Then $\H$ is (up to affine homeomorphism) an entropy structure for a minimal homeomorphism of the Cantor set.
\end{thm}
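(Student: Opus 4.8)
The plan is to split the statement into an abstract characterization step and a concrete realization step, following \cite{D} and \cite{DS}. First, by Downarowicz's theory of entropy structures, the uniform equivalence class of an entropy structure of a finite-entropy system $(X,T)$ is precisely a uniform equivalence class of harmonic, u.s.c.d. candidate sequences on $M(X,T)$ whose pointwise limit is the entropy function; conversely — and this is the content to be established — every such class is realized, and in fact by a minimal Cantor system. Since both uniform equivalence and the property of being an entropy structure depend only on the class, it suffices to realize one representative $\H = (h_k)$ that is harmonic and u.s.c.d. with $h = \lim_k h_k$ bounded.

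Second, I would realize the pair $(K,\H)$ by an explicit adic construction. Using Choquet's theorems (Theorems \ref{choquetWeak}, \ref{SimplexThm}), fix $K$ together with a metrizable compactification of $\ex(K)$, and present $K$ through a sequence of finite-dimensional ``skeleta'' converging to it — for instance as an inverse limit of finite-dimensional simplices with affine connecting maps, or as a quotient of $\M(\overline{\ex(K)})$ as in Section \ref{simplices}. Then build a Bratteli--Vershik (adic) transformation on a carefully chosen ordered Bratteli diagram: level $k$ of the diagram encodes the combinatorial data that produce the measures of $K$ up to ``resolution $k$'', while the fresh combinatorial freedom introduced between levels $k$ and $k+1$ is calibrated so that its contribution to metric entropy realizes $h_{k+1}-h_k$ on the corresponding part of the simplex. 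Choosing the diagram simple (every vertex eventually connects to every vertex below) makes the adic transformation minimal with Cantor phase space.

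Third, I would verify that the entropy structure of the constructed system is uniformly equivalent to $\H$. This requires identifying $M(X,T)$ affinely with $K$ (standard for simple adic systems given the chosen diagram), and then comparing the reference sequence $\H_{ref}$ — built from a refining sequence of clopen partitions adapted to the levels of the diagram — with $\H$: one shows the entropy seen through the level-$k$ partition is within a uniform error of $h_k$, giving $\H_{ref}\cong\H$. Harmonicity and u.s.c.-of-differences of $\H_{ref}$ come from the general theory and match those of $\H$ since both represent the same class.

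The hard part will be the calibration in the second step: designing the diagram so that the invariant measures come out exactly as $K$, \emph{and} the incremental entropy between consecutive levels realizes $h_{k+1}-h_k$, \emph{and} these differences remain u.s.c., all simultaneously while preserving minimality. Controlling the upper semi-continuity of $h_{k+1}-h_k$ is the most delicate point, since it encodes the requirement that newly added complexity ``accumulates from below'' in the weak* topology on measures; this is precisely where the fine combinatorial bookkeeping of Downarowicz and Serafin is indispensable.
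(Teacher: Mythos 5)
The paper does not prove Theorem~\ref{realization}: it is stated with a citation to \cite{D,DS} and the one-line remark that ``the work of Downarowicz and Serafin \cite{DS} implies'' it, so there is no in-paper argument to compare your proposal against. Your proposal is a reconstruction of the strategy that lives in those references, and as a roadmap it is sound: reduce to a harmonic u.s.c.d.\ representative $\H=(h_k)$ with bounded limit (uniform equivalence and the property of being an entropy structure both depend only on the class); present $K$ via the Lazar--Lindenstrauss inverse limit of finite-dimensional simplices; build a simple ordered Bratteli diagram so that the Herman--Putnam--Skau correspondence produces a minimal Cantor system whose simplex of invariant measures is affinely homeomorphic to $K$; and tune the diagram so the entropy seen at level $k$ reproduces $h_k$ and the increments $h_{k+1}-h_k$.

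The gap is that the core of the proof is only named, not carried out. Your closing paragraph correctly locates the difficulty, but it is a list of obligations rather than an argument: the entire content of \cite{DS} is the combinatorial construction that simultaneously (i) realizes the prescribed simplex of invariant measures, (ii) prescribes the entropy added between consecutive levels so that it matches $h_{k+1}-h_k$ while remaining u.s.c., (iii) preserves minimality, and (iv) yields genuine \emph{uniform} (not merely pointwise or levelwise) equivalence of the reference sequence $\H_{ref}$ with $\H$. None of these is proved or even sketched beyond being stated as desiderata. Two smaller corrections: Theorem~\ref{choquetWeak} produces a simplex from a given Polish space, whereas here $K$ is the given object, so the structural tool you actually want is the Lazar--Lindenstrauss inverse-limit presentation, which you mention only as an alternative; and the quotient of $\M(\overline{\ex(K)})$ used in Section~\ref{simplices} is an analytic device this paper uses to compute the transfinite sequence $(u_{\al})$, not a combinatorial presentation that feeds naturally into a Bratteli construction.
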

The importance of Theorem \ref{realization} lies in the fact that it allows one to translate questions in the theory of entropy structures and dynamical systems into the terms of functional analysis. To understand the theory of entropy structure in dynamical systems, it helps to consider symbolic extensions, and we briefly recall this theory.


A good introduction to symbolic dynamical systems is given in \cite{LM}. For any finite set $\cA$, we refer to $\cA^{\Z}$ as the full shift on $\cA$. The elements of $\cA$ are referred to as symbols. We give $\cA^{\Z}$ the product topology induced by the discrete topology on $\cA$, which makes $\cA^{\Z}$ a compact metrizable space. Then the left-shift map, $\sigma : \cA^{\Z} \rightarrow \cA^{\Z}$, given by $\sigma(x)_n = x_{n+1}$, is a homeomorphism of $\cA^{\Z}$. If $Y$ is closed subset of $\cA^{\Z}$ satisfying $\sigma(Y) = Y$ and $S = \sigma|_Y$, then we refer to $(Y,S)$ as a symbolic dynamical system, or possibly a subshift of $\cA^{\Z}$.

\begin{defn}
Let $(X,T)$ be a dynamical system. A \textbf{symbolic extension} of $(X,T)$ is a subshift $(Y,S)$ of a full shift on a finite number of symbols, along with a continuous surjection $\pi: Y \rightarrow X$ such that $T \pi = \pi S$.
\end{defn}
We think of a symbolic extension as a ``lossless finite encoding'' of the dynamical system $(X,T)$ \cite{D}.

Downarowicz introduced the study of the entropy of symbolic extensions at the level of measures \cite{D2}.

\begin{defn}
If $(Y,S)$ is a symbolic extension of $(X,T)$ with factor map $\pi$, then the \textbf{extension entropy function}, $h_{ext}^{\pi} : M(X,T) \rightarrow [0,\infty)$, is defined for $\mu$ in $M(X,T)$ by
\begin{equation*}
h_{ext}^{\pi}(\mu) = \sup \{ h(\nu) : \pi^* \mu = \nu \}.
\end{equation*}
The \textbf{symbolic extension entropy function} of a dynamical system $(X,T)$, $h_{sex} : M(X,T) \rightarrow [0, \medspace \infty]$, is defined for $\mu$ in $M(X,T)$, as
\begin{equation*}
h_{sex}(\mu) = \inf \{ h_{ext}^{\pi}(\mu) : \pi \text{ is the factor map of a symbolic extension of } (X,T) \}.
\end{equation*}
and the \textbf{residual entropy function}, $h_{res}: M(X,T) \rightarrow [0,\infty]$ is defined for $\mu$ in $M(X,T)$ as
\begin{equation*}
h_{res}(\mu) = h_{sex}(\mu)-h(\mu).
\end{equation*}
\end{defn}

The study of symbolic extensions is related to entropy structures by the following striking result.
\begin{thm}[\cite{BD}] \label{sexEntThm}
Let $(X,T)$ be a dynamical system with entropy structure $\H$. Then
\begin{equation*}
h_{sex} = h + u_{\al_0(\H)}^{\H},
\end{equation*}
\end{thm}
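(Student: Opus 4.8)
The plan is to follow the Boyle--Downarowicz argument, organized around the notion of a \emph{superenvelope} of the entropy structure. Call a bounded, non-negative, affine function $E$ on $M(X,T)$ a superenvelope of $\H = (h_k)$ if $E \ge h$ and $E - h$ is a fixed point of the ``repair operator'' $\Phi(v) = \lim_k \widetilde{v + \tau_k}$, where $\tau_k = h - h_k$ (the limit exists because $v + \tau_k$ is non-increasing in $k$, so $\Phi(v) = \inf_k \widetilde{v + \tau_k} \ge v$, and a super-fixed point is automatically a fixed point). The first step is the purely functional-analytic observation that $h + u_{\al_0}^{\H}$ is the pointwise-smallest superenvelope. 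By monotonicity of $\Phi$ and transfinite induction, $u_{\al_0} \le E - h$ for every superenvelope $E$: $u_0 \equiv 0 \le E - h$; the successor step gives $u_{\al+1} = \Phi(u_\al) \le \Phi(E-h) = E - h$; and the limit step uses that $E - h$ is u.s.c.\ (being a fixed point of $\Phi$). Conversely $h + u_{\al_0}$ is itself a superenvelope, since $u_{\al_0}$ is by definition a fixed point of $\Phi$; the one point requiring the dynamical context is that $h + u_{\al_0}$ is affine, equivalently that $u_{\al_0}$ is affine for an entropy structure (cf.\ the general concavity statement in Remark \ref{ConcaveRmk}), which is forced by the harmonicity in the definition of $\H$, propagated through the transfinite sequence as in Proposition \ref{Uaffine}, now on the general Choquet simplex $M(X,T)$. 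This reduces the theorem to proving $h_{sex} = \inf\{E : E \text{ a superenvelope of } \H\}$.

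For the inequality $h_{sex} \ge h + u_{\al_0}$ I would show that the extension entropy function $h_{ext}^{\pi}$ of every symbolic extension $\pi : (Y,S) \to (X,T)$ is a superenvelope of $\H$, whence $h_{sex} = \inf_{\pi} h_{ext}^{\pi} \ge h + u_{\al_0}$. That $h_{ext}^{\pi}$ is affine, non-negative, u.s.c., and $\ge h$ is routine: a subshift is asymptotically $h$-expansive, so its entropy function is u.s.c., and the affineness and attainment of the maximum defining $h_{ext}^{\pi}$ follow from the ergodic decomposition. The substance is the superenvelope property; one in fact establishes a one-step inequality of the form $\widetilde{h_{ext}^{\pi} - h_{k+1}} \le h_{ext}^{\pi} - h_k$, which yields $\Phi(h_{ext}^\pi - h) \le h_{ext}^\pi - h$. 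This is where the structure of entropy structures enters: the entropy structure of a subshift is uniformly equivalent to a constant sequence (no residual entropy emerges on refining scales), so all the residual complexity that $\H$ records accumulating on the base must already be visible, scale by scale, inside the fibre entropy $h_{ext}^{\pi}$. Making this precise uses Downarowicz's apparatus --- the description of the reference structure $\H_{ref}$ by finite resolutions, its behaviour under factor maps, and the outer/relative variational principles for topological conditional entropy --- to reduce the inequality to sub-additivity of conditional entropy along a refining tower of partitions.

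For the reverse inequality it suffices, by the first paragraph, to produce for each $\epsilon > 0$ a symbolic extension with $h_{ext}^{\pi} \le h + u_{\al_0} + \epsilon$; more generally, one shows every u.s.c.\ affine superenvelope $E$ of $\H$ is approximated from above by some $h_{ext}^{\pi}$. The subshift $Y$ is constructed scale by scale, as a subsystem of an inverse limit of subshifts: at stage $k$ one injects just enough additional symbolic information to account for the entropy budget $E - h_k$ prescribed by $E$, and the superenvelope (``repair'') condition is exactly what guarantees that the successive stages can be glued consistently while keeping the total fibre entropy below $E + \epsilon$. Applying this with $E = h + u_{\al_0}$ completes the proof.

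The main obstacle is the pair of dynamical steps --- the superenvelope property of $h_{ext}^{\pi}$ and, above all, the explicit construction of symbolic extensions realizing a prescribed superenvelope. These, unlike the soft functional analysis of the first paragraph, rest on the full machinery of entropy structures (the reference structure and its conjugacy invariance, Downarowicz's variational principles for tail entropy, asymptotic $h$-expansiveness of subshifts), which is precisely why the theorem is invoked here as a result of \cite{BD} rather than reproved from scratch.
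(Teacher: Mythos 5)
The paper does not prove this theorem; both of its appearances carry the attribution \cite{BD}, and the result is used throughout as a black box (as you yourself note at the end). So there is no in-paper proof to compare against; what follows is an assessment of your outline of the Boyle--Downarowicz argument. The overall architecture you describe --- superenvelopes, $h + u_{\al_0}^{\H}$ as the minimal one, the superenvelope property of each $h_{ext}^{\pi}$, and the construction of symbolic extensions realizing a prescribed superenvelope up to $\epsilon$ --- does match the shape of \cite{BD}.

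There is, however, a concrete gap in the one step of your first paragraph that you do try to supply rather than cite. You claim $u_{\al_0}^{\H}$ is affine because the harmonicity of $\H$ is ``propagated through the transfinite sequence as in Proposition~\ref{Uaffine}, now on the general Choquet simplex $M(X,T)$.'' Proposition~\ref{Uaffine} applies only to Bauer simplices: its proof rests on Proposition~\ref{uscBauerharm}, which uses the closedness of $\ex(K)$ in an essential way. But $M(X,T)$ is almost never Bauer --- the ergodic measures need not form a closed set, and for a full shift they are dense in $M(X,T)$ --- and the paper itself warns twice (Remark~\ref{ConcaveRmk}, and again at the close of Appendix~A) that on a non-Bauer Choquet simplex the functions $u_{\al}$ of a harmonic candidate sequence are in general only \emph{concave}, not harmonic or affine. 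So you have not shown $h + u_{\al_0}$ is affine, hence not that it is a superenvelope in your sense. Worse, the affineness of $u_{\al_0}^{\H} = h_{res}$ in the dynamical setting is a \emph{consequence} of the very theorem being proved (since $h_{sex}$ and $h$ are each harmonic), so invoking it at this stage is circular. The actual \cite{BD} route does not need to know a priori that $h + u_{\al_0}$ is a superenvelope: it proves $h_{sex} \ge h + u_{\al_0}$ (every $h_{ext}^{\pi}$ is a superenvelope, and by your transfinite-induction step every superenvelope dominates $h + u_{\al_0}$) and $h_{sex} \le h + u_{\al_0}$ (the construction, phrased so as to target superenvelopes given externally rather than $h + u_{\al_0}$ itself) as separate inequalities, with the affineness of $u_{\al_0}^{\H}$ emerging only at the end. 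That distinction is what your sketch elides.
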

This theorem relates the notion of how entropy emerges on refining scales to the symbolic extensions of a system, showing that there is a deep connection between these topics. Using this connection, some progress has been made in understanding the symbolic extensions of certain classes of dynamical systems, with particular interest in smooth dynamical systems. For results of this type, see \cite{A, BD, Bur2, BurNew, DF, D, DM, DN}. Note that the functions $u_{\al}$ are, in general, not harmonic, which stands in stark contrast to most objects of study in ergodic theory (in particular, the entropy function $h$ is harmonic \cite{Glas, Pet}).

The order of accumulation $\al_0(X,T)$, which is defined as $\al_0(\H)$ for any entropy structure $\H$ of the system $(X,T)$, measures on how many ``layers'' residual entropy accumulates in system. From Theorem \ref{sexEntThm} we see that the complexity in these layers accounts for the extra entropy that must be added to each measure in the system in order to obtain symbolic extensions. Thus the order of accumulation of entropy measures some additional complexity in the system that is not detected by the entropy function $h$.

\section{Proof of Fact \ref{fonEharIsUSC}} \label{proofAppendix}
The following fact was given as Fact 2.5 in \cite{DM}, where there is a sketch of the proof. In this appendix we fill in some details of this proof for the sake of completeness.
\begin{fact*}[Fact \ref{fonEharIsUSC}]
Let $K$ be a metrizable Choquet simplex, and let $f: K \rightarrow [0, \infty)$ be convex and u.s.c. Then $(f|_{\ex(K)})^{har}$ is u.s.c.
\end{fact*}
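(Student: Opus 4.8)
The plan is to realize $(f|_{\ex(K)})^{har}$ as a pointwise infimum of continuous affine functions, which immediately gives upper semi-continuity via Definition \ref{USCdef} (1). First I would reduce to the case where $f$ is the restriction of a continuous convex function: since $f$ is u.s.c.\ on the compact metrizable space $K$, by Definition \ref{USCdef} (2) we may write $f = \lim_n g_n$ for a non-increasing sequence of continuous functions $g_n$. Replacing $g_n$ by its \emph{convex} envelope (the supremum of all continuous affine functions lying below $g_n$, or equivalently the largest continuous convex function $\leq g_n$) we may assume each $g_n$ is continuous \emph{and} convex, while still $g_n \searrow f$ pointwise. Here one uses that $f$ itself is convex, so $f$ lies below the convex envelope of $g_n$, and the convex envelopes still decrease to $f$. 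So it suffices to prove the claim for $f$ continuous and convex, and then pass to the limit: if $g_n$ is continuous convex with $g_n \searrow f$, then on $\ex(K)$ we have $g_n|_{\ex(K)} \searrow f|_{\ex(K)}$, and by the Monotone (or Dominated) Convergence Theorem applied to the representing measures $\P_x$, $(g_n|_{\ex(K)})^{har} \searrow (f|_{\ex(K)})^{har}$ pointwise on $K$; a non-increasing limit of u.s.c.\ functions is u.s.c., so the general case follows from the continuous convex case.

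Next I would handle the case $f$ continuous and convex. The key observation is that for a continuous convex function, $f$ is sub-harmonic: for every $x \in K$, $f(x) \leq \int_{\ex(K)} f \, d\P_x = (f|_{\ex(K)})^{har}(x)$. (This is the standard Choquet inequality; it is exactly the inequality already invoked in the excerpt just before Fact \ref{fonEharIsUSC}, and it also follows from Jensen's inequality applied to the barycenter representation $x = \bary(\P_x)$.) On the other hand, I claim the reverse inequality holds with a supremum over continuous affine minorants: define, for $x \in K$,
\begin{equation*}
F(x) = \sup\{ a(x) : a \text{ continuous affine on } K,\ a \leq f \text{ on } \ex(K) \}.
\end{equation*}
Since any continuous affine $a \leq f$ on $\ex(K)$ satisfies $a(x) = \int_{\ex(K)} a \, d\P_x \leq \int_{\ex(K)} f \, d\P_x = (f|_{\ex(K)})^{har}(x)$, we get $F \leq (f|_{\ex(K)})^{har}$. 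For the opposite inequality I would show $F$ is itself continuous and convex with $F \geq f$ on $\ex(K)$ — in fact $F = \widehat{f}$, the upper convex envelope computed from affine minorants of $f|_{\ex(K)}$ — hence $F$ is sub-harmonic and $F|_{\ex(K)} \geq f|_{\ex(K)}$, which forces $(f|_{\ex(K)})^{har} = (F|_{\ex(K)})^{har} \leq F$; combining gives $(f|_{\ex(K)})^{har} = F$. But $F$, being a supremum of continuous (affine) functions, is lower semi-continuous and convex, while it equals $(f|_{\ex(K)})^{har}$; to get \emph{upper} semi-continuity I instead use the dual description.

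The cleaner route for the continuous convex case, which I would actually carry out, is this: $(f|_{\ex(K)})^{har}$ is an affine function on $K$ (Remark \ref{harmRemark}) and it is bounded and non-negative. It is u.s.c.\ because it can be written as an infimum of continuous affine functions: I claim
\begin{equation*}
(f|_{\ex(K)})^{har}(x) = \inf\{ b(x) : b \text{ continuous affine on } K,\ b \geq f \text{ on } \ex(K) \}.
\end{equation*}
The inequality $\leq$ is clear since $b \geq f$ on $\ex(K)$ implies $b(x) = \int b\, d\P_x \geq \int f\, d\P_x$. For $\geq$, I would invoke the Hahn--Banach separation theorem in the space $A(K)$ of continuous affine functions together with the fact (for $f$ continuous convex) that the concave upper envelope $\overline{f}(x) = \inf\{ b(x) : b \in A(K),\ b \geq f \text{ on } K\}$ satisfies $\overline{f} = \inf\{ b : b \in A(K),\ b \geq f \text{ on } \ex(K)\}$ — because a continuous affine function dominating $f$ on the closure of $\ex(K)$ dominates it everywhere by the maximum principle for convex functions — and then use that for a continuous convex $f$ one has $(f|_{\ex(K)})^{har} \geq f$ pointwise combined with sub-harmonicity; a short argument using that the measures $\P_x$ range over all of $\mathcal{M}(\ex(K))$ as $x$ ranges over $K$ pins down the value. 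Being an infimum of continuous functions, $(f|_{\ex(K)})^{har}$ is then u.s.c.\ by Definition \ref{USCdef} (1), and the reduction above completes the proof.

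The main obstacle I anticipate is the separation/duality step in the continuous convex case: making rigorous that $(f|_{\ex(K)})^{har}$ equals the infimum of continuous affine functions dominating $f$ on $\ex(K)$, rather than on all of $K$. This is where the Choquet-theoretic content lives — one must use that for convex $f$ the maximum principle transfers domination on $\overline{\ex(K)}$ to domination on $K$, and one must verify that the infimum is actually attained (or approximated) by the harmonic extension, which uses the surjectivity of $x \mapsto \P_x$ onto barycenters and a compactness argument in $\mathcal{M}(\ex(K))$. Once that identity is in hand, the upper semi-continuity and the passage from continuous convex $f$ to general u.s.c.\ convex $f$ are routine.
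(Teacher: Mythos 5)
Your outline aims at the same quantity as the paper's proof — the concave envelope $\hat f(x)=\sup\{\int f\,d\mu:\bary(\mu)=x\}$ — but the paper extracts upper semi-continuity more cheaply. It observes that $g(\mu)=\int f\,d\mu$ is u.s.c.\ on $\M(K)$ because $f$ is u.s.c., that $G(x)=\sup\{g(\mu):\bary(\mu)=x\}$ is then u.s.c.\ by Remark~\ref{pushDownPullUp}(3) (the projection of a u.s.c.\ function under a continuous surjection is u.s.c.), and then identifies $G$ with $(f|_{\ex(K)})^{har}$ by two inequalities: a Jensen-type estimate $f(x)\le\int f\,d\mu$ for every $\mu$ with $\bary(\mu)=x$ (proved by discretizing $\mu$ into a finite convex combination and using convexity of $f$), and $\int f\,d\mu\le\int f\,d\P_x$ (from harmonicity of $y\mapsto\int f\,d\P_y$, Remark~\ref{harmRemark}). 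No Hahn--Banach separation or envelope theorem is invoked.

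Your proposal has two real gaps. The preliminary reduction to continuous convex $f$ depends on replacing each continuous $g_n$ by ``the largest continuous convex function $\le g_n$,'' but the supremum of the continuous affine minorants of $g_n$ is in general only lower semi-continuous, and on an infinite-dimensional compact convex set (for instance a non-Bauer simplex) this envelope need not be upper semi-continuous, so no such largest \emph{continuous} convex minorant need exist; the reduction is also unnecessary, since your ``cleaner route'' never actually uses continuity of $f$. More importantly, the central identity
\[
(f|_{\ex(K)})^{har}(x)=\inf\{\,b(x): b\ \text{continuous affine on}\ K,\ b\ge f\ \text{on}\ \ex(K)\,\}
\]
is left unproven and is exactly where the content lives, as you yourself flag. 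The direction $\le$ is immediate by integrating $b\ge f$ against $\P_x$. But the direction $\ge$ amounts to proving both that $(f|_{\ex(K)})^{har}$ equals the concave envelope $\hat f$ (which is precisely the harmonicity computation the paper carries out) and, in addition, that $\hat f$ is an infimum of continuous affine majorants (a nontrivial Hahn--Banach/envelope step). Completing your route would thus re-derive the hard part of the paper's argument and then stack a separation argument on top of it, whereas the paper's projection observation makes that extra layer unnecessary.
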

\begin{proof}
Let $f: K \to [0,\infty)$ be convex and u.s.c. Let $g : \M(K) \to [0,\infty)$ be defined for each $\mu$ in $\M(K)$ as
\begin{equation*}
g(\mu) = \int f d\mu.
\end{equation*}
Now let $G: K \to [0,\infty)$ be given by $G(x) = \sup\{g(\mu) : \bary(\mu)=x\}$ for all $x$ in $K$. We have that $g$ is u.s.c. because $f$ is u.s.c., and $G$ is u.s.c. because $g$ is u.s.c. (Remark \ref{pushDownPullUp} (iii)).

Now we claim that $f(x) \leq \int f d \mu$ for any $\mu$ such that $\bary(\mu) = x$. To see this, fix $x$ and $\mu$ such that $\bary(\mu)=x$. Let $f_m$ be a decreasing sequence of continuous functions, $f_m : K \to [0,\infty)$, whose limit is $f$. Let $\delta >0$. Partition the support of $\mu$ into a finite number of sets $S_j$ of diameter smaller than $\delta$. For each $j$, if $\mu(S_j)>0$, let $z_j = \bary( \mu_{S_j} )$, where $\mu_{S_j}$ is the measure $\mu$ conditioned on the set $S_j$. Then let $\nu = \sum_j \mu(S_j) \epsilon_{z_j}$. Note that $\bary(\nu) = \bary(\mu) = x$, and $\nu$ tends to $\mu$ in $\M(K)$ as $\delta$ tends to zero. We have shown that there exists a sequence of measures $\nu_k$ such that each $\nu_k$ is a finite convex combination of point measures, $\nu_k$ converges to $\mu$ in $\M(K)$, and $\bary(\nu_k)=x$ for each $k$. Now choose such a sequence $\nu_k$, and note that for any $m$, any $\epsilon >0$, and any large enough $k$ (depending on $\epsilon$ and $m$), by the convexity of $f$,
\begin{equation*}
f(x) \leq  \int f \; d\nu_k \leq  \int f_m \;  d \nu_k \leq \int f_m \; d\mu  + \epsilon.
\end{equation*}
Letting $m$ tend to infinity, the Dominated Convergence Theorem implies that $f(x) \leq \int f d\mu +\epsilon$. Since $\epsilon$ was arbitrary, we see that $f(x) \leq \int f d\mu$, which implies in particular that $f(x) \leq \int f d\P_x$.

Then for any $\mu$ with $\bary(\mu) = x$,
\begin{equation*}
\int f d\mu \leq \int \Bigl( \int f d\P_y \Bigr) \; d \mu(y) = \int f \; d\P_x,
\end{equation*}
where the equality of the last two expressions follows from the fact that $x \mapsto \int f \; d\P_x$ defines a harmonic function on $K$ (Remark \ref{harmRemark}).

Thus $G(x) = \int f d\P_x$, which shows that $G = (f|_{\ex(K)})^{har}$. Since $G$ is u.s.c., the proof is complete.
\end{proof}


\end{document}